\pdfoutput=1
\RequirePackage{ifpdf}
\ifpdf 
\documentclass[pdftex]{sigma}
\else
\documentclass{sigma}
\fi

\numberwithin{equation}{section}

\usepackage{amscd}
\usepackage{braket, dsfont}
\usepackage{cancel}
\usepackage{mathrsfs}
\usepackage{tikz-cd}

\usepackage[all]{xy}
\usepackage{stmaryrd}
\usepackage[mathscr]{euscript}
\usepackage{tikz}

\usepackage{etoolbox} 

\newcommand{\Proj}{\operatorname{Proj}}
\newcommand{\smk}{{\fontfamily{pzc}\selectfont \text{k}}}
\newcommand{\tto}{\xrightarrow}
\newcommand{\chr}{{\mathsf{c}}}
\newcommand{\gm}{{\mathsf{g}}}
\newcommand{\ptr}{\operatorname{ptr}}

\makeatletter
\let\c@proposition\c@theorem
\let\c@lemma\c@theorem
\let\c@corollary\c@theorem
\let\c@definition\c@theorem
\let\c@remark\c@theorem
\let\c@example\c@theorem

\makeatother
\numberwithin{theorem}{section}
\numberwithin{proposition}{section}
\numberwithin{definition}{section}
\numberwithin{lemma}{section}
\numberwithin{corollary}{section}
\numberwithin{remark}{section}
\numberwithin{example}{section}

\newcommand{\rcoev}{\stackrel{\longrightarrow}{\operatorname{coev}}}
\newcommand{\rev}{\stackrel{\longrightarrow}{\operatorname{ev}}\!\!}
\newcommand{\lev}{\stackrel{\longleftarrow}{\operatorname{ev}}\!\!}
\newcommand{\lcoev}{\stackrel{\longleftarrow}{\operatorname{coev}}}

\newcommand{\bp}[1]{{\left(#1\right)}}

\newcommand{\ci}{\tilde\imath}
\newcommand{\cat}{\mathcal{C}}
\newcommand{\ttt}{\mathcal{T}}
\newcommand{\coend}{\mathscr{L}}

\newcommand{\catd}{\mathcal{D}}
\newcommand{\C}{\ensuremath{\mathbb{C}}}
\newcommand{\Z}{\ensuremath{\mathbb{Z}}}
\newcommand{\R}{\ensuremath{\mathbb{R}}}
\newcommand{\N}{\ensuremath{\mathbb{N}}}

\newcommand{\slt}{\ensuremath{\mathfrak{sl}(2)}}

\newcommand{\End}{\operatorname{End}}
\newcommand{\Hom}{\operatorname{Hom}}

\newcommand{\bbone}{\text{\usefont{U}{bbold}{m}{n}1}}
\MakeRobust{\bbone}
\newcommand{\unit}{\ensuremath{\mathds{1}}}
\newcommand{\Id}{\operatorname{Id}}
\newcommand{\qdim}{\operatorname{qdim}}

\newcommand{\FK}{{\Bbbk}}

\newcommand{\ve}{\varepsilon}
\newcommand{\wt}{\widetilde}

\newcommand{\ms}[1]{\mbox{\tiny\ensuremath{#1}}}

\newcommand{\kk}{\Bbbk}

\newcommand{\CP}{\mathbb{CP}}

\newcommand{\mt}{{\operatorname{\mathsf{t}}}}

\newcommand{\qt}{\operatorname{\mathsf{t}}}

\newcommand{\CC}{\mathcal{C}}
\newcommand{\EE}{\mathcal{E}}
\newcommand{\LL}{\mathcal{L}}

\newcommand{\TSkein}{{\mathscr{S}}}

\newcommand{\Emb}{\mathrm{Emb}}
\newcommand{\Vect}{\mathrm{Vec}}

\newcommand{\cob}{\operatorname{\textbf{Cob}}}
\newcommand{\nc}[1]{{#1^{\mathsf{nc}}}}
\newcommand{\man}{\operatorname{\textbf{Man}}}
\newcommand{\Sp}{\mathbb{S}}
\newcommand{\ThreeManInv}{\mathrm{B}}
\newcommand{\BB}{\ThreeManInv}

\newcommand{\pathtoFig}{}

\usepackage[calc]{picture}\usepackage{settobox}
\newlength{\posx}
\newlength{\posy}
\newlength{\Imagew}
\newlength{\Imageh}
\newlength{\Textw}
\newlength{\Texth}
\newsavebox{\Image}
\newsavebox{\Text}
\newcommand{\epsh}[2]{
  \savebox{\Image}{\epsfig{figure=\pathtoFig#1,height=#2}}
  \settoboxwidth{\Imagew}{\Image}
  \settoboxheight{\Imageh}{\Image}
  \begin{array}{c} \hspace{-1.3mm}
    \raisebox{-4pt}{\usebox{\Image}}
    \hspace{-1.9mm}\end{array}
}
\newcommand{\epsw}[2]{
  \savebox{\Image}{\epsfig{figure=\pathtoFig#1,width=#2}}
  \settoboxwidth{\Imagew}{\Image}
  \settoboxheight{\Imageh}{\Image}
  \begin{array}{c} \hspace{-1.3mm}
    \raisebox{-4pt}{\usebox{\Image}}
    \hspace{-1.9mm}\end{array}
}
\newcommand{\putl}[3]{
  \savebox{\Text}{#3}
  \settoboxwidth{\Textw}{\Text}
  \settoboxheight{\Texth}{\Text}
  \setlength\posx{-\Imagew+0.4pt+0.01\Imagew*\real{#1}}
  \setlength\posy{-0.5\Imageh+2.5pt+0.01\Imageh*\real{#2}}
  \put(\posx,\posy){#3}
}
\newcommand{\putlc}[3]{
  \savebox{\Text}{#3}
  \settoboxwidth{\Textw}{\Text}
  \settoboxheight{\Texth}{\Text}
  \setlength\posx{-\Imagew+0.4pt+0.01\Imagew*\real{#1}}
  \setlength\posy{-0.5\Imageh+2.5pt+0.01\Imageh*\real{#2}-0.5\Texth}
  \put(\posx,\posy){#3}
}
\newcommand{\putr}[3]{
  \savebox{\Text}{#3}
  \settoboxwidth{\Textw}{\Text}
  \settoboxheight{\Texth}{\Text}
  \setlength\posx{-\Imagew+0.4pt+0.01\Imagew*\real{#1}-\Textw}
  \setlength\posy{-0.5\Imageh+2.5pt+0.01\Imageh*\real{#2}}
  \put(\posx,\posy){#3}
}
\newcommand{\putrc}[3]{
  \savebox{\Text}{#3}
  \settoboxwidth{\Textw}{\Text}
  \settoboxheight{\Texth}{\Text}
  \setlength\posx{-\Imagew+0.4pt+0.01\Imagew*\real{#1}-\Textw}
  \setlength\posy{-0.5\Imageh+2.5pt+0.01\Imageh*\real{#2}-0.5\Texth}
  \put(\posx,\posy){#3}
}
\newcommand{\putc}[3]{
  \savebox{\Text}{#3}
  \settoboxwidth{\Textw}{\Text}
  \settoboxheight{\Texth}{\Text}
  \setlength\posx{-\Imagew+0.4pt+0.01\Imagew*\real{#1}-0.5\Textw}
  \setlength\posy{-0.5\Imageh+2.5pt+0.01\Imageh*\real{#2}-0.5\Texth}
  \put(\posx,\posy){#3}
}
\newcommand{\putcb}[3]{
  \savebox{\Text}{#3}
  \settoboxwidth{\Textw}{\Text}
  \settoboxheight{\Texth}{\Text}
  \setlength\posx{-\Imagew+0.4pt+0.01\Imagew*\real{#1}-0.5\Textw}
  \setlength\posy{-0.5\Imageh+2.5pt+0.01\Imageh*\real{#2}-\Texth}
  \put(\posx,\posy){#3}
}
\newcommand{\putct}[3]{
  \savebox{\Text}{#3}
  \settoboxwidth{\Textw}{\Text}
  \settoboxheight{\Texth}{\Text}
  \setlength\posx{-\Imagew+0.4pt+0.01\Imagew*\real{#1}-0.5\Textw}
  \setlength\posy{-0.5\Imageh+2.5pt+0.01\Imageh*\real{#2}}
  \put(\posx,\posy){#3}
}

\newcounter{exo}

\begin{document}

\allowdisplaybreaks

\newcommand{\arXivNumber}{2306.03225}

\renewcommand{\PaperNumber}{034}

\FirstPageHeading

\ShortArticleName{Skein (3+1)-TQFTs from Non-Semisimple Ribbon Categories}

\ArticleName{Skein (3+1)-TQFTs from Non-Semisimple\\ Ribbon Categories}

\Author{Francesco COSTANTINO~$^{\rm a}$, Nathan GEER~$^{\rm b}$, Benjamin HA\"IOUN~$^{\rm a}$ \newline and Bertrand PATUREAU MIRAND~$^{\rm c}$}

\AuthorNameForHeading{F.~Costantino, N.~Geer, B.~Ha\"{\i}oun and B.~Patureau Mirand}

\Address{$^{\rm a)}$~Institut de Math\'ematiques de Toulouse, 118 route de Narbonne, 31062 Toulouse, France}
\EmailD{\mail{francesco.costantino@math.univ-toulouse.fr}, \mail{benjamin.haioun@math.univ-toulouse.fr}}

\Address{$^{\rm b)}$~Mathematics and Statistics, Utah State University, Logan, Utah 84322, USA}
\EmailD{\mail{nathan.geer@gmail.com}}

\Address{$^{\rm c)}$~Univ Bretagne Sud, CNRS UMR 6205, LMBA, 56000 Vannes, France}
\EmailD{\mail{bertrand.patureau@univ-ubs.fr}}

\ArticleDates{Received July 04, 2025, in final form March 16, 2026; Published online April 14, 2026}

\Abstract{We define a (3+1)-TQFT associated with possibly non-semisimple finite unimodular ribbon tensor categories using skein theory. This gives an explicit realization of a TQFT predicted by the cobordism hypothesis, based on recent results on dualizability. State spaces are given by admissible skein modules, and we prescribe the TQFT on handle attachments. We give some explicit algebraic conditions on the input category to define this TQFT, namely to be ``chromatic non-degenerate''. As a by-product, we obtain an invariant of 4-manifolds equipped with a ribbon graph in their boundary, and in the ``twist non-degenerate'' case, an invariant of 3-manifolds. Our construction generalizes the Crane--Yetter--Kauffman TQFTs in the semi-simple case, and the Lyubashenko (hence also Hennings and WRT) invariants of 3-manifolds. The whole construction is very elementary, and we can easily characterize the invertibility of the TQFTs, study their behavior under connected sums and provide some examples.}

\Keywords{(3+1)-TQFT; skein modules; chromatic category}

\Classification{57M27; 57R56}

\section{Introduction}

Since Atiyah's pioneering paper~\cite{At}, the construction of topological quantum field theories (TQFTs) of smooth compact oriented manifolds has flourished. Many of the results in this area apply to
manifolds of dimension $3$ and $(2+1)$-dimensional cobordisms.
Some of the milestones in this dimension include Turaev's~\cite{Tu} general TQFT construction associated to a semi-simple modular category (which is an extension of the Witten--Reshetikhin--Turaev 3-manifold invariant) and the construction of a TQFT associated to the Kauffman bracket skein algebra~\cite{BHMV95}.

These constructions have been generalized to non-semisimple settings first by Hennings~\cite{H96} and Lyubashenko and Kerler~\cite{KerLy2001, L94} and later in~\cite{BCGP14,CGPV2023a,DRGGPMR2022} using the theory of modified traces and the notion of \emph{admissible skein modules}. These non-semisimple TQFTs have new properties, often proving more powerful than their semi-simple analogs. For example, non-semisimple TQFTs have
been
shown to distinguish diffeomorphism types of homotopically equivalent lens spaces which were not distinguished by semi-simple TQFTs.
The Reshetikhin--Turaev 3-TQFTs contain in particular Turaev--Viro theories, and non-semisimple
generalizations
also exist for the latter using non-semisimple pivotal categories and based on admissible string nets, see~\mbox{\cite{CGPV2023a, MSWY23}}.

Less research has been done in dimension $4$. A notable exception is due to Crane and Yetter~\cite{CY}
who defined (3+1)-TQFTs associated to any semi-simple modular category. These TQFTs turn out to be very simple, and the underlying invariants of closed 4-manifolds only depend on the Euler characteristic
and
the signature.
This construction was generalized to certain non-modular semi-simple categories by Crane, Kauffman and Yetter~\cite{CYK}. In~\cite{BB18}, B\"arentz and Barrett generalize the CKY construction to pairs of finite semi-simple categories $\cat \to \catd$ (the CKY construction corresponds to the inclusion of $\cat$ in its Drinfeld center). The underlying 4-manifold invariants of these TQFTs, at least in a broad list of cases, are conjecturally related to the Euler characteristic, the signature and the fundamental group, see~\cite[Conjecture~8.1]{BB18}.
A different kind of quantum topology construction in dimension 4 has
recently been defined by Beliakova and De Renzi in
\cite{BD22, BD21}.
They consider $4$-manifolds with corners which admit a handle decomposition without
$3$- and $4$-handles, and considered up to a suitable $2$-equivalence
relation.\footnote{In some sense, the TQFT of this paper might be seen as
 an extension of their work to include higher index handle (but using
 different methods and reversing the labeling of the index of handle decomposition).}

However, there are good reasons to believe that this list is incomplete.
The main purpose of this paper is to add to this list by considering a general framework of different kinds of 4-TQFT arising from non-semisimple categories and skein theory. Let us put this into context.
All
of
the TQFTs mentioned above are expected to be fully extended and classified, by the Baez--Dolan--Hopkins--Lurie cobordism hypothesis, by some fully dualizable objects in some higher category. The fully dualizable objects that would be associated to the semisimple CKY theories have been studied in~\cite{BJS}, but recently non-semisimple fully dualizable objects were exhibited in~\cite{BJSS}. They prove that non-semisimple modular tensor categories are fully dualizable, and hence should induce 4-TQFTs. Moreover, it is conjectured in~\cite{Haioun} that these 4-TQFTs support a boundary condition that recovers the non-semisimple (2+1)-TQFTs of~\cite{DRGGPMR2022}. See~\cite{HaiounWRT} for a proof of this statement in a non-extended setting using the construction of the present paper.
In this paper, we construct 4-TQFTs which
we expect should be the 3-4 part of
the TQFTs predicted by~\cite{BJSS}
(at least when the category is a ${\rm SO}(3)$-homotopy fixed point so that the associated theory is only oriented),
and give a different look on conditions for full dualizability.
Another way to phrase the main objective of this paper is:
\begin{center}
 \it When do admissible skein modules of $3$-manifolds extend to a $(3+1)$-TQFT?
\end{center}

Note, that the usual definition of a TQFT is that of a symmetric monoidal functor from the category of oriented cobordisms to the category of finite-dimensional vector spaces. In the present article, however, we work with non-semisimple categories, which necessitates a generalization of this framework.
In particular, we consider also non-compact TQFTs (see Section~\ref{S:Top}).
Similarly, the invariant $\ThreeManInv'$ appearing in Theorem~\ref{T:Exist3ManInv} depends not only on the underlying 3-manifold, but also on auxiliary data ensuring admissibility,
but one can extract from it an~invariant of the underlying $3$-manifold denoted $\ThreeManInv_{\cat}(M)$.

\textbf{Dualizability and 4-TQFTs.}
We now contextualize the results of this paper within a larger perspective.
By the Cobordism Hypothesis, fully extended framed $n$-TQFTs should be classified by fully dualizable objects in some $n$-category $n\hskip-2pt\Vect$. As we will see, it proposes an answer to the question above through dualizability requirements. In this introduction, we consider $n=4$ and the model for $4\hskip-2pt\Vect$
will be
 the Morita 4-category of linear braided categories $\operatorname{BrTens}$ introduced in~\cite{BJS, JFS}.
The objects of this 4-category are presentable braided categories, a notion we will not recall here, and we will limit ourselves to examples of the form $\EE:= \operatorname{Fun}(\operatorname{Proj}(\CC)^{\rm op}, \Vect)$, the free cocompletion of the tensor ideal of projectives in a ribbon category $\CC$.

It is shown in~\cite{BJS} that every braided rigid tensor category $\CC$ is
3-dualizable in $\operatorname{BrTens}$
(or more precisely the associated category $\EE$ is 3-dualizable).

We need 4-dualizability for 4-TQFTs, but a 3-dualizable object will still induce a partially-defined 4-TQFT: it
associates
vector spaces to 3-manifolds, but
in general no
linear maps to 4-cobordisms (except those induced by diffeomorphisms). This TQFT is in general framed (all 3-manifolds above are assumed to be framed) but if the object $\CC$ is endowed with an ${\rm SO}(3)$-homotopy-fixed-point structure, the Cobordism Hypothesis predicts that one can obtain an~oriented TQFT. It is expected that a ribbon structure on $\CC$ induces such an ${\rm SO}(3)$-structure and that the vector spaces assigned to oriented 3-manifolds are the admissible skein modules of~\cite{CGP22}. Work of the third author and Brown~\cite{BH} shows that the categories assigned to surfaces by this partially-defined, fully-extended 4-TQFT are indeed described by admissible skein modules in the thickened surface.
Note however that at present the rigorous study of ${\rm SO}(3)$-structures is still out of reach.

Assuming the expectations above, and the cobordism hypothesis, the following two questions become equivalent:
\begin{itemize}\itemsep=0pt
\item When is $\CC$ 4-dualizable and how can one equip it with an ${\rm SO}(4)$-structure extending its ${\rm SO}(3)$-structure?
\item When and how can one extend admissible skein modules to a 4-TQFT?
\end{itemize}
There are actually a number of intermediate steps between 3- and 4-dualizability, see~\cite[Definition~2.25]{Haioun} or~\cite{Lurie},
\begin{gather*}
 \text{3-dualizability = (4,0)-dzb $\Leftarrow$ (4,1)-dzb $\Leftarrow$ (4,2)-dzb}\\
 \hphantom{\text{3-dualizability = (4,0)-dzb }}{}
 \text{$\Leftarrow$ (4,3)-dzb $\Leftarrow$ (4,4)-dzb = 4-dualizability}.
\end{gather*}
Each
level of
dualizability
induces a partially-defined
4-TQFT
which is defined on a
larger
class of 4-cobordisms~\cite[Section~3.4]{Lurie}. If $\CC$ is $(4,k)$-dualizable, it should induce a TQFT defined
on
4-cobordism obtained using only $0,\dots, k$-handles. However, these cobordisms are not considered up to diffeomorphism, but up to $k$-equivalence, which is an a priori weaker relation. There is also a framing issue which we will not discuss here.

Note that (4,2)-dualizable objects are used in~\cite{BD22, BD21} to construct invariants of 4-dimensional 2-handlebody up to 2-equivalence, with the goal of distinguishing 2-equivalent but non-dif\-feo\-mor\-phic 4-manifolds.

In this paper we construct non-compact (3+1)-TQFTs which we expect
will
realize the theory associated with the (4,3)-dualizable case in the above list. In this case, the notion of 3-equivalence is known to coincide with diffeomorphism, hence we do obtain invariants of 4-manifolds.

We will build the (3+1)-TQFT handle-by-handle, and we expect that our successive requirements strongly reflect the successive dualizability requirements displayed above (together with an orientability requirement). Note that in our construction one has to read the indices backwards: we first construct the 4-handle, then the 3-handle and so on. One can take orientation reversal to get the usual indices.

We will work under some simplifying assumptions on the category $\CC$ recalled in Section~\ref{SS:Hypothesiscat}, which we expect
to
correspond to (4,2)-dualizability together with an orientability requirement. Under these assumptions on $\CC$, we expect that the notions of \emph{chromatic non-degenerate} and \emph{chromatic compact} which we introduce in this paper coincide respectively with (4,3)- and (4,4)-dualizability. We do not show that the extension of admissible skein modules to a 4-TQFT that we construct is the only possible one, but this is shown in a once-extended setting by work of the third author~\cite[Theorem~6.1]{HaiounHandle} that
has
appeared after this article was prepublished.

\textbf{The construction.}
\begin{figure}[t]
 \centering
 \small
 \[\begin{tikzcd}[column sep=tiny]
 {\begin{array}{c} \cat\text{ is \color{blue}chromatic}\\\text{\color{blue} non-degenerate}
 \end{array} } &\ & {\exists \left\{\begin{array}{@{}l}{\color{red} \TSkein_\cat}\text{ non-compact (3+1)-TQFT}\\ {\color{red} \dot\TSkein_\cat\bigl(W^4\bigr)} \text{ closed 4-manifold invariant}\\{\color{red} \TSkein_\cat\bigl(W^4,T\bigr)}\text{ invariant of pair }T\subset -\partial W^4\end{array}\right.} \\
 {\cat \text{ is \color{blue}chromatic compact}} &\ & {\begin{array}[t]{l}\TSkein_\cat\text{ extends to a TQFT}\\\TSkein_\cat\bigl(W^4\bigr)=\zeta\dot\TSkein_\cat\bigl(W^4\bigr)\end{array}} \\
 & \begin{array}{c} \cat\text{ is \color{blue}twist}\\
 \text{\color{blue} non-degenerate} \end{array}
 &
 \begin{array}[t]{l}\exists\left\{
 \begin{array}{@{}l}{\color{red} \BB_\cat\bigl(M^3\bigr)}\text{ 3-manifold invariant} \\
 {\color{red}\BB'_\cat\bigl(M^3,T\bigr)}\text{ invariant of pair }T\subset M^3
 \end{array}\right.
 \\\text{generalizing Lyubashenko's invariants}
 \end{array}\\
 {\cat\text{ is \color{blue}modular}} &\ & {
\TSkein_\cat\text{ is invertible}
 }
 \\
 & \begin{array}{c} \cat\text{ is \color{blue}fusion}\\
 \dim\cat\neq0 \end{array} & \TSkein_\cat\text{ is the Crane--Yetter--Kauffman TQFT}\\
 {\cat\text{ is \color{blue}semi-simple modular}} &\ &
 {\begin{array}{l}\TSkein_\cat\text{ is the Crane--Yetter TQFT}\\\BB_\cat\text{ is the WRT 3-manifold invariant}\end{array}}
 \arrow[Rightarrow, from=4-1, to=2-1]
 \arrow[Rightarrow, from=5-2, to=2-1] \arrow[Rightarrow, from=2-1,
 to=1-1] \arrow[rightarrow, from=1-1, to=1-3] \arrow[rightarrow,
 from=2-1, to=2-3] \arrow[Rightarrow, from=4-1, to=3-2] \arrow[Rightarrow, from=6-1, to=5-2]
 \arrow[leftrightarrow, from=4-1, to=4-3]
 \arrow[rightarrow, from=6-1, to=6-3]
 \arrow[Rightarrow, from=6-1, to=4-1]
 \arrow[rightarrow, from=3-2, to=3-3]
 \arrow[rightarrow, from=5-2, to=5-3]
\end{tikzcd}\]
\caption{All categories
$\cat$ above are assumed to be ribbon. This figure represents different properties on a ribbon chromatic category $\cat$ and their relationships and corresponding 3-manifold invariants and TQFTs. A category at the tail of a double arrow implies the property at the head of the arrow. For example, chromatic compact implies chromatic non-degenerate. A category at the tail of a single arrow implies the existence of the invariant at the head of the arrow. For example, a chromatic non-degenerate category gives rise to a non-compact $(3+1)$-TQFT $\TSkein_\cat$.}
\label{F:Representation}
\end{figure}
In this paper, we construct 3-manifold invariants and (3+1)-TQFTs, which
have different qualities depending on the underlying category,
 see
Figure~\ref{F:Representation}.
Our main algebraic tool is a chromatic category, which loosely
speaking, is a $\FK$-linear pivotal category with a modified trace and
a chromatic morphism. For example, any finite tensor category (in the
sense of~\cite{EGNO}) has a structure of a chromatic category.
Spherical chromatic categories were used in~\cite{CGPV2023a} to
produce non-compact $(2+1)$-TQFTs.

In Section~\ref{sec:presman},
we show that a \emph{twist non-degenerate} ribbon chromatic category $\cat$ gives rise to a 3-manifold invariant $\BB_\cat$.
Moreover, we show that this construction includes and generalizes many well known invariants:
 \begin{enumerate}\itemsep=0pt
\item[(1)] WRT-invariants~\cite{RT} (when $\cat$ is semi-simple modular),
\item[(2)] the (modified) Hennings invariants~\cite{DRGPM18, H96} (when $\cat$ is the category of representations over a finite-dimensional, unimodular ribbon Hopf algebra), and
\item[(3)] the (modified) Lyubashenko invariants~\cite{DRGGPMR2022, L94} (when $\cat$ is a finite ribbon tensor category).
\end{enumerate}
The construction of $\BB_\cat$ is very similar to that of Reshetikhin--Turaev invariants with the chromatic morphism playing the role of a non-semisimple Kirby color.

 Then in Section~\ref{sec:3mfld}, we build a $(3+1)$-TQFT from
a ribbon chromatic category $\cat$ with certain properties.
The idea is to use work of Juh\'asz~\cite{Juh2018} which allows one to build a TQFT by associating a vector space to each closed $3$-manifold (all manifolds are
assumed to be
smooth from now on) and a set of operators associated to $4$-cobordisms formed by a single handle of index $k$ for each $0\leq k \leq 4$ (see Section~\ref{sec:presman} for details).
We use the vector space of admissible skeins introduced in~\cite{CGP22} as value for the TQFT on closed $3$-manifolds. In order to define the maps associated
with
 handles of index $k$ we need to progressively add some structure or restrictions on the category $\cat$, when $k$ starts from $4$ and ranges down to $0$. Schematically:
\begin{description}\itemsep=0pt
\item[The 4-handle] will be given by the \emph{modified trace}, the usual key ingredient in non-semisimple skein theory. See Figure~\ref{fig:4handle}. It is shown to exist and to be unique up to scalar in a~unimodular ribbon tensor category in~\cite[Corollary 5.6]{GKP22}. It is equivalent to a~linear form on the admissible skein module of $S^3$, see~\cite[Theorem~3.1]{CGP22}. Our TQFT will depend on a~choice of modified trace, but simply by a term depending only on the Euler characteristic, see Proposition~\ref{prop:changeOfModifiedTraceTQFT}.
\item[The 3-handle] will be given by the \emph{cutting morphism} first introduced in~\cite{CGPT20} and already used in~\cite{CGPV2023a}. See Figures~\ref{fig:3handle} and~\ref{F:cut}. It is associated with the copairing of the modified trace and its existence imposes a non-degeneracy condition on the modified trace.
\item[The 2-handle] will be given by the \emph{chromatic morphism} studied in~\cite{CGPV2023b}. See Figures~\ref{fig:2handle} and~\ref{F:surgery}. It plays the role of the Kirby color, and in the abelian case is another way to phrase the integral in the coend used in~\cite{DRGGPMR2022}. The existence of such a map is proved in~\cite{CGPV2023b} under suitable finiteness conditions on $\cat$, even for spherical categories, but we reprove it here for the reader's convenience when $\cat$ is a finite unimodular ribbon tensor category, see Section~\ref{ss:coend}. This chromatic morphism is not claimed to be unique in any way, but our construction does not depend on a choice, see Proposition~\ref{prop:redtoblue}.
\item[The 1-handle] will be given by the \emph{gluing morphism}. See Figures~\ref{fig:1handle} and~\ref{F:glue}. This is a new notion which we introduce. The category $\cat$ is called \emph{chromatic non-degenerate} when a~gluing morphism exists, see Definition~\ref{def:maindef}. It can indeed be rephrased by asking that a~certain morphism involving the chromatic morphism and the double braiding is non-zero, see Proposition~\ref{P:existsGluing}. Again, our construction will not depend on the choice of gluing morphism, see Proposition~\ref{prop:1handlewelldef}.
\item[The 0-handle] will be given by the \emph{global dimension} of $\cat$. See Figure~\ref{fig:0handle}. It is only the choice of a scalar because the state space of the 3-sphere is one-dimensional. However, for a~coherent choice to exist, it will impose conditions on the gluing morphism, namely that it is invertible, see Lemma~\ref{lem:invertibleg}. We call this condition \emph{chromatic compact} because it is a~strengthening of the above that yields to fully-defined TQFTs instead of non-compact~ones.
\end{description}

As already mentioned, the $(3+1)$-TQFT of this paper has different qualities depending on
the new notions of the category $\cat$, see Figure~\ref{F:Representation}.
The most general notion we
consider is called a \emph{chromatic non-degenerate} category.
Theorem~\ref{T:S} says such a category gives rise to a non-compact $(3+1)$-TQFT. Here non-compact means we only
consider cobordisms whose connected components all have non-empty sources and so we do not consider $0$-handles. Therefore, this construction associates morphisms to $k$-handles,
for $k=1,2,3,4$ as described above. To extend this TQFT to
all cobordisms the category must satisfy a stronger condition, namely it should be
\emph{chromatic compact}. We show that if we make further
restrictions on the category we can characterize the corresponding
TQFTs: the $(3+1)$-TQFT is invertible if and only if $\cat$ is
\emph{factorizable} which is a generalization of factorizability of
finite ribbon tensor categories, see Theorem~\ref{T:invertible}.

One advantage of the techniques of this paper is that they are based on algebraic data which are easy to formulate with low-level technology using monoidal categories, in particular the notions and properties of non-degenerate modified trace and chromatic morphisms.
These properties are easily represented graphically and most of our proofs reduce to diagrammatic ones.\looseness=-1

\textbf{Higher structure.}
As discussed above, the TQFTs constructed in this paper are expected to be an incarnation of theories that were predicted by higher-algebraic results, and we expect that our construction recovers the 3-4-part of the fully extended theories associated with the fully dualizable objects of~\cite{BJSS}.

Now that we have described our construction, let us compare it more explicitly with the predictions coming from higher algebra. First, note that the dualizability results of~\cite{BJSS, BJS} include only categories that are either semisimple or modular, in characteristic 0,
while
our construction is much more general. We naturally expect that chromatic compact categories, for which our construction works, are fully dualizable. We believe that this expectation was not known in the higher-algebraic
settings,
 and our conditions can guide future dualizability results.
It was recently proven in~\cite{DecoppetRelativeInvert}, following expectations of~\cite{BJSS} that a finite tensor category with separable semi-simple M\"uger center is fully dualizable. It would be interesting to compare this condition to our notion of chromatic compactness. We can achieve this in the extreme case where the category is symmetric monoidal, see Proposition~\ref{P:symcase}.

In particular, we expect that the TQFTs we describe can be extended down to the point using skein theory. A description of the presumed values of this theory on surfaces appeared after the publication of a preprint of this article in~\cite{BH}.

It is also natural to expect that the construction of this paper is an explicit
example
 in dimension four of the work of David Reutter and Kevin Walker announced in~\cite{ReutterWalker1,Walker2021,ReutterWalker2}.

\textbf{Boundary conditions.}
A particularity of skein TQFTs is that vectors in the TQFT space are
represented by $\cat$-colored links or ribbon graphs in 3-manifolds.
Hence the abstract TQFT comes with an invariant of
a pair $(W,T)$ where $T$ is a $\cat$-colored link
or ribbon graph in the boundary of $W$. We show in Theorem~\ref{T:4TQFT-3M} that if the category is twist
non-degenerate, this invariant recovers the invariant of 3-manifold $\BB'_\cat$ that we constructed.

This invariant can be much stronger than the TQFT itself. Indeed the invertible CY TQFTs are known to only depend on two complex numbers~\cite[Theorem~7.6]{SP}, but given the empty skein they recover the WRT invariant of the boundary of a 4-manifold. The invertible TQFTs associated with a non-semisimple modular category are similarly simple, but we show in Theorem~\ref{T:4TQFT-3M} that, given an admissible skein in their boundary, they recover the invariants of decorated 3-manifolds from~\cite{DRGGPMR2022}.

In fact, these particularly simple examples of 4-TQFTs constructed from modular input were an important motivation for this paper. Indeed, they model the \emph{anomaly} of the non-semisimple~\cite{DRGGPMR2022} (2+1)-TQFTs. The notion of anomaly, describing the projectivity of a TQFT,
has
recently received a lot of attention~\cite{FreedAnomalyInvFT, FreedAnomaly}. According to Freed, a projective $n$-TQFT is a~boundary condition to an invertible $(n+1)$-TQFT, called its anomaly.

The first and most fundamental example of projective TQFT, which motivated Freed's notion, is the Witten--Reshetikhin--Turaev (2+1)-TQFT. Walker describes how it can be seen as~a~boundary condition to its anomaly: the Crane--Yetter 4-TQFT. Using the present paper, the third author~\cite{HaiounWRT} extended this story to the non-semisimple case, and wrote \cite{DRGGPMR2022} the theory as~a~boundary condition to the 4-TQFT we construct here.

\textbf{Detecting exotic pairs.}
It is a natural question to ask how fine 4-manifold topology can 4-TQFTs detect, and in particular if they can be sensitive to exotic phenomena. There are many obstructions to this, boiling down to the fact that by
definition TQFTs behave well with respect to connected sums,
but exotic phenomena can be killed by taking connected sums with some fixed 4-manifold. Nevertheless, there are TQFT-like constructions, e.g., Heegaard--Floer homology or lasagna skein modules, which are known to be sensitive to exotic phenomena. Since we also construct partially-defined 4-TQFTs, it is natural to ask:
\begin{center}
 \it How sensitive can $4$-manifolds invariants coming from skein theory be?
\end{center}
Let us discuss some positive and negative partial answers. We begin with obstructions for our construction to detect exotic pairs of 4-manifolds.

First, extending a previous result of Wall,
Gompf shows in~\cite{Go}
that two compact orientable $4$-manifolds (possibly with boundary) which are homeomorphic become diffeomorphic after some finite sequence of connected sums with $S^2\times S^2$. Therefore, a $(3+1)$-TQFT which is multiplicative under connected sum, has a hope to distinguish exotic pairs only if the numerical invariant it associates to $S^2\times S^2$ is not invertible.
A similar restriction is that if one stabilizes a closed simply connected $4$-manifold by a sequence of connected sums with $\CP^2$ or \smash{$\overline{\CP^2}$} then one can make it diffeomorphic to a connected sum of the form \smash{$\#_m\CP^2\#_n\overline{\CP^2}$} for some $m,n\in \N$. Therefore, again, in order for a $(3+1)$-TQFT to have interesting invariants on closed simply-connected $4$-manifolds, it needs to have non-invertible values on $\CP^2$ and \smash{$\overline{\CP^2}$}.

Furthermore, given a $(3+1)$-TQFT $Z$ then both $Z\bigl(S^3\bigr)$ and $Z\bigl(S^2\times S^1\bigr)$ can be endowed with the structure of a commutative Frobenius algebra (this is done in a similar way to the standard approach in dimension $(1+1)$).
In~\cite{Reutter}, Reutter showed that a $4$-dimensional TQFT which is ``semisimple'' (i.e., for which $Z\bigl(S^3\bigr)$ and $Z\bigl(S^2\times S^1\bigr)$ are semi-simple algebras) cannot detect exotic pairs.
Finally, we mention that Schommer-Pries in~\cite[Theorem~7.6]{SP} shows that if~$Z$ is invertible (i.e.,
if
 $Z(M)$ is $1$-dimensional for each $3$-manifold $M$ and $Z(W)$ is an invertible linear map for each 4-dimensional cobordism $W$) then its $4$-dimensional invariant only depends on the Euler characteristic and the first Pontryagin number (for instance CY theories are all invertible).

Let us turn to positive partial answers, all of which come from explicit examples.

First, in~\cite{ReutterSP}, the authors show that some semisimple TQFTs generalizing Dijkgraaf--Witten theories can detect stable-diffeomorphism classes \big(i.e., 4-manifolds up to taking connected sums with $S^2\times S^2$ on both sides\big) under some $\pi$-finiteness assumptions on the 4-manifolds.

In Section~\ref{S:Examples}, we consider several examples. First, in Section~\ref{SS:ExSS}, we consider the semi-simple case and show we recover CKY theories, which conjecturally depend only on Euler characteristic, signature and fundamental group.
Then in Section~\ref{SS:Ex-sl2}, we
carry out some computations
for the TQFT associated to finite-dimensional versions of quantum $\slt$ at root of unity. In many
cases, the associated category is shown to be chromatic compact but not
factorizable. At~a~\smash{$8r$\textsuperscript{th}} root of unity, the category is twist
degenerate and $\TSkein_\cat\bigl(\CP^2\bigr)=0$. We can show that our invariant does not depend only on Euler characteristic, signature and fundamental group, but even then, ${\TSkein_\cat\bigl(S^2\times S^2\bigr)\neq 0}$, so this example cannot detect
exotic pairs.
In Section~\ref{SS:Ex-chp}, we conclude with a toy example
in characteristic $p$ showing the existence of chromatic
non-degenerate categories which fail to be chromatic compact.

In order to detect exotic pairs with our construction, we expect that the input category should be chromatic non-degenerate but not chromatic compact.
By the above discussion, it certainly should not be semisimple or modular.
In fact, using the recent results of~\cite{HaiounWRT}, the construction of the present paper yields once-extended TQFTs, and the results of~\cite{Reutter} show that TQFTs arising from chromatic compact categories cannot detect exotic pairs.
Our expectation is that, looking at less trivial examples of chromatic non-degenerate categories over a field with finite characteristics (expanding the example of Section~\ref{SS:Ex-chp}) could produce non trivial invariants, but currently we have no explicit examples yet.
This is the first time that this kind of structure is shown to have topological applications, and therefore it has never been looked for before. However, since the publication of a preprint of this article, a very interesting family of examples, which first appeared in~\cite{STWZ23}, have been put forward in~\cite{Decoppet}. The third author and D\'ecoppet studied invariants of 4-dimensional 2-handlebodies associated to these categories using the constructions of the present paper in~\cite{DH2HB}. We are hopeful that our construction will motivate and open up new directions of research of which these papers are the first example. See in particular the recent studies~\cite{FaesManko, Manko}.

\section{Algebraic setting}
\subsection{Hypothesis on the category}\label{SS:Hypothesiscat}
Throughout this paper, we fix an algebraically closed field $\FK$ and
an essentially small strict ribbon $\FK$-linear
category $\cat$. We do not need $\cat$ to be finite abelian but we
assume that it has finite-dimensional hom-spaces, is additive,
idempotent complete, has a simple unit $\unit$ such that any non-zero
morphism to $\unit$ is an epimorphism, and admits a non-zero projective
generator.

We will also require that $\cat$ is a chromatic category, i.e., it has a
non-degenerate m-trace and that there exists a chromatic morphism for a projective generator (see the
next section for the definitions).

All of these conditions are automatically satisfied for finite unimodular ribbon tensor categories. Depending on the constructions, we will sometimes further require that $\cat$ is twist non-degenerate, chromatic non-degenerate or chromatic compact, see Figure~\ref{F:Representation}.

 \subsection{Cutting, chromatic and gluing morphisms}\label{SS:Def-gluingmorph}
 We use the notation and terminology of~\cite{CGP22}. In particular,
$\cat$ is a braided category with braiding
$c_{V,W}\colon V\otimes W\to W\otimes V$ and a compatible pivotal structure
which associates to each $V \in \cat$ an~object $V^*\in \cat$ and the
(co)evaluation morphisms $\lev_V\colon V^*\otimes V \to \unit, \lcoev_V, \rev_V,\rcoev_V$ which are
related by a canonical isomorphism $\phi_V\colon V\to V^{**}$. We consider
$\cat$-colored ribbon graphs in oriented manifolds. These ribbon
graphs may have coupons which we denote by a box filled with a morphism.

\begin{definition}
 Let $\Proj$ be the ideal of projective objects of $\cat$. A
\emph{non-degenerate m-trace on $\Proj$} is a family of linear maps
$\mt=\{\mt_P\colon\End_\cat(P)\to\FK\}_{P\in\Proj}$ satisfying the
following properties:
\begin{enumerate}\itemsep=0pt
\item \textit{Cyclicity:} For all $U,V\in \Proj$ and morphisms
 \smash{$ V\tto f U$}, \smash{$U\tto g V$}, we have $ \qt_V(g f)=\qt_U(f g)$.
\item \textit{Right partial trace:} 
 If
 $U\in \Proj$ and $W\in \cat$, then for any
 $f\in \End_\cat(U\otimes W)$,
\begin{equation}\label{E:RightPartialTraceProp}
 \qt_{U\otimes W}\bp{f}
 =\qt_U\bigl(\bigl(\Id_U \otimes \rev_W\bigr)(f\otimes \Id_{W^*})\bigl(\Id_U \otimes \lcoev_W\bigr)\bigr).
\end{equation}
\item \textit{Non-degeneracy}: For any $P\in\Proj$, the pairing
 $\Hom_\cat(\unit,P)\otimes_\FK\Hom_\cat(P,\unit)\to\FK$ given by
 $(x,y)\mapsto\mt_P(x\circ y)$ is non-degenerate.
\end{enumerate}
Note since $\cat$ is ribbon the m-trace also satisfies the left
partial trace property similar to equation~\eqref{E:RightPartialTraceProp}.
\end{definition}

Let $F$ be the Reshetikhin--Turaev functor (R-T functor from now on) from the category of $\cat$-ribbon graphs
in $\R^2\times [0,1]$ to $\cat$, see~\cite{Tu}.
Let $\LL_{\rm adm}$ be the class of all $\cat$-colored ribbon graphs in
$S^3$ obtained as the braid closure of a (1,1)-ribbon graph $T_V$
whose open edge is colored with an object $V\in\Proj$. Then the
assignment
\begin{equation*}\label{E:DefF'}
 F'\colon \ \LL_{\rm adm}\to \kk \qquad\text{given by } F'(L)=\mt_V(F(T_V))
\end{equation*} is an isotopy invariant of $L$ in $S^3$ (see~\cite{GP18, GPT09}).

\begin{definition}
 For any $P \in \Proj$, we set
\begin{gather}
 \Omega_P=\sum_i x^i\otimes_\FK x_i \in\Hom_\cat(P,\unit)\otimes_\FK\Hom_\cat(\unit,P)
 \qquad \text{and}\nonumber\\
 \Lambda_P =\sum_ix_i\circ x^i \in\End_\cat(P),\label{eq:cop}
\end{gather}
where \smash{$\bigl\{x^i\bigr\}_i$} and $\{x_i\}_i$ are basis of $\Hom_\cat(P,\unit)$ and
$\Hom_\cat(\unit,P)$ which are dual with respect to the m-trace, that
is, such that \smash{$\mt_P\bigl(x_i\circ x^j\bigr)=\delta_{i,j}$}. Clearly, $\Omega_P$
and $\Lambda_P$ are independent of the choice of such dual basis.
\end{definition}

The properties of the $m$-trace translate to the copairings $\Omega_P$
as follows.
\begin{lemma} \label{P:Omega-nat} \samepage \
 \begin{enumerate}\itemsep=0pt
 \item[$1.$] Duality: If \smash{$\Omega_P=\sum_ix^i\otimes x_i$}, then
 \smash{$\Omega_{P^*}=\sum_ix_i^*\otimes
 \bigl(x^i\bigr)^*\in\Hom_\cat(P^*,\unit)\otimes_\FK\Hom_\cat(\unit,P^*).$}
 \item[$2.$] Naturality: If $f\colon P\to Q$ is a morphism in $\Proj$, \smash{$\Omega_P=\sum_ix^i\otimes x_i$}, and
 \smash{$\Omega_Q=\sum_iy^i\otimes y_i$}, then
 \begin{gather*}
 \sum_ix^i\otimes (f\circ x_i)=\sum_i\bigl(y^i\circ f\bigr)\otimes
 y_i\in\Hom_\cat(P,\unit)\otimes_\FK\Hom_\cat(\unit,Q).
 \end{gather*}
 \item[$3.$] Rotation: If $V\in\cat$ and \smash{$\Omega_{P\otimes V}=\sum_iz^i\otimes z_i$},
 then \smash{$\Omega_{V\otimes P}=\sum_i\wt z^i\otimes \wt z_i$}, where
 \begin{gather*}
 \wt z_i=\bigl(\Id_{V\otimes P}\otimes\rev_V\bigr)\circ(\Id_V\otimes
 z_i\otimes\Id_{V^*})\circ\bigl(\lcoev_V\bigr) \qquad \text{and} \\
 \wt z^i=\bigl(\lev_V\bigr)\circ\bigl(\Id_V\otimes
 z^i\otimes\Id_{V^*}\bigr)\circ\bigl(\Id_{V\otimes P}\otimes\rcoev_V\bigr).
 \end{gather*}
 \end{enumerate}
\end{lemma}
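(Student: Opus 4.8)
The three parts are all verified by the same philosophy: in each case one exhibits the proposed family as a pair of bases of the relevant Hom-spaces that are \emph{dual} with respect to the m-trace, and then invokes the stated independence of $\Omega$ from the choice of dual bases. For parts (1) and (3) this is the whole content; for part (2) the cleanest route is a direct computation using cyclicity and the non-degeneracy axiom to identify morphisms by their m-trace pairings. I will use repeatedly that, since $\{x_i\},\{x^i\}$ are dual, $\mt_P(x_i\circ x^j)=\delta_{ij}$, and that non-degeneracy lets me conclude $g=h$ in $\Hom_\cat(P,\unit)$ once I know $\mt_P(x_j\circ g)=\mt_P(x_j\circ h)$ for all $j$.

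For the duality statement (1) I would first record the compatibility of the m-trace with the pivotal duality, namely $\mt_{P^*}(h^*)=\mt_P(h)$ for $h\in\End_\cat(P)$; this holds because $\cat$ is ribbon (equivalently, the closed diagram computing the left-hand side is the $180^\circ$ rotation of the one computing the right-hand side, and $F'$ is an isotopy invariant). Since the transpose is a linear isomorphism $\Hom_\cat(\unit,P)\to\Hom_\cat(P^*,\unit)$ and $\Hom_\cat(P,\unit)\to\Hom_\cat(\unit,P^*)$, the families $\{x_i^*\}$ and $\{(x^i)^*\}$ are bases of $\Hom_\cat(P^*,\unit)$ and $\Hom_\cat(\unit,P^*)$. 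It remains to see they are dual: by contravariance of the transpose, $(x^i)^*\circ x_j^*=(x_j\circ x^i)^*$, whence $\mt_{P^*}((x^i)^*\circ x_j^*)=\mt_P(x_j\circ x^i)=\delta_{ij}$, and independence of $\Omega_{P^*}$ from the choice of dual bases gives the formula. For naturality (2), I would expand $f\circ x_i\in\Hom_\cat(\unit,Q)$ in the basis $\{y_j\}$, writing $f\circ x_i=\sum_j c_{ij}y_j$; pairing against $y^k$ via the dual-basis relation for $Q$ gives $c_{ik}=\mt_Q(f\circ x_i\circ y^k)$. The left-hand side of the asserted identity is then $\sum_k(\sum_i c_{ik}x^i)\otimes y_k$, so it suffices to prove $\sum_i c_{ik}x^i=y^k\circ f$ in $\Hom_\cat(P,\unit)$. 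By non-degeneracy on $P$ I compare after composing with $x_j$ and applying $\mt_P$: the left side gives $\sum_i c_{ik}\mt_P(x_j\circ x^i)=c_{jk}$, while on the right cyclicity yields $\mt_P(x_j\circ y^k\circ f)=\mt_Q(f\circ x_j\circ y^k)=c_{jk}$, so the two agree.

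The rotation statement (3) is the main point. First I would check that $z_i\mapsto\wt z_i$ and $z^i\mapsto\wt z^i$ genuinely land in $\Hom_\cat(\unit,V\otimes P)$ and $\Hom_\cat(V\otimes P,\unit)$, and that they are inverse to the analogous rotations in the opposite direction by the zig-zag identities for $\lcoev_V,\rev_V$ (resp.\ $\lev_V,\rcoev_V$); hence they are linear isomorphisms and carry the bases $\{z_i\},\{z^i\}$ to bases $\{\wt z_i\},\{\wt z^i\}$ for $V\otimes P$. There remains the dual-basis relation $\mt_{V\otimes P}(\wt z_i\circ\wt z^j)=\delta_{ij}$, which I would argue graphically: $\mt_{V\otimes P}(\wt z_i\circ\wt z^j)$ is the value of $F'$ on the closure of the $(1,1)$-tangle built from $\wt z_i\circ\wt z^j$ with open edge colored by $V\otimes P$; sliding the rotating $V$-strand around the closure is an isotopy in $S^3$ that straightens the extra (co)evaluation arcs and identifies this closed graph with the closure computing $\mt_{P\otimes V}(z_i\circ z^j)=\delta_{ij}$. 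Since $F'$ is an isotopy invariant the two values coincide, and independence of $\Omega_{V\otimes P}$ from the dual basis finishes the proof. Equivalently, one can run the composite through the right and left partial trace properties (both available because $\cat$ is ribbon) to cancel the caps directly.

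The expected obstacle is entirely in (3): one must keep careful track that the precise combination of (co)evaluations appearing in $\wt z_i$ and $\wt z^i$ is exactly the rotation whose closure-isotopy cancels, and that the reverse rotation inverts it on the nose via the snake identities. Once this bookkeeping is set up, isotopy invariance of $F'$ — equivalently the combination of the two partial trace properties — does all the remaining work, and the dual-basis relation follows with no further computation.
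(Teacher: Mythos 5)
Your proof is correct and takes essentially the same route as the paper's: the paper disposes of (1) and (3) by observing that the transpose and rotation maps send dual bases to dual bases (exactly the fact you verify via $\mt_{P^*}(h^*)=\mt_P(h)$ and the isotopy invariance of $F'$), and proves (2) by applying $\mt_P(x_k\circ\_)\otimes\mt_Q(\_\circ y^\ell)$ to both sides and invoking cyclicity, which is your basis-expansion argument in functional form. You have simply supplied the bookkeeping the paper leaves implicit, with no gaps.
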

\begin{proof}
 The duality and rotation properties follow since we apply
 transformations that send dual bases to dual bases. The naturality
 can be checked by applying
 $\mt_P(x_k\circ\_)\otimes\mt_Q\bigl(\_\circ y^\ell\bigr)$ to both side then
 the equality reduces to the cyclic property of the m-trace:
 $\mt_Q\bigl(f\circ x_k\circ y^\ell\bigr)=\mt_P\bigl(x_k\circ y^\ell\circ f\bigr)$.
\end{proof}

We fix a projective cover of the unit $P_\unit$ (which can be chosen
as any indecomposable summand of $P^*\otimes P$ on which \smash{$\lev_P$} is
non-zero for any object $P\in\Proj$) and a non-zero morphism
$\ve\colon P_\unit\to\unit$. Then since the
m-trace
is non-degenerate, there
exist $\eta\colon \unit\to P_\unit$
such that
$\mt_{P_\unit}(\eta\circ\ve)=1$.
(If $\cat$ is semi-simple, we choose $P_\unit=\unit$ and
$\ve=\eta=\Id_\unit$). Let $\Gamma_0$ be the closed ribbon graph
\begin{equation}\label{GammaZero}
\Gamma_0=\begin{tikzpicture}[baseline = 8pt]
 \node[draw, rectangle, minimum height = 0.4cm, minimum width =
 0.7cm] (eta) at
 (0,0){$\eta$}; \node[draw, rectangle, minimum height = 0.4cm,
 minimum width = 0.7cm] (epsilon) at
 (0,1){$\varepsilon$}; \draw[blue] (eta) -- (epsilon)
 node[midway,sloped]{$>$} node[black, midway, right]{$P_\unit$};
\end{tikzpicture}.
\end{equation}
Then $F'(\Gamma_0)=\mt_{P_\unit}(\eta\circ\ve)=1$.

Recall that a \emph{projective generator} of $\cat$ is a projective object $G$ such that any projective object is a retract of $G^{\oplus n}$ for some $n\in\N$.
\begin{definition}
A \emph{chromatic morphism} for a projective generator $G$ is a map $\chr\in\End_\cat(G\otimes G)$
satisfying
\begin{equation*}
\epsh{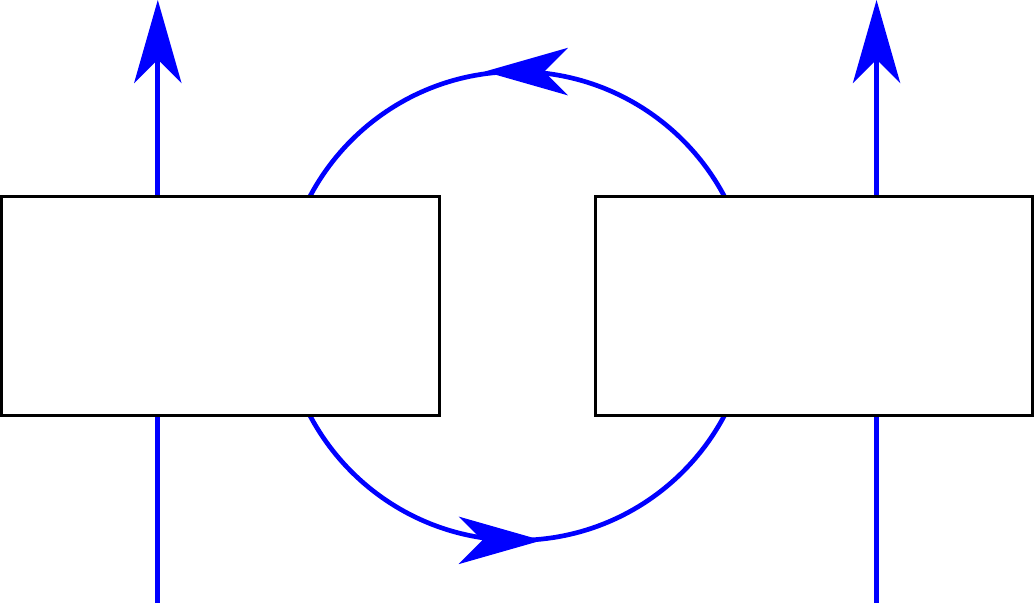}{10ex}
\putc{20}{48}{$\ms{\Lambda_{G\otimes G^*}}$}
\putc{79}{49}{$\chr$}=
 \epsh{fig20}{10ex} \,.\end{equation*}
More generally, a \emph{chromatic morphism based on $P \in \Proj$} for
a projective generator $G$ is a map $\chr_P\in\End_\cat(G\otimes P)$
such that for all $V\in\cat$, we have
\begin{equation}
 \label{eq:chrP}
 \epsh{fig19.pdf}{10ex}
\putc{20}{48}{$\ms{\Lambda_{V\otimes G^*}}$}
\putc{79}{49}{$\chr_P$}=
 \epsh{fig20}{10ex} \;
 \qquad\text{that is}\qquad
 \sum_i\epsh{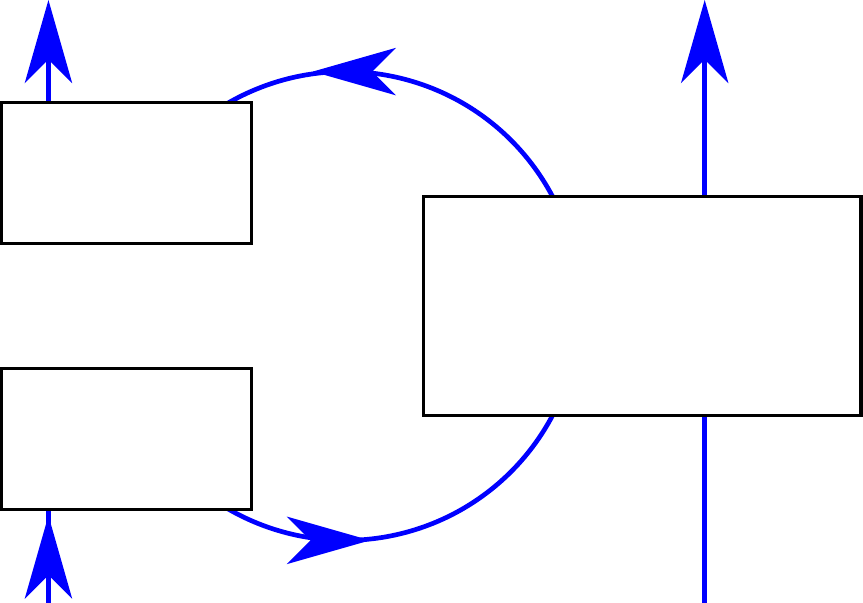}{10ex}
\putc{14}{72}{$\ms{x_i}$}
\putc{14}{26}{$\ms{x^i}$}
\putc{72}{50}{$\ms{\chr_P}$}
\putrc{1}{93}{$\ms{V}$}
\putrc{1}{6}{$\ms{V}$}
\putcb{38}{4}{$\ms{G}$}
\putct{42}{95}{$\ms{G}$}
\putlc{86}{93}{$\ms{P}$}
=\epsh{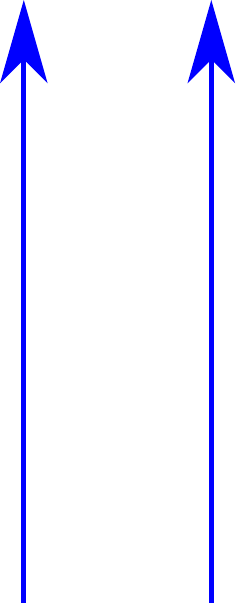}{10ex}
\putrc{-1}{92}{$\ms{V}$}
\putlc{100}{93}{$\ms{P}$}
 \,,
\end{equation}
where $\{x_i\}_i$ and \smash{$\bigl\{x^i\bigr\}_i$} are any dual bases.
\end{definition}

\begin{remark}
 If $c\colon G\otimes P \to G\otimes P$ is a morphism satisfying
 equation \eqref{eq:chrP} where~$G$ and~$P$ are projective objects
 then
 one can show that
 $G$ is a projective generator.
 Indeed, if~${V\in\Proj}$, there exists
 $f\in\End_\cat(V\otimes P)$ such that $\ptr_R(f)=\Id_V$. Then if
 $\Omega_{V\otimes G^*}=\sum_ix^i\otimes x_i$, we have that
 $\Id_V=\sum_i\beta_i\circ\alpha_i$ where
 \begin{gather*}
 {\alpha_i=\bigl(x^i\otimes\Id_G\bigr)\circ\bigl(\Id_V\otimes\rcoev_G\bigr)\colon \ V\to G} \qquad \text{and}\\
 {\beta_i=\ptr_R\bigl(f\circ\bigl(\Id_V\otimes\lev_G\otimes\Id_P\bigr)\circ(x_i\otimes c)\bigr)\colon \ G\to V}.
 \end{gather*}
\end{remark}

 Clearly, a chromatic morphism based on $G$ is a chromatic morphism. Conversely,
any chromatic morphism gives rise to chromatic morphisms based on
projective objects.
\begin{lemma}[{\cite[Lemma 1.2]{CGPV2023a}}]
 Let $\chr\in\End_\cat(G\otimes G)$ be a chromatic morphism and
 ${P\in\Proj}$. Pick any non-zero morphism $\ve_G\colon G\to\unit$ and a
 morphism ${{e}}_{P,G}\colon P\to G\otimes P$ such that
 $\Id_P=(\ve_G\otimes\Id_P)\circ{{e}}_{P,G}$ $($such morphisms always
 exist$)$. Then the map
 \begin{equation*}
 {{\chr}}_P=
 (\Id_{G}\otimes\ve_G\otimes\Id_P)\circ(\chr\otimes\Id_P)\circ(\Id_{G}\otimes{{e}}_{P,G}) \in \End_\cat(G\otimes P)
 \end{equation*}
 is a chromatic morphism based on $P$.
\end{lemma}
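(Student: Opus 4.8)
The plan is to verify the defining identity \eqref{eq:chrP} for $\chr_P$ for every $V\in\cat$ by reducing it to the single identity \eqref{E:ChrMapDef} satisfied by $\chr$. (The auxiliary morphisms exist: a non-zero $\ve_G\colon G\to\unit$ exists since $G$ is a generator, hence $\Hom_\cat(G,\unit)\neq0$; as every non-zero morphism to $\unit$ is an epimorphism, $\ve_G\otimes\Id_P\colon G\otimes P\to P$ is an epimorphism, and its splitting, guaranteed by projectivity of $P$, furnishes $e_{P,G}$.) I would write $L_V\in\End_\cat(V\otimes P)$ for the left-hand side of \eqref{eq:chrP}, that is, the morphism obtained by coupling the left leg of $\chr_P$ to an external $V$-strand through the copairing $\Omega_{V\otimes G^*}$; the right-hand side being $\Id_{V\otimes P}$, the goal is $L_V=\Id_{V\otimes P}$.

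First I would observe that $V\mapsto L_V$ is additive and natural in $V$. Additivity is inherited from the copairing. Naturality is exactly the naturality property of Lemma~\ref{P:Omega-nat}, applied to the projective objects $V\otimes G^*$ and to morphisms $\varphi\otimes\Id_{G^*}$ for $\varphi\colon V\to W$: it lets one transport $\varphi$ along the external strand, which is the naturality square $(\varphi\otimes\Id_P)\circ L_V=L_W\circ(\varphi\otimes\Id_P)$.

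I would then reduce to the case $V=G$. Since $G$ is a projective generator there is an epimorphism $p\colon G^{\oplus n}\twoheadrightarrow V$, and since $-\otimes P$ preserves epimorphisms, $p\otimes\Id_P$ is again an epimorphism. Assuming $L_G=\Id_{G\otimes P}$, additivity gives $L_{G^{\oplus n}}=\Id$, so naturality along $p$ reads $p\otimes\Id_P=L_V\circ(p\otimes\Id_P)$; cancelling the epimorphism yields $L_V=\Id_{V\otimes P}$.

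It remains to compute $L_G$, the only step invoking the hypothesis on $\chr$. Substituting $\chr_P=(\Id_G\otimes\ve_G)\circ\chr\circ(\Id_G\otimes e_{P,G})$, the coupling through $\Omega_{G\otimes G^*}$ touches only the first tensor factor while $\ve_G$ and $e_{P,G}$ touch only the second; acting on disjoint factors, they separate and give $L_G=(\Id_G\otimes\ve_G\otimes\Id_P)\circ(K\otimes\Id_P)\circ(\Id_G\otimes e_{P,G})$, where $K\in\End_\cat(G\otimes G)$ is the left-hand side of \eqref{E:ChrMapDef}. Replacing $K$ by $\Id_{G\otimes G}$ via \eqref{E:ChrMapDef}, the expression collapses to $\Id_G\otimes\big((\ve_G\otimes\Id_P)\circ e_{P,G}\big)=\Id_{G\otimes P}$ by the defining property of $e_{P,G}$. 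I expect the main obstacle to be precisely this passage from the single instance \eqref{E:ChrMapDef} to arbitrary $V$: it is not formal, since $\Omega_{V\otimes G^*}$ and $\Omega_{G\otimes G^*}$ are different copairings and \eqref{E:ChrMapDef} cannot be inserted into $L_V$ directly; it is the rigidity coming from naturality of $\Omega$ together with $G$ being a generator that pins the family $\{L_V\}$ down by its value at $G$.
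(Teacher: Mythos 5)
Your overall strategy is sound, and since this paper does not prove the lemma itself (it is imported verbatim from [CGPV23a, Lemma~1.2]), your proposal has to be judged on its own merits: the verification at $V=G$ is exactly right (the coupling through $\Omega_{G\otimes G^*}$ attaches only to the first leg of $\chr_P$, so it commutes with $\ve_G$ and $e_{P,G}$ acting on the other legs, and \eqref{E:ChrMapDef} together with $(\ve_G\otimes\Id_P)\circ e_{P,G}=\Id_P$ collapses $L_G$ to the identity), and the naturality square is a legitimate application of Lemma~\ref{P:Omega-nat}(2), since $V\otimes G^*$ and $W\otimes G^*$ are projective ($\Proj$ is a tensor ideal containing $G^*$) so the lemma applies to $\varphi\otimes\Id_{G^*}$. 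The cancellation of the epimorphism $p\otimes\Id_P$ is also fine, as $-\otimes P$ has a right adjoint by rigidity and hence preserves epimorphisms.

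The one step you assert rather than prove is the existence of an epimorphism $p\colon G^{\oplus n}\twoheadrightarrow V$ for an \emph{arbitrary} object $V\in\cat$. Under the hypotheses of this paper, $\cat$ is not assumed abelian, and ``projective generator'' means only that every \emph{projective} object is a retract of $G^{\oplus n}$; since $V$ need not be projective, your one-line justification (``since $G$ is a projective generator'') does not apply, and the claim genuinely uses the axiom that every non-zero morphism to $\unit$ is an epimorphism. The patch is short: $\lev_G\colon G^*\otimes G\to\unit$ is non-zero (otherwise the zig-zag identities would force $\Id_G=0$), hence an epimorphism by that axiom; tensoring on the left with $V$ preserves epimorphisms, so $\Id_V\otimes\lev_G\colon V\otimes G^*\otimes G\to V$ is an epimorphism; its source is projective because $G\in\Proj$ and $\Proj$ is an ideal, hence a retract of some $G^{\oplus n}$, and composing the split epimorphism $G^{\oplus n}\to V\otimes G^*\otimes G$ with $\Id_V\otimes\lev_G$ yields the desired $p$. (The same retract argument is the clean way to justify your parenthetical existence of $\ve_G$: take a non-zero component of a composite $G^{\oplus n}\to P_\unit\tto{\ \ve\ }\unit$.) With this paragraph inserted, your proof is complete and matches the mechanism the source proof relies on: naturality of the copairing plus the fact that $G$ generates pins the family $\{L_V\}$ down by its value at $G$.
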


\begin{definition}
 A \emph{chromatic
 category} is a $\FK$-linear pivotal category $\cat$ endowed with a
 non-degenerate m-trace on $\Proj$, in which there exist a non-zero
 projective generator and a chromatic morphism.
\end{definition}
\begin{remark}
In the case where $\cat$ is a tensor category in the sense of~\cite{EGNO}, in particular is abelian, the notion of chromatic morphism is a reformulation of that of the integral in the canonical coend developed by Lyubashenko~\cite{L94} to produce invariants of 3-manifolds, see \eqref{eq:chrcoend}. However, it has two key advantages.

First, it allows for non-abelian categories satisfying our hypotheses above.
An example of $\cat$ of this kind is the full subcategory of $U_q(\mathfrak{sl}_2)$-mod (for $q$ a root of unity) whose objects are all the simple and projective modules and their direct sums. It is easy to check that this category is chromatic but it is not abelian as, for instance, the kernel of the projective cover of $\unit$ is not an object in $\cat$ (it is an indecomposable, non projective, non simple object).
More generally,
we~expect that the requirements we fix on chromatic categories are
more flexible and should allow one to find more examples than
restricting to abelian categories. For instance, we expect that working
over a ring (instead of a field) could yield examples of interesting
chromatic categories which are not abelian; another potential source
of examples is coming from the construction outlined in~\cite[Theorem~4.2]{CGP22} where a suitable quotient of a monoidal category is
considered: in general, we do not expect such a quotient to be abelian,
but it could be chromatic (for instance it does, by design, admit a
modified trace).

At this point, one may also ask whether it is possible to suitably
embed a chromatic category in an abelian one which has the same ideal of projective objects, and yield the same topological invariants. A natural approach for such an abelianization, studied for example in~\cite{BEO23}, is to consider the category of finite-dimensional
$End(G)$-modules, where $G$ is a projective generator of $\cat$. This category inherits a monoidal structure and a braiding from that of $\cat$. However,
we~were not able to verify that it is rigid in full
generality.

The second point which makes us prefer chromatic morphisms is practical computational purposes. Even when the abelianization process suggested above does yield a chromatic abelian category, it is often much harder to work with than $\cat$ itself. This is, for example, the case in the mixed higher Verlinde categories studied in~\cite{Decoppet, DH2HB} where $\cat$ has a good diagrammatic description and a complete classification of all irreducible projective objects is known, but its abelianization is still largely unexplored.
\end{remark}

We will prove the following lemma in Section~\ref{ss:gluing}.

\begin{lemma}\label{L:Delta}
 There exist scalars $\Delta_+,\Delta_-\in\FK$ and a family of
 \smash{$\bigl\{\Delta_0^P\in \Hom_{\cat}(P,P)\bigr\}_{P\in\Proj}$}, such that for any
 chromatic morphisms $\chr_{P_\unit}$, $\chr_P$ based on $P_\unit$
 and $P$ respectively, one has
 \begin{gather*}
 F\bp{\epsh{fig22-1}{12ex}
 \putr{43}{53}{$\ms{G}$}
 \putl{85}{8}{$\ms{P_\unit}$}
 \putc{75}{35}{$\ms{\chr_{P_\unit}}$}
 \putc{81}{91}{$\ms{\ve}$}
 \ }=\Delta_+\ve,
 \quad F\bp{\epsh{fig22-2}{12ex}
 \putr{43}{53}{$\ms{G}$}
 \putl{85}{8}{$\ms{P_\unit}$}
 \putc{75}{35}{$\ms{\chr_{P_\unit}}$}
 \putc{81}{91}{$\ms{\ve}$}
 \ }=\Delta_-\ve, \quad\text{and}\quad
 F\bp{\ \epsh{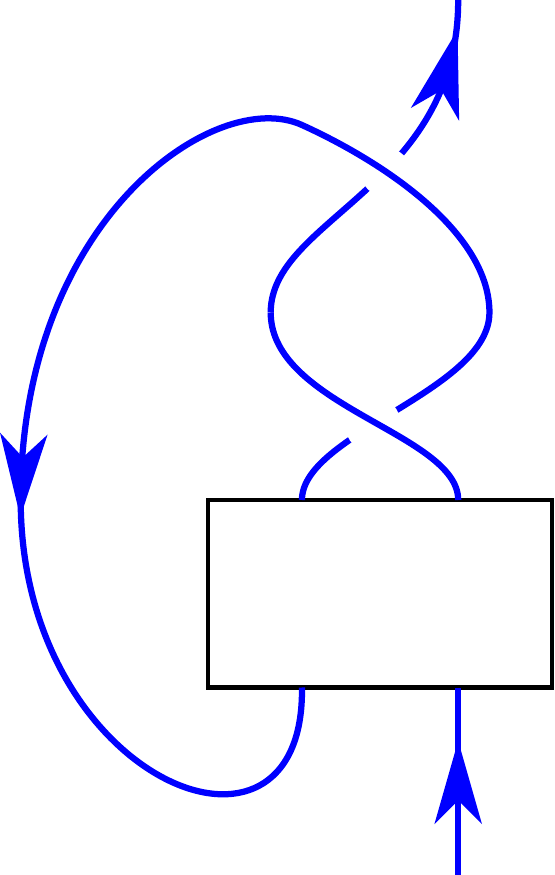}{12ex}
 \putlc{88}{91}{$\ms{P}$}
 \putrc{-2}{45}{$\ms{G}$}
 \putc{68}{32}{$\ms{\chr_{P}}$}
 \putlc{91}{10}{$\ms{P}$}\
 }
 =\Delta_0^P.
 \end{gather*}
 In particular, the evaluation of these diagrams does not depend on the choice of chromatic morphisms.
\end{lemma}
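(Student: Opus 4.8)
The plan is to prove the three equalities at once, by reading each closed graph on the left as a copairing (Kirby--color) loop threaded by the strand colored $P_\unit$ or $P$, and by using the defining chromatic relation \eqref{eq:chrP} as a handle-slide that pulls this strand off the loop and thereby eliminates the chromatic morphism. Once $\chr_{P_\unit}$ (resp. $\chr_P$) has been removed, what remains is a graph built only from the copairing, the (co)evaluations and a $\pm1$ or $0$ curl, whose $F$-value is manifestly independent of the chosen chromatic morphism; this single move gives both the existence of $\Delta_\pm,\Delta_0^P$ and their independence of $\chr$.

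Before that I would record two preliminary facts. First, since $\unit$ is simple and $P_\unit$ is a projective cover of $\unit$, the head of $P_\unit$ is $\unit$ and hence $\Hom_\cat(P_\unit,\unit)=\FK\,\ve$ is one-dimensional; consequently any morphism $P_\unit\to\unit$ obtained by evaluating a closed graph is automatically a scalar multiple of $\ve$, which is exactly what lets me extract the scalars $\Delta_+,\Delta_-$. Second, the $G$-colored loop in each picture is the Kirby-color loop built from the copairing, namely the $(V\otimes G^*)$-colored loop carrying $\Lambda_{V\otimes G^*}$, closed with the help of the modified trace on $\Proj$ (legitimate because $G$ is projective) exactly as in the definition of $F'$; the duality, naturality and rotation statements of Lemma~\ref{P:Omega-nat}, together with the ribbon structure, let me slide these copairing elements past braidings and twists and present the loop in the exact form appearing on the left-hand side of \eqref{eq:chrP}.

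With these in hand, independence of the choice of chromatic morphism is immediate. Let $\chr_P$ be any chromatic morphism based on $P$ (and $\chr_{P_\unit}$ any one based on $P_\unit$). In each graph the $P$- or $P_\unit$-strand enters the region where the $G$-legs of $\chr_P$ are closed off, and I would rewrite that region as the encircling by $\Lambda_{V\otimes G^*}$ of \eqref{eq:chrP}, with the ambient object $V$ equal to $P_\unit$ for $\Delta_\pm$ (the curl supplying a $\pm1$ twist) and to $P$ for $\Delta_0^P$. Applying \eqref{eq:chrP} replaces that region by its identity-type right-hand side, in which $\chr_P$ no longer occurs. The residual $\chr$-free graph then evaluates under $F$ to the asserted value: the positively and negatively curled $P_\unit$-unknots give the two distinct scalars $\Delta_+$ and $\Delta_-$, and the uncurled configuration gives the endomorphism $\Delta_0^P\in\End_\cat(P)$. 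For $\Delta_\pm$ one checks in addition that the extracted scalar does not depend on the auxiliary data $\ve,\eta$ normalizing $P_\unit$, using $F'(\Gamma_0)=\mt_{P_\unit}(\eta\circ\ve)=1$.

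The step I expect to be the real obstacle is the diagrammatic matching inside the previous paragraph: recognizing the modified-trace closure of the $G$-loop as exactly the $\Lambda_{V\otimes G^*}$-encircling demanded by \eqref{eq:chrP}, with the curl placed correctly relative to the copairing insertion. This forces one to keep precise track of the framing of the $G$-loop and of the position of the twist, using the rotation property of $\Omega$ (Lemma~\ref{P:Omega-nat}) and ribbon naturality to bring the diagram into the normal form of \eqref{eq:chrP}. The interaction of the braiding with the twist is the only place where the two cases genuinely diverge, and it is there that the distinction between $\Delta_+$ and $\Delta_-$ is produced and that framing or sign errors are easiest to make.
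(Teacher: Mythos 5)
Your two preliminary observations are sound, and the first one ($\Hom_\cat(P_\unit,\unit)=\FK\ve$, which is Lemma \ref{L:prop-proj}(3)) is indeed what the paper uses to extract the scalars $\Delta_\pm$. But the central step of your argument is invalid. The defining relation \eqref{eq:chrP} does \emph{not} say that a $G$-loop carrying $\chr_P$ and encircling a strand can be pulled off; it says that this configuration becomes the identity only \emph{after} the encircled strand has been cut by the dual-basis coupons $x_i,x^i$, i.e.\ after insertion of $\Lambda_{V\otimes G^*}=\sum_i x_i\circ x^i$, which is a genuinely non-identity endomorphism. The three graphs in Lemma \ref{L:Delta} contain no such insertion --- the $P_\unit$- or $P$-colored strand passes through the loop uncut --- so they are not of the form of the left-hand side of \eqref{eq:chrP}; your proposal to ``rewrite that region as the encircling by $\Lambda_{V\otimes G^*}$'' conflates the plain closure of the $G$-strand of $\chr_P$ with the $\Lambda$-cut closure, and these differ exactly by the insertion of $\Lambda_{V\otimes G^*}$. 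Worse, if your elimination move were legitimate it would prove $\Delta_0^P=\Id_P$ for every $P$ and (using naturality of the twist, $\ve\circ\theta_{P_\unit}^{\pm1}=\ve$) would force $\Delta_\pm=1$: every chromatic category would then be chromatic non-degenerate and twist non-degenerate, emptying Definition \ref{def:maindef} of content and contradicting the paper's own computations --- in the fusion case $\Delta_0^{P_\unit}$ is multiplication by the global dimension $d(\cat)$ (Proposition \ref{prop:semisimpleCompact}), and in the $\mathfrak{sl}_2$ examples $\Delta_\pm$ is a Gauss sum that vanishes for suitable parameters. There is in general no $\chr$-free presentation of these quantities; the content of the lemma is only that their values do not depend on \emph{which} chromatic morphism is chosen.

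The paper's proof is a one-liner given the earlier skein lemmas: the three graphs are red-to-blue modifications \eqref{eq:redtoblue} of bichrome graphs (a red $\pm1$-framed unknot together with the $\ve$-capped $P_\unit$-strand for $\Delta_\pm$, and a red $0$-framed meridian of the $P$-strand for $\Delta_0^P$), and Lemma \ref{prop:redtoblue} states that any two red-to-blue modifications of the same admissible bichrome graph are projective skein equivalent, hence have equal evaluations under $F$; the scalars then exist because the values lie in $\Hom_\cat(P_\unit,\unit)=\FK\ve$. Note how \eqref{eq:chrP} is actually deployed inside the proof of Lemma \ref{prop:redtoblue}: one reads it right-to-left to \emph{introduce} a second chromatic loop together with its dual-basis cut, rearranges using the rotation property of Lemma \ref{P:Omega-nat}, and then reads it left-to-right to remove the \emph{first} loop. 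This insert--rearrange--remove pattern compares two choices of chromatic morphism without ever eliminating the lone one present in the graph, and it is the repair your proposal needs.
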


\begin{definition}
 A
 \emph{gluing morphism}
 is an endomorphism
 \[\gm\in\End_\cat(P_\unit)\text{ such that }
 \gm\circ\Delta_0^{P_\unit}=\Lambda_{P_\unit},\text{ i.e., }\quad
 \epsh{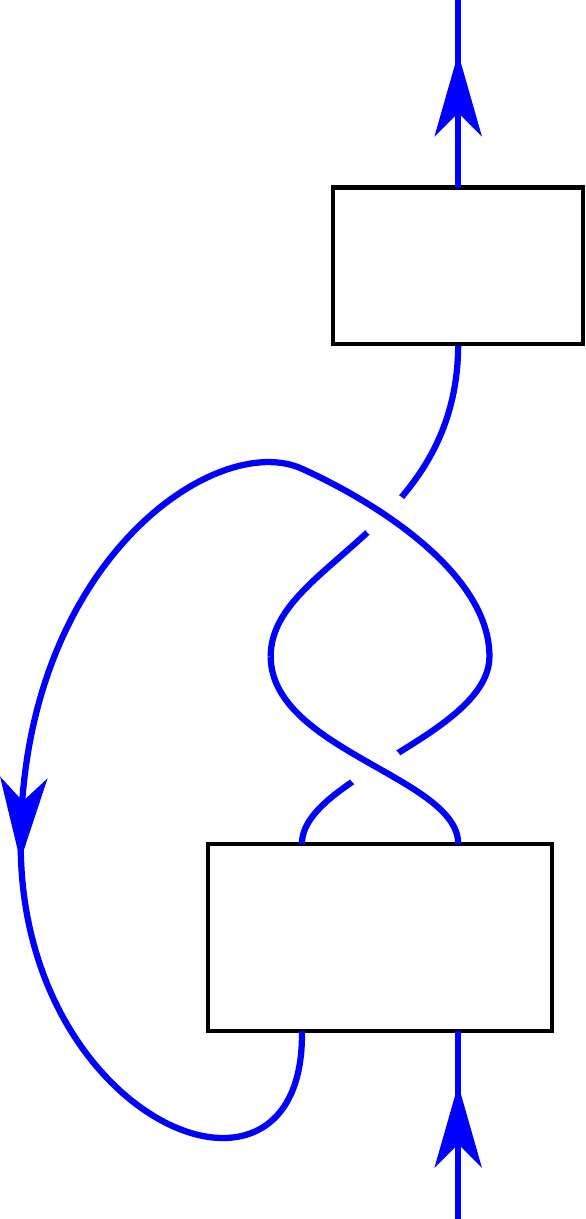}{16ex}
 \putlc{88}{92}{$\ms{P_\unit}$}
 \putlc{86}{7}{$\ms{P_\unit}$}
 \putrc{-2}{32}{$\ms{G}$}
 \putc{78}{78}{$\ms{\gm}$}
 \putc{65}{25}{$\ms{\chr_{P_\unit}}$}=
 \epsh{fig21-0}{9ex}
 \putc{50}{50}{$\ms{\Lambda_{P_\unit}}$}
 \putlc{60}{90}{$\ms{P_\unit}$} .
 \]
 \vspace*{-3ex}
 \end{definition}
 We will prove in Section~\ref{ss:gluing} the following:
\begin{proposition}\label{P:existsGluing}
 The category $\cat$ admits a gluing morphism
 $\gm\in\End_\cat(P_\unit)$ if and only if
 $\Delta_0^{P_\unit} \neq 0$.
\end{proposition}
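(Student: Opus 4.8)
The plan is to transport the statement into the finite-dimensional algebra $A:=\End_\cat(P_\unit)$ and to reduce everything to the fact that the m-trace makes $A$ a symmetric Frobenius algebra. First I would pin down $\Lambda_{P_\unit}$. Since $P_\unit$ is the projective cover of the simple object $\unit$, one has $\dim\Hom_\cat(P_\unit,\unit)=1$, spanned by $\ve$; non-degeneracy axiom (3) then forces $\dim\Hom_\cat(\unit,P_\unit)=1$ as well, spanned by $\eta$. Hence the copairing has a single dual pair $x^1=\ve$, $x_1=\eta$ with $\mt_{P_\unit}(\eta\circ\ve)=1$, so that $\Lambda_{P_\unit}=\eta\circ\ve$ and in particular $\Lambda_{P_\unit}\neq 0$. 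Because $P_\unit$ is indecomposable and $\cat$ is Krull--Schmidt, $A$ is a local algebra; write $\epsilon_A\colon A\to\FK$ for its augmentation, $A/\!\operatorname{rad}(A)=\FK$. One-dimensionality of the two $\Hom$-spaces gives $\ve\circ a=\epsilon_A(a)\,\ve$ and $a\circ\eta=\epsilon_A(a)\,\eta$ for all $a\in A$, whence $a\circ\Lambda_{P_\unit}=\Lambda_{P_\unit}\circ a=\epsilon_A(a)\,\Lambda_{P_\unit}$: the line $\FK\,\Lambda_{P_\unit}$ is a two-sided ideal on which $A$ acts through $\epsilon_A$.

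The \emph{only if} direction is then immediate: a gluing morphism satisfies $\gm\circ\Delta_0^{P_\unit}=\Lambda_{P_\unit}\neq 0$, which forces $\Delta_0^{P_\unit}\neq 0$. For the converse I would first upgrade the unit-level axiom (3) to the statement that the trace form $\langle a,b\rangle:=\mt_{P_\unit}(a\circ b)$ is non-degenerate on all of $A$, i.e.\ that $(A,\mt_{P_\unit})$ is a symmetric Frobenius algebra. This is the main technical point. The idea is standard in modified-trace theory: using that $\Proj$ is a tensor ideal, $P_\unit^*\otimes P_\unit$ is projective, and the duality isomorphisms $\End_\cat(P_\unit)\cong\Hom_\cat(\unit,P_\unit^*\otimes P_\unit)$ and $\End_\cat(P_\unit)\cong\Hom_\cat(P_\unit^*\otimes P_\unit,\unit)$ carry the composition pairing on $\End_\cat(P_\unit)$ --- via the partial-trace property \eqref{E:RightPartialTraceProp} --- to the pairing of axiom (3) for the projective object $P_\unit^*\otimes P_\unit$, which is non-degenerate. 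I expect the bookkeeping identifying these two pairings to be the only delicate part.

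Granting non-degeneracy of $\langle\,,\rangle$, the \emph{if} direction is a short duality argument. Assume $\Delta_0^{P_\unit}\neq 0$ and consider the left ideal $A\circ\Delta_0^{P_\unit}$; I claim $\Lambda_{P_\unit}$ lies in it. The form is symmetric and associative, so its orthogonal complement of $A\circ\Delta_0^{P_\unit}$ is exactly the right annihilator $\{c\in A:\Delta_0^{P_\unit}\circ c=0\}$. As $\Delta_0^{P_\unit}\neq 0$, this annihilator is a \emph{proper} right ideal, hence contained in the unique maximal ideal $\operatorname{rad}(A)=\ker\epsilon_A$. For any $c$ in it, $\mt_{P_\unit}(\Lambda_{P_\unit}\circ c)=\epsilon_A(c)\,\mt_{P_\unit}(\Lambda_{P_\unit})=\epsilon_A(c)=0$, so $\Lambda_{P_\unit}$ is orthogonal to the annihilator; by non-degeneracy $\Lambda_{P_\unit}\in(A\circ\Delta_0^{P_\unit})^{\perp\perp}=A\circ\Delta_0^{P_\unit}$. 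Any $\gm\in A$ with $\gm\circ\Delta_0^{P_\unit}=\Lambda_{P_\unit}$ is then the desired gluing morphism, completing the equivalence.
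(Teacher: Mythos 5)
Your proof is correct, and its skeleton is the paper's: the paper deduces the proposition from Lemma \ref{L:prop-proj}, whose proof likewise works inside the local algebra $A=\End_\cat(P_\unit)$ with the symmetric form $(f,g)\mapsto\mt_{P_\unit}(f\circ g)$, identifies $\Lambda_{P_\unit}=\eta\circ\ve$, and uses that $\FK\,\Lambda_{P_\unit}$ is the orthogonal of the radical $J$. The differences are in execution. The paper proves the stronger statement (Lemma \ref{L:prop-proj}(6)) that for \emph{every} non-zero $f\in A$ there is $g$ with $g\circ f=\Lambda_{P_\unit}$, by an explicit construction: take $k=\max\{n: J^n f\neq 0\}$ and $g\in J^k$ with $gf\neq 0$; then $Jgf=0$ forces $gf\in\FK^*\Lambda_{P_\unit}$. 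You instead argue by double orthogonal complement: $(A\circ\Delta_0^{P_\unit})^\perp$ is the right annihilator of $\Delta_0^{P_\unit}$, a proper right ideal hence contained in $J=\ker\epsilon_A$, while $\Lambda_{P_\unit}\perp J$ since $\Lambda_{P_\unit}\circ c=\epsilon_A(c)\Lambda_{P_\unit}$; this is slicker, and in fact applies verbatim to any non-zero $f$, so you lose no generality relative to the paper's lemma (which is reused, e.g., in Proposition \ref{P:dualgluing}). You are also more careful on the one point the paper leaves implicit: the paper simply asserts that the trace form is non-degenerate on all of $\End_\cat(P_\unit)$, whereas you correctly flag this as the technical crux and sketch the standard upgrade from the unit-level axiom via the duality adjunctions, the partial trace property, and the fact that $P_\unit\otimes P_\unit^*$ is projective --- that is exactly the right argument from the m-trace literature. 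One cosmetic caveat: your justification of $\dim\Hom_\cat(P_\unit,\unit)=1$ via ``projective cover of a simple'' tacitly assumes an abelian setting, while the paper is not abelian; there the fact is Lemma \ref{L:prop-proj}(3), proved from the standing hypothesis that every non-zero morphism to $\unit$ is an epimorphism, so the statement you need is available (and your non-degeneracy count giving $\dim\Hom_\cat(\unit,P_\unit)=1$ is even shorter than the paper's detour through unimodularity).
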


\subsection{Statement of main results}
\begin{definition}\label{def:maindef}
 We say that
 \begin{enumerate}\itemsep=0pt
 \item $\cat$ is \emph{twist non-degenerate} if
 $\Delta_+\Delta_-\neq0$,
 \item $\cat$ is \emph{chromatic non-degenerate} if
 \smash{$\Delta_0^{P_\unit}\neq0$}, i.e., if $\cat$ admits a gluing
 morphism,
 \item $\cat$ is \emph{chromatic compact} if there exists a scalar
 $\zeta\in\FK^*$, called the \emph{global dimension} of $\cat$, such that
 \smash{$\Delta_0^{P_\unit}=\zeta\Lambda_{P_\unit}$},
 \item $\cat$ is \emph{factorizable} if there exists a scalar
 $\zeta\in\FK^*$ such that for any projective $P$,
 \smash{$\Delta_0^P=\zeta\Lambda_P$}.
 \end{enumerate}
\end{definition}
We will see in Section~\ref{ss:coend} that the notion of factorizable (sometimes called modular)
and twist non-degenerate coincide with the usual ones in finite tensor
categories. The global dimension $\zeta$ is called modularity parameter in~\cite{DRGGPMR2022}. Note that it does depend on the choice of the modified trace, see Proposition~\ref{prop:changeOfModifiedTraceTQFT}.

Clearly, factorizable
$\implies$ chromatic compact (with the same
scalar $\zeta$) $\implies$ chromatic non-degenerate. We will also see in Lemma~\ref{L:fact>twistnd}
that factorizable
$\implies$ twist non-degenerate with
$\Delta_+\Delta_-=\zeta$.

The main constructions of this paper are the following:
\begin{enumerate}\itemsep=0pt
\item If $\cat$ is twist non-degenerate, there exists 3-manifold
 invariants $\ThreeManInv_\cat$ and $\ThreeManInv'_\cat$ (see Theorem~\ref{T:Exist3ManInv})
 that generalize many quantum invariants defined
 through link surgery.
\item If $\cat$ is chromatic non-degenerate, there exists a non-compact (3+1)-TQFT $\TSkein_\cat$ (see Theorem~\ref{T:S}) whose vector
 spaces are the admissible skein modules of 3-manifolds.
\item If
$\cat$ is chromatic compact, then $\TSkein_\cat$ extends to a full (3+1)-TQFT (see Theorem~\ref{T:S}).
\item If $\cat$ is factorizable, then $\TSkein_\cat$ is an invertible
 (3+1)-TQFT (see Theorem~\ref{T:invertible}).
\end{enumerate}
\begin{remark}
The name \emph{chromatic compact} refers to the fact that $\TSkein_\cat$ extends from a non-compact TQFT (i.e., one defined only
on
 cobordisms whose components have non-empty source) to a
full TQFT, i.e., one defined
on all cobordisms.
\end{remark}

\subsection{Existence of gluing morphisms} \label{ss:gluing}
The proof of the existence of a gluing morphism as stated in Proposition
\ref{P:existsGluing} is a direct consequence of the last statement in
the following lemma.

\begin{lemma}\label{L:prop-proj}\samepage
 With our
 hypotheses
 on $\cat$, we have
 \begin{enumerate}\itemsep=0pt
 \item[$1.$] For any non-zero morphism \smash{$P\tto f\unit$} where $P$ is
 projective, there exists
 an
 epimorphism \smash{$\wt f\colon P\to P_\unit$}
 such that
 \smash{$f=\ve\circ\wt f$}.
 \item[$2.$]
 $P_\unit\simeq P_\unit^*$, i.e., $\cat$ is unimodular.
 \item[$3.$] $\Hom_\cat(P_\unit,\unit)=\FK\ve$ and $\Hom_\cat(\unit,P_\unit)=\FK\eta$.
 \item[$4.$] $\Lambda_{P_\unit}=\eta\circ\ve$.
 \item[$5.$] For any $f\in\End(P_\unit)$, $f$ is nilpotent if and only if
 $\Lambda_{P_\unit}\circ f=0$.
 \item[$6.$] For any non-zero endomorphism $f\in \End_\cat(P_\unit)$, there exists
 $g\in \End_\cat(P_\unit)$ with $g\circ f=\Lambda_{P_\unit}$.
 \end{enumerate}
\end{lemma}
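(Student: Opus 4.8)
The plan is to derive all six statements from two structural inputs. The first is that $A:=\End_\cat(P_\unit)$ is a local $\FK$-algebra with $A/J\cong\FK$ (where $J$ is its Jacobson radical): this holds because $P_\unit$ is indecomposable and $\cat$ is Krull--Schmidt (additive, idempotent complete, finite-dimensional hom-spaces over the algebraically closed $\FK$). The second is that $\ve$ is \emph{right minimal}, i.e. $\ve\circ u=\ve$ forces $u\in\Aut(P_\unit)$; this is automatic, since if $u$ were not invertible it would lie in $J$, so $\Id-u$ would be invertible and $\ve\circ(\Id-u)=0$ would give $\ve=0$. I would prove (3) first, as everything rests on it. Since $\ve$ is an epimorphism (any non-zero map to $\unit$ is) and $P_\unit$ is projective, every $g\in\Hom_\cat(P_\unit,\unit)$ factors as $g=\ve\circ h$, so $\Hom_\cat(P_\unit,\unit)=\ve\circ A$; to cut this down to $\FK\ve$ it suffices to show $\ve\circ h=0$ for $h\in J$. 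If not, $\ve\circ h$ is a non-zero, hence epi, map $P_\unit\to\unit$, so by projectivity $\ve$ lifts through it, yielding $k$ with $\ve\circ(h\circ k)=\ve$; right minimality makes $h\circ k$ invertible, contradicting $h\circ k\in J$. Thus $\Hom_\cat(P_\unit,\unit)=\FK\ve$, and since the non-degenerate m-trace pairing forces $\dim\Hom_\cat(\unit,P_\unit)=\dim\Hom_\cat(P_\unit,\unit)=1$ while $\eta\neq0$ (as $\mt_{P_\unit}(\eta\circ\ve)=1$), also $\Hom_\cat(\unit,P_\unit)=\FK\eta$.

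With (3) in hand the two lifting statements are short. For (1), given non-zero $f:P\to\unit$, projectivity of $P$ and epi-ness of $\ve$ produce $\wt f$ with $\ve\circ\wt f=f$, while projectivity of $P_\unit$ and epi-ness of $f$ produce $s:P_\unit\to P$ with $f\circ s=\ve$; then $\ve\circ(\wt f\circ s)=\ve$, so $\wt f\circ s$ is invertible by right minimality and $\wt f$ is a (split) epimorphism. For (2), I note $P_\unit^*$ is again an indecomposable projective (it is a summand of $(P^*\otimes P)^*\cong P^*\otimes P$), and $\Hom_\cat(P_\unit^*,\unit)\cong\Hom_\cat(\unit,P_\unit)=\FK\eta\neq0$ by (3) and pivotality. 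Picking a non-zero $q:P_\unit^*\to\unit$ and running the same lift-both-ways argument gives $\wt q:P_\unit^*\to P_\unit$ and $s:P_\unit\to P_\unit^*$ with $\wt q\circ s\in\Aut(P_\unit)$; the idempotent $s\circ(\wt q\circ s)^{-1}\circ\wt q\in\End_\cat(P_\unit^*)$ is non-zero, hence equals $\Id$ by indecomposability, so $P_\unit^*\cong P_\unit$.

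Statements (4) and (5) are then formal. By (3) the dual bases in $\Lambda_{P_\unit}=\sum_i x_i\circ x^i$ reduce to $x^1=\ve$, $x_1=\eta$ (normalized by $\mt_{P_\unit}(\eta\circ\ve)=1$), so $\Lambda_{P_\unit}=\eta\circ\ve$. For (5), (3) lets me define a character $\chi:A\to\FK$ by $\ve\circ f=\chi(f)\,\ve$; it is an algebra map with $\chi(\Id)=1$, hence is the canonical projection $A\to A/J\cong\FK$. Then $\Lambda_{P_\unit}\circ f=\eta\circ(\ve\circ f)=\chi(f)\,\Lambda_{P_\unit}$, so $\Lambda_{P_\unit}\circ f=0$ iff $\chi(f)=0$ iff $f\in J$ iff $f$ is nilpotent (as $J$ is a nilpotent ideal whose complement consists of units).

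The real work is (6), and here the key extra ingredient is that the non-degenerate m-trace makes $A=\End_\cat(P_\unit)$ a symmetric Frobenius algebra, i.e. the pairing $(a,b)\mapsto\mt_{P_\unit}(a\circ b)$ is non-degenerate. This is the place where the m-trace hypothesis genuinely enters; it is the $\End$-level reformulation of the non-degeneracy of Subsection \ref{SS:Def-trace}, which I would either cite from \cite{CGP22} or deduce from the $\unit$-pairing on all projectives. Granting it, suppose $f\neq0$ but $\Lambda_{P_\unit}\notin A\circ f$. Choose a linear functional vanishing on the left ideal $A\circ f$ but not on $\Lambda_{P_\unit}$; by non-degeneracy it equals $\mt_{P_\unit}(-\circ c)$ for some $c\in A$. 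Vanishing on $A\circ f$ reads $\mt_{P_\unit}(a\circ f\circ c)=0$ for all $a$, so $f\circ c=0$ by non-degeneracy again, whereas $\mt_{P_\unit}(\Lambda_{P_\unit}\circ c)=\chi(c)\neq0$ forces $c\notin J$, i.e. $c$ invertible; but then $f\circ c=0$ gives $f=0$, a contradiction. Hence $\Lambda_{P_\unit}\in A\circ f$, which is exactly the sought $g$ with $g\circ f=\Lambda_{P_\unit}$. I expect the verification of this Frobenius non-degeneracy to be the main obstacle, everything else being soft consequences of local-ring theory and projectivity.
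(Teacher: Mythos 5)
Your proof is correct, and for parts (1)--(5) it is essentially the paper's argument: both rest on the Fitting-lemma decomposition $\End_\cat(P_\unit)=\FK\Id\oplus J$ with $J$ a nilpotent ideal, and on projectivity lifts showing $\ve\circ J=0$, hence $\Hom_\cat(P_\unit,\unit)=\FK\ve$; your ``right minimality'' of $\ve$ is a repackaging of the paper's observation that a composite $fg$ with $\ve\circ f\circ g=\ve$ cannot be nilpotent, and your character $\chi$ with $\ve\circ f=\chi(f)\ve$ is the paper's criterion ``$f$ nilpotent iff $\ve\circ f=0$'' in multiplicative form. Two harmless reorderings: you prove (3) before (1)--(2), and you get $\dim_\FK\Hom_\cat(\unit,P_\unit)=1$ directly from non-degeneracy of the m-trace pairing, where the paper routes through $\Hom_\cat(\unit,P_\unit)\simeq\Hom_\cat(P_\unit^*,\unit)$ using (2). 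The genuine divergence is in (6). The paper is constructive: setting $k=\max\{n:J^nf\neq0\}$ and picking $g\in J^k$ with $g\circ f\neq0$, one gets $J\circ g\circ f=0$, so $g\circ f$ lies in the orthogonal of $J$, which is $\FK\Lambda_{P_\unit}$, and one rescales $g$. You instead argue by duality: if $\Lambda_{P_\unit}\notin \End_\cat(P_\unit)\circ f$, a separating functional is represented as $\mt_{P_\unit}(-\circ c)$, which forces $f\circ c=0$ while $\chi(c)\neq0$ makes $c$ invertible, a contradiction. Both versions hinge on exactly the same input, the non-degeneracy of the symmetric pairing $(a,b)\mapsto\mt_{P_\unit}(a\circ b)$ on $\End_\cat(P_\unit)$, which the paper asserts without comment and which you correctly flag as the step requiring the $\End$-level upgrade of the stated $\Hom_\cat(\unit,P)\times\Hom_\cat(P,\unit)$ non-degeneracy; it indeed follows from \cite{CGP22} (or from the partial-trace property via $\End_\cat(P)\cong\Hom_\cat(\unit,P\otimes P^*)$), so your plan to cite or deduce it is sound. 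Your route to (6) is slightly less constructive but arguably more transparent; the paper's has the merit of exhibiting $g$ explicitly inside a power of the radical, which makes the ``socle'' structure $\FK\Lambda_{P_\unit}=J^\perp$ visible.
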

\begin{proof}
 First remark that since $\FK$ is algebraically closed (so
 $\FK$ is the unique finite-dimensional division $\FK$-algebra) and
 since $P_\unit$ is indecomposable
 and $\cat$ is idempotent complete then the $\FK$-algebra
 $\End_\cat(P_\unit)$ is indecomposable. By~\cite[Theorem~3.2.2]{DK94},
 this implies that $\End_\cat(P_\unit)$ is local, i.e.,
 $\End_\cat(P_\unit)/J$ is a division algebra where $J$ is the
 Jacobson radical. Hence $\End_\cat(P_\unit)=\FK\Id\oplus J$. Then,
 by~\cite[Proposition 3.1.9]{DK94}, $J$ is a nilpotent ideal which,
 in this case, is formed by the nilpotent endomorphisms of $P_\unit$.
 \begin{enumerate}\itemsep=0pt
 \item For any nilpotent $n\in\End_\cat(P_\unit)$, if
 $\ve\circ n\neq 0$, then $\ve\circ n$ is an epimorphism and since
 $P_\unit$ is projective, $\ve$ factors through it:
 $\ve=\ve\circ n\circ g$ for some $g\in\End_\cat(P_\unit)$. But
 $n\circ g$ belongs to~$J$ which is nilpotent thus
 \smash{$\ve=\ve\circ(n\circ g)^{\dim_\FK(J)}=0$}
 and we have a contradiction. Then the kernel of
 $\ve\circ\_
 \colon \End_\cat(P_\unit)\to\Hom_\cat(P_\unit,\unit)$
 contains $J$ which is a maximal left
 ideal so it is equal to $J$. Now if $\ve'\colon P\to\unit$ is non-zero, then it is an epimorphism and since $P$, $P_\unit$ are
 projective, there exist \smash{$P\tto fP_\unit$}, \smash{$P_\unit\tto gP$} such that
 $\ve'=\ve f$, $\ve=\ve'g=\ve fg$. In particular, since $fg$ is not
 nilpotent, it is an isomorphism and so $f$ is an epimorphism which
 is split since $P_\unit$ is projective.

 \item Apply the previous to \smash{$P_\unit^*\tto{\eta^*}\unit$} implies
 that $P_\unit$ is a direct summand of $P_\unit^*$ which is
 indecomposable so $P_\unit\simeq P_\unit^*$.

 \item If $\ve'\colon P_\unit\to\unit$ is non-zero, then there is a split
 isomorphism \smash{$P_\unit\tto f P_\unit$} such that ${\ve'=\ve f}$. Write
 $f=\lambda\Id+n$ with $n$ nilpotent and $\lambda\in\FK$, then
 $\ve'=\lambda\ve+\ve n=\lambda\ve$. So,
 ${\Hom(P_\unit,\ve)\hspace{-0.9pt}=\hspace{-0.9pt}\FK\ve}$. Then
 $\Hom_\cat(\unit,P_\unit)\simeq\Hom_\cat(P_\unit^*,\unit)\simeq\Hom_\cat(P_\unit,\unit)$ also
 has dimension $1$ so it is generated by $\eta$.
 \item This follows since $\{\ve\}$ and $\{\eta\}$ are dual basis.
 \item From the proof of (1), we have for any
 $f\in\End(P_\unit)$, $f$ is nilpotent if and only if
 $\ve\circ f=0$. Since $\eta$ is a monomorphism, $f$ is nilpotent
 if and only if $\eta\circ \ve\circ f=0$.
 \item The symmetric pairing $(f,g)\mapsto \mt_{P_\unit}(fg)$ is
 non-degenerate on $\End_\cat(P_\unit)$. From (5),
 $\FK \Lambda_{P_\unit}$ is the orthogonal of $J$. Recall that $J$
 is nilpotent and let $k=\max\{n\in\N:J^nf\neq0\}$ and let
 $g\in J^k$ be such that $gf\neq0$. Then $Jgf=0$ thus $gf$ is
 orthogonal to $J$ and $gf\in\FK^* \Lambda_{P_\unit}$. Up to
 rescaling $g$, we thus have $gf=\Lambda_{P_\unit}$.
\hfill\qed
\end{enumerate}
\renewcommand{\qed}{}
\end{proof}

\subsection{Existence of m-traces and chromatic morphisms}\label{ss:coend}

 In this subsection, we discuss some sufficient conditions for the existence of chromatic morphisms as well as some cases when $\cat$ is chromatic compact.

{\bf Tensor categories.} In~\cite{EGNO}, a \emph{tensor category}~$\ttt$ is defined as a locally
finite $\FK$-linear abelian rigid monoidal category with
$\End_\ttt(\unit)\cong \FK$ where $\FK$ is an algebraically closed
field. A category is \emph{unimodular} if its unit object has a
self dual projective cover. If $\ttt$ is unimodular and pivotal, then
it has a unique (up to scalar) right m-trace on $\Proj$ (see~\cite[Corollary
5.6]{GKP22}). When $\ttt$ is ribbon this right m-trace is an m-trace.

In~\cite{CGPV2023b}, a more general notion of ``left'' and ``right'' chromatic morphisms is defined and proved to exist for general rigid finite tensor categories. We provide a sketch of proof for the special case of interest below.

\begin{theorem}
If $\ttt$ is a finite, unimodular, ribbon tensor category then it admits a chromatic morphism, and the notions
of factorizable and twist non-degenerate coincide with the usual ones, given for example in~{\rm \cite{DRGGPMR2022}}.
\end{theorem}
\begin{proof}[Sketch of proof.]
The finite
tensor category $\ttt$ has a coend
\smash{$\coend=\int^{V\in\cat}V^*\otimes V$} with dinatural transformations
$i_V\colon V^*\otimes V\to \coend$. The coend is a Hopf algebra object in
$\cat$ and every object $V\in\ttt$ has a structure of right
$\coend$-comodule given by
\[
\ci_V=\Bigl(V\tto {\lcoev_V\otimes\Id}V\otimes V^*\otimes V\tto
{\Id\otimes i_V}V\otimes\coend\Bigr)
\]
compatible with the monoidal
structure: the product $m_{\!\coend}\colon \coend\otimes\coend\to\coend$ is
used to define the coaction on a tensor product. Moreover, the coend
is known to have (unique up to a scalar) right integrals
$\lambda\colon \unit\to \coend$ and $\Lambda\colon \coend\to\unit$ with
$\Lambda\circ\lambda=\Id_\unit$ (since $\ttt$ is unimodular, $\Lambda$
is a two-sided integral). It is shown in~\cite{CGPV2023b}
(see Lemma 3.2 and take into account that in the unimodular case $\alpha=\unit$) that for a
good choice of $\Lambda$, we have for any projective $P\in\ttt$:
\begin{equation}
 \label{eq:Lambda_coend}
 (\Id_P\otimes\Lambda)\circ\ci_P=\Lambda_P,
\end{equation}
where $\Lambda_P$ is defined in equation~\eqref{eq:cop} with the
copairing of the m-trace. Let $G$ be a projective generator, letting $\epsilon_G\colon G\to \unit$ be any surjective morphism, then the
map $i_G$ is an epimorphism thus the map
$\lambda\circ\ve_G\colon G\to\coend$ factors through it and there exists a
map $f_\lambda\colon G\to G^*\otimes G$ such~that%
\begin{equation}
 \label{eq:flambda}
 \lambda\circ\ve_G=i_G\circ f_\lambda .
\end{equation}
 Let
\smash{$\wt f_\lambda=\bigl(\rev_G\otimes\Id_G\bigr)\circ(\Id_G\otimes f_\lambda)$} and
fix any map $e\colon G\to G\otimes G$ such that
$(\ve_G\otimes\Id_G)\circ e
=\Id_G$. Then the map
\begin{equation}
 \label{eq:chrcoend}
\chr=\bigl(\wt f_\lambda\otimes\Id_G\bigr)\circ(\Id_G\otimes e
)
\end{equation}
is a chromatic morphism. Indeed, the integral property of $\lambda$ is
that
$m_\coend\circ(\lambda\otimes \Id_\coend)=\ve_\coend\otimes \lambda$
where $m_\coend$ and $\ve_\coend$ are the product and counit of
$\coend$. Then we make a graphical proof:
\begin{align*}
\epsh{fig19}{9ex}
\putc{79}{50}{${\chr}$}
\putc{22}{50}{$\ms{\Lambda_{G\otimes G^*}}$}
&=\epsw{fig19-1}{15ex}
\putc{79}{28}{${\chr}$}
\putc{42}{66}{$\ms{\Lambda_{G\otimes G^*}}$}
\begin{array}{c}{\text{\tiny\eqref{eq:Lambda_coend}}}\\=\\{\text{\tiny\eqref{eq:chrcoend}}}\end{array}\epsw{fig19-2}{14ex}
\put(-10.5ex,-4.8ex){$\ms{f_\lambda}$}
\put(-7.5ex,-9ex){$\ms{e}$}
\put(-6.5ex,10.2ex){$\ms{\Lambda}$}
\putc{58}{50}{$\ms{\ci}$}\putc{31}{50}{$\ms{\ci}$}
\putc{58}{78}{$\ms{m_{\!\coend}}$}
\stackrel{\eqref{eq:flambda}}=\epsw{fig19-3}{14ex}
\putc{49}{09}{$\ms{e}$}\putc{34}{29}{\ms{\ve_G}}\putc{24}{52}{\ms{\lambda}}
\putc{58}{78}{$\ms{m_{\!\coend}}$}\putc{58}{50}{$\ms{\ci}$}
\putc{58}{94}{$\ms{\Lambda}$}\\
&=\epsw{fig19-4}{17.5ex}
\putc{62}{11}{$\ms{e}$}\putc{51}{33}{\ms{\ve_G}}\putc{06}{57}{\ms{\lambda}}
\putc{68}{58}{$\ms{\ci}$}
\putc{06}{76}{$\ms{\Lambda}$}\putc{71}{94}{\ms{\ve_{\!\coend}}}
=\epsh{fig20}{8ex}\putrc{0}{90}{$\ms{G}$}\putlc{100}{90}{$\ms{G}$}\ .
\end{align*}
The first equality comes from the duality property of $\Omega$ (see Lemma~\ref{P:Omega-nat}); the second is expressing~$\Lambda_{G\otimes G^*}$ as the coaction of $\Lambda$. This coaction on a tensor product is given by the product of the coactions. The third equality is the defining property of $f_\lambda$ and the fourth is the integral property of $\lambda$.

The coend in a braided tensor category $\ttt$ also gives an alternative description of the morphisms
$\Delta_0$ and $\Delta_\pm$ as follows: Let $\theta\colon \coend\to\coend$ be defined by
$\theta\circ i_V=i_V\circ(\Id_{V^*}\otimes\theta_V)$, note that~$\theta$ is not $\theta_\coend$. Then
$\Delta_{\pm}=\ve_\coend\circ\theta^{\pm1}\circ\lambda$. Let
$\omega\colon \coend\otimes\coend\to\unit$ be the Hopf pairing defined by
\[\omega\circ(i_U\otimes i_V)=\bigl(\lev_U\otimes\lev_V\bigr)\circ(\Id_{U^*}\otimes(c_{V^*,U}\circ
c_{U,V^*})\otimes\Id_V)\] and let
$\Delta_0=\omega\circ(\lambda\otimes\Id_\coend):\coend\to\unit$. Then
for any projective object $P$, $\Delta_0^P=(\Id_P\otimes\Delta_0)\circ\ci$, indeed
\begin{gather*}
\begin{split}
&\epsh{fig21-0}{9ex}\putc{50}{50}{$\ms{\Delta_0^P}$}\putlc{60}{90}{$\ms{P}$}
=\epsh{fig21-1}{18ex}\putc{63}{28}{$\ms{\chr_P}$}\putlc{9}{50}{$\ms{G}$}
=\epsh{fig21-2}{18ex}\putc{54}{24}{$\ms{\chr_P}$}\putc{63}{44}{$\ms{\ci}$}
\putc{30}{56}{$\ms{\ci}$}\putc{86}{87}{$\ms{\omega}$}
=\epsh{fig21-3}{18ex}\putc{55}{17}{$\ms{e}$}\putc{62}{40}{$\ms{\ci}$}
\putc{15}{41}{\ms{\ve_G}}\putc{15}{62}{\ms{\lambda}}\putc{86}{87}{$\ms{\omega}$}
=\epsh{fig21-4}{17ex}\putc{32}{32}{$\ms{\ci}$}\putc{79}{77}{$\ms{\Delta_0}$}\;\; .
\end{split}
\end{gather*}
Thus, $\ttt$ is factorizable if and only if
$\Delta_0=\Lambda$ which is equivalent (see~\cite[Lemma 2.7]{DRGGPMR2022})
to the non-degeneracy of $\omega$. We thus recover the usual notion
of factorizable and twist non-degenerate given for example in
\cite{DRGGPMR2022}.
\end{proof}

{\bf Hopf algebras.} A particular example is when $\ttt$ is the category of finite-dimensional left
modules over a finite-dimensional unimodular ribbon Hopf algebra
$H$. By~\cite[Theorem~1]{BBG17b}, it has a non-degenerate left
m-trace on $\Proj$, which is also a right m-trace since $H$ is ribbon.
The module $H$ with its left regular action is a projective generator
and the map $\Lambda_{H \otimes H^*}$ is the action of the two sided
cointegral. Indeed, as a Hopf algebra, $\coend$ can be identified
with $H^*$
where the multiplication is
 twisted by the braiding of $\ttt$.
An explicit formula for a chromatic morphism $\chr\colon H\otimes H\to H\otimes H$ is
given by
\begin{equation}\label{E:FormulaChHopf}
x\otimes y\mapsto \lambda\bigl(S\bigl(y_{(1)}\bigr) g x\bigr) y_{(2)} \otimes y_{(3)},
\end{equation}
where $g$ is the pivotal element, $\lambda$ is the right integral, $S$
is the antipode, and \smash{$y_{(1)} \otimes y_{(2)} \otimes y_{(3)}$} is the
double coproduct of $y$ (see~\cite[Lemma 6.3]{CGPT20}). In this case
the morphism $\Delta_0^P$ is given by the action of the element
$\Delta_0=\lambda \otimes\Id_H(R_{21}R_{12})$, where $R\in H\otimes H$
is the $R$-matrix of $H$. Note that $\Delta_0$ being non-zero is one
of the two conditions for $H-{\rm mod}$ to be called 3-modular in
\cite[Theorem~3.7.3]{L94}.

When working with Hopf algebras, it may
seem more natural to express everything in terms of $H$ and to avoid
mentioning $P_\unit$. One can define a \emph{gluing morphism for $H$}
as an endomorphism $\gm_H\colon H\to H$ satisfying
\smash{$\gm_H\circ \Delta_0^H=\Lambda_H$}. Writing $P_\unit$ as an idempotent
$i \circ p$ in $H$, it is easy to see that the existence of a gluing
morphism $\gm_H$ for $H$ implies the existence of a gluing morphism
$\gm:= p\circ \gm_H\circ i$. Similarly, the existence of $\gm$ implies
that of $\gm_H:= i\circ\gm\circ p$
(there is up to automorphism a unique indecomposable summand of
$P_\unit$ in $H$ which contains the unique left ideal~$\FK\Lambda$ isomorphic to $\unit$).
However, the
existence of $\gm_H$ is a priori not equivalent to $\Delta_0$ being
non-zero.

{\bf Fusion categories.} Recall that the global dimension of a ribbon fusion category $\cat$ is
the sum of the squares of the dimensions of its simple
objects. It is
shown in~\cite[Theorem~7.21.12]{EGNO} that it is non-zero when $\FK$
has characteristic zero.
\begin{proposition}\label{prop:semisimpleCompact}
 A ribbon fusion $($possibly non-modular$)$ category $\cat$ is chromatic non-degen\-er\-ate if and only if it has non-zero
 global dimension. In this case, it is moreover chromatic compact.
\end{proposition}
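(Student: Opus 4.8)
The plan is to reduce the claim to the computation of a single scalar, namely the value of $\Delta_0^{P_\unit}$ on the trivial object, and to recognize that scalar as the global dimension. Since $\cat$ is fusion it is in particular semisimple, so every object is projective and we may take $P_\unit=\unit$; then Lemma \ref{L:prop-proj}(4) gives $\Lambda_{P_\unit}=\Lambda_\unit=\eta\circ\ve=\Id_\unit$. Because $\cat$ is a fusion category we have $\End_\cat(\unit)=\FK$, so $\Delta_0^\unit$ is automatically a scalar multiple $\zeta\Id_\unit$ of the identity. Hence, by Definition \ref{def:maindef}, proving that $\cat$ is chromatic compact amounts to nothing more than showing that this scalar $\zeta$ is nonzero.

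To pin down $\zeta$ I would use the coend description of $\Delta_0^P$ established above in this subsection, namely $\Delta_0^P=(\Id_P\otimes\Delta_0)\circ\ci_P$ with $\Delta_0=\omega\circ(\lambda\otimes\Id_\coend)$. Specializing to $P=\unit$ and using that the $\coend$-coaction on the monoidal unit is, under $\unit\otimes\coend\cong\coend$, simply the unit $i_\unit$ of the coend Hopf algebra (indeed $\ci_\unit=(\Id\otimes i_\unit)\circ(\lcoev_\unit\otimes\Id)$ collapses to $i_\unit$), one obtains $\zeta\Id_\unit=\Delta_0^\unit=\omega\circ(\lambda\otimes i_\unit)$. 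In the semisimple case the coend is $\coend=\bigoplus_X X^*\otimes X$, the sum running over representatives of the simple objects, with structure maps $i_X$; the two-sided integrals (two-sided since $\cat$ is unimodular) can be normalized as $\lambda=\sum_X\qdim(X)\,i_X\circ\lcoev_X$ and $\Lambda$ the projection onto the trivial summand, so that $\Lambda\circ\lambda=\Id_\unit$. Pairing $\lambda$ against the unit component $i_\unit$ through $\omega$, the double braiding $c_{-,\unit}c_{\unit,-}$ against the trivial object is trivial, so the Hopf pairing degenerates to the ordinary dimension pairing and the $X$-summand contributes $\qdim(X)^2$. Thus $\zeta=\sum_X\qdim(X)^2$ is exactly the global dimension of $\cat$, which is nonzero by hypothesis, and $\Delta_0^\unit=\zeta\,\Lambda_{P_\unit}$ with $\zeta\in\FK^*$.

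The main obstacle, and really the only nonroutine point, is the explicit identification of the integral $\lambda$ and the Hopf pairing $\omega$ in the semisimple setting together with the normalization bookkeeping needed to read off $\zeta$ as the global dimension; conceptually this is just the statement that $\Delta_0^\unit$ is the value of encircling the empty strand by the Kirby color $\sum_X\qdim(X)\,X$. If one prefers to bypass the coend entirely, the same scalar can be extracted directly from the defining diagram of $\Delta_0^\unit$ in Lemma \ref{L:Delta}: decompose the projective generator as $G\cong\bigoplus_X X$, note that in the semisimple case $\Lambda_{G\otimes G^*}$ implements encircling by the Kirby color, and compute the closed $G$-colored loop around $\chr_\unit$ strand by strand. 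This reduces to the same sum $\sum_X\qdim(X)^2$, so either route concludes that $\cat$ is chromatic compact.
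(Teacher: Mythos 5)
Your proposal is correct, but its main argument takes a genuinely different route from the paper's. The paper works entirely inside the fusion category: taking $\mt=\operatorname{qTr}_\cat$, it writes down explicit dual bases $x_i=\frac{1}{\qdim(S_i)}\lcoev_{S_i}$ and $y_i=\rev_{S_i}$ of $\Hom_\cat(\unit,G\otimes G^*)$ and $\Hom_\cat(G\otimes G^*,\unit)$ for $G=\oplus_{i\in I}S_i$, checks directly that the Kirby color $\chr_P=(\oplus_i\qdim(S_i)\Id_{S_i})\otimes\Id_P$ satisfies the chromatic identity, and then reads off $\Delta_0^{P_\unit}=d(\cat)\Id_\unit$ --- essentially the ``bypass the coend'' alternative you only sketch in your final paragraph. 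Your primary route instead goes through the coend machinery of Subsection \ref{ss:coend}: $\Delta_0^P=(\Id_P\otimes\Delta_0)\circ\ci_P$ with $\Delta_0=\omega\circ(\lambda\otimes\Id_\coend)$, the semisimple identification $\coend=\oplus_X X^*\otimes X$ with $\lambda=\sum_X\qdim(X)\,i_X\circ\lcoev_X$, and triviality of the double braiding against $\unit$, giving $\zeta=\sum_X\qdim(X)^2$. This is legitimate: a fusion category is a finite unimodular ribbon tensor category, so Theorem \ref{T:coend} and the displayed coend formulas apply, and by Lemma \ref{L:Delta} the scalar $\Delta_0^\unit$ is independent of the chosen chromatic map. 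Two points you should make explicit, though. First, ``chromatic compact'' presupposes that $\cat$ carries a chromatic structure at all, so the existence of the chromatic map must be cited (here via Theorem \ref{T:coend}); the paper gets this for free by exhibiting the map, and that explicit Kirby-color chromatic map together with the gluing morphism $\frac{1}{d(\cat)}\Id_\unit$ is reused later (e.g.\ in the WRT and Crane--Yetter comparisons). Second, the value $\zeta=d(\cat)$ depends on the normalizations $\mt=\operatorname{qTr}_\cat$ and $\Lambda\circ\lambda=\Id_\unit$ as in Equation \eqref{eq:Lambda_coend} (rescaling $\mt$ by $\kappa$ rescales $\zeta$ by $\kappa^2$); you flag this bookkeeping yourself, and it affects only the value of $\zeta$, never its non-vanishing. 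What your approach buys is a uniform computation from the abstract integral data; what the paper's buys is elementary self-containedness and the explicit structural morphisms.
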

\begin{proof}
 Let $
\{S_i\}_{i\in I}$ be a set of representatives of the isomorphism
classes of simple objects of~$\cat$.
Observe that here $G=\oplus_{i\in I} S_i$ is
a generator of $\Proj=\cat$ and
the quantum trace $\mt=\operatorname{qTr}_\cat$ is
a~non-degenerate m-trace on $ \cat$ (and the other choices differ by an invertible scalar). It follows~that
\[
\biggl\{x_i=\frac{1}{\qdim(S_i)}\lcoev_{S_i}\biggr\}_{i\in I} \qquad\text{and}\qquad \bigl\{y_i=\rev_{S_i}\bigr\}_{i\in I}
\]
 are dual bases of $\Hom_\cat(\unit,G \otimes G^*)$ and $\Hom_\cat(G\otimes G^*,\unit)$, respectively. Using the expansion
\smash{$\Omega_{G\otimes G^*}=\sum_{i\in I} x_i\otimes_\kk y_i$}, it is straightforward to check that
\[\chr_P=(\oplus_{i\in I} \qdim(S_i)\Id_{S_i})\otimes \Id_P\]
is a chromatic morphism for $G$ based on $P$ (it is essentially the
Kirby color tensor the identity of $P$).

Let $d(\cat)= \sum_{i\in I} \qdim(S_i)^2$ be the global dimension of
$\cat$.
As $P_\unit=\unit$, the
morphism \smash{$\Delta_0^{P_\unit}$} is multiplication by $d(\cat)$, and indeed $\CC$ chromatic non-degenerate if and only if it has non-zero
 global dimension. In this case, a gluing morphism
is given by \smash{$\frac{1}{d(\cat)}{\rm Id}_\unit$}, and
$\cat$ is chromatic compact with $\zeta = d(\cat)$.
\end{proof}

In particular, the parameter $\zeta$ we called global dimension coincides with the usual notion in the semisimple case and for the usual quantum trace.

{\bf Symmetric categories.} It seems that having semi-simple M\"uger center is a good criterion for being chromatic compact in characteristic 0. On one extreme, we have seen above that if~$\cat$ has trivial M\"uger center (i.e., is factorizable) then it is chromatic compact. We consider the other extreme, the symmetric case. 
\begin{proposition}\label{P:symcase}
If $\cat$ is a symmetric monoidal tensor category and ${\rm char}(\FK)=0$, then the following are equivalent: $\cat$ is chromatic non-degenerate, $\cat$ is chromatic compact, $\cat$ is semi-simple.
\end{proposition}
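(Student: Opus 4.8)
The plan is to prove the cycle of implications \emph{semi-simple} $\implies$ \emph{chromatic compact} $\implies$ \emph{chromatic non-degenerate} $\implies$ \emph{semi-simple}. The middle implication is the general fact recorded just after Definition \ref{def:maindef}. The first implication is immediate from Proposition \ref{prop:semisimpleCompact}: a semi-simple symmetric tensor category is a ribbon fusion category, and in characteristic $0$ its global dimension is non-zero (see \cite{EGNO}), so it is chromatic compact. Thus all the content is in the last implication, and the engine is the coend description of $\Delta_0$ from Subsection \ref{ss:coend}.

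First I would compute $\Delta_0$ using that $\cat$ is symmetric. In a symmetric category the double braiding is trivial, $c_{V^*,U}\circ c_{U,V^*}=\Id_{U\otimes V^*}$, so the Hopf pairing degenerates: $\omega\circ(i_U\otimes i_V)=\lev_U\otimes\lev_V=(\ve_\coend\circ i_U)\otimes(\ve_\coend\circ i_V)$, since the counit of the coend is $\ve_\coend\circ i_V=\lev_V$. By the universal property of $\coend\otimes\coend$ this forces $\omega=\ve_\coend\otimes\ve_\coend$. Hence, setting $c:=\ve_\coend\circ\lambda\in\FK$, we get $\Delta_0=\omega\circ(\lambda\otimes\Id_\coend)=c\,\ve_\coend$, and therefore for every projective $P$,
\[
  \Delta_0^P=(\Id_P\otimes\Delta_0)\circ\ci_P=c\,(\Id_P\otimes\ve_\coend)\circ\ci_P=c\,\Id_P,
\]
by the counit axiom of the comodule $\ci_P$. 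In particular $\Delta_0^{P_\unit}=c\,\Id_{P_\unit}$, so $\cat$ is chromatic non-degenerate if and only if $c\neq0$.

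This computation already re-proves, cleanly, that chromatic compactness is equivalent to semi-simplicity, which isolates exactly what is missing. Indeed $\cat$ is chromatic compact iff $c\,\Id_{P_\unit}=\zeta\,\Lambda_{P_\unit}=\zeta\,\eta\circ\ve$ for some $\zeta\neq0$, using $\Lambda_{P_\unit}=\eta\circ\ve$ from Lemma \ref{L:prop-proj}; since $\mt_{P_\unit}(\eta\circ\ve)=1\neq0$ this forces $c\neq0$ and $\eta\circ\ve=\tfrac{c}{\zeta}\Id_{P_\unit}$ to be invertible, whence $\ve$ admits the section $\tfrac{\zeta}{c}\eta$ and $\unit$ is a retract of $P_\unit$. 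Being a retract of a projective, $\unit$ is projective, and a finite tensor category with projective unit is semi-simple (see \cite{EGNO}); the converse is Proposition \ref{prop:semisimpleCompact}. So the only remaining, and genuinely hard, step is to upgrade the weaker hypothesis $c\neq0$ (chromatic non-degenerate) to semi-simplicity.

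The main obstacle is therefore the implication $c\neq0\implies\cat$ semi-simple, a categorical Maschke theorem. My approach is to turn the integral into an averaging idempotent: normalizing $\lambda$ so that $c=1$, the integral identity $m_\coend\circ(\lambda\otimes\Id_\coend)=\ve_\coend\otimes\lambda$ gives $\lambda\cdot\lambda=\ve_\coend(\lambda)\,\lambda=\lambda$, so $e:=\lambda$ is a normalized (two-sided, since $\cat$ is unimodular by Lemma \ref{L:prop-proj}) integral with $\ve_\coend\circ e=1$. The hard part will be to leverage this normalized integral to show $\unit$ is projective and conclude semi-simplicity as above. Since $\cat$ is a finite symmetric tensor category over an algebraically closed field of characteristic $0$, the cleanest rigorous route is Deligne's super-Tannakian theorem: it presents $\cat\simeq\operatorname{Rep}(\mathcal{G})$ for a finite supergroup scheme $\mathcal{G}$, so that under a symmetric fiber functor to super vector spaces $\coend$ is identified with (the dual of) a finite-dimensional Hopf superalgebra $H$, the element $\lambda$ with an integral, and the scalar $c=\ve_\coend\circ\lambda$ with $\ve_H(\textstyle\int)$; the classical Larson--Sweedler/Maschke theorem for Hopf (super)algebras then yields $H$ semi-simple $\iff\ve_H(\int)\neq0\iff c\neq0$, and semi-simplicity of $H$ is equivalent to that of $\cat$. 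Characteristic $0$ is essential precisely here (it is what makes the averaging idempotent $e$ do its work, via Larson--Radford), and the idempotent $e$ is the intrinsic avatar allowing, in principle, a fiber-functor-free proof by splitting directly through the coaction $\ci$.
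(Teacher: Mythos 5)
Your proposal is correct and, for the one substantive implication (chromatic non-degenerate $\Rightarrow$ semi-simple), takes essentially the same route as the paper: Deligne's theorem supplies a (super) fiber functor (the paper verifies the sub-exponential growth hypothesis in one line, a check you elide but which is immediate from the projective generator), Tannakian reconstruction identifies $\cat$ with $H$-mod for a finite-dimensional (super-)Hopf algebra, triviality of the double braiding reduces $\Delta_0$ to the scalar $\lambda(1)$, and Radford plus Larson--Radford in characteristic $0$ convert $\lambda(1)\neq 0$ into semi-simplicity --- note only that since $c=\ve_{\coend}\circ\lambda$ corresponds to $\ve_{H^*}(\lambda)$, Maschke first gives semi-simplicity of $H^*$, and it is Larson--Radford that transfers it to $H$, so your chain ``$H$ semi-simple $\iff \ve_H(\int)\neq 0$'' conflates $H$ with $H^*$, though you do invoke Larson--Radford where it is needed. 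The remaining differences are cosmetic but pleasant: you compute $\Delta_0^P=c\,\Id_P$ intrinsically from the degeneration $\omega=\ve_{\coend}\otimes\ve_{\coend}$ of the coend pairing in the symmetric case, where the paper computes $\Delta_0^H(1)=\lambda(1)1$ from the explicit Hopf-algebra chromatic map \eqref{E:FormulaChHopf} after reconstruction, and your direct retract argument (from $\Delta_0^{P_\unit}=\zeta\Lambda_{P_\unit}$ and $\Lambda_{P_\unit}=\eta\circ\ve$ to $\unit$ projective) re-proves chromatic compact $\iff$ semi-simple without any fiber functor, an extra the paper does not include.
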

\begin{proof}
First, let us observe that $\cat$ admits a fiber functor. By Deligne's theorem, see~\cite[Theorem~9.11.4]{EGNO}, it is enough to check that $\cat$ has sub-exponential growth. Find $C\in \mathbb{N}$ such that $G\otimes G \subseteq G^{\oplus C}$ and let $L$ be the length of $G$. Then the length of $G^{\otimes n}$ is at most $(C.L)^n$ and~$\cat$ has sub-exponential growth.

Using Tannakian reconstruction, $\cat \simeq H-{\rm mod}^{fd}$ for some Hopf algebra $H$. (Note that, despite the fact that $\cat$ is symmetric, it is not clear that $H$ is cocommutative as it might be a~super-Hopf algebra.)
We use the chromatic morphism from equation~\ref{E:FormulaChHopf} (it does not matter which chromatic morphism we use by Lemma~\ref{prop:redtoblue}). We can ignore the double braiding in the definition and compute \smash{$\Delta_0^H(1)=\lambda(1)1$}. If $\cat$ is chromatic non-degenerate, it has to be non-zero, hence $\lambda(1) \neq0$. This is equivalent to semisimplicity of $H^*$ by~\cite{Radford} which is equivalent to semisimplicity of $H$ in characteristic~0 \cite{RadfordLarson}.

 To conclude, we have seen in Proposition~\ref{prop:semisimpleCompact} that semi-simple implies chromatic compact, and it is immediate that chromatic compact implies chromatic non-degenerate.
\end{proof}

\section{Skein relations and algebraic properties}
Throughout this paper, all manifolds are smooth and oriented and all
diffeomorphisms are orientation preserving, unless otherwise stated.

\subsection{Blue, red and green graphs}
In this article, we will consider admissible skeins in closed 3-manifolds. It is common to represent the topology of the 3-manifold $M$ by links in $S^3$ using Kirby calculus. In our setting, this might cause confusion because there are now two kinds of links: the skeins that live in the 3-manifold, and the links depicting its topology. We take the convention to represent the first ones in blue, and the latter in green. We will actually need a third color: the red components. They are a shorthand for a possibly complicated skein (generalizing the Kirby color) and can be turned into actual skeins using the ``red-to-blue modification". It is easy to get the green and red components mixed up, because the red skeins are used to depict the effect of surgery on skeins. Let us stress here that
\begin{itemize}\itemsep=0pt
\item[{}] \textit{blue and red graphs and links} are decorations embedded in some 3-manifold, whereas
\item[{}] \textit{green links} represent the 3-manifold itself, using Kirby calculus.
\end{itemize}

We consider graphs whose edges are colored by one of the colors blue, red or green each representing different structures: the blue part will be a $\cat$-colored ribbon graph with coupons in the sense of Turaev.
 The red part is an unoriented framed
link in $M$ which is not $\cat$-colored. Graphs made of the disjoint union of a blue $\cat$-colored ribbon graph and a red set of non-oriented framed circles are called \emph{bichrome graphs}, see~\cite{CGPT20}.
The green is an unoriented framed
link which is used as a notation for the topology of $M$. This means that $M$
is identified with the result of the $S^1$-surgery on the green link, see Figure~\ref{fig:OrangeAndRedColor}.
\begin{figure}[t]
$$
 \epsh{fig22-3}{12ex}
 \putr{-2}{40}{$\ms{V}$}
 \putc{71}{68}{${f}$}
 \qquad\qquad\qquad\qquad\qquad
 \epsh{fig22-4}{12ex}
 \putr{-2}{40}{$\ms{V}$}
 \putc{71}{68}{${f}$}
$$
\caption{The graph on the left is a ribbon graph $T$ in $S^2\times S^1$ where the green circle represents the topology of $S^2\times S^1$. The graph on the right is a bichrome graph in~$S^3$.
}\label{fig:OrangeAndRedColor}
\end{figure}
Sliding a blue or red edge on an green circle should be thought as
an isotopy in $M$.
For a~disconnected 3-manifolds $M$, we will
use the symbol~$\sqcup$ to separate the different components of~$M$.

A bichrome graph in a manifold
$M$ is admissible if
every connected component of $M$ contains a blue edge colored by
a projective object. A \emph{red to blue modification} of a bichrome graph
is an~operation in an annulus given by
\begin{equation}
 \label{eq:redtoblue}
 \epsh{fig3}{15ex}\put(1,15){\ms{P}}\longrightarrow
 \epsh{fig7}{15ex}\put(-16,-2){{${{\chr}}_P$}}
 \put(-27,16){\ms G} \put(-2,25){\ms P} \;,
\end{equation}
where $\chr_P$ is any chromatic morphism based on a projective object $P$.
Here we allow the $P$-colored edge to be replaced by several parallel
strands with at least one colored by a projective object (indeed if $P\in \Proj$ and $V,W\in \cat$ are any objects, then $V\otimes P\otimes W\in \Proj$).

\subsection{Skein relations and admissible skein modules}
For this subsection, let $M$ be a manifold of dimension $n=2,3$.
An \emph{admissible ribbon graph} in~$M$ is a
$\cat$-colored ribbon graph in $M$ where each connected component of
$M$ contains at least one edge colored with a projective object.

A \emph{box} in $M$ is a proper embedding of $[0,1]^{ n} $ in $M$.
Its top and bottom faces are $[0,1]^{n-1}\times \{1\}$ and
$[0,1]^{n-1}\times \{0\}$, respectively. A ribbon graph is
\emph{transverse} to a box if it intersects the box transversally on
the lines $[0,1] \times \{0,1\}$ if $n=2$ and
$[0,1] \times \bigl\{\frac12\bigr\} \times \{0,1\}$ if $n=3$.

Given admissible graphs $\Gamma_1, \dots,\Gamma_k$ in $M$ and $a_1, \dots,a_k\in \FK$, the linear combination $a_1\Gamma_1 + \cdots + a_n\Gamma_n$ is a \emph{projective skein relation} if there is a box $B$ in a
connected component $M_0$ of $M$ and admissible graphs $\Gamma'_1, \dots,\Gamma'_k$ in $M$ such that
\begin{itemize}\itemsep=0pt
\item $\Gamma'_i$ is isotopic to $\Gamma_i$ (as a $\cat$-colored graph in $M$) for $1 \leq i \leq k$,
\item the $\Gamma'_i$s coincide outside $B$: $\Gamma'_i\cap (M \setminus B)=\Gamma'_j\cap (M \setminus B)$ for all $1 \leq i,j \leq k$,
\item $a_1 F\bigl(\Gamma'_1 \cap B\bigr) + \cdots + a_k F\bigl(\Gamma'_k \cap B\bigr)=0$ (as a morphism in $\cat$),
\item each $\Gamma'_i$
has a
$\Proj$-colored edge in $M_0$ not entirely contained in the box $B$.
\end{itemize}
Two linear combinations are \emph{projective skein
 equivalent} if their difference is a projective skein relation.

\begin{definition}
 The \emph{admissible skein module} $\TSkein_{\cat}(M)$ is
 the $\FK$-span of admissible ribbon graphs in $M$ modulo
 the span of projective skein relations.\footnote{Since we only work with the ideal of projective objects, in this paper, we denote by $\TSkein_\cat$ the skein module $\mathcal{S}_{\Proj}$ of~\cite{CGP22}.}
\end{definition}

\begin{theorem}
For any manifold $M$ of dimension $2$ or $3$, the vector space $\TSkein_\cat(M)$ is finite-dimensional.
\end{theorem}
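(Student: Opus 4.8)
The plan is to reduce the computation to a handlebody by generic position, identify the skein module of the handlebody with a coend of morphism spaces, and then bound that coend using the finiteness of $\cat$. Throughout I work one connected component at a time, so I assume $M$ connected.

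First I would put an admissible graph into normal position with respect to a handle decomposition of $M$. Since a ribbon graph is $1$-dimensional, a generic isotopy makes it disjoint from the cocores of all handles of index $\geq 2$: when $\dim M = 3$ the cocore of a $2$-handle is an arc and that of a $3$-handle a point, so the intersection counts $1+1<3$ and $1+0<3$ allow us to push the graph off them, and when $\dim M = 2$ the cocores of $2$-handles are points and $1+0<2$ does the same. The complement of these cocores deformation retracts onto the union of the handles of index $0$ and $1$, which is a handlebody $H$ (a regular neighborhood of a wedge of circles, i.e.\ a disk-with-bands when $\dim M=2$). The retraction carries the graph into $H$ and preserves admissibility, so the map $\TSkein_\cat(H)\to\TSkein_\cat(M)$ induced by inclusion is surjective. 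It therefore suffices to prove $\TSkein_\cat(H)$ is finite-dimensional.

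Next I would compute $\TSkein_\cat(H)$ for a genus-$g$ handlebody. Cutting $H$ along the $g$ meridian disks $D_1,\dots,D_g$ of its $1$-handles yields a ball $B$, and a graph in normal position meets each $D_i$ transversally in finitely many colored points. Fusing the strands crossing a given $D_i$ into a single strand (parallel strands are their tensor product, a merge realized by an identity coupon and hence an allowed projective skein relation) lets me assume exactly one strand, colored by some $V_i\in\cat$, crosses each $D_i$. Cutting along all the $D_i$ then presents an admissible graph as a morphism in $B$ with prescribed boundary colors $V_1,\dots,V_g$; because the relations impose \emph{all} identities holding under $F$ inside a box, the skein module of $B$ with fixed boundary data is exactly the corresponding finite-dimensional $\Hom$-space. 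Reassembling $H$ glues these boundary points in pairs, and the relations of sliding around each $1$-handle are precisely the dinaturality relations of a coend. Thus $\TSkein_\cat(H)$ is a coend of finite-dimensional vector spaces indexed by the admissible colorings $V_1,\dots,V_g$, for instance $\int^{V}\End_\cat(V)$ when $g=1$. The finiteness input is then that $\cat$ is Krull--Schmidt (additive, idempotent complete, with finite-dimensional hom-spaces over a field) with projective generator $G$, so there are only finitely many indecomposable projectives $P_1,\dots,P_r$, namely the summands of $G$; by co-Yoneda the relevant coend is a quotient of the finite-dimensional space $\bigoplus_i\End_\cat(P_i)$ (or of finitely many such blocks in higher genus), hence finite-dimensional, and so is its quotient $\TSkein_\cat(M)$.

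I expect the main obstacle to be the last two steps, and above all the interaction between admissibility and the coend. Over all of $\cat$ the coend $\int^{V}\End_\cat(V)$ need \emph{not} be finite-dimensional, since $\cat$ may have infinitely many indecomposables; it is the requirement of a $\Proj$-colored edge in every component that must confine the colorings to the finite-dimensional projective range. The delicate point is that a cutting disk may be met only by a priori non-projective strands while the projective edge sits elsewhere. Controlling this requires using the red-to-blue modification \eqref{eq:redtoblue} together with the ideal property of $\Proj$ (a tensor product with one projective factor is projective) to route projectivity across every relevant cutting disk, or equivalently to rewrite every admissible state in terms of the finitely many $P_i$. Making this confinement precise, rather than the topological reduction, is where the real work lies.
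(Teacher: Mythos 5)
Your topological reduction coincides with the paper's first step: generic position pushes an admissible skein off the cocores of the handles of index $\geq 2$, so the inclusion-induced map $\TSkein_\cat(H_g)\to\TSkein_\cat(M)$ is surjective for the genus-$g$ handlebody $H_g$ built from the $0$- and $1$-handles. From there, however, the two arguments diverge completely. The paper never computes the skein module of the handlebody: it observes that $H_g=\Sigma\times[-1,1]$ for a compact orientable surface $\Sigma$, so $\dim\TSkein_\cat(H_g)=\dim\TSkein_\cat(\Sigma)$, and the surface case --- which is also the entirety of the dimension-$2$ statement --- is quoted from \cite[Theorem 5.10]{CGP22}. You instead attempt to reprove the handlebody (hence surface) finiteness from scratch via a coend presentation, which is a genuinely different, and much heavier, route.

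The problem is that your route is incomplete at exactly the point you yourself flag, and that point is not a deferrable technicality: it carries the whole weight of the argument. Without first confining the colors $V_i$ crossing the meridian disks to $\Proj$, two intermediate claims fail. First, as you note, $\int^{V\in\cat}\End_\cat(V)$ is in general infinite-dimensional, since $\cat$ need not have finitely many indecomposables. Second, and logically prior, your identification of the cut-ball module with ``exactly the corresponding finite-dimensional $\Hom$-space'' is only available for projective boundary colors: projective skein relations apply only to admissible graphs, and the isomorphism between the admissible skein module of a ball with colored boundary points and the $\Hom$-space is established in \cite{CGP22} under projectivity hypotheses, so for non-projective $V_i$ your ``blocks'' are not identified. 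The fix is also not the red-to-blue modification \eqref{eq:redtoblue}, which concerns bichrome graphs and has no role here (there are no red components); what is needed is the standard isotopy trick: drag a $\Proj$-colored edge of the skein through each meridian disk $D_i$ and back before fusing, so that the fused color has the form $V_i'\otimes P\otimes P^*\otimes V_i''$, projective by the ideal property of $\Proj$. With that done, the coend ranges over $\Proj$, the indecomposable projectives are the finitely many indecomposable summands of the generator $G$, and your co-Yoneda bound goes through. As written, though, the proposal stops short of this step and therefore does not yet constitute a proof; carried out in full, it would amount to reproving \cite[Theorem 5.10]{CGP22} rather than invoking it, as the paper does in one line.
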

\begin{proof}
In the case of surfaces this theorem is~\cite[Theorem~5.10]{CGP22}. Let $M$ be a connected 3-manifold, decomposed into one $0$-handle, $g$ index $1$ handles, and some index $2$ and $3$ handles.
Then $\TSkein_\cat(M)$ is generated by admissible skeins in the genus $g$ handlebody $H_g$ formed by the handles of index $0$ and $1$. Then we conclude by observing that $H_g=\Sigma\times [-1,1]$ for some orientable surface $\Sigma$, therefore
\[
\dim\TSkein_\cat(H_g)=\dim\TSkein_\cat(\Sigma)<\infty.
\]
Finally, for non connected manifolds, the skein modules are tensor products of those of the connected components.
\end{proof}

By~\cite[Proposition 2.2]{CGP22}, the assignment $M\to \TSkein_{\cat}(M)$ extends to a functor $\TSkein_{\cat}\colon \Emb_n \to \Vect$ where $\Vect$ is the category of finite-dimensional vector spaces and $\Emb_n$ is the category whose objects are oriented $n$-dimensional manifolds and morphisms are isotopy classes of orientation preserving proper embeddings.

The following lemmas are from~\cite{CGPV2023a}, for completeness we restate them here in a slightly different language.
\begin{lemma}\label{prop:redtoblue}
Let $L$ be an admissible bichrome graph in $M$ and let $L_{1}$ and $L_2$ be two $\cat$-ribbon graphs each obtained by using red to blue modifications to change every red component of $L$ to blue. Then $L_1$ and $L_2$ are projective skein equivalent.
\end{lemma}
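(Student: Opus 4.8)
The plan is to exploit the \emph{locality} of the construction. Since $L_1$ and $L_2$ arise from the same bichrome graph $L$, with the same red components in the same positions, each red-to-blue modification is supported in a fixed annular (solid-torus) neighborhood of a red circle, and the two graphs coincide away from these neighborhoods. Introducing the intermediate graphs that use the $L_2$-modification on the first $j$ red components and the $L_1$-modification on the remaining ones, consecutive graphs differ by a single modification; by transitivity of projective skein equivalence it therefore suffices to fix one red circle $c$ and show that any two of its red-to-blue modifications --- say via $\chr_P$ based on $P$, and via $\chr'_{P'}$ based on $P'$ --- are projective skein equivalent, the object $V$ threaded through $c$ being the same in both cases.

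First I would normalise the attached projective object. By the lemma of \cite{CGPV2023a} recalled above, every chromatic morphism based on $P$ has the form $\chr_P=(\Id_G\otimes\ve_G)\circ\chr\circ(\Id_G\otimes e_{P,G})$ for a chromatic morphism $\chr$ on the generator $G$; the coupons $\ve_G$ and $e_{P,G}$ are ordinary morphisms of $\cat$ confined to a box, and the relation $(\ve_G\otimes\Id_P)\circ e_{P,G}=\Id_P$ lets me pass, by a projective skein relation inside that box, from the $\chr_P$-modification to one built directly from $\chr$ on $G$; the naturality and rotation identities for the copairing (Lemma \ref{P:Omega-nat}) then align the two cablings. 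After this reduction both modifications are built from chromatic morphisms $\chr,\chr'$ on the single object $G$, encircling the same object $V$.

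The heart of the matter is independence of the chromatic morphism on $G$, and here the defining property \eqref{E:ChrMapDef}--\eqref{eq:chrP} is used as a local skein relation. Equation \eqref{eq:chrP} states that for every object $V$ the morphism obtained by closing the $G$-strand of the coupon against the copairing $\Lambda_{V\otimes G^*}=\sum_i x_i\circ x^i$ equals $\Id_{V\otimes P}$; in particular it constrains a chromatic morphism only through this copairing-closure. Hence, inserting $\chr-\chr'$ in place of the coupon on the left-hand side of \eqref{eq:chrP} yields the zero morphism of $\cat$. Choosing a box $B$ around $c$ that contains the coupon together with the copairing closing its $G$-strand, and keeping the mandatory $\Proj$-colored edge of the component (supplied by admissibility) outside $B$, this vanishing is exactly the identity $F(L_1\cap B)-F(L_2\cap B)=0$, so that $L_1-L_2$ is a projective skein relation.

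The main obstacle is the very last point: realising, inside the globally modified graph, the copairing configuration of \eqref{eq:chrP}. One must check that the $G$-strand emerging from the coupon can be closed against a copairing coming from an adjacent admissible edge --- without disturbing the rest of the graph --- and arrange the box $B$ so that this copairing lies inside $B$ while a projective edge of the component remains outside, as the definition of a projective skein relation demands. This is a diagrammatic verification resting on $G$ being a projective generator (so every admissible strand is a retract of a power of $G$ and carries the relevant copairing) together with the cyclicity and partial-trace properties of the m-trace, which slide the copairing into place. Once the configuration is matched, the defining property of the chromatic morphism closes the argument with no further computation.
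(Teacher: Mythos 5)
There is a genuine gap at what you yourself identify as ``the heart of the matter''. Your plan is to put the two modifications into a single box $B$ containing the copairing-closure configuration of \eqref{eq:chrP}, so that $F(L_1\cap B)-F(L_2\cap B)=0$ becomes a local skein relation. But that configuration is simply not present in $L_1$ or $L_2$: after a red to blue modification the graph contains only the bare $G$-colored circle with its chromatic coupon encircling the strand, with no copairing anywhere nearby. And the bare circles are \emph{not} equal as morphisms of $\cat$: the defining property \eqref{eq:chrP} constrains a chromatic morphism only after composing with the copairing $\Lambda_{V\otimes G^*}$, so two chromatic morphisms may differ by a term whose copairing-closure vanishes while its plain circle-closure does not. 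Hence no single-box relation of the kind you want exists, and your fallback --- conjuring the copairing ``from an adjacent admissible edge'' via projectivity of $G$ and the cyclicity/partial-trace properties of the m-trace --- does not work either: inserting $\Lambda_{V\otimes G^*}=\sum_i x_i\circ x^i$ on a strand is not the identity and is therefore not a skein-trivial operation, and the m-trace axioms are properties of the evaluation $F'$, not skein moves that create coupons. What \emph{is} skein-trivial, by \eqref{eq:chrP}, is inserting the \emph{entire} second chromatic configuration (copairing together with the $\chr_Q$-colored circle), since that composite equals the identity. This is exactly the paper's mechanism: insert the second modification as an identity, isotope the first circle around the red curve, convert the dual bases by the rotation property of Lemma \ref{P:Omega-nat}, and then delete the first circle \emph{together with} the copairing using its own defining property. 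The argument is intrinsically a chain of global moves in the annular neighborhood, not a one-box identity.

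A second, related omission: you assume the two modifications encircle ``the same object $V$'', but modifications may be performed at different places along the red circle (where different strands pass through, based on different projectives $P$ and $Q$), on the same or opposite sides, and with either orientation convention. The paper's proof is organized precisely around this: the insert--slide--remove argument handles modifications on different strands with opposite orientations, and the same-side case is then reduced to it by comparing both with a third modification on the opposite side. Your preliminary normalization to chromatic morphisms on the generator $G$ is harmless but does not help here, since even two chromatic morphisms based on the same $P$ need not arise from a common $\chr$ via Equation \eqref{eq:def-cp}, and the essential difficulty --- that only copairing-closures, not the morphisms themselves, are pinned down --- remains untouched by that reduction.
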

\begin{proof}
In this proof, we write $T \,\dot{=}\, T'$ if $T$ and $T'$ are projective skein equivalent graphs in~$M$. We need to show that two red to blue modifications of a red circle at
 different places with different chromatic morphisms are projective skein equivalent. Let $\chr_P\in\End_\cat(G\otimes P)$ and
 $\chr_Q\in\End_\cat(G'\otimes Q)$ be two chromatic morphisms based on $P$
 and $Q$, respectively, where $G$ and~$G'$ are projective generators.

Suppose that we have two modifications of the type ``red to blue'' which are made
on different strands and using opposite orientations for
the red circle. Then we have (with implicit summation):\vspace*{-1ex}
 \begin{gather*}
 \epsh{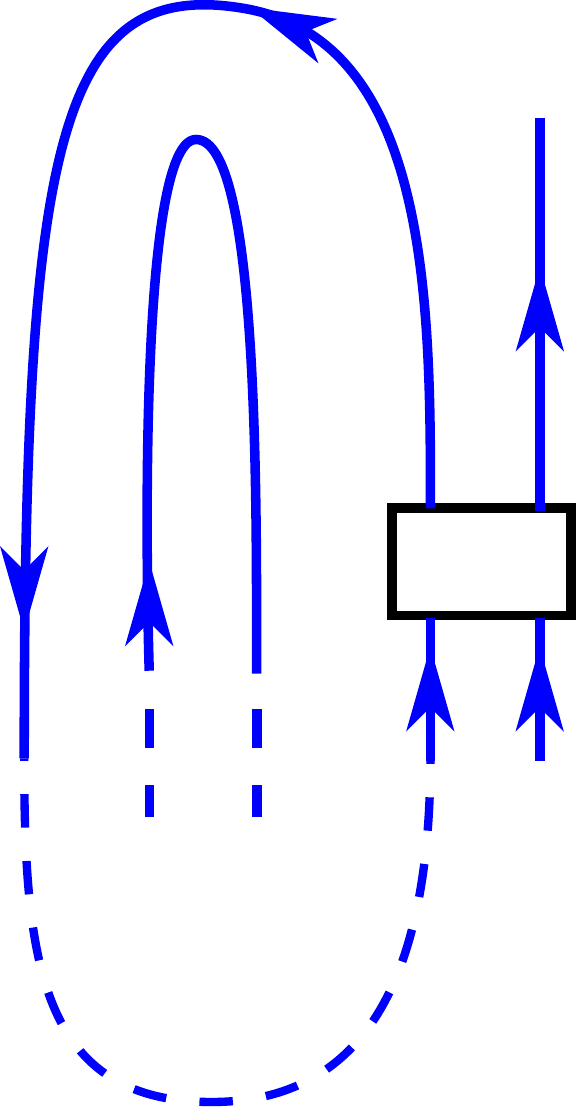}{24ex}
 \putc{84}{49}{$\ms{{{\chr}}_P}$}
 \putlc{31}{45}{$\ms{Q}$}
 \ \dot{=}\
 \epsh{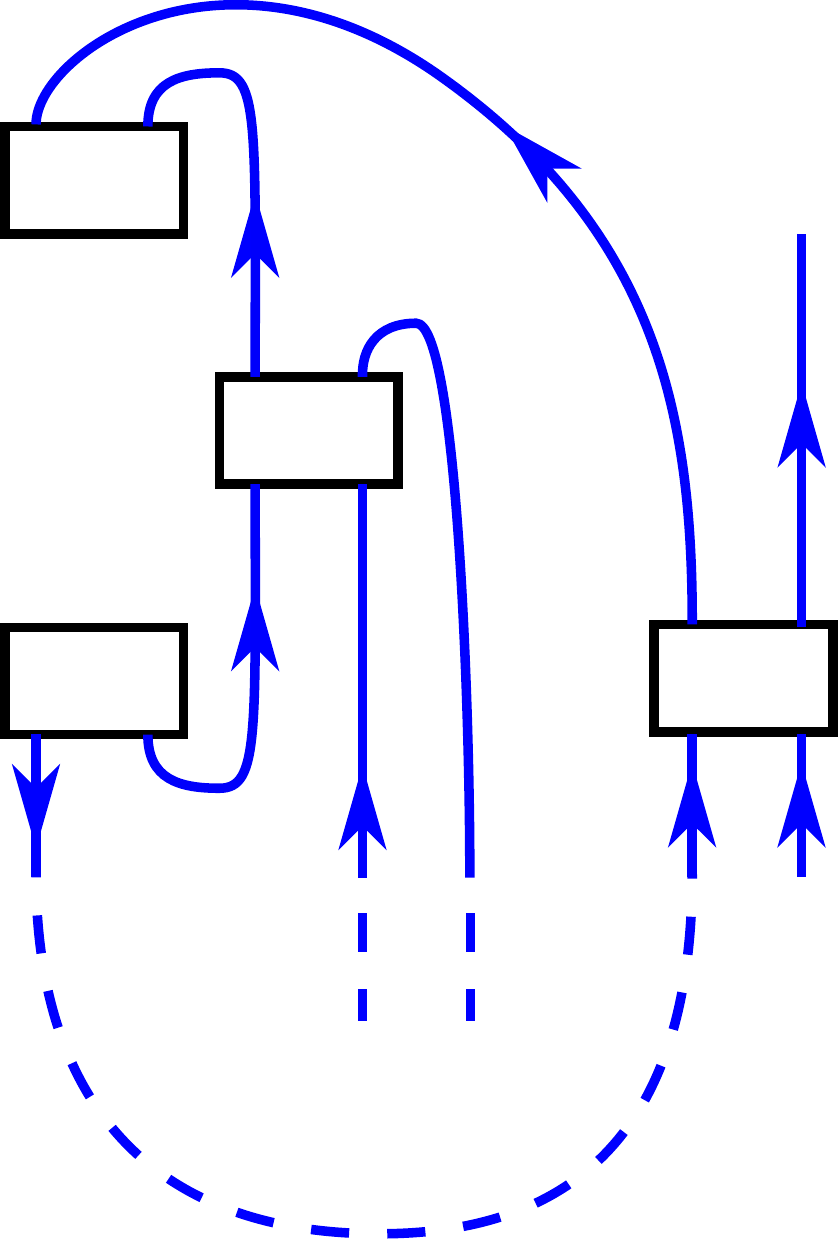}{24ex}
 \putc{11}{86}{$\ms{x_i}$}
 \putc{11}{45}{$\ms{x^i}$}
 \putc{36}{66}{$\ms{{{\chr}}_Q}$}
 \putc{89}{45}{$\ms{{{\chr}}_P}$}
 \ \dot{=}
 \epsh{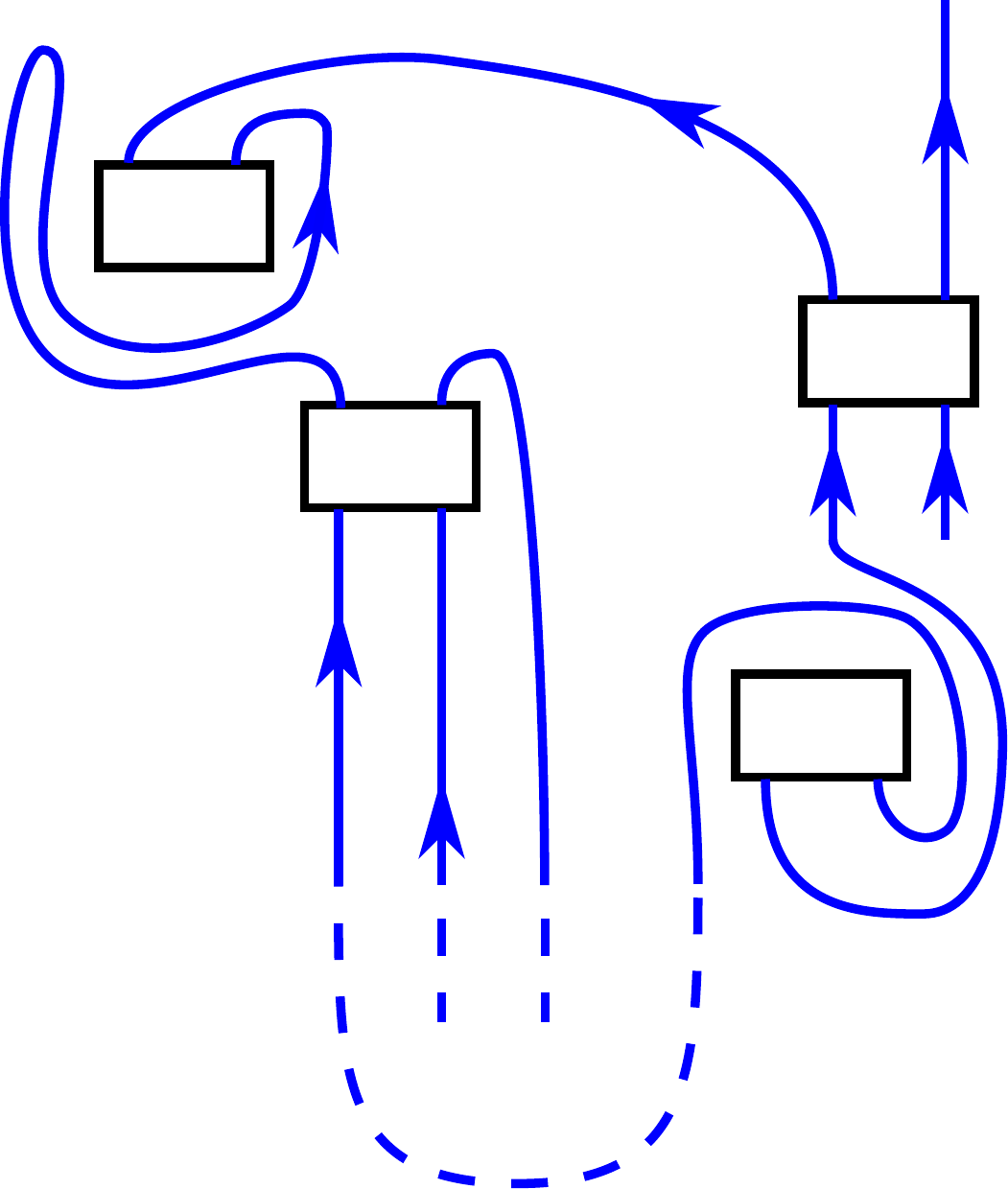}{24ex}
 \putc{17}{82}{$\ms{x_i}$}
 \putc{38}{62}{$\ms{{{\chr}}_Q}$}
 \putc{88}{70}{$\ms{{{\chr}}_P}$}
 \putc{81}{39}{$\ms{x^i}$}
 \ \dot{=}
 \epsh{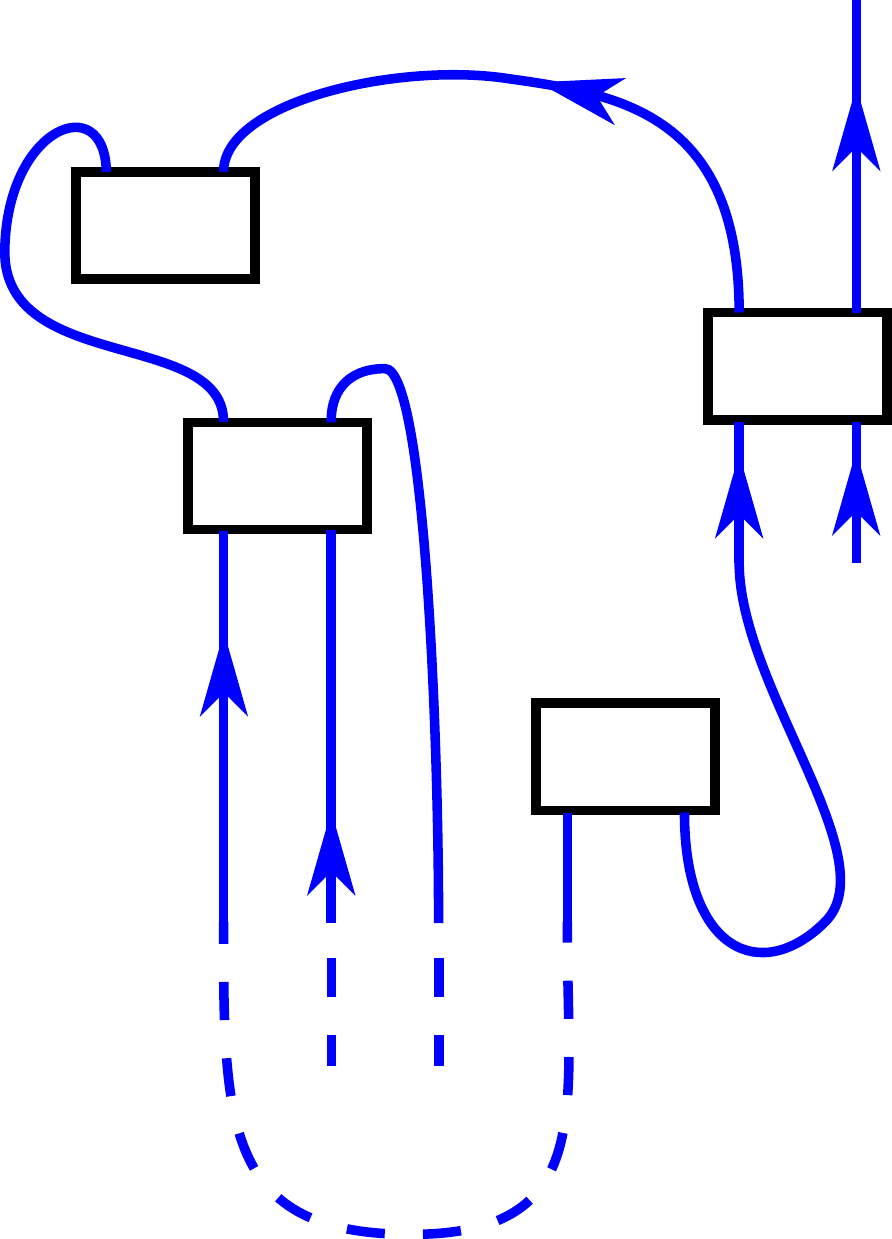}{24ex}
 \putc{18}{82}{$\ms{\wt x_i}$}
 \putc{31}{62}{$\ms{{{\chr}}_Q}$}
 \putc{90}{70}{$\ms{{{\chr}}_P}$}
 \putc{71}{40}{$\ms{\wt x^i}$}
 \ \dot{=}\
 \epsh{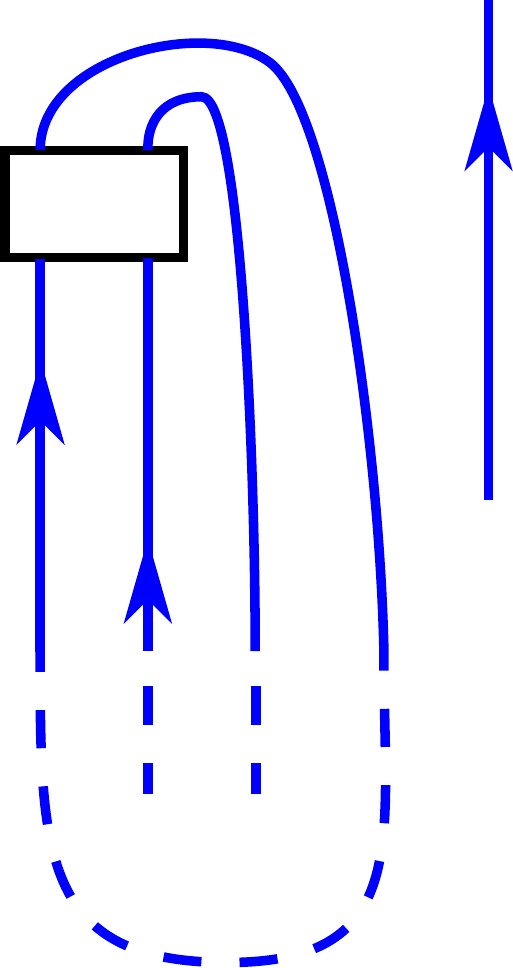}{18ex}
 \putc{18}{79}{$\ms{{{\chr}}_Q}$}
 \putlc{100}{86}{$\ms{P}$}
\end{gather*}
 where $\wt x_i$ and ${\wt x}^i$ are the dual basis obtained from
 $x_i$ and $x^i$ by the rotation property
 of Lemma~\ref{P:Omega-nat}. If the two modifications are happening on the same side of the red circle, then by the above argument they are both equal to a third modification happening on the opposite side of the red circle.
\end{proof}

\begin{lemma}\label{lemma:slidingOverRed}
Let $L$ be an admissible bichrome graph in $M$ and let $L'$ be the admissible bichrome graph obtained by sliding a red or blue edge of $L$ over a red circle of $L$ $($via a Kirby II move, see equation~\eqref{eq:Kirby2}$)$. If $L_1$ and $L_2$ are ribbon graphs obtained by applying a red to blue modification on each red component of~$L$ and~$L'$, respectively, then $L_1$ and $L_2$ are projective skein equivalent. \end{lemma}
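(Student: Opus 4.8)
The plan is to reduce the assertion to a single application of the defining identity \eqref{eq:chrP} of the chromatic morphism, which is precisely the algebraic form of Kirby II invariance. First I would observe that the handle slide producing $L'$ from $L$ is supported in a ball meeting only the sliding edge $e$ and the red circle $c$ over which it slides, and that $L$ and $L'$ agree outside this ball. Performing one and the same red-to-blue modification on every red component other than $c$ in both graphs, and invoking Lemma \ref{prop:redtoblue} to guarantee that these auxiliary choices do not change the projective skein class, I reduce to comparing the two graphs obtained by converting $c$ alone. Writing $\tilde c$ for the resulting chromatic loop — a $G$-colored circle carrying a coupon $\chr_P$ and a projective through-strand colored $P$, as produced by \eqref{eq:redtoblue} — the graphs $L_1$ and $L_2$ then differ only in that in $L_2$ the edge $e$ has been slid over $\tilde c$.

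It remains to show that sliding an edge over the chromatic loop $\tilde c$ is a projective skein equivalence. If $e$ is red, I first convert it to blue by a red-to-blue modification, again well defined up to projective skein equivalence by Lemma \ref{prop:redtoblue}, so that near the slide region $e$ becomes a bundle of blue strands; since a handle slide of a parallel bundle can be carried out one strand at a time, it suffices to treat the slide of a single blue strand colored by an arbitrary object $V\in\cat$. Thus the whole lemma reduces to the purely blue assertion that a $V$-colored strand slid over $\tilde c$ is projective skein equivalent to the same strand not slid over $\tilde c$.

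To prove this last assertion I would cut open the $G$-loop of $\tilde c$ where it runs alongside the slid strand and expand the cup and cap of the slide into the dual bases $\{x_i\}$, $\{x^i\}$ of the copairing $\Omega_{V\otimes G^*}$ (legitimate because $V\otimes G^*\in\Proj$). With the through-strand playing the role of $P$, the slid configuration then reproduces exactly the left-hand side of the explicit form of \eqref{eq:chrP}, and that identity rewrites it as the identity on $V\otimes P$, i.e.\ as the $V$-strand passing $\tilde c$ without interacting with it, which is the unslid picture in $L_1$. The duality and rotation clauses of Lemma \ref{P:Omega-nat} are used along the way to bring the copairing into the normal form required by the orientation and framing of the pushoff, and in particular to absorb the ambiguity in the (unoriented) direction chosen for $c$.

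The hard part will not be conceptual but diagrammatic bookkeeping: one must match the geometric Kirby II move of Equation \eqref{eq:Kirby2}, with its specific band, framing, and pushoff of $c$, to the configuration appearing in \eqref{eq:chrP}, and verify that the two relative orientations of $e$ and $c$, together with the case where $e$ is itself red, all collapse to one application of the chromatic identity after the adjustments of Lemma \ref{P:Omega-nat}. Throughout, admissibility is automatically preserved: the component of $\tilde c$ retains the projective through-strand $P$, and the slid edge is unchanged away from the local ball, so every connected component still carries a projective-colored edge.
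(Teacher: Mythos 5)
Your reductions are sound, and they match the paper's implicit ones: converting the other red components via Lemma \ref{prop:redtoblue}, converting a red sliding edge to blue first, and sliding one strand at a time. But the central step fails. After the red-to-blue modification \eqref{eq:redtoblue}, the slid configuration consists of the \emph{intact} $G$-loop with its coupon $\chr_P$ around the $P$-strand, and a $V$-strand running parallel to the loop: there are no coupons $x^i$, $x_i$ anywhere in this picture. The ``cup and cap of the slide'' are plain bends of the $V$-strand (isotopy data), and expanding them ``into the dual bases of $\Omega_{V\otimes G^*}$'' would amount to replacing $\Id_{V\otimes G^*}$ by $\Lambda_{V\otimes G^*}=\sum_i x_i\circ x^i$, which is \emph{not} a projective skein relation: in the non-semisimple case $\Lambda_Q\neq\Id_Q$ (for instance $\Lambda_{P_\unit}=\eta\circ\ve$ by Lemma \ref{L:prop-proj}). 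The $\Omega$-insertion on the left-hand side of \eqref{eq:chrP} is geometrically a \emph{cut}, not a slide: it is exactly what the cutting map of a $3$-handle supplies, which is why the $2$--$3$-handle cancellation of Figure \ref{F:R4c}, and not the handle slide, reduces to a single application of the chromatic identity. If one application of \eqref{eq:chrP} sufficed to remove a parallel strand of arbitrary color $V$, the chromatic loop would be transparent to all of $\cat$, a much stronger statement than what holds.

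The paper's actual proof supplies precisely the ingredient you are missing. It first treats the case where the slid strand carries a \emph{projective} color $P$, using the chromatic identity \emph{twice} (introducing a second coupon $\chr_{P^*}$ alongside $\chr_P$, together with dual-basis coupons), passing to the starred dual bases via the duality clause of Lemma \ref{P:Omega-nat}, and performing an isotopy of the coupons around the circle; this computation is where the dual-basis coupons are legitimately created and destroyed by the chromatic identity itself, and it also yields the case of a red slid edge. The case of an arbitrary color $V\in\cat$ is then handled by a fusion trick: fuse $V$ with a projective edge $Q$ into a single $V\otimes Q$-colored edge (projective, since $\Proj$ is an ideal) with two coupons, slide $V\otimes Q$ by the established projective case, move one fusion coupon around the circle to cancel the other, and unslide the $Q$-edge. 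Your proposal skips both the separate projective case and the fusion argument, and the one-shot application of \eqref{eq:chrP} cannot replace them.
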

\begin{proof}
We first consider the case of sliding a blue edge colored by
 $P\in\Proj$ on a red circle:
 \begin{align*}
 \epsh{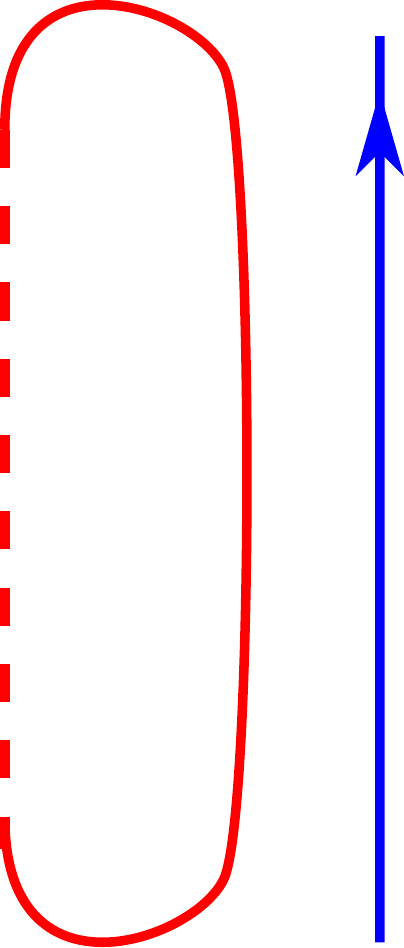}{16ex}
 \putlc{101}{85}{$\ms{P}$}
 &{}\;\;\dot{=}\;\;
 \epsh{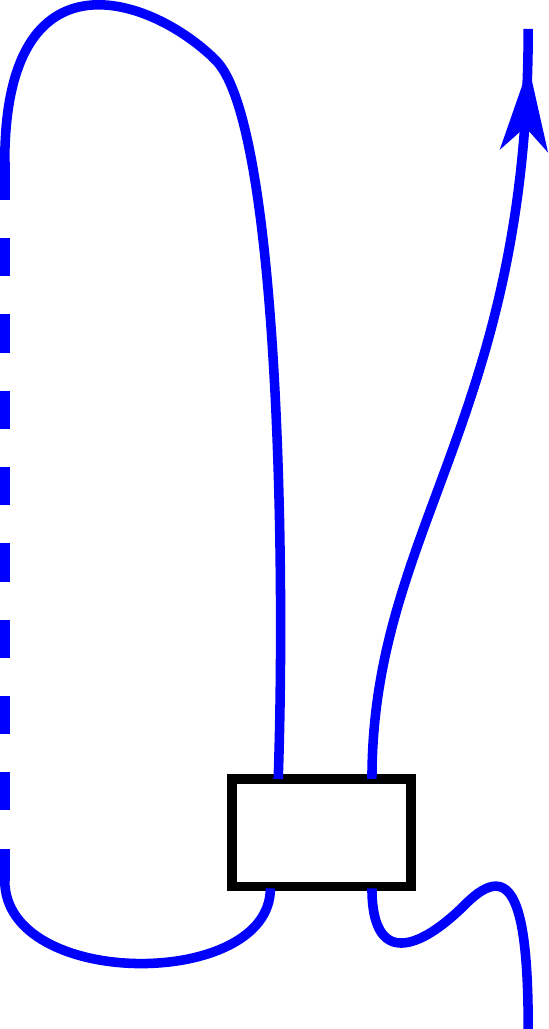}{18ex}
 \putc{58}{19}{$\ms{{{\chr}}_{P}}$}
 \dot{=}\;\;
 \epsh{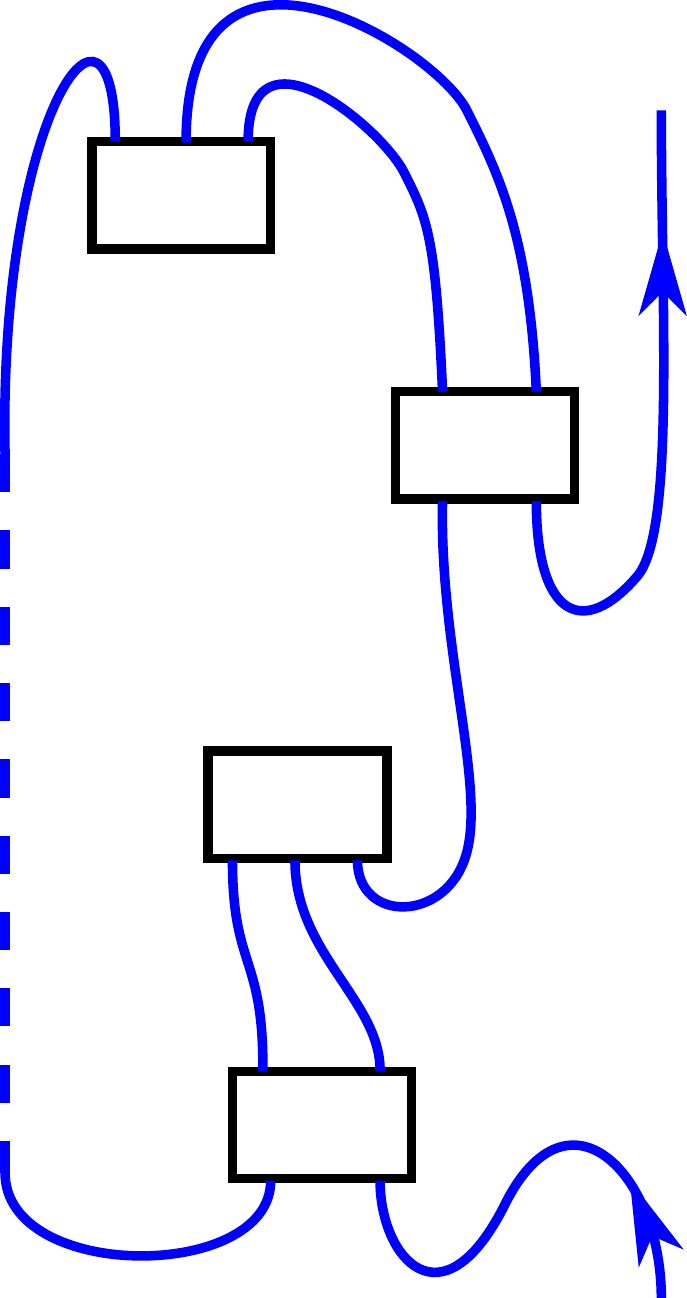}{24ex}
 \putc{26}{86}{$\ms{x_i}$}
 \putc{43}{38}{$\ms{x^i}$}
 \putc{70}{66}{$\ms{{{\chr}}_{P^*}}$}
 \putc{46}{13}{$\ms{{{\chr}}_{P}}$}
 \dot{=}\;\;
 \epsh{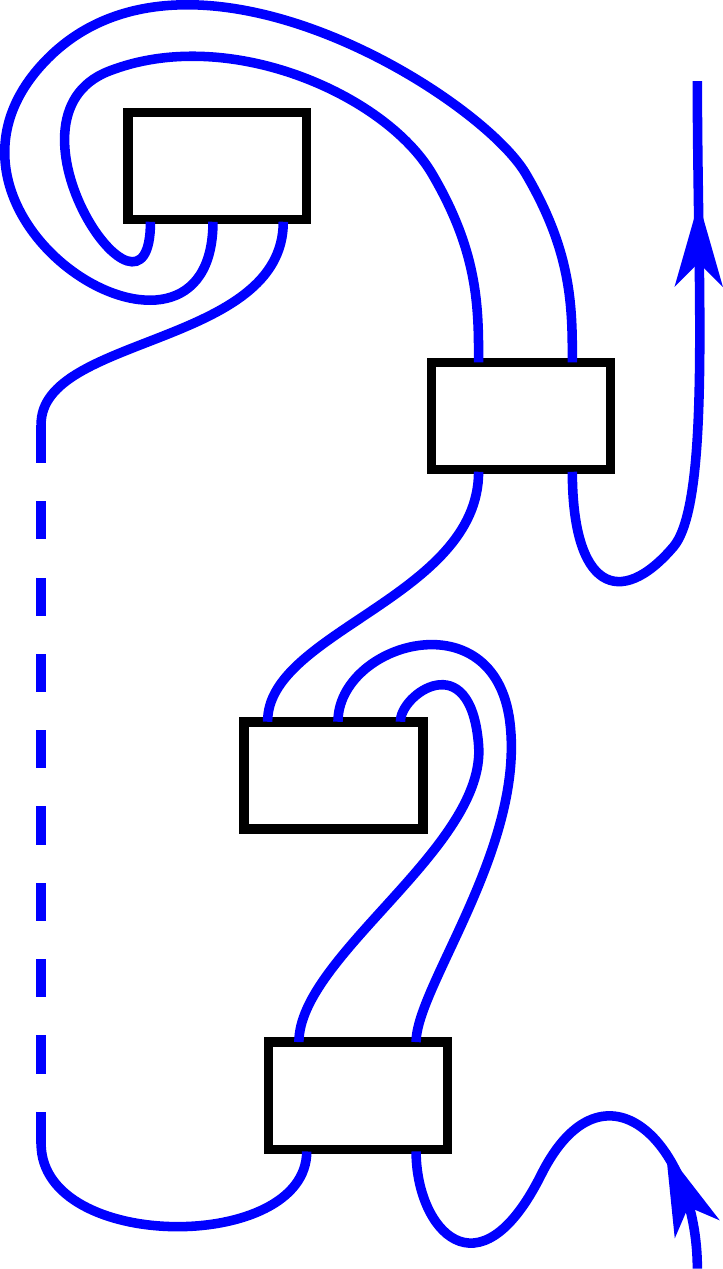}{24ex}
 \putc{30}{87}{$\ms{x^{*i}}$}
 \putc{72}{67}{$\ms{{{\chr}}_{P^*}}$}
 \putc{46}{39}{$\ms{{x^*}_i}$}
 \putc{49}{13}{$\ms{{{\chr}}_{P}}$}\\
 &{}\dot{=}\;\;
 \epsh{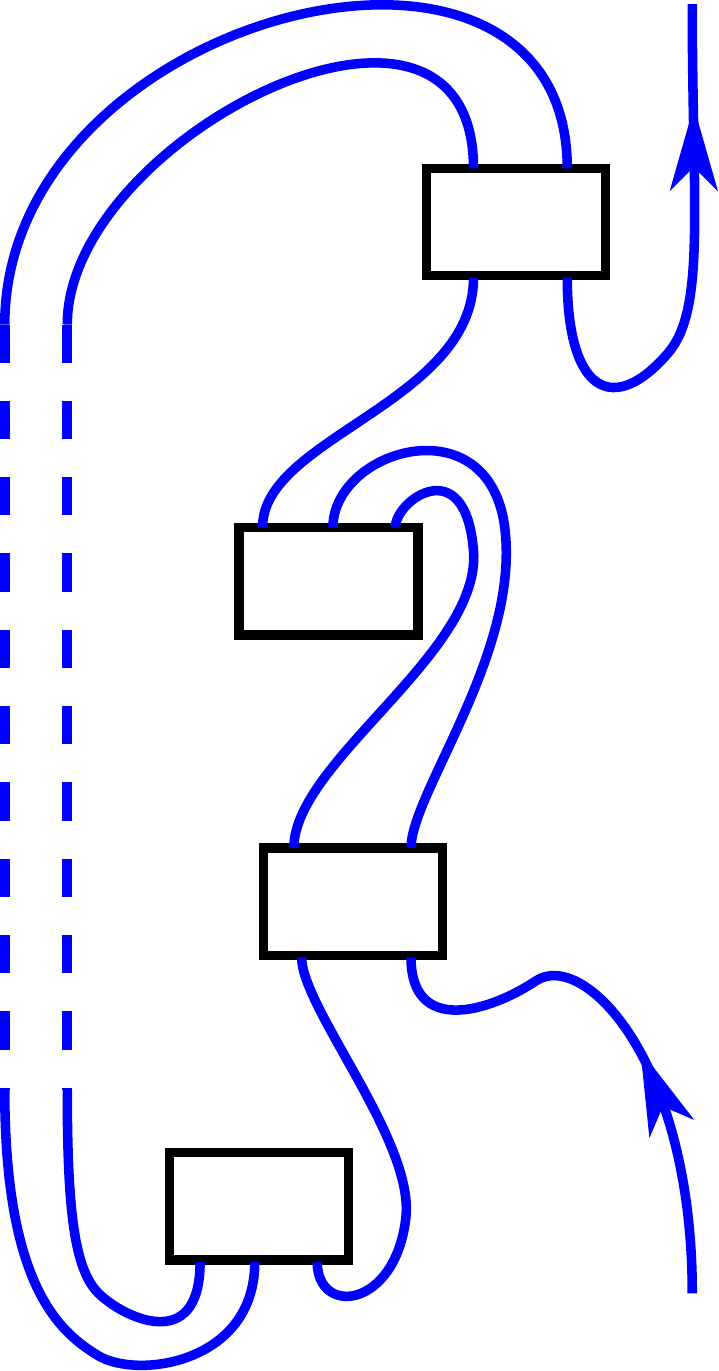}{24ex}
 \putc{36}{12}{$\ms{x^{*i}}$}
 \putc{48}{34}{$\ms{{{\chr}}_{P}}$}
 \putc{45}{58}{$\ms{{x^*}_i}$}
 \putc{72}{84}{$\ms{{{\chr}}_{P^*}}$}
 \dot{=}\;\;
 \epsh{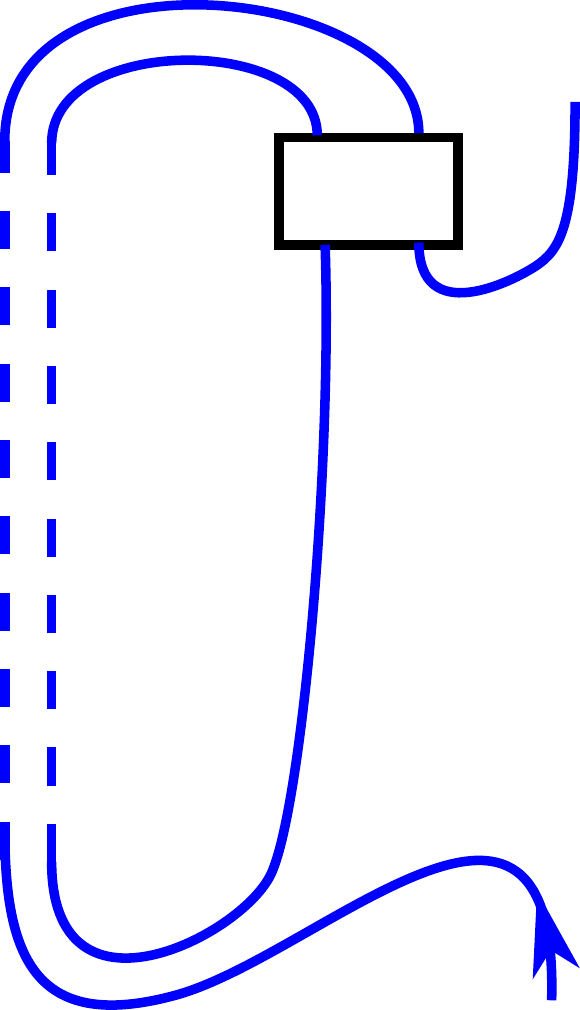}{18ex}
 \putc{64}{81}{$\ms{{{\chr}}_{P^*}}$}
 \dot{=}\;\;
 \epsh{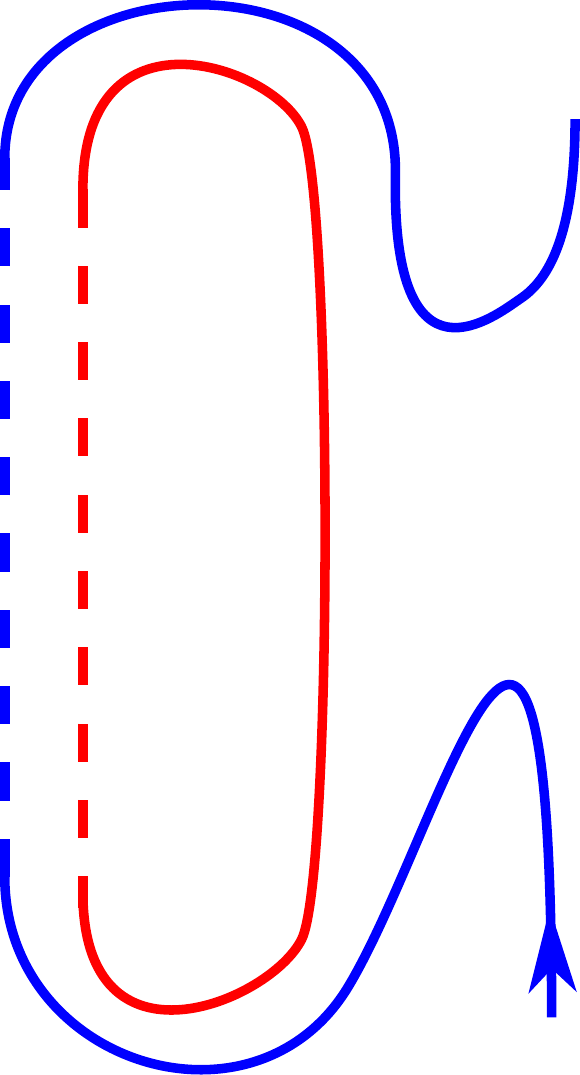}{16ex}
 \putrc{88}{11}{$\ms{P}$}\ ,
 \end{align*}
 where \smash{${x^*}_i$} and \smash{${x^*}^i$} are the dual basis defined by
 \smash{${x^*}^i=(x_i)^*\circ(\phi_{G}\otimes\Id_{P^*\otimes {G}^*})$} and
 ${x^*}_i=\bigl(\smash{\phi_{G}^{-1}}\otimes\Id_{P^*\otimes {G}^*}\bigr)\circ\smash{\bigl(x^i\bigr)^*}$. Notice this implies a red circle can be made blue then slid over a red circle. Now
 if we want to slide a blue edge colored by $V\in\cat$ we can fuse it with an edge colored with a projective $Q$ to this edge creating an edge colored by $V\otimes Q$ (which is projective) and two coupons. Then we can slide the $V\otimes Q$-colored edge as in the above computation. After moving one of coupons along the red circle to the other coupon, we can remove both coupons and then unslide the edge colored with $Q$.
\end{proof}

\subsection{Properties of the morphisms of a chromatic category}
Next we prove Lemma~\ref{L:Delta}.
\begin{proof}[Proof of Lemma~\ref{L:Delta}] By Lemma~\ref{prop:redtoblue}, we
 can give a meaning to the evaluation by $F$ of $F'$ of admissible
 ribbon graphs with red components as the value of any red to blue
 modification of it, see equation \eqref{eq:redtoblue}. Thus we have that
 \begin{gather*}
 \Delta_+\ve=F\bp{\epsh{fig22-5}{3ex}\ \epsh{fig22-7}{6ex}
 \putc{50}{81}{$\ms{\ve}$}\ }\in\FK\ve,\qquad
 \Delta_-\ve=F\bp{\epsh{fig22-6}{3ex}\ \epsh{fig22-7}{6ex}
 \putc{50}{81}{$\ms{\ve}$}\ }\in\FK\ve \qquad \text{and}\\
 \Delta^P_0=F\bp{\epsh{fig22-8}{8ex}\putlc{58}{90}{$\ms{P}$}}\in\End_\cat(P)
 \end{gather*}
 does not depend on the choice of any chromatic morphism.
\end{proof}

\begin{lemma}\label{L:Delta2}
 The morphisms \smash{$\Delta_0^P$} is
 a natural morphism in $P\in\Proj$ \big(that is
 \smash{$f\circ\Delta_0^P=\Delta_0^Q\circ f$} for any \smash{$P\tto fQ\in\Proj$}\big) and
 \smash{$\bigl(\Delta_0^P\bigr)^*=\Delta_0^{P^*}$} for any $P\in\Proj$.
\end{lemma}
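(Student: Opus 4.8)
The statement asks for two properties of the morphisms $\Delta_0^P$: naturality in $P \in \Proj$, and compatibility with duals, $(\Delta_0^P)^* = \Delta_0^{P^*}$. The plan is to prove both by diagrammatic manipulation, leveraging the graphical description of $\Delta_0^P$ established in the proof of Lemma \ref{L:Delta}, namely $\Delta_0^P = F$ of the closed bichrome graph in which a $P$-colored edge is linked (via a double braiding/clasp) with a red circle, evaluated after a red to blue modification using $\chr_P$. The crucial input is that, by Lemma \ref{prop:redtoblue}, this value is independent of the choice of chromatic morphism $\chr_P$, so I am free to move coupons and reroute the $P$-strand as convenient.

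\textbf{Naturality.} First I would prove $f \circ \Delta_0^P = \Delta_0^Q \circ f$ for any morphism $f : P \to Q$ in $\Proj$. The idea is that $\Delta_0^P$ is built from the defining property \eqref{eq:chrP} of a chromatic morphism based on $P$, which feeds the copairing $\Lambda_{V \otimes G^*}$ into the red-to-blue picture. The naturality of the copairings $\Omega_P$ (Lemma \ref{P:Omega-nat}(2)) is exactly the algebraic shadow of this: it lets one slide the morphism $f$ along the $P$-strand past the coupon encoding $\chr_P$. Concretely, I would insert $f$ on the open $P$-edge of the graph computing $\Delta_0^P$, then use the defining relation of $\chr_P$ together with naturality of $\Omega$ to transfer $f$ from below the chromatic coupon to above it, turning the $\chr_P$-picture into the $\chr_Q$-picture and thereby producing $\Delta_0^Q \circ f$. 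Because both sides are defined independently of the chosen chromatic morphism, it suffices to verify the identity for one convenient choice, which keeps the diagram chase short.

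\textbf{Duality.} For $(\Delta_0^P)^* = \Delta_0^{P^*}$ I would use the duality property of the copairing, Lemma \ref{P:Omega-nat}(1), which states $\Omega_{P^*} = \sum_i x_i^* \otimes (x^i)^*$. Taking the dual (mirror) of the closed graph that computes $\Delta_0^P$ reverses all strands and orientations; the $P$-edge becomes a $P^*$-edge, the double braiding with the red circle is sent to the double braiding appropriate to $P^*$ (the clasp is symmetric under this reflection up to the duality of the involved objects), and the chromatic coupon $\chr_P$ together with its surrounding copairing gets transported to the data computing $\Delta_0^{P^*}$ precisely by virtue of the duality formula for $\Omega$. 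I would make this precise by recognizing that applying $(-)^*$ to $\End_\cat(P) \to \End_\cat(P^*)$ sends the graph for $\Delta_0^P$ to the graph for $\Delta_0^{P^*}$ after reading the reflected diagram in the standard way, again invoking independence from the chromatic morphism to match the two sides.

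\textbf{Main obstacle.} The main difficulty I anticipate is purely bookkeeping: correctly tracking how the double braiding linking the $P$-strand to the red circle behaves under the reflection used for duality, and ensuring that the pivotal isomorphisms $\phi_V$ and the (co)evaluations are inserted in the right places so that the reflected clasp is genuinely the one occurring in $\Delta_0^{P^*}$ rather than a twisted variant. For naturality the analogous delicate point is checking that sliding $f$ past the clasp does not introduce extra braiding factors; here the key is that $f$ is a morphism of projective objects and can be slid freely along its own strand, with the chromatic defining relation \eqref{eq:chrP} absorbing it. In both cases I expect the argument to reduce, as the authors note is typical, to a short diagrammatic identity once the correct normalization of $\chr_P$ and the copairing conventions are fixed.
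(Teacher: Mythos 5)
Your proposal is correct and takes essentially the paper's approach: both claims follow from viewing $\Delta_0^P$ as the evaluation of the bichrome graph given by a red meridian circle on a $P$-strand (well defined by Lemma \ref{prop:redtoblue}), so that naturality amounts to sliding the red circle past the $f$-coupon by an isotopy, and duality amounts to rotating the diagram by a half-turn, which yields exactly the graph for $\Delta_0^{P^*}$ because the red circle carries no orientation. One small caution on your wording: the duality operation is the $\pi$-rotation (which preserves crossing signs), not a mirror reflection (which would invert the clasp), and with this reading no further appeal to the defining relation \eqref{eq:chrP} or to Lemma \ref{P:Omega-nat} is needed beyond what Lemma \ref{prop:redtoblue} already provides.
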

\begin{proof}
 Since
 \[
 \Delta^P_0=F\bp{\epsh{fig22-8}{6ex}\putlc{58}{90}{$\ms{P}$}}\in\End_\cat(P),
 \]
 the morphism
 $\Delta^P_0$ is clearly natural in $P\in\Proj$. Finally, since the
 red circle is not oriented,
 \[\bigl(\Delta_0^P\bigr)^*=F\bp{\epsh{fig22-10}{8ex}\putlc{58}{10}{$\ms{P}$}}
 =F\bp{\epsh{fig22-11}{8ex}\putlc{58}{10}{$\ms{P}$}}
 =F\bp{\epsh{fig22-12}{8ex}\putlc{58}{10}{$\ms{P}$}}=\Delta_0^{P^*}.
\tag*{\qed}
\]
\renewcommand{\qed}{}
\end{proof}

\begin{lemma}\label{L:fact>twistnd}
 If $\cat$ is factorizable, then it is twist non-degenerate
 and $\Delta_+\Delta_-=\zeta$.
\end{lemma}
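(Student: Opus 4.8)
The plan is to interpret all three quantities as values of the invariant $F'$ on framed red (Kirby) circles, and then to exhibit essentially one closed diagram whose evaluation equals $\Delta_+\Delta_-$ on the one hand and $\zeta$ on the other. Recall from the proof of Lemma \ref{L:Delta} that $\Delta_\pm$ is $F'$ of a $\pm 1$-framed red unknot, while a $0$-framed red meridian encircling a $P$-colored strand inserts $\Delta_0^P$. Factorizability says $\Delta_0^P=\zeta\Lambda_P$ for \emph{every} $P\in\Proj$, i.e.\ such a meridian replaces the encircled strand, through the unit object, by $\zeta\sum_i x_i\circ x^i$.

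First I would compute $\zeta$ as the value of the Hopf link $H$ of two $0$-framed red circles. Applying a red-to-blue modification (Lemma \ref{prop:redtoblue}) to one component turns it into a Kirby-colored blue loop, and the second component becomes a $0$-framed meridian of it. By factorizability this meridian cuts the encircled strand through $\unit$ via $\Lambda_P=\sum_i x_i\circ x^i$ and multiplies by $\zeta$; the integral normalization carried by the chromatic circle (equivalently $F'(\Gamma_0)=\mt_{P_\unit}(\eta\circ\ve)=1$, Lemma \ref{L:prop-proj}) makes the cut-and-reclosed loop evaluate to $1$, so that $F'(H)=\zeta$.

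Next I would show $\Delta_+\Delta_-=F'(H)$, which is the real content (a quantum Gauss-sum identity). Starting from the distant union of a $+1$- and a $-1$-framed red unknot, whose value is $\Delta_+\Delta_-$ by multiplicativity of $F'$ under split union, I would perform a handle slide (Lemma \ref{lemma:slidingOverRed}, which preserves the skein class) of the $-1$-framed circle over the $+1$-framed one; this turns the two self-framings into a single linking and produces the $0$-framed Hopf link $H$ up to a single residual $+1$ curl on the unslid component. The point is that this curl disappears: once factorizability cuts the strand it sits on through $\unit$, the twist $\theta_P$ is absorbed by naturality of the twist together with $\theta_\unit=\Id$, since $x^i\circ\theta_P=\theta_\unit\circ x^i=x^i$. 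Hence the curled and the $0$-framed Hopf links have the same value, giving $\Delta_+\Delta_-=F'(H)=\zeta$, and in particular $\Delta_+\Delta_-\neq 0$, so $\cat$ is twist non-degenerate.

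The hard part is exactly this twist-absorption/normalization step: one must organise the bookkeeping so that the cut contributes precisely $1$ (rather than $\dim_\FK\Hom_\cat(\unit,P)$) and so that the residual curl is genuinely killed. This is where factorizability for all projectives, and not merely for $P_\unit$, together with the integral–cointegral pairing encoded in the chromatic morphism, are essential. As a cross-check in the finite tensor category case, the coend description of Subsection \ref{ss:coend} reduces the statement to $\Delta_+\Delta_-=(\ve_\coend\circ\theta\circ\lambda)(\ve_\coend\circ\theta^{-1}\circ\lambda)=\omega\circ(\lambda\otimes\lambda)=\Delta_0\circ\lambda=\zeta\,\Lambda\circ\lambda=\zeta$, where the only nontrivial equality is again the Gauss-sum identity $\Delta_+\Delta_-=\omega\circ(\lambda\otimes\lambda)$, itself equivalent to the twist-invariance $\Lambda\circ\theta=\Lambda$ on the image of the integral.
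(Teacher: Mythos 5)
Your proof is correct and essentially the paper's own argument: the paper evaluates the single bichrome picture of a Hopf link with framings $(1,0)$ (next to the admissibility graph $\Gamma_0$) in two ways, getting $\zeta\ve$ via red-to-blue on the $1$-framed component, factorizability, and the defining property \eqref{eq:chrP} of the chromatic morphism, and $\Delta_+\Delta_-\ve$ via the handle slide of Lemma \ref{lemma:slidingOverRed} -- exactly your ingredients, with your slide being the inverse of theirs. Your factoring through the $0$-framed Hopf link and your explicit twist-absorption through $\unit$ (the curl dying on the $x^i$--$x_i$ cut by naturality of $\theta$ and $\theta_\unit=\Id$) merely spell out what the paper's one-equation proof leaves implicit.
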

\begin{proof}
 We have
 \begin{equation*}
 \zeta\ve=\epsh{fig22-9}{5ex}\ \epsh{fig22-7}{6ex}
 \putc{50}{81}{$\ms{\ve}$}=\Delta_+\Delta_-\ve,
 \end{equation*}
 where the left equality is obtained by making the $1$-framed
 component blue, applying the factorizability condition and then using the defining property of the chromatic morphism. The right equality is
 obtained by sliding the 0-framed red unknot on the $1$-framed red
 one.
\end{proof}

\begin{proposition}
 The dual of a gluing morphism is conjugate to a gluing morphism by
 any isomorphism $P_\unit\simeq P_\unit^*$.
\end{proposition}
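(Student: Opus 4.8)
The plan is to reduce the statement to the naturality and duality behaviour of the two central morphisms $\Delta_0^{P_\unit}$ and $\Lambda_{P_\unit}$. By Lemma \ref{L:Delta2} we already know that $\Delta_0$ is natural in $P$ and satisfies $(\Delta_0^P)^*=\Delta_0^{P^*}$. First I would record the analogous facts for $\Lambda$: that $\Lambda$ is natural, i.e. $f\circ\Lambda_P=\Lambda_Q\circ f$ for every $f:P\to Q$ in $\Proj$, and that $(\Lambda_P)^*=\Lambda_{P^*}$. Both follow immediately from the corresponding properties of the copairing $\Omega$ in Lemma \ref{P:Omega-nat}. Writing $\Omega_P=\sum_i x^i\otimes x_i$ and $\Omega_Q=\sum_i y^i\otimes y_i$ and applying the composition map $a\otimes b\mapsto b\circ a$ to the naturality identity $\sum_i x^i\otimes(f\circ x_i)=\sum_i(y^i\circ f)\otimes y_i$ yields $f\circ\Lambda_P=\Lambda_Q\circ f$; applying it to the duality identity $\Omega_{P^*}=\sum_i x_i^*\otimes(x^i)^*$ yields $\Lambda_{P^*}=\sum_i(x^i)^*\circ x_i^*=(\Lambda_P)^*$.

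A second preliminary observation is that, because $\Delta_0$ is natural, $\Delta_0^{P}$ is central in $\End_\cat(P)$ for any $P\in\Proj$: applying the naturality of Lemma \ref{L:Delta2} to an endomorphism $f:P\to P$ gives $f\circ\Delta_0^P=\Delta_0^P\circ f$. This centrality is what lets me move freely between the two possible orders of the defining relation $\gm\circ\Delta_0^{P_\unit}=\Lambda_{P_\unit}$ after dualizing, which reverses the order of composition.

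Now for the computation. Fix an isomorphism $\psi:P_\unit\to P_\unit^*$, which exists by Lemma \ref{L:prop-proj}(2), and set $\tilde\gm=\psi^{-1}\circ\gm^*\circ\psi\in\End_\cat(P_\unit)$. Dualizing the gluing relation and using $(\Delta_0^{P_\unit})^*=\Delta_0^{P_\unit^*}$ together with $(\Lambda_{P_\unit})^*=\Lambda_{P_\unit^*}$ gives $\Delta_0^{P_\unit^*}\circ\gm^*=\Lambda_{P_\unit^*}$. I would then compute, using naturality of $\Delta_0$ for $\psi$ (so $\psi\circ\Delta_0^{P_\unit}=\Delta_0^{P_\unit^*}\circ\psi$), centrality of $\Delta_0^{P_\unit^*}$, the dualized relation, and finally naturality of $\Lambda$ for $\psi$:
$$\tilde\gm\circ\Delta_0^{P_\unit}=\psi^{-1}\circ\gm^*\circ\Delta_0^{P_\unit^*}\circ\psi=\psi^{-1}\circ\Delta_0^{P_\unit^*}\circ\gm^*\circ\psi=\psi^{-1}\circ\Lambda_{P_\unit^*}\circ\psi=\Lambda_{P_\unit}.$$
Hence $\tilde\gm$ is a gluing morphism, which is exactly the assertion, and the argument is valid for every choice of $\psi$.

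The proof is essentially bookkeeping, so the main obstacle is conceptual rather than computational: one has to recognize that dualizing places the relation on the "wrong side", so that $\gm^*$ only satisfies $\Delta_0^{P_\unit^*}\circ\gm^*=\Lambda_{P_\unit^*}$, and that this is harmless precisely because $\Delta_0$ is central. The only genuinely new input beyond Lemma \ref{L:Delta2} is the naturality and duality of $\Lambda$ extracted from Lemma \ref{P:Omega-nat}; here I would be careful that naturality gives the \emph{on the nose} equality $\psi\circ\Lambda_{P_\unit}=\Lambda_{P_\unit^*}\circ\psi$, since a mere up-to-scalar statement (compatible with the normalization $\mt_{P_\unit}(\eta\circ\ve)=1$) would not suffice to conclude $\tilde\gm\circ\Delta_0^{P_\unit}=\Lambda_{P_\unit}$ without an unwanted scalar.
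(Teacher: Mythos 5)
Your proof is correct and follows essentially the same route as the paper's: dualize the relation $\gm\circ\Delta_0^{P_\unit}=\Lambda_{P_\unit}$ using $(\Lambda_{P_\unit})^*=\Lambda_{P_\unit^*}$ (from Lemma \ref{P:Omega-nat}) and $(\Delta_0^{P_\unit})^*=\Delta_0^{P_\unit^*}$ (Lemma \ref{L:Delta2}), absorb the reversal of composition via centrality of $\Delta_0$, and conjugate by $\psi$ using naturality. Your explicit derivation of the naturality and duality of $\Lambda$ from the properties of $\Omega$ is a welcome clarification of a step the paper leaves implicit, but it is the same argument.
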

\begin{proof}
 For the existence: since \smash{$\Delta_0^{P_\unit}\neq0$} by Lemma~\ref{L:prop-proj}\,(6),
 there exist \smash{$\gm\in\End_\cat(P_\unit)$} such
 that \smash{$\Delta_0^{P_\unit}\gm=\Lambda_{P_\unit}$}. Since
 \smash{$\Delta_0^{P_\unit}$} is central,
 \smash{$\gm\Delta_0^{P_\unit}=\Lambda_{P_\unit}$} and taking the dual, we get
 \[
 (\Lambda_{P_\unit})^*=\Lambda_{P_\unit^*}=\bigl(\Delta_0^{P_\unit}\bigr)^*\gm^* =\bigl(\Delta_0^{P_\unit^*}\bigr)\gm^*
 \]
 (where the first equality is due to Lemma~\ref{P:Omega-nat} and the last to Lemma~\ref{L:Delta2}).
 Now if we conjugate by an isomorphism $\psi\colon P_\unit\to P_\unit^*$ we
 get \smash{$\Lambda_{P_\unit}=\psi\circ \Lambda_{P^*_\unit}\circ \psi^{-1}=\Delta_0^{P_\unit}\bigl(\psi^{-1}\gm^*\psi\bigr)$}, where the last equality follows by Lemma~\ref{L:Delta2}.
\end{proof}

\begin{lemma}\label{lem:invertibleg}
 The category $\cat$ has a gluing morphism which is an isomorphism of
 $P_\unit$ if and only~if%
 \[\Delta_0^{P_\unit}=\zeta\Lambda_{P_\unit} \qquad
 \text{for some scalar }\zeta\in\FK^* \text{$($i.e., iff $\cat$ is chromatic compact$)$.}\]
 In this case, \smash{$\zeta^{-1}\Id_{P_\unit}+n$} is a gluing morphism for any nilpotent $n\in {\rm End}(P_1)$.
\end{lemma}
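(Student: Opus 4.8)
The plan is to prove both implications by exploiting the structure of $\End_\cat(P_\unit)=\FK\Id\oplus J$ established in Lemma \ref{L:prop-proj}, together with the centrality of $\Delta_0^{P_\unit}$ and the characterization of nilpotents via $\Lambda_{P_\unit}$. First I would treat the ``if'' direction, which is the easy one: assume $\cat$ is chromatic compact, so $\Delta_0^{P_\unit}=\zeta\Lambda_{P_\unit}$ with $\zeta\in\FK^*$. I claim $\gm=\zeta^{-1}\Id_{P_\unit}+n$ is a gluing morphism for any nilpotent $n$. To verify this I would compute $\gm\circ\Delta_0^{P_\unit}=(\zeta^{-1}\Id+n)\circ\zeta\Lambda_{P_\unit}=\Lambda_{P_\unit}+\zeta\,n\circ\Lambda_{P_\unit}$, and then observe that $n\circ\Lambda_{P_\unit}=n\circ\eta\circ\ve=0$ because by Lemma \ref{L:prop-proj}(3) we have $\Hom_\cat(\unit,P_\unit)=\FK\eta$, so $n\circ\eta\in\FK\eta$, while $n$ nilpotent forces $\ve\circ n=0$ by Lemma \ref{L:prop-proj}(5) and hence (taking duals or using (3) again) $n\circ\eta=0$. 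This shows $\gm\circ\Delta_0^{P_\unit}=\Lambda_{P_\unit}$, so $\gm$ is a gluing morphism; and $\gm=\zeta^{-1}\Id+n$ is invertible since $\zeta^{-1}\neq0$ and $\End_\cat(P_\unit)$ is local with maximal ideal $J$.

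For the ``only if'' direction, suppose $\gm$ is a gluing morphism that is an isomorphism, so $\gm\circ\Delta_0^{P_\unit}=\Lambda_{P_\unit}$ with $\gm$ invertible. Since $\End_\cat(P_\unit)$ is local, I would write $\gm=\lambda\Id+n$ with $\lambda\in\FK^*$ (invertibility forces $\lambda\neq0$) and $n\in J$ nilpotent. Applying $\gm^{-1}$ gives $\Delta_0^{P_\unit}=\gm^{-1}\circ\Lambda_{P_\unit}$. Now $\Lambda_{P_\unit}=\eta\circ\ve$ by Lemma \ref{L:prop-proj}(4), and I would compute $\gm^{-1}\circ\eta\circ\ve$: again using that $\Hom_\cat(\unit,P_\unit)=\FK\eta$, the element $\gm^{-1}\circ\eta$ is a scalar multiple $\mu\eta$ of $\eta$, so $\Delta_0^{P_\unit}=\mu\,\eta\circ\ve=\mu\Lambda_{P_\unit}$. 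To pin down $\mu=\zeta$ and show $\zeta\in\FK^*$, I would note that $\gm$ invertible forces $\Delta_0^{P_\unit}=\gm^{-1}\circ\Lambda_{P_\unit}$ to be nonzero (as $\Lambda_{P_\unit}=\eta\circ\ve\neq0$ by non-degeneracy of the m-trace, and $\gm^{-1}$ is an isomorphism), hence $\mu\neq0$; setting $\zeta=\mu$ gives chromatic compactness.

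The main subtlety, and the step I would handle most carefully, is the interplay between the one-dimensionality of $\Hom_\cat(\unit,P_\unit)$ and the action of endomorphisms: the key recurring fact is that any $f\in\End_\cat(P_\unit)$ sends $\eta$ to a scalar multiple of $\eta$ and $\ve$ to a scalar multiple of $\ve$, and that these scalars agree with the ``diagonal part'' $\lambda$ when $f=\lambda\Id+n$, so that $f\circ\Lambda_{P_\unit}=\Lambda_{P_\unit}\circ f=\lambda\Lambda_{P_\unit}$. This collapses all the computations, and once it is in hand both directions follow immediately. I expect no serious obstacle beyond keeping the scalars straight; the essential input is entirely contained in Lemma \ref{L:prop-proj} parts (3), (4), (5).
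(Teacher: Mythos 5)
Your proposal is correct and follows essentially the same route as the paper, whose (very terse) proof just writes $\gm^{-1}=\zeta\Id+n$ via the local decomposition $\End_\cat(P_\unit)=\FK\Id\oplus J$ and uses that nilpotents annihilate $\Lambda_{P_\unit}=\eta\circ\ve$, exactly the ``key recurring fact'' you isolate. You in fact spell out more than the paper does (both directions and the final claim, which the paper leaves implicit); just note that deducing $n\circ\eta=0$ is cleanest by iterating $n\circ\eta=\mu\eta$ to get $\mu^k\eta=n^k\circ\eta=0$, since $\ve\circ n=0$ alone does not immediately give it ($\ve\circ\eta$ may vanish).
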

\begin{proof}
 Let $\gm$ be an invertible gluing morphism. Then $\gm^{-1}=\zeta\Id+n$ for some $n$ nilpotent and $\zeta\in\FK^*$. Then \smash{$\Delta_0^{P_\unit}=\gm^{-1}\Lambda_{P_\unit}=\zeta\Lambda_{P_\unit}$}.
\end{proof}

\section{Presentation of manifolds}\label{sec:presman}
\subsection{Surgery presentation of 3-manifolds containing ribbon graphs} It is well known that any closed 3-manifold can be represented by surgery along a link in $S^3$ and that two such presentations of the same manifold are related by the following two moves. A~Kirby I move which replaces a link $L$ with itself disjoint union a unknot $U_{\pm}$ with framing $\pm 1$:
\begin{equation*}
 L\quad\longleftrightarrow\quad L\sqcup U_{\pm}
\end{equation*}
and
 a Kirby II move which replaces a component $L_i$ of a link with a connected
sum of $L_i$ with a parallel copy of a different component $L_j$:
\begin{equation}
 \label{eq:Kirby2}
 \epsh{K2c}{12ex}\ \longleftrightarrow\ \epsh{K2d}{12ex}\ .
\end{equation}
The result of surgery along a link $L$ is a 3-manifold
$S^3(L)$ uniquely defined up to isotopy. If $L$ and $L'$ are related by a
Kirby move, it induces a canonical diffeomorphism
\smash{$S^3(L)\overset\sim\to S^3(L')$} up to isotopy.

Now given a pair $(M,T)$ where $M$ is a closed 3-manifold containing an admissible $\cat$-colored ribbon graph $T$.
 We say the ribbon graph $L \cup T \subset S^3$ is a surgery presentation of $(M,T)$ if~$L$ is a link surgery presentation representing $M$. The components of $L$ are called the surgery components of $L\cup T$.
 We have the following theorem (see~\cite{CGP14}).
\begin{theorem}\label{T:SurgPresRelatedbyKirby}
 For $i=1,2$, let $L_i\cup T_i$ be a surgery presentation in $S^3$ of a $3$-manifold $M_i$ containing a $\cat$-colored ribbon graph $T_i$. Let
 $f\colon M_1\to M_2$ be an orientation preserving diffeomorphism such that
 $f(T_1)=T_2$ as $\cat$-colored ribbon graphs. Then $L_1\cup T_1$ and
 $L_2\cup T_2$ are related by a sequence of orientation changes of
 the surgery components, Kirby I moves away from $T_i$, Kirby II
 moves on the surgery components and Kirby II moves obtained by
 sliding an edge of $T_i$ on a component of the surgery link such
 that the induced diffeomorphism between $M_1$ and $M_2$ is isotopic
 to $f$.
\end{theorem}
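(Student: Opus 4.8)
The plan is to bootstrap from the classical graphless Kirby--Fenn--Rourke theorem by recording the effect of surgery in dimension four and tracking the ribbon graph, which throughout rides passively on the boundary. First I would dispose of the colored ribbon data: the colors and coupons of $T_i$ are carried along by any ambient isotopy and by each of the moves in the statement, so it suffices to prove the topological statement for $T_i$ a framed (uncolored, coupon-free) graph embedded in $M_i$, with the coloring reinstated at the end. I would also first reduce the role of $f$: after an ambient isotopy of $M_2$ we may assume $f$ carries the underlying framed graph of $T_1$ exactly onto that of $T_2$, so that the remaining task is to compare two surgery presentations inducing the \emph{same} identification on the graphs.

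To each presentation $L_i \cup T_i \subset S^3$ I would associate the $4$-dimensional $2$-handlebody $W_i = B^4 \cup_{L_i}(\text{$2$-handles})$, whose boundary is $M_i$ and in whose boundary the graph $T_i$ sits inside the region $S^3 \setminus N(L_i)$ that survives the surgery. The diffeomorphism $f \colon M_1 \to M_2$ then relates the two boundaries compatibly with the graphs. The key input is the relative handle calculus (Cerf theory): any two handle decompositions of a smooth $4$-manifold that induce isotopic boundary identifications are related by a finite sequence of ambient isotopies, handle slides, and births/deaths of canceling handle pairs. Applying this to $W_1$ and $W_2$ (after the standard stabilization that allows one to compare $2$-handlebodies whose boundaries agree), I obtain a sequence of elementary four-dimensional moves connecting the two handle pictures.

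The final step is the dictionary translating each four-dimensional move into a move on the diagram $L \cup T$ in $S^3$: a handle slide of one $2$-handle over another becomes a Kirby II move on the surgery components; the birth or death of a canceling pair (equivalently, the introduction or removal of a $\pm1$-framed unknot) becomes a Kirby I move, which can always be performed in a small ball disjoint from $T$, hence ``away from $T_i$''; reversing the co-orientation of a $2$-handle becomes an orientation change of the corresponding surgery component, which does not alter the surgered manifold. The only moves that touch the graph are those in which an edge of $T_i$, lying on the boundary, is pushed across the belt sphere of a $2$-handle; this is exactly a Kirby II move sliding an edge of $T_i$ over a surgery component. Reinstating the colors and coupons and checking that the resulting identification $M_1 \to M_2$ is isotopic to $f$ completes the argument.

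The step I expect to be the main obstacle is making the relative Cerf argument honest in the presence of the graph: one must guarantee that the births, deaths, and slides can be arranged so that the attaching and belt spheres are kept disjoint from $T$ except in the controlled situation above, and that no move drags a $2$-handle across $T$ in an uncontrolled way. Equivalently, one must run the Fenn--Rourke argument in the complement of (a neighborhood of) $T$, which is where all the genericity and transversality bookkeeping lives; once the moves are localized away from $T$, the translation to Kirby moves and the verification that the induced diffeomorphism is isotopic to $f$ are routine. This is exactly the content established in \cite{CGP14}.
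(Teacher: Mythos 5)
First, a structural remark: the paper does not prove this theorem at all --- it is imported wholesale from \cite{CGP14} (``We have the following theorem (see \cite{CGP14})'') --- so your closing sentence, which defers the transversality and tracking arguments to ``exactly the content established in \cite{CGP14}'', is circular for a blind proof: you cite the source of the theorem as the justification for the one step you yourself identify as the main obstacle. Judging the sketch on its own terms, there is a genuine mathematical gap in the four-dimensional step. You propose to apply relative Cerf theory to the traces $W_1 = B^4\cup_{L_1}(\text{2-handles})$ and $W_2$, asserting that handle decompositions ``of a smooth $4$-manifold'' inducing isotopic boundary identifications are related by isotopies, slides, and births/deaths of canceling pairs. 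But $W_1$ and $W_2$ are in general \emph{not diffeomorphic}: a Kirby I move changes the trace by connected sum with $\pm\CP^2$, so whenever the two presentations differ by Kirby I moves, no Cerf-theoretic comparison inside a single manifold can connect them. The actual argument must first make the traces diffeomorphic by stabilizing with $\CP^2$ and $\overline{\CP^2}$ summands, and the input guaranteeing that this suffices is Wall's theorem on stable diffeomorphism of simply connected $4$-manifolds (with signature bookkeeping to match the summands), not the ``standard stabilization'' of handle theory within a fixed manifold that your parenthesis suggests. Moreover, the Cerf argument produces births/deaths of $1$--$2$ and $2$--$3$ pairs, and trading the resulting $1$- and $3$-handles back for $2$-handles so as to stay in the class of $2$-handlebodies is a further nontrivial step your dictionary omits.

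Second, the refinement that distinguishes this statement from the bare Kirby theorem --- that the induced diffeomorphism $M_1\to M_2$ be isotopic to the \emph{given} $f$ --- is precisely the point of the statement, and you dismiss its verification as ``routine''. It is not: one must build $f$ into the stable diffeomorphism of the traces and then control the residual self-diffeomorphism of $S^3$ that the comparison produces, which rests on Cerf's theorem that every orientation-preserving diffeomorphism of $S^3$ extends over $B^4$ (i.e.\ $\Gamma_4=0$). Your treatment of the graph, by contrast, is essentially right: reducing to uncolored framed graphs, keeping $T$ in the region $S^3\setminus N(L)$ that survives surgery, and recognizing that the only essential interaction is the controlled passage of an edge across a belt sphere (the graph-slide Kirby II move) is the correct bookkeeping and matches how the cited source handles the pair $(M,T)$. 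So the plan is structurally sound, but as written it would fail at the comparison of $W_1$ with $W_2$, and the two decisive inputs --- Wall's stable diffeomorphism theorem and Cerf's $\Gamma_4=0$ --- are missing rather than merely elided.
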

\subsection{Presentation of the (3+1)-cobordism categories
}\label{S:Top}
A generator and relation presentation of the $n$-dimensional oriented cobordism category
is given in~\cite{Juh2018}. The (3+1)-TQFT construction given here
uses this work. Here we recall
 some of the definitions and results of~\cite{Juh2018}, in the case of 3-manifolds, i.e., $n=3$.

Let $\man$ be the category whose objects are closed oriented 3-manifolds and whose morphisms are
orientation preserving diffeomorphisms.
 Let $\cob$ be the category whose objects are closed oriented 3-manifolds and morphisms are equivalence classes (up to oriented diffeomorphism that are the identity on the boundary) of oriented cobordisms.
 Finally, let $\nc\cob$ (resp. $\cob'$) be the largest subcategory of $\cob$ such that each component of every cobordism has a non-empty source (resp.\ has a non-empty source and target).
 Here we add the non-compact cobordism category that
 was not consider
 by Juh\'asz but that we will use later.

Let $M$ be a closed oriented 3-manifold. For
$k\in \{0,1,2,3\}$, a \emph{framed k-sphere} in $M$ is an orientation reversing embedding $\Sp=\Sp^k\colon S^k \times D^{3-k} \hookrightarrow M$.
Then we can perform surgery on~$M$ along $\Sp$ by removing the interior of the image of $\Sp$ and gluing in $D^{k+1} \times S^{2-k}$, getting a well defined topological manifold $M(\Sp)$ which, using the framing of the sphere, can be endowed with a canonical smooth structure.
 Juh\'asz considers additional types of framed sphere, namely $\Sp=\Sp^{-1}=0$ and $\Sp=\varnothing$, where $M(0)=M \sqcup S^3$ and
$M(\varnothing)=M$. Given a framed k-sphere $\Sp$,
 we denote by $W(\Sp)$ an associated smooth cobordism from $M \to M(\Sp)$ (unique up to diffeomorphism relative to the boundary see for instance~\cite[Theorem~3.13]{Mil1965}), $W(0)=(M\times[-1,1])\sqcup D^4$ and~${W(\varnothing)=M\times [-1,1]}$.

Let $\mathcal{G}$ be the directed graph described as follows. The vertices correspond to objects of $\man$. There are two kinds of edges of $\mathcal{G}$: 1) if $d\colon M \to M'$ is a diffeomorphism then there is an edge~$e_d$ going from $M$ to $M'$, 2) if $\Sp$ is a $k$-framed sphere in $M$ then
there is an edge $e_{M,\Sp}$ from $M$ to~$M(\Sp)$. Let $\nc{\mathcal G}$ (resp. $\mathcal{G'}$) be the subgraph of $\mathcal{G}$ obtained by removing the edges $e_{M,\Sp}$ where $\Sp=0$ (resp. where $\Sp=0$ or $\Sp$ is a framed $3$-sphere). Let $\mathcal{F}(\mathcal{G})$ (resp.\ $\mathcal{F}(\nc{\mathcal G})$, resp.\ $\mathcal{F}(\mathcal{G}')$) be the free category generated by $\mathcal{G}$ (resp.\ $\nc{\mathcal G}$, resp.\ $\mathcal{G}'$).

In~\cite[Definition~1.4]{Juh2018}, Juh\'asz considers a set of relations $\mathcal{R}$ in $\mathcal{F}(\mathcal{G})$ which we recall now. If~$w$ and~$w'$ are words consisting of composing arrows, then we write $w\sim w'$ if
$(w,w')\in \mathcal{R}$.
\begin{enumerate}\itemsep=0pt
\item[(R1)] For diffeomorphisms $d$ and $d'$ whose composition is defined, we have the relation $e_{d \circ d'} \sim e_d\circ e_{d'}$. We also have the relations $e_{M,\varnothing}\sim e_{\Id_M}$ and $e_d\sim e_{\Id_M}$ if $d\colon M \to M$ is a~diffeomorphism isotopic to the identity.
\item[(R2)] If $d\colon M \to M'$ is an orientation preserving diffeomorphism between 3-manifolds and~$\Sp$ a~framed sphere in $M$, then let $\Sp'=d\circ \Sp$ be the framed sphere in $M'$; then
let $d^\Sp\colon M(\Sp)\to M'(\Sp')$ be the induced diffeomorphism. Then the commutativity of the following diagram defines a relation:
\[\xymatrix{
M \ar[d]_{d} \ar[r]^{e_{M,\Sp}} & M(\Sp) \ar[d]^{d^\Sp} \\
M' \ar[r]^{e_{M',\Sp'}} & M'(\Sp').}\]
\item[(R3)] If $\Sp$, $\Sp'$ are disjoint framed sphere in an oriented 3-manifold $M$, then
$M(\Sp)(\Sp')= M(\Sp')(\Sp)$ and we denote this 3-manifold by $M(\Sp,\Sp')$. The commutativity of the following diagram defines a relation:
\[\xymatrix{
M \ar[d]_{e_{M,\Sp'}} \ar[r]^{e_{M,\Sp}} & M(\Sp) \ar[d]^{e_{M(\Sp),\Sp'}} \\
 M(\Sp')\ar[r]^{e_{M(\Sp'),\Sp}} & M(\Sp,\Sp').}\]

 \item[(R4)]
 Let $\Sp$ be a framed $k$-sphere in $M$ and $\Sp'$ a framed $k'$-sphere in $M(\Sp)$. If the attaching sphere \smash{$\Sp'\bigl(S^{k'}\times \{0\}\bigr)\subset M(\Sp)$} intersects the belt sphere
 $\{0\}\times S^{-k+2} \subset M(\Sp)$ once transversally, then there is a diffeomorphism (well defined up to isotopy)
 $\phi \colon M \to M(\Sp,\Sp')$ (defined in~\cite[Definition~2.17]{Juh2018}) and the following is a relation:
 \[
 e_{M(\Sp),\Sp'} \circ e_{M,\Sp} \sim \phi.
 \]
\item[(R5)] If $\Sp\colon S^k \times D^{3-k} \hookrightarrow M $ is a framed $k$-sphere, then there is a relation $ e_{M,\Sp} \sim e_{M,\bar\Sp} $ where~$\bar \Sp$ is the framed $k$-sphere given by $\bar\Sp(x,y)= \Sp(r_{k+1}(x),r_{3-k}(y))$ with $x\in S^k \subset \R^{k+1}, y\in D^{3-k}\subset \R^{3-k}$ and $r_m(x_1,x_2, \dots, x_m)=(-x_1,x_2, \dots, x_m)$.
\end{enumerate}
Let $\nc{\mathcal{R}}$ (resp.\ $\mathcal{R'}$) be the subset of
$\mathcal{R}$ consisting of relations where all involved edges are in
$\nc {\mathcal{G}}$ (resp.~$\mathcal{G'}$).
\begin{definition}[{\cite[Definition~1.5]{Juh2018}}]
 Let $c\colon \mathcal{G}\to \cob$ be the map which is the identity on
 vertices, assigns the cylindrical cobordism $c_d$ to a
 diffeomorphism $d$ (see~\cite[Definition~2.3]{Juh2018}) and
 assigns the cobordism $W(\Sp)$ to the edge $ e_{M, \Sp}$. This
 extends to a symmetric strict monoidal functor
 $c\colon \mathcal{F}(\mathcal{G})\to \cob$ (resp
 $c\colon \mathcal{F}(\nc{\mathcal{G}})\to \nc{\cob}$,
 resp. $c\colon \mathcal{F'}({\mathcal{G'}})\to {\cob'}$).
\end{definition}

Recall that given a category $\cat$ and a set of relations $\sim$ on
its morphisms, the quotient category~$\cat/{\sim}$ has the same objects
as $\cat$ and
equivalence classes of morphisms of $\cat$
under the monoidal congruence relation generated by $\sim$
as morphisms.
\begin{theorem}[{\cite[Theorem~1.7]{Juh2018}}]
The functor $c\colon \mathcal{F}(\mathcal{G})\to \cob$ induces a functor
$\mathcal{F}(\mathcal{G})/\mathcal{R}\to \cob$,
which is an isomorphism of symmetric monoidal categories.
Furthermore there is also an induced isomorphism of symmetric monoidal categories
$c\colon \mathcal{F}(\mathcal{G'})/\mathcal{R'}\to \cob'$.
\end{theorem}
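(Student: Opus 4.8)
The plan is to show that the functor $c$ respects the relations $\mathcal{R}$, so that it descends to the quotient, and then to verify that the induced functor is bijective on objects and on $\operatorname{Hom}$-sets. Bijectivity on objects is immediate, since $c$ is the identity on objects. That $c$ descends amounts to checking each of (R1)--(R5) holds in $\cob$: (R1) is the functoriality of the cylinder construction together with the fact that isotopic diffeomorphisms yield cobordisms that are diffeomorphic rel boundary; (R2) is the naturality of surgery under a diffeomorphism; (R3) expresses that surgeries along disjoint framed spheres commute, i.e. independent handles may be reordered; (R4) is the handle-cancellation lemma, asserting that a $k$-handle and a $(k+1)$-handle whose attaching and belt spheres meet transversally in a single point cancel to a cylinder; and (R5) records that reorienting a framed sphere does not alter the surgery. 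Each is a standard fact about handle attachments.

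For fullness I would use Morse theory. Given a cobordism $W\colon M\to M'$, choose a Morse function $f\colon W\to[0,1]$ with $f^{-1}(0)=M$, $f^{-1}(1)=M'$, all critical points nondegenerate and lying in distinct regular levels, and a gradient-like vector field. Crossing a critical point of index $j$ replaces the level $3$-manifold $N$ by the result of surgery along a framed $(j-1)$-sphere $\Sp$ in $N$, realizing the edge $e_{N,\Sp}$; the regular sub-cobordisms between successive critical levels are cylinders, hence equal in $\cob$ to images of diffeomorphism edges $e_d$. Composing these pieces exhibits $W$ as $c(w)$ for a word $w$ in $\mathcal{F}(\mathcal{G})$, proving $c$ is full.

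The main work, and the principal obstacle, is faithfulness: whenever $c(w)$ and $c(w')$ are diffeomorphic rel boundary, the words $w,w'$ must be related by $\mathcal{R}$. Here one invokes Cerf theory. The words correspond to two ordered handle decompositions (equivalently, two generic Morse functions with gradient-like fields) of the \emph{same} cobordism $W$, and the goal is to connect any two such by a finite sequence of elementary moves, each realized by a relation in $\mathcal{R}$: isotopy of the Morse function and its gradient through regular one-parameter families, which isotopes the attaching maps and includes handle slides, captured by (R1) and (R2); interchange of two critical points whose attaching and belt data are disjoint, captured by (R3); and births or deaths of canceling pairs of adjacent index as one crosses the codimension-one birth--death stratum, captured by (R4); with (R5) serving as an orientation normalization. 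The essential difficulty is the completeness statement---that these moves, and hence (R1)--(R5), generate \emph{all} relations among handle decompositions of $W$---which is precisely the Cerf-theoretic content underlying Juh\'asz's theorem.

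Finally, the symmetric monoidal structure is handled by noting that disjoint union is the monoidal product on both sides, that $c$ is strict monoidal by construction, and that the symmetry (component swap) is manifestly preserved; thus the induced functor is an isomorphism of symmetric monoidal categories. For the variant $c\colon\mathcal{F}(\mathcal{G}')/\mathcal{R}'\to\cob'$, I would observe that a cobordism lies in $\cob'$ exactly when each of its components has nonempty source and target, and that such a cobordism admits a Morse function with no critical points of index $0$ or $4$ rel boundary (by trading index-$0$ handles against index-$1$ handles and index-$4$ against index-$3$). These excluded handles are exactly the edges $\Sp=0$ and the framed $3$-spheres removed to form $\mathcal{G}'$, and $\mathcal{R}'$ consists of precisely the relations whose edges survive; the fullness and faithfulness arguments then run verbatim within this restricted class. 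The intermediate case $\nc\cob$ is identical, discarding only the index-$0$ critical points corresponding to the edges $\Sp=0$.
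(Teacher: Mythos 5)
Your outline reproduces, at the level of strategy, Juh\'asz's own proof (check that (R1)--(R5) hold in $\cob$ so that $c$ descends; fullness from Morse functions; faithfulness from Cerf theory on one-parameter families), and it is worth noting that the paper does not reprove this statement at all --- it cites it, adding only a remark about how its formulation differs from Juh\'asz's. For the unprimed isomorphism $\mathcal{F}(\mathcal{G})/\mathcal{R}\to \cob$ your sketch has the right shape, and you are honest in deferring the completeness claim (that the Cerf moves generate all relations) to the cited Cerf-theoretic machinery rather than pretending to prove it.

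The genuine gap is in your final paragraph, and it sits exactly where the paper's remark after the theorem puts its finger. For $c:\mathcal{F}(\mathcal{G}')/\mathcal{R}'\to\cob'$ you claim the fullness and faithfulness arguments ``run verbatim within this restricted class.'' Fullness does: a cobordism in $\cob'$ admits a decomposition without $0$- and $4$-handles, by cancelling $0$-handles against $1$-handles (possible since every component has nonempty source) and dually for $4$- against $3$-handles. But faithfulness does not run verbatim: two such decompositions of the same $W$ must be joined by a one-parameter family, and a generic Cerf path is free to contain birth--death points creating cancelling $0$--$1$ or $3$--$4$ pairs, i.e.\ to leave the restricted class. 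The instances of (R4) involving $\Sp=0$ or a framed $3$-sphere are precisely the relations excluded from $\mathcal{R}'$, so one may not invoke them; one needs the additional Cerf-theoretic input that paths can be chosen avoiding $0$--$1$ and $(n-1)$--$n$ birth--deaths. This is why Juh\'asz actually states his Theorem 1.7 with the \emph{full} relation set, as $\mathcal{F}(\mathcal{G}')/\mathcal{R}\to\cob'$, and why the present paper's remark explains that the strengthening to $\mathcal{R}'$ requires ``a careful reading'' of the proof and is explicitly Juh\'asz's Theorem 2.24. The same caveat applies to your one-line dismissal of the case $\nc\cob$, which the paper treats as a separate corollary with its own justification via parameterized Cerf decompositions (or by transposing the dimension-3 argument of \cite[Corollary 4.3]{CGPV2023a}): avoiding $0$--$1$ birth--deaths there is an extra argument, not a verbatim repetition.
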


\begin{remark}
 The theorem is stated in a slightly different way by Juh\'asz as an
 isomorphism $\mathcal{F}(\mathcal{G'})/\mathcal{R}\to \cob'$ but a careful
 reading of its proof shows that the $0$-$1$-handle cancellations
 and the $(n-1)$-$n$-handle cancellations can be avoided so that one
 only needs to consider the equivalence relation generated by
 $\mathcal R'$ (this is explicitly stated in~\cite[Theorem~2.24]{Juh2018}) . The
 other difference is that, in order to have a monoidal category, we have added
 the empty manifold in~$\mathcal{G'}$ and $\cob'$ but it has no maps
 to any other object.
\end{remark}

\begin{corollary}
 The functor $c\colon \mathcal{F}(\nc{\mathcal{G}})\to \nc{\cob}$ induces
 a functor
 \[
 \mathcal{F}(\nc{\mathcal{G}})/\nc{\mathcal{R}}\to \nc\cob,
 \]
 which is an isomorphism of symmetric monoidal categories.
\end{corollary}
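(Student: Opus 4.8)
The plan is to show that the functor $\bar c\colon \mathcal{F}(\nc{\mathcal G})/\nc{\mathcal R}\to\nc\cob$ induced by $c$ is an isomorphism of symmetric monoidal categories. First I would check that $\bar c$ is well defined. The generators of $\nc{\mathcal G}$ are the diffeomorphism cylinders $c_d$ and the surgery cobordisms $W(\Sp)$ along framed $k$-spheres with $k\in\{0,1,2,3\}$, the excluded edges $\Sp=0$ being precisely the ones producing a component with empty source; hence $c$ sends every generator of $\nc{\mathcal G}$ into $\nc\cob$. Since gluing two cobordisms each of whose components meets its source again has this property (every component of the composite reaches $M_1$, and each component of $M_1$ is the target of a source-touching piece below), $\nc\cob$ is closed under composition, and it is clearly closed under disjoint union; so $c$ lands in $\nc\cob$. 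As $\nc{\mathcal R}\subset\mathcal R$ and these relations already hold in $\cob\supset\nc\cob$, the functor descends to the quotient, is the identity on objects, and is strictly monoidal for $\sqcup$ and the symmetry by construction. It therefore remains to prove that $\bar c$ is bijective on each hom-set, which I will do by comparing with the two isomorphisms $\mathcal{F}(\mathcal G)/\mathcal R\cong\cob$ and $\mathcal{F}(\mathcal G')/\mathcal R'\cong\cob'$ of \cite{Juh2018} recalled above.

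For fullness I would use Morse theory. Any cobordism $W\colon M\to M'$ in $\nc\cob$ has every connected component meeting the source $M$, so it admits a Morse function with no index-$0$ critical point, equivalently a handle decomposition using only handles of index $1,2,3,4$. Reading such a decomposition from $M$ to $M'$ writes $W$ as a composite of diffeomorphism cylinders and surgeries along framed $k$-spheres with $k\in\{0,1,2,3\}$, where the index-$4$ handles are realised by framed $3$-spheres, which $\nc{\mathcal G}$ retains. These are exactly the edges of $\nc{\mathcal G}$, so $\bar c$ is surjective on morphisms.

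The main obstacle is faithfulness. Suppose $w,w'$ are morphisms of $\mathcal{F}(\nc{\mathcal G})$ with $c(w)=c(w')$ in $\nc\cob\subset\cob$. By the isomorphism $\mathcal{F}(\mathcal G)/\mathcal R\cong\cob$ they are related by a finite chain of relations in $\mathcal R$ inside $\mathcal{F}(\mathcal G)$; the difficulty is that a priori this chain might pass through words containing the excluded $0$-handle edges $e_{M,0}$. I would eliminate these exactly as Juh\'asz does in his refinement from $\cob$ to $\cob'$, but carried out on the source side only. The only relations of $\mathcal R$ that can force the appearance of a $0$-handle are the handle-cancellation relations (R4): the bottom ($0$--$1$) cancellations create $0$-handles, while the top cancellations create $4$-handles, i.e.\ framed $3$-spheres. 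Since both $w$ and $w'$ already avoid $0$-handles, one can run Juh\'asz's reduction without ever performing a $0$--$1$-handle cancellation, so that every intermediate word stays in $\mathcal{F}(\nc{\mathcal G})$ and every relation invoked lies in $\nc{\mathcal R}$; in contrast to the $\cob'$ case we remain free to use (and in general still need) the top $3$--$4$-handle cancellations, which is precisely why framed $3$-spheres stay among the generators. This is the mechanism the Remark above refers to, now applied to the bottom boundary alone.

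Combining the three points, $\bar c$ is an identity-on-objects, strictly symmetric monoidal functor that is bijective on morphisms, hence an isomorphism of symmetric monoidal categories, as claimed.
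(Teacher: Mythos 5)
Your proposal is correct and takes essentially the same route as the paper: the paper's proof likewise reduces everything to Juh\'asz's parameterized Cerf decomposition argument, observing that the $0$--$1$-handle cancellations can be avoided while the $3$--$4$-handle cancellations are retained, which is exactly your faithfulness step. The only difference is one of detail — you spell out well-definedness and fullness explicitly, while the crucial claim that the chain of $\mathcal{R}$-relations can be chosen to stay inside $\mathcal{F}(\nc{\mathcal{G}})$ remains, in both your write-up and the paper, an appeal to a careful reading of Juh\'asz's proof (the paper also mentions an alternative argument using only the statements of Juh\'asz's theorems, obtained by translating \cite[Corollary 4.3]{CGPV2023a} to dimension~4).
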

\begin{proof}
 This corollary follows from Juh\'asz's argument in~\cite{Juh2018}
 using parameterized Cerf decomposition. For an argument based on
 the statements of Juh\'asz's theorems, one can easily translate in
 dimension 4 the one made in dimension~3 of~\cite[Corollary~4.3]{CGPV2023a}.
\end{proof}

\section{3-manifold invariant}\label{sec:3mfld}
In this section, we assume $\cat$ is twist non-degenerate.
Recall the definition of the scalars $\Delta_\pm$ given in Section~\ref{SS:Def-gluingmorph}.
\begin{theorem}\label{T:Exist3ManInv}
Let $(M,T)$ be a pair where $M$ is a closed $3$-manifold containing an admissible $\cat$-colored ribbon graph $T$. Let $L \cup T \subset S^3$ be a surgery presentation of $(M,T)$. If $L^{\rm blue}$ is a~$\cat$-colored ribbon graph obtained by making each red component of $T \cup L$ blue using a red to blue modification, then
\begin{equation}\label{E:Def3ManInv}
\ThreeManInv'_\cat(M,T)=\dfrac{F'(L^{\rm blue})}{\Delta_+^r\Delta_-^s}
\end{equation}
only depends on the diffeomorphism class of $(M,T)$, where $(r,s)$ is the signature of the linking matrix of $L$.

In particular, if $M$ is connected, the scalar
\[
\ThreeManInv_\cat(M):=\ThreeManInv'_\cat(M,\Gamma_0),
\] where
$\Gamma_0$ defined in \eqref{GammaZero} is contained in a ball in $M$, is an invariant of $3$-manifolds.
\end{theorem}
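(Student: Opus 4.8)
The plan is to invoke the surgery-presentation theorem (Theorem \ref{T:SurgPresRelatedbyKirby}), which tells us that any two surgery presentations $L_i\cup T_i$ of diffeomorphic pairs $(M_i,T_i)$ are related by a finite sequence of (i) orientation reversals of surgery components, (ii) Kirby I moves away from $T$, (iii) Kirby II moves among surgery components, and (iv) Kirby II moves sliding an edge of $T$ over a surgery component. It therefore suffices to check that the right-hand side of \eqref{E:Def3ManInv} is unchanged under each of these four moves. Before doing so, I would first record that the numerator $F'(L^{blue})$ is itself well defined, i.e.\ independent of the chosen red-to-blue modifications: this is exactly Lemma \ref{prop:redtoblue} together with the fact that $F'$ factors through projective skein equivalence (a local skein relation is sent to $0$ by $F$ inside the box and hence by $F'$ after closing up with the m-trace).

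For the moves that leave the numerator invariant, the argument is short. Since the surgery components are unoriented red circles and the red-to-blue modification does not see an orientation, reversing the orientation of a surgery component changes neither $F'(L^{blue})$ nor, up to a congruence by a diagonal $\pm1$ matrix, the linking matrix; hence the signature $(r,s)$ is preserved. For the two kinds of Kirby II moves, Lemma \ref{lemma:slidingOverRed} shows that sliding a red or blue edge over a red surgery circle yields projective skein equivalent blue graphs, so $F'(L^{blue})$ is unchanged; on the topological side a handle slide acts on the linking matrix by an integral congruence (adding one row/column to another), which preserves $(r,s)$. A Kirby II move sliding an edge of $T$ does not alter the linking matrix of $L$ at all. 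Thus for moves (i), (iii), (iv) both numerator and denominator are separately invariant.

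The crux is the Kirby I move (ii), where the normalization $\Delta_+^r\Delta_-^s$ is forced. Adding a disjoint $\pm1$-framed unknot $U_\pm$ (away from $T$) produces the closed blue diagram obtained from $L^{blue}$ together with the blue version of $U_\pm$. Because a disjoint closed blue component contributes a scalar factor equal to its own evaluation (the functor $F$ is multiplicative on disjoint unions, the closed blue unknot evaluates in $\End_\cat(\unit)\cong\FK$, and the m-trace pulls this scalar out of the closure), we get $F'\bigl((L\sqcup U_\pm)^{blue}\bigr)=\Delta_\pm\,F'(L^{blue})$, where $\Delta_\pm$ is precisely the value attached to the $\pm1$-framed unknot in the proof of Lemma \ref{L:Delta}. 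Simultaneously, appending $U_\pm$ adds a $\pm1$ to the diagonal of the linking matrix, so $(r,s)\mapsto(r+1,s)$ for $U_+$ and $(r,s)\mapsto(r,s+1)$ for $U_-$; the denominator is multiplied by the same factor $\Delta_\pm$, so the ratio is unchanged. This is the step that requires $\cat$ to be twist non-degenerate, i.e.\ $\Delta_+\Delta_-\neq0$, so that the denominator never vanishes and the division is legitimate.

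The main obstacle is the bookkeeping in move (ii): one must match the scalar extracted from the disjoint unknot with the exponent change in the signature, and be sure that $\Delta_\pm$ as defined via Lemma \ref{L:Delta} is literally the closed evaluation of the $\pm1$-framed unknot made blue. Everything else reduces, via Lemmas \ref{prop:redtoblue} and \ref{lemma:slidingOverRed}, to the standard Reshetikhin--Turaev invariance computations, together with the elementary observation that handle slides and orientation reversals act on the linking matrix by signature-preserving congruences. Combining the four verifications with Theorem \ref{T:SurgPresRelatedbyKirby} then yields that $\ThreeManInv'_\cat(M,T)$ depends only on the diffeomorphism class of $(M,T)$.
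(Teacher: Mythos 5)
Your proposal is correct and follows essentially the same route as the paper's proof: reduce to the moves of Theorem \ref{T:SurgPresRelatedbyKirby}, use Lemma \ref{prop:redtoblue} for well-definedness of the blue-ification and of the scalars $\Delta_\pm$, Lemma \ref{lemma:slidingOverRed} for both kinds of Kirby II moves, and the normalization $\Delta_+^r\Delta_-^s$ (legitimate by twist non-degeneracy) for Kirby I, with your explicit linking-matrix bookkeeping merely making explicit what the paper leaves implicit. One small caveat: after a red-to-blue modification the $\pm1$-framed circle is no longer a disjoint closed blue component (the modification threads a projective strand through it), so the factor $\Delta_\pm$ is extracted by the local computation of Lemma \ref{L:Delta} combined with Lemma \ref{prop:redtoblue}, not by multiplicativity of $F$ on disjoint unions --- but since you anchor the value to Lemma \ref{L:Delta} anyway, this does not affect the argument.
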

\begin{proof}
Lemma~\ref{prop:redtoblue} implies that any
way of making a
 surgery presentation blue only depends on the surgery presentation. This lemma also implies that using a red to blue modification on a~unknot with $\pm1$ framing with any chromatic morphism produces the same the scalar $\Delta_\pm$. Thus, it is enough to show the invariant is well defined for any two surgery presentations
which are related by a Kirby I or II move as in Theorem~\ref{T:SurgPresRelatedbyKirby}. Lemma~\ref{lemma:slidingOverRed} implies any of the Kirby~II~moves in Theorem~\ref{T:SurgPresRelatedbyKirby} hold. Finally, since the category is twist non-degenerate, then the normalization in equation \eqref{E:Def3ManInv} implies invariance under any Kirby I move.
\end{proof}

When $T$ is not necessarily admissible, and $M$ is connected define
$\ThreeManInv_\cat(M,T)=\ThreeManInv'_\cat(M,T\sqcup \Gamma_0)$, where
\[\Gamma_0=\begin{tikzpicture}[baseline = 5pt]
\node[draw, rectangle, minimum height = 0.4cm, minimum width = 0.7cm] (eta) at (0,0){$\eta$};
\node[draw, rectangle, minimum height = 0.4cm, minimum width = 0.7cm] (epsilon) at (0,1){$\varepsilon$};
\draw (eta) -- (epsilon) node[midway,sloped]{$>$} node[midway, right]{$P_\unit$};
\end{tikzpicture}\]
is contained in a ball in $M$. If $(M,T)$ and $(M',T')$ are 3-manifolds with $\cat$-colored ribbon graphs such that $T'$ is admissible, then
\begin{equation*}
\ThreeManInv'_\cat\bigl((M,T)\sharp\bigl(M',T'\bigr)\bigr)=\ThreeManInv_\cat(M,T)\ThreeManInv'_\cat\bigl(M',T'\bigr),
\end{equation*}
where $\sharp$ stands for the connected sum along balls not intersecting $T$ nor $T'$.

\begin{theorem} The $3$-manifold invariant $\ThreeManInv'$ recovers the following invariants as special cases:
 \begin{enumerate}\itemsep=0pt
 \item[$1.$] If $\catd$ is semisimple modular with the standard categorical trace, then
 \[\operatorname{WRT}_\catd=\ThreeManInv'_\catd=\ThreeManInv_\catd,\]
 where $\operatorname{WRT}_\catd$ is the Witten--Reshetikhin--Turaev
 invariant associated to $\catd$, see~{\rm \cite{RT, Tu}}.
 \item[$2.$] If $\catd$ is the category of finite-dimensional left
modules over a finite-dimensional unimodular ribbon Hopf algebra with right integral
$\lambda$, equipped with the modified trace induced by $\lambda$ $($see
{\rm\cite[{\it Theorem} 1]{BBG17b})}, then
\[
\operatorname{H}_H(M)=\mathbb{D}^{-1-b_1}\ThreeManInv_\catd(M,\varnothing) \qquad \text{and}\qquad \operatorname{H}'_\catd(M,T) =\mathbb{D}^{-1-b_1}\ThreeManInv'_\catd(M,T),
\]
where
$b_1$ is the first Betti number of $M$, $\operatorname{H}_H$ is the Hennings invariant defined in~{\rm \cite{H96}} and~$\operatorname{H}'_\catd$ is its renormalized version defined in~{\rm \cite{DRGPM18}}. Here both invariants are normalized using a choice of square root $\mathbb{D}$ of $\Delta_+\Delta_-$.
\item[$3.$] If $\catd$ is an abelian finite unimodular $\FK$-linear ribbon category $($note in the rest of the paper we do not require our category to be abelian$)$, then
\[\operatorname{L}_\catd(M)=\mathbb{D}^{-1-b_1}\ThreeManInv_\catd(M,\varnothing) \qquad\text{and}\qquad \operatorname{L}'_\catd(M,T) =\mathbb{D}^{-1-b_1}\ThreeManInv'_\catd(M,T),\]
where
$b_1$ is the first Betti number of $M$, $\operatorname{L}_\catd$ is the Lyubashenko invariant defined in~{\rm \cite{L94}} and $\operatorname{L}'_\catd$ its renormalized version defined in~{\rm \cite{DRGGPMR2022}}. Both invariants are normalized using a~choice of square root $\mathbb{D}$ of $\Delta_+\Delta_-$.
 \end{enumerate}
\end{theorem}
 \begin{proof}
We prove the last equality, which recovers the other ones as special cases. Let ${L\cup T}$ be a~surgery presentation of $(M,T)$. Let $F_\lambda$ be the extension to bichrome graphs of the R-T functor~$F$ and let $F_\lambda'$ be its renormalization with the m-trace, see~\cite{DRGGPMR2022, DRGPM18}. From~\cite[Theorem~3.8]{DRGGPMR2022}, we~have%
\[
\operatorname{L}_\catd'(M,T)= \mathbb{D}^{-1 - \ell} \delta^{- \sigma(L)} F'_{\lambda}(L\cup T),
\]
where $\sigma(L)=r-s$ is the signature linking matrix of a surgery presentation $L$ of $M$, \smash{$\delta=\frac{\Delta_+}{D}=\frac{D}{\Delta_-}$} and $\ell=b_1+r+s$.
The main property of the map $F'_\lambda$ is that it gives a well defined meaning to a red circle that can be made blue using the red-to -blue operation of~\cite{DRGGPMR2022}.
Now~\cite{CGPV2023b} (see~\cite[Lemma 6.3]{CGPT20} in the Hopf-algebraic case) says that this red to blue operation is exactly the chromatic morphism given in equation \eqref{E:FormulaChHopf}. Thus, with the notation of Theorem~\ref{T:Exist3ManInv} it follows that \smash{$F'_{\lambda}(L\cup T)=F'\bigl(L^{\rm blue}\bigr)$}.
Finally, it is easy to show that \smash{$\mathbb{D}^{-1 - \ell} \delta^{- \sigma(L)} =\frac{\mathbb{D}^{-1-b_1}} {\Delta_+^r\Delta_-^s}$} for a surgery presentation $L$.
\end{proof}

\section{The (non-compact) (3+1)-TQFT}
In this section, we assume that $\cat$ is
chromatic non-degenerate.

\subsection{Construction of TQFT and 4-dimensional invariants}
We extend the functor $\TSkein_\cat\colon\man\to \Vect$ to a
functor $\TSkein_\cat\colon \mathcal{F}(\nc{\mathcal{G}})\to \Vect$
(resp.~$\TSkein_\cat\colon \mathcal{F}(\mathcal{G})\to \Vect$ if
$\cat$ is chromatic compact)
by assigning to each $\Sp$-surgery a
linear map between skein modules.

Let $M$ be a closed 3-manifold. For $k\in \{0,\dots ,4\}$ and $\Sp^{k-1}$ a framed sphere in $M$, recall from Section~\ref{S:Top} the cobordism $W\bigl(\Sp^{k-1}\bigr)$ which is given by gluing a $k$-handle on $M\times [-1,1]$. We define the associated morphism
\[\chi _{M,{\Sp^{k-1}}}\colon\ \TSkein_\cat(M)\to\TSkein_\cat\bigl(M\bigl({\Sp^{k-1}}\bigr)\bigr)\]
as follows.

\begin{figure}[t]
 \centering
 $T \quad \mapsto \quad \zeta\cdot\ T\sqcup$
\begin{tikzpicture}[baseline = 0pt]
\draw[gray] (-1.4,0) arc(180:540:1.5);
\fill[gray!10] (-1.4,0) arc(180:540:1.5);
\node[color = gray] at (1.4,1.2){$S^3$};
\draw[blue] (0,-0.7)node[rectangle, draw=black, fill=white]{\ \color{black}$\eta$\ \ } --(0,0.7)node[pos = 0.5, right]{$P_\unit$} node [pos = 0.5, sloped]{$>$} node[rectangle, draw=black, fill=white]{\ \color{black}$\varepsilon$\ \ };
\end{tikzpicture}
\caption{The 0-handle, or birth map, adds a standard skein in the new $S^3$ component. It is only defined when $\CC$ is chromatic compact.}
 \label{fig:0handle}
\end{figure}
\textbf{0-handle}: We only consider 0-handles when $\cat$ is
chromatic compact and so $\gm=\zeta^{-1}\Id_{P_\unit}$ is a gluing morphism.
Let $\Sp^{-1}\colon \varnothing\hookrightarrow M$ be a framed $-1$-sphere. Recall $\Gamma_0$ is the ribbon graph with a unique edge from a
coupon colored with $\eta$ to a coupon colored by $\ve$ (see the right-hand side of Figure~\ref{fig:0handle}). Then there exists a \emph{birth map}:
\begin{align*}
\chi _{M,{\Sp^{-1}}}\colon\ \TSkein_\cat(M)&{}\to\TSkein_\cat\bigl(M\sqcup S^3\bigr),\\
 T &{}\mapsto \zeta\cdot\ T\sqcup\Gamma_0
\end{align*}
sending a skein in $M$ to its disjoint union with $\bigl(S^3,\zeta\Gamma_0\bigr)$, see Figure~\ref{fig:0handle}.

\begin{figure}[t]
 \centering
\begin{tikzpicture}[baseline = 0pt,xscale = 0.8]
\fill[gray!10] (0,1) arc(180:360:1 and 0.8) -- (2,2) -- (0,2) -- (0,1);
\fill[gray!10] (0,-1) arc(180:0:1 and 0.8) -- (2,-2) -- (0,-2) -- (0,-1);
 \fill[gray!10] (0,2) arc(180:540:1 and 0.4);
 \draw[gray] (0,2) arc(180:540:1 and 0.4);
 \draw[gray] (0,2) arc(180:360:1 and 0.2);
 \draw[dashed, gray] (0,2) arc(180:0:1 and 0.2);

 \fill[gray!10] (0,-2) arc(180:540:1 and 0.4);
 \draw[gray] (0,-2) arc(180:540:1 and 0.4);
 \draw[gray] (0,-2) arc(180:360:1 and 0.2);
 \draw[dashed, gray] (0,-2) arc(180:0:1 and 0.2);

 \draw[gray] (0,1) arc(180:360:1 and 0.8);
 \draw[gray] (0,-1) arc(180:0:1 and 0.8);

 \draw[gray] (0,1)--++(0,1);
 \draw[gray] (0,-2)--++(0,1);
 \draw[gray] (2,1)--++(0,1);
 \draw[gray] (2,-2)--++(0,1);
 \node[gray] at (2.7,2.2) {$D^3\times S^0$};

\node[rectangle, draw, fill=white] (Up) at (1,0.7) {$\eta_\unit$};
\node[rectangle, draw, fill=white] (Do) at (1,-0.7) {$\varepsilon_\unit$};
\draw[blue] (0.5,2) node[rotate = 40, above]{$\vdots$} ..controls (0.5, 1.8) and (0.8,1.8).. (1, 1.5) -- (Up);
\draw[blue] (1.5,2) node[rotate = -40, above]{$\vdots$} ..controls (1.5, 1.8) and (1.2,1.8).. (1, 1.5);
\node[rectangle, draw, fill=white] at (1,1.5){$T_+$};

\draw[blue] (1,-2) node{$\vdots$} -- (1, -1.5) -- (Do);
\node[rectangle, draw, fill=white] at (1,-1.5){$T_-$};
\end{tikzpicture}
$\longmapsto$ \hspace{1cm}
\begin{tikzpicture}[baseline = 0pt,xscale = 0.8]
\fill[gray!10] (0,-2) rectangle (2,2);
 \fill[gray!10] (0,2) arc(180:540:1 and 0.4);
 \draw[gray] (0,2) arc(180:540:1 and 0.4);
 \draw[gray] (0,2) arc(180:360:1 and 0.2);
 \draw[dashed, gray] (0,2) arc(180:0:1 and 0.2);

 \fill[gray!10] (0,-2) arc(180:540:1 and 0.4);
 \draw[gray] (0,-2) arc(180:540:1 and 0.4);
 \draw[gray] (0,-2) arc(180:360:1 and 0.2);
 \draw[dashed, gray] (0,-2) arc(180:0:1 and 0.2);

 \draw[gray] (0,0) arc(180:540:1 and 0.4);
 \draw[gray] (0,0) arc(180:360:1 and 0.2);
 \draw[dashed, gray] (0,0) arc(180:0:1 and 0.2);

 \draw[gray] (0,-2)--++(0,4);
 \draw[gray] (2,-2)--++(0,4);
 \node[gray] at (2.7,2.2) {$S^2\times D^1$};

\node[inner sep = 0pt, outer sep = 0pt] (P) at (1,0) {};
\draw[blue] (0.5,2) node[rotate = 40, above]{$\vdots$} ..controls (0.5, 1.8) and (0.8,1.8).. (1, 1) -- (P);
\draw[blue] (1.5,2) node[rotate = -40, above]{$\vdots$} ..controls (1.5, 1.8) and (1.2,1.8).. (1, 1);
\node[rectangle, draw, fill=white] at (1,1){$T_+$};

\draw[blue] (1,-2) node{$\vdots$} -- (1, -1) -- (P) node[rectangle, draw=black, fill=white]{\color{black}$\gm$};
\node[rectangle, draw, fill=white] at (1,-1){$T_-$};
\end{tikzpicture}
\caption{The 1-handle, or gluing map, connects two skeins living in disjoint disks by replacing an $\varepsilon$ and an $\eta$ coupon by a $\gm$ coupon.}
 \label{fig:1handle}
\end{figure}
\begin{figure}[t]
$$
\epsh{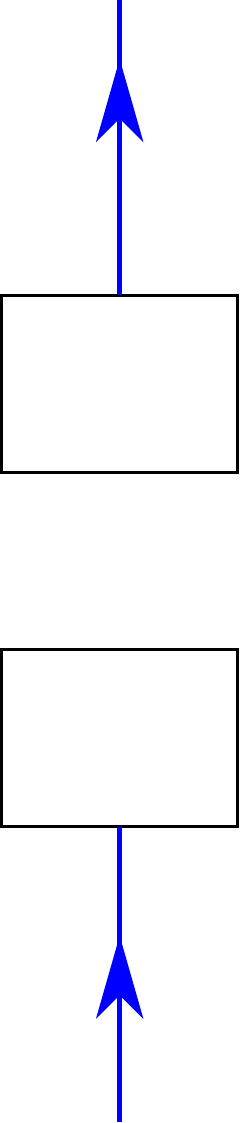}{14ex}
\putc{47}{66}{$\ms{\eta}$}
\putc{47}{34}{$\ms{\ve}$}
\putlc{70}{90}{$\ms{P_\unit}$}
\putlc{72}{12}{$\ms{P_\unit}$}
\longmapsto
\epsh{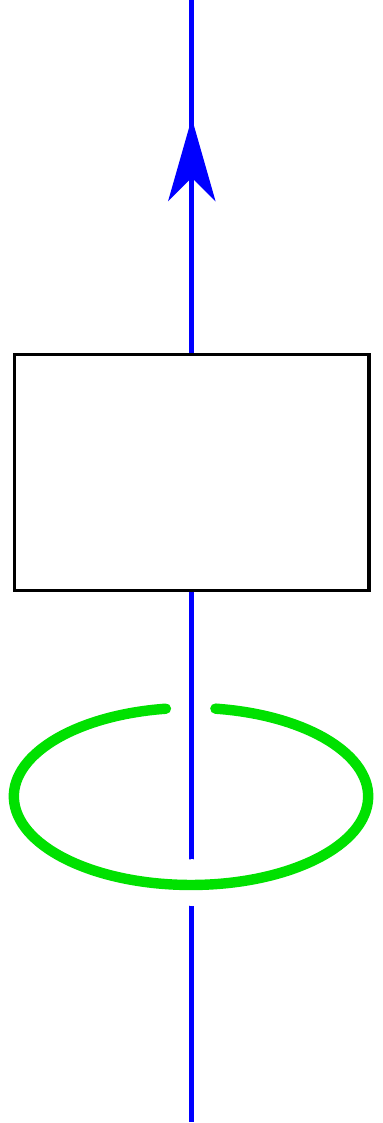}{14ex}
\putc{49}{58}{$\ms{\gm}$}
\putlc{61}{85}{$\ms{P_\unit}$}
\qquad\text{ or }\qquad
\epsh{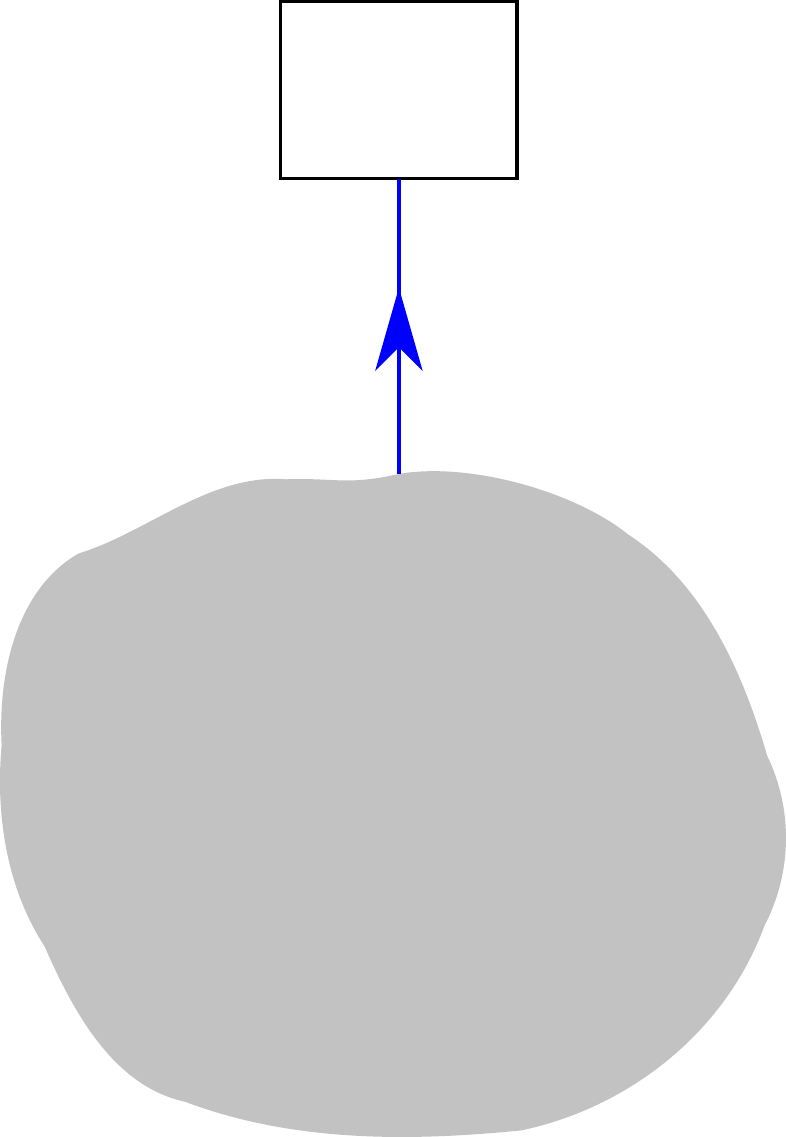}{12ex}
\putc{49}{30}{A}
\putc{50}{93}{$\ms{\ve}$}
\putlc{56}{70}{$\ms{P_\unit}$}
\sqcup
\epsh{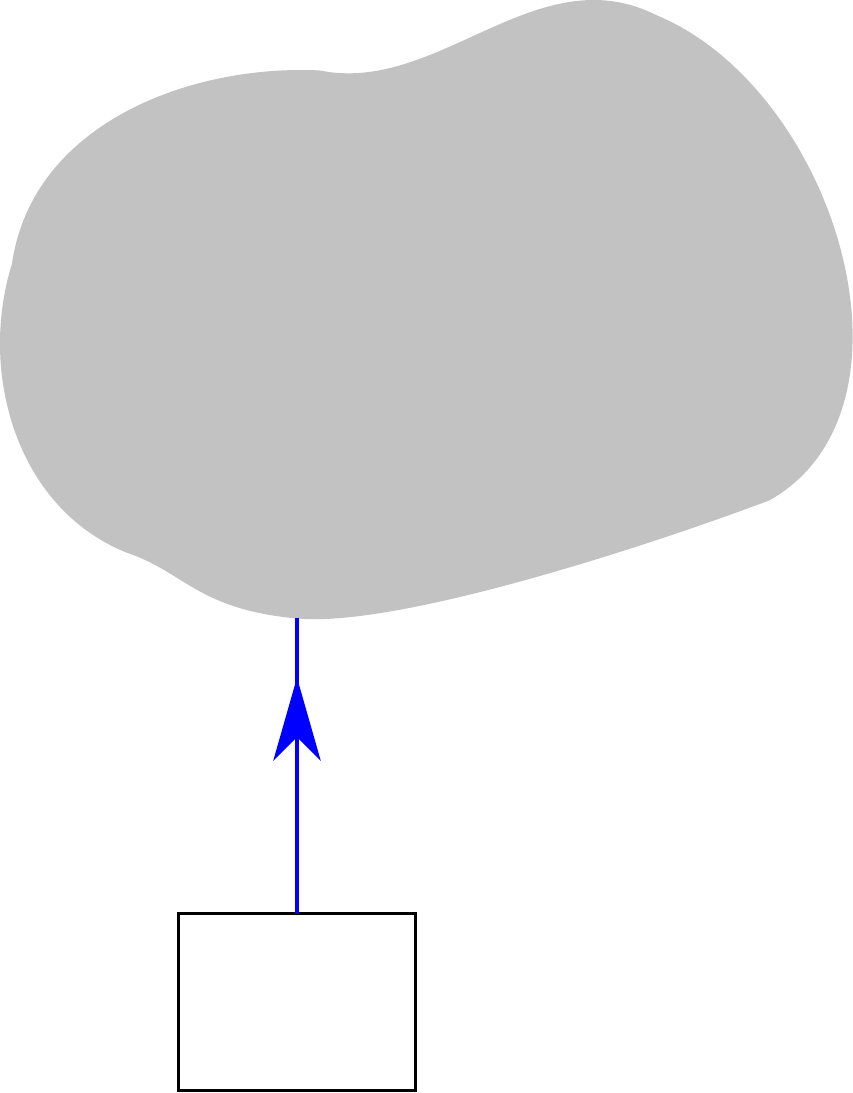}{12ex}
\putc{46}{71}{$B$}
\putc{34}{8}{$\ms{\eta}$}
\putlc{40}{33}{$\ms{P_\unit}$}
\longmapsto
\epsh{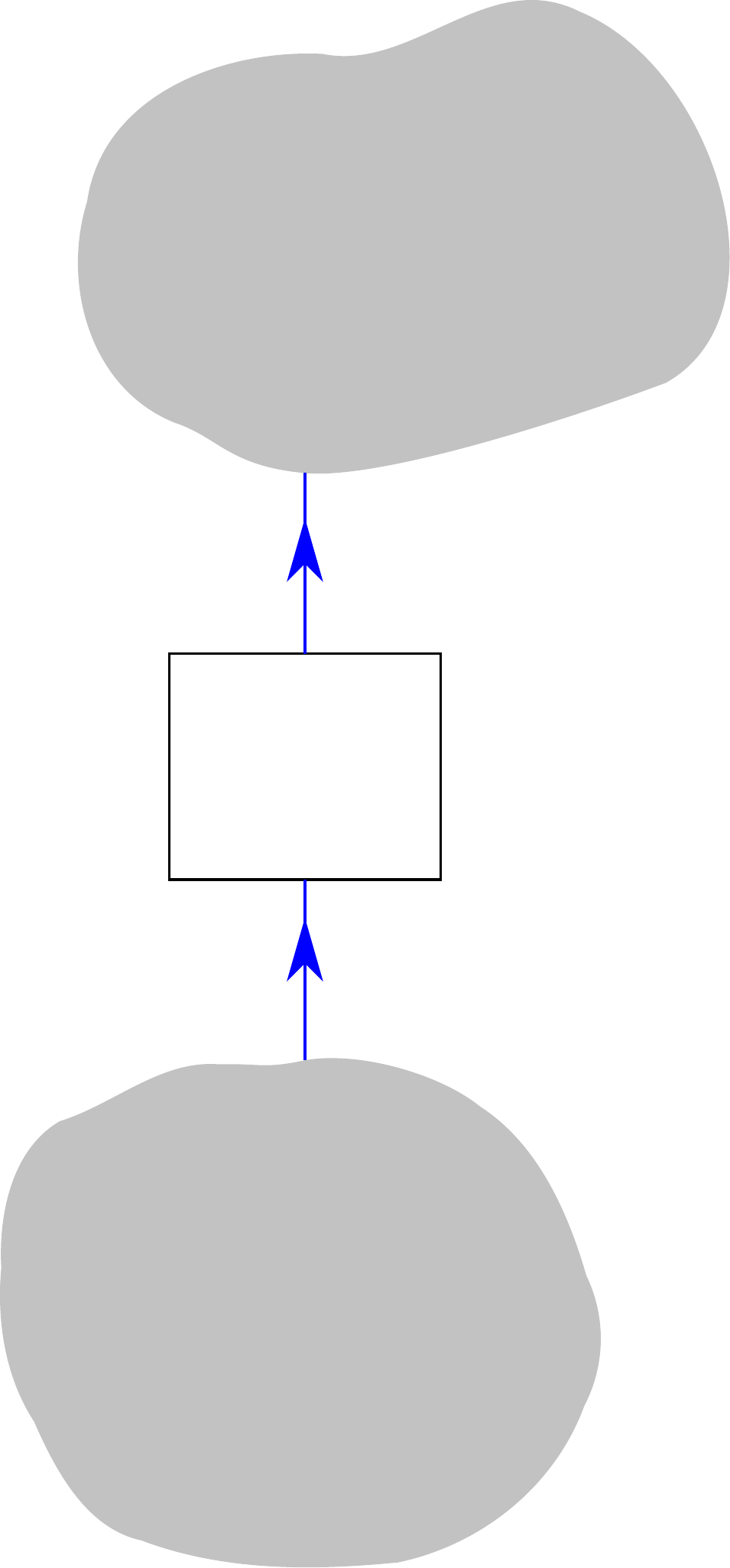}{16ex}
\putc{55}{84}{B}
\putc{41}{51}{$\ms{\gm}$}
\putc{40}{16}{A}
\putlc{49}{65}{$\ms{P_\unit}$}
\putlc{49}{38}{$\ms{P_\unit}$}
$$
 \caption{The gluing map $\chi_{M,\Sp^0}$
 is depicted by
 two different
 representations depending on whether $\Sp^0$ is embedded in a unique
 connected component of $M$ (left) or not (right).}\label{F:glue}
\end{figure}
\textbf{1-handle}: Given a framed sphere $\Sp^0$ in $M$ there exists a \emph{gluing map}:
\[
\chi _{M,{\Sp^0}}\colon\ \TSkein_\cat(M)\to\TSkein_\cat\bigl(M\bigl({\Sp^0}\bigr)\bigr),
\]
which glues two edges terminating on coupons colored by $\eta$ and $\varepsilon$ by a gluing morphism as represented in Figure~\ref{fig:1handle}. We drew the disks and spheres there, but for computations it is often useful describe this operation on a surgery presentation of~$M$. If the gluing $\Sp^0$ is not contained in a single component of $M$, let $L$ and $L'$ be surgery presentation of connected components of~$M$, then $L\cup L'$ is a surgery presentation of $M\bigl(\Sp^0\bigr)$; else a surgery presentation $L$ of $M$ is turned into a surgery presentation of $M\bigl(\Sp^0\bigr)$ by adding a disjoint unknot, $L\to L\cup {\textcolor{green}{O}}$. We represent the 1-handle map in these two situations in Figure~\ref{F:glue}.

\begin{remark}
 In order to understand why we turned towards such a construction,
 the reader should think of the case where the $1$-handle is glued to
 two distinct components of the $3$-manifold, $M_1\sqcup M_2$. In
 this case, one wants to get a linear map from
 $\TSkein(M_1\sqcup M_2)$ to $\TSkein(M_1\#M_2)$. But a surgery
 presentation of $M_1\#M_2$ is obtained by taking the disjoint union
 in $S^3$ of the surgery presentations of $M_1$ and $M_2$, thus the
 simple idea of embedding the skeins $\Gamma_1\subset M_1$ and
 $\Gamma_2\subset M_2$
 separately in $M_1\#M_2$ would not work: indeed, if $M_2=S^3$, it would yield the $0$ vector in
 $\TSkein(M_1\#M_2)=\TSkein(M_1)$ because one can apply the admissible skein
 relation given by evaluating via the Reshetikhin--Turaev functor the
 skein $\Gamma_2$ which is~$0$ (as it contains a projective color).
 Therefore, one needs to ``glue'' the two skeins in order for them to
 form a single skein in $M_1\#M_2$ not separated by a sphere. The
 glueing morphism serves this purpose, but of course not any morphism
 will as the operation of glueing the two skeins must be well
 defined: the properties we demanded on $g$ are sufficient to ensure
 this.
\end{remark}

Let us describe this morphism in more detail.
Let $x$, $y$ be two distinct points of a 3\nobreakdash-mani\-fold~$M$. Let~$B_x$,~$B_y$
be neighborhood of $x$ and $y$ both oriented and parameterized by~$B^3$
and let ${\Sp^0}$ be the framed 0-sphere $B_x\sqcup B_y$. Let
\smash{$M'=M\setminus (B_x\sqcup B_y)\stackrel i\hookrightarrow M$} be the
inclusion and $C\simeq S^2\times [0,1]$ be the cylinder such that
$M\bigl({\Sp^0}\bigr)=M'\cup_\partial C$. We put in this cylinder a skein $\Gamma_\gm$ with a single coupon
colored by any gluing morphism $\gm$ and an incoming and an outgoing edge
parallel to $(1,0,0)\times [0,1]$, framed in the direction $(0,0,1)$. We will say that a skein $T$ in $M$ is in good position with respect to $\Sp^0$ if $B_x\cap T$ consists of a
planar ribbon graph in $\R^+\times\R\times\{0\}\cap B_x$ consisting of
a unique edge oriented from $(1,0,0)\in\partial B_x$ towards a coupon
colored by $\ve$ and if $B_y\cap T$ consists of a planar ribbon graph
in $\R^+\times\R\times\{0\}\cap B_y$ consisting of a unique edge
oriented from a coupon colored by $\eta$ towards
$(1,0,0)\in\partial B_y$. The map $\chi_{M,{\Sp^0}}$ assigns to a
skein~$T$ in good position
with respect to
 $\Sp^0$ the skein
$(M',
T\cap M')\cup_\partial (C,\Gamma_\gm)$.
\begin{proposition}\label{prop:1handlewelldef}
 The linear map $\chi_{M,\Sp^0}$ is well defined and does not depend
 on the ordering of~$\{x,y\}$ nor
 on the gluing morphism $\gm$.
\end{proposition}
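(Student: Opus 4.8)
The plan is to define $\chi_{M,\Sp^0}$ on good-position skeins and then to prove that the resulting class in $\TSkein_\cat(M(\Sp^0))$ is independent of every choice, so that it descends to a well-defined linear map on $\TSkein_\cat(M)$. I would organise this into four steps: (a) bringing an arbitrary admissible skein into good position, (b) invariance of the output under isotopies and projective skein relations of the input, (c) independence of the ordering of $\{x,y\}$, and (d) independence of the gluing morphism $\gm$. For (a), given an admissible $T$ I first isotope it off $B_x\sqcup B_y$. In the component $M_0$ containing $x,y$, admissibility provides a $\Proj$-colored edge $e$, which—$M_0$ being connected—can be routed to pass near both balls. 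I then graft onto $e$ a $P_\unit$-colored strand terminating at a $\ve$-coupon inside $B_x$ and a $P_\unit$-strand issuing from an $\eta$-coupon inside $B_y$, the two being joined to $e$ through $M'$; since $\Hom_\cat(P_\unit,\unit)=\FK\ve$ and $\Hom_\cat(\unit,P_\unit)=\FK\eta$ are one-dimensional (Lemma \ref{L:prop-proj}(3)) the grafting data is canonical up to scalar. The construction then replaces, in $M(\Sp^0)$, these two coupons by a single $\gm$-coupon on a $P_\unit$-tube running once through the new handle.

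For (b), an isotopy of $T$ preserving good position extends over $M'$ and induces an isotopy of the output in $M(\Sp^0)$. A projective skein relation is supported in a box $B$ which, after a preliminary isotopy keeping a $\Proj$-colored edge outside $B$, can be taken disjoint from $B_x\sqcup B_y$; it then lies in $M'$ and is carried verbatim to a projective skein relation in $M(\Sp^0)$. Independence of the chosen edge $e$, of the routing, of the grafting points, and of the $M'\cup C$ splitting follows from connectivity of $M_0$ (any two choices are related by an ambient isotopy of $M(\Sp^0)$) together with the skein relations; here the defining identity $\gm\circ\Delta_0^{P_\unit}=\Lambda_{P_\unit}$ is what guarantees invariance under the Kirby-II slides of the tube over the green handle-circle that relate different graftings.

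For (c), exchanging $x$ and $y$ reverses the cylinder $C$, hence reverses the $P_\unit$-tube and transposes its coupon to $\gm^*$, while interchanging the roles of the $\ve$- and $\eta$-coupons. Using unimodularity $P_\unit\cong P_\unit^*$ (Lemma \ref{L:prop-proj}(2)), the duality of the copairing (Lemma \ref{P:Omega-nat}(1), which identifies $\ve^*$ with $\eta$ up to scalar), and Proposition \ref{P:dualgluing} (the dual of a gluing morphism is conjugate to a gluing morphism), the swapped construction is identified with the construction for a second gluing morphism. Thus (c) reduces to (d).

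For (d), write $\delta=\gm-\gm'$ for two gluing morphisms; centrality of $\Delta_0^{P_\unit}$ (Lemma \ref{L:Delta2}) gives $\delta\circ\Delta_0^{P_\unit}=\Delta_0^{P_\unit}\circ\delta=0$. I would reduce the claim to showing that a $P_\unit$-tube carrying the coupon $\delta$ through the handle is the zero skein, and evaluate this by cutting along the belt $2$-sphere of the handle so that the value of the tube becomes a functional of its coupon expressed through the m-trace and $\Delta_0^{P_\unit}$; the relations $\gm\circ\Delta_0^{P_\unit}=\gm'\circ\Delta_0^{P_\unit}=\Lambda_{P_\unit}$ then force equal values. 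I expect this to be the main obstacle: one must produce the precise skein relation, supported near the handle, realising post-composition by $\Delta_0^{P_\unit}$ on the tube's coupon (via a red-to-blue modification of a meridian of the green handle-circle, Lemmas \ref{prop:redtoblue}--\ref{lemma:slidingOverRed}), and then treat separately the case where $\Delta_0^{P_\unit}$ is invertible—in which the defining relation already pins down $\gm=\Lambda_{P_\unit}\circ(\Delta_0^{P_\unit})^{-1}$ uniquely—and the nilpotent case, where the annihilator identity $\delta\circ\Delta_0^{P_\unit}=0$ must be fed into that relation. Since the same handle relation underlies the slide-invariance used in (b), getting it right is the technical heart of the proposition.
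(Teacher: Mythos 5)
Your steps (b) and (c) essentially track the paper's proof (skein boxes chosen disjoint from the coupons; the swap of $x,y$ handled via $\eta=\psi^{-1}\ve^*$, $\ve=\psi\eta^*$ and Proposition \ref{P:dualgluing}), but your step (d) --- which is the heart of the proposition --- has a genuine gap. Reducing to the vanishing of the $\delta$-tube, $\delta=\gm-\gm'$, and then ``evaluating by cutting along the belt $2$-sphere'' is not a valid argument: cutting along that sphere is the $3$-handle map $\chi_{M(\Sp^0),\Sp^2}$, a linear map into the skein module of a \emph{different} manifold, and it has no reason to be injective (bijectivity of $3$-handle maps is only established in the factorizable case, cf.\ the proof of Theorem \ref{T:invertible}); vanishing of the cut image therefore says nothing about the class in $\TSkein_\cat(M(\Sp^0))$. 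Similarly, there is no free-standing skein relation realising post-composition of the tube's coupon by $\Delta_0^{P_\unit}$: a red meridian of the tube is not skein-equivalent to the empty picture, so your relation ``$\delta\circ\Delta_0^{P_\unit}=0$ fed into the handle relation'' never gets off the ground in the nilpotent (non-compact) case, precisely the case where gluing morphisms are genuinely non-unique. The paper's actual mechanism, absent from your sketch, is a \emph{catalyst} argument using a second $\ve$--$\eta$ pair, always available by the generation step: since $\eta\circ\ve=\Lambda_{P_\unit}=\gm'\circ\Delta_0^{P_\unit}$ (Lemma \ref{L:prop-proj}(4) and the defining property of $\gm'$), the spare pair is traded for a $\gm'$-coupon together with a red meridian of its strand; that red circle is then isotoped \emph{inside the belt $2$-sphere of the $1$-handle} from one strand to the other (red circles are unoriented, so no orientation clash occurs), and finally absorbed into the tube's $\gm$-coupon, turning it back into an $\ve$--$\eta$ pair. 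A single chain of equalities thus proves at once independence of the gluing morphism \emph{and} of the chosen pair of coupons; your case split on invertibility of $\Delta_0^{P_\unit}$ is then unnecessary.

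A secondary problem is your step (a): grafting a $P_\unit$-strand with $\ve$- and $\eta$-coupons onto an edge $e$ changes the skein class unless the grafting is itself realized by a skein relation, and the one-dimensionality of $\Hom_\cat(P_\unit,\unit)$ and $\Hom_\cat(\unit,P_\unit)$ does not make the attaching morphism of the strand to $e$ canonical --- that morphism does not live in these one-dimensional spaces. The paper instead \emph{creates} the coupons by skein relations: a cap $\lev_P$ on a $\Proj$-colored edge is a non-zero morphism to $\unit$, hence factors as $\ve\circ\wt f$ by Lemma \ref{L:prop-proj}(1), and an $\ve$-coupon is then split into $\psi$- and $\eta$-coupons using an isomorphism $\psi:P_\unit\to P_\unit^*$ normalized by $\eta^*\circ\psi=\ve$; applying this twice puts a generator in good position without changing its class. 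Finally, your appeal in (b) to the gluing identity for ``Kirby-II slides over the green handle-circle'' conflates green with red: slides over green circles are isotopies of $M(\Sp^0)$ and cost nothing, while slides over red circles are governed by Lemma \ref{lemma:slidingOverRed}; the gluing identity plays no role in either.
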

\begin{proof}
 First we note that the admissible skein module is generated by skeins
 in $M$ where every component of $M$ contains a coupon colored by
 $\ve$ and a coupon colored by $\eta$. Indeed, consider a~box
 containing a part of an edge colored by $P\in\Proj$ whose image by
 the RT-functor is~$\lev_P$ and apply Lemma~\ref{L:prop-proj}\,(1) to
 show that a skein relation can be used to make appear a~coupon
 colored by $\ve\colon P_\unit \to \unit$. Let us choose an isomorphism $\psi\colon P_\unit\to P_\unit^*$
 normalized so that $\eta^*\circ\psi=\ve$ then a coupon colored by
 $\ve$ is skein equivalent to a graph with two coupons colored by~$\psi$ and~$\eta$. So applying this procedure twice, we can ensure the presence of a $\epsilon$-colored coupon and of a~$\eta$-colored coupon in each connected component of $M$.

 Now, up to isotopy of the skein, the definition of
 $\chi_{M,{\Sp^0}}$ only depends a priori on the choice of the two
 coupons colored by $\ve$ and $\eta$, and on the choice of a gluing
 morphism $\gm$: we will now prove independence on these data. Let $\gm'$ be an other gluing morphism and consider
 the element obtained by using $\gm'$ instead of $\gm$ and two
 different coupons colored with $\ve$ and $\eta$. Then we have
 if $\Sp^0$ is embedded in a unique connected component,
 \begin{gather*}\epsh{fig23-3.pdf}{14ex}
 \putc{47}{66}{$\ms{\eta}$}
 \putc{47}{34}{$\ms{\ve}$}
 \putlc{70}{90}{$\ms{P_\unit}$} \putlc{72}{12}{$\ms{P_\unit}$}\hspace{3ex}
 \epsh{fig23-3.pdf}{14ex}
 \putc{47}{66}{$\ms{\eta}$}
 \putc{47}{34}{$\ms{\ve}$}
 \putlc{70}{90}{$\ms{P_\unit}$}
 \putlc{72}{12}{$\ms{P_\unit}$}
 \quad\longmapsto\quad
 \epsh{fig23-3.pdf}{14ex}
 \putc{47}{66}{$\ms{\eta}$}
 \putc{47}{34}{$\ms{\ve}$}\hspace{3ex}
 \epsh{fig23-4.pdf}{14ex}
 \putc{49}{58}{$\ms{\gm}$}
 \quad=\quad
 \epsh{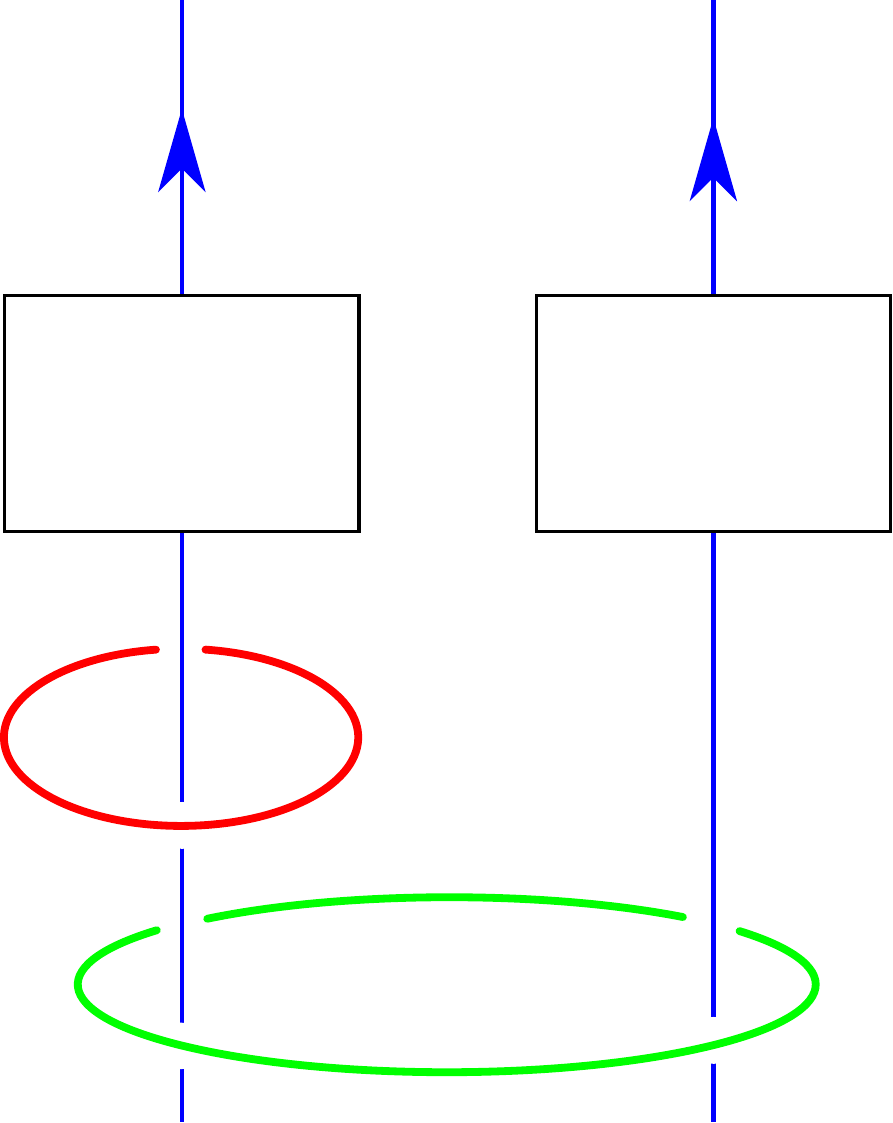}{14ex}
 \putc{19}{63}{$\ms{\gm'}$}
 \putc{80}{63}{$\ms{\gm}$}
 \quad=\quad
 \epsh{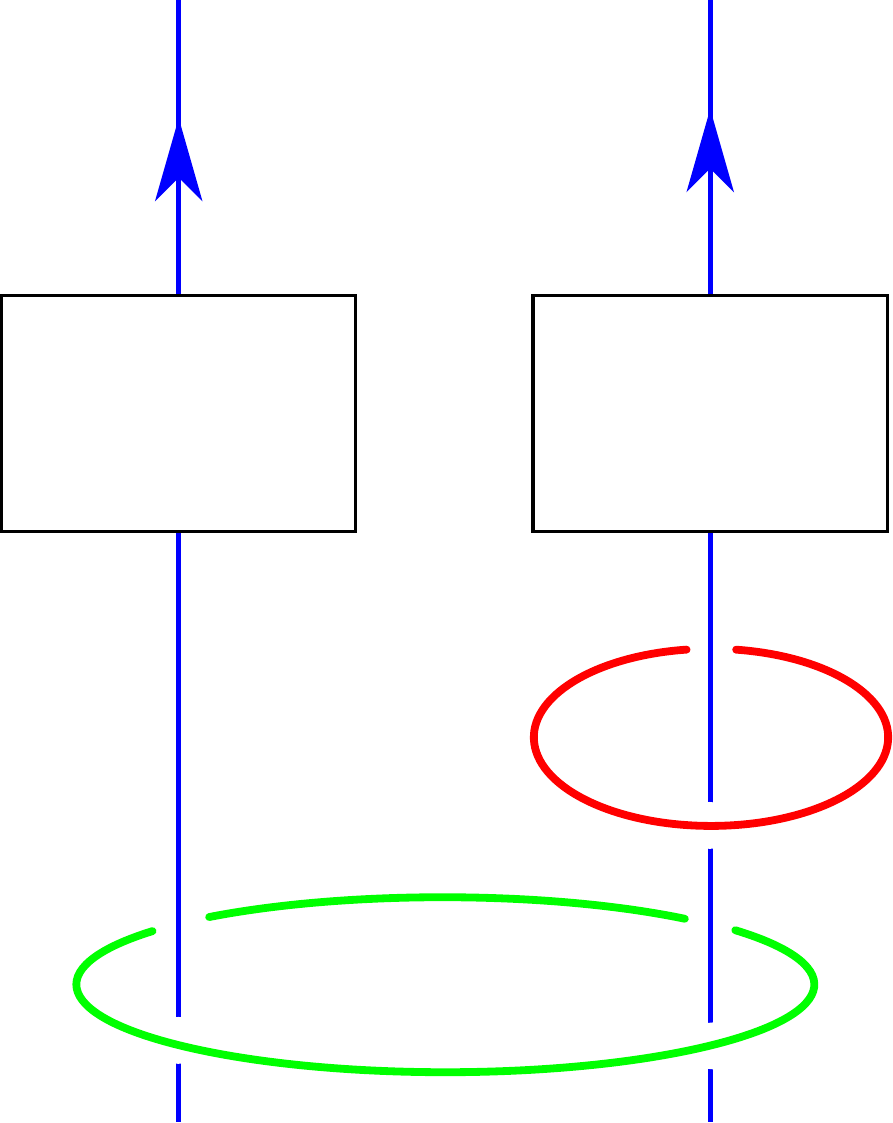}{14ex}
 \putc{19}{63}{$\ms{\gm'}$}
 \putc{79}{64}{$\ms{\gm}$}
 \quad=\quad
 \epsh{fig23-4.pdf}{14ex}
 \putc{49}{58}{$\ms{\gm'}$}\hspace{3ex}
 \epsh{fig23-3.pdf}{14ex}
 \putc{47}{66}{$\ms{\eta}$}
 \putc{47}{34}{$\ms{\ve}$}\ ,
 \end{gather*}
 where the first and last equalities are skein
 equivalences given by definition of gluing morphisms and the
 middle one is an isotopy of the red circle in the belt 2-sphere
 created by gluing the 1-handle. Similarly, if the
 surgery is
 connecting two different components of $M$, the representation of the
 equivalence is similar without the green circles but with the
 separating belt 2-sphere represented by the horizontal
 plane.

 The map $\chi_{M,{\Sp^0}}$ preserves skein relations as we can
 always choose coupons $\ve$ and $\eta$ outside a~fixed box.

 Finally, reversing the orientation of the sphere $\Sp^0$ that is
 interchanging $x$ and $y$ does not change the map since
 $\eta=\psi^{-1}\ve^*$, $\ve=\psi\eta^*$ and $\psi^{-1}\gm^*\psi$ is
 also a gluing morphism.
\end{proof}

\begin{figure}
 \centering
\begin{tikzpicture}[baseline = 0pt]
 \fill[gray!10] (0,0) arc(180:540:2 and 1.5);
 \draw[blue] (2.5,-1.3) node[rotate = 25]{$\vdots$}.. controls (2.2,-0.8) and (2.5, -0.5).. (3.5,0.3) node[pos = 0.5, sloped]{$>$}node[pos = 0.5, above]{$P$};
 \draw[gray] (0,0) arc(180:540:2 and 1.5) node[pos = 0.65, right=5pt]{$S^1\times D^2$};
 \draw[line width = 5pt, gray!10] (0.5,0) arc(180:540:1.5 and 1);
 \draw[gray] (0.5,0) arc(180:540:1.5 and 1) node[pos = 0.75, above=-3pt]{$\scriptstyle S^1\times \{0\}$};
 \node[rectangle, draw, fill=white] at (3.5,0.4){$T$};
 \begin{scope}[xshift = 0.6cm,xscale = 0.7]
 \fill[white] (1,0).. controls (1.75, 0.2) and (2.25, 0.2).. (3,0).. controls (2.25, -0.3) and (1.75, -0.3)..(1,0);
 \draw[gray] (0.5, 0.3) .. controls (0.75,0.1) ..(1,0).. controls (1.75, -0.3) and (2.25, -0.3).. (3,0).. controls (3.25,0.1).. (3.5,0.3);
 \draw[gray] (1,0).. controls (1.75, 0.2) and (2.25, 0.2).. (3,0);
 \end{scope}
\end{tikzpicture} $\longmapsto$
\begin{tikzpicture}[baseline = 5pt]
\fill[gray!10] (-1,-1)..controls (-0.8,-0.5) and (-0.8,0.5).. (-1,1) -- (1,1) ..controls (0.8,0.5) and (0.8,-0.5).. (1,-1) -- (-1,-1);
 \draw[gray] (-1,-1)..controls (-0.8,-0.5) and (-0.8,0.5).. (-1,1) node[above]{$D^2\times S^1$};
 \draw[gray] (1,-1)..controls (0.8,-0.5) and (0.8,0.5).. (1,1);

 \draw[dashed, gray] (-0.85,0) arc(180:0:0.85 and 0.2);

 \draw[red] (0,-1)--(0,1);
 \draw[red, dashed] (0,1).. controls (0,1.8) and (2,1.8).. (2,1) node[pos = 0.5,above]{$\{0\}\times S^1$} --(2,-1)..controls (2,-1.8) and (0,-1.8)..(0,-1);

 \draw[blue] (0.6,-0.8) node[rotate = 25]{$\vdots$}.. controls (0.3,-0.1) and (0.6, 0).. (0.6,0.6) node[pos = 0.2, sloped]{$<$}node[pos = 0.2, right]{$P$};
 \node[rectangle, draw, fill=white] at (0.6,0.6){$T$};

 \draw[line width = 3pt, gray!10] (-0.8,0) arc(180:360:0.8 and 0.2);
 \draw[gray] (-0.85,0) arc(180:360:0.85 and 0.2);
\end{tikzpicture}
\quad := \quad
\begin{tikzpicture}[baseline = 5pt]
\fill[gray!10] (-1,-1)..controls (-0.8,-0.5) and (-0.8,0.5).. (-1,1) -- (1,1) ..controls (0.8,0.5) and (0.8,-0.5).. (1,-1) -- (-1,-1);
 \draw[gray] (-1,-1)..controls (-0.8,-0.5) and (-0.8,0.5).. (-1,1);
 \draw[gray] (1,-1)..controls (0.8,-0.5) and (0.8,0.5).. (1,1);

 \draw[blue] (0,-1)--(0,1);
 \draw[dashed, blue] (0,1).. controls (0,1.8) and (2,1.8).. (2,1) node[pos = 0.5,above]{$G$}node[pos = 0.5]{$>$} --(2,-1)..controls (2,-1.8) and (0,-1.8)..(0,-1);

 \draw[blue] (0.6,-0.8)node[rotate = 25]{$\vdots$}.. controls (0.3,-0.1) and (0.6, 0).. (0.6,0.6) node[pos = 0.2, sloped]{$<$}node[pos = 0.2, right]{$P$};
 \node[rectangle, draw, fill=white] at (0.6,0.6){$T$};
 \node[rectangle, draw, fill=white] at (0.3,0){$\ \chr_P\ $};
\end{tikzpicture}
\begin{tikzpicture}

\end{tikzpicture}
\caption{The 2-handle, or knot-surgery map. For depiction purposes we have represented $S^1\times D^2$ embedded in $S^3$, and $D^2 \times S^1$ as its complement. Note however that the orientations do not quite match, as they both should be on the same side of $S^1 \times S^1$.}
 \label{fig:2handle}
\end{figure}
\begin{figure}[t]
 \[
 \epsh{fig23-8}{10ex} \longmapsto \epsh{fig23-9}{10ex}
 \qquad\text{ and }\qquad
 \epsh{fig23-11}{10ex} \longmapsto \epsh{fig23-10}{10ex}
 \]
 \caption{The knot-surgery map $\chi_{M,\Sp^1}$, two alternative
 representations: on the left we choose a representation of $M$
 where $\Sp^1$ is a meridian of a green knot; a presentation for $M\big(\Sp^1\big)$ is then obtained by forgetting the green knot in the presentation of $M$, but the map on skeins consists of adding a red component along that $\Sp^1$. On the right, the surgery presentation of $M\big(\Sp^1\big)$ is obtained by adding the green circle \big(which is $\Sp^1$\big) and the map on skeins consists in adding also its red meridian.}\label{F:surgery}
\end{figure}
\textbf{2-handle:} Given a framed sphere $\Sp^1$ in $M$,
there exists a \emph{knot-surgery map}:
\[
\chi _{M,{\Sp^1}}\colon\ \TSkein_\cat(M)\to\TSkein_\cat\bigl(M\bigl({\Sp^1}\bigr)\bigr)
\]
adding a red circle along the meridian of the surgery knot, see Figure~\ref{fig:2handle}. Again, instead of drawing solid tori, it will be useful to have a diagrammatic representation based on a surgery presentation of $M$, see Figure~\ref{F:surgery}.

Let $C=-B^2\times S^1$ where the sign of $B^2$ means reversing
orientation and $O_r\subset C$
be a red ribbon knot of the form
$[-0.1,0.1]\times\{0\}\times S^1$. Let $\Sp^1\simeq S^1\times B^2$ be
a framed knot in $M$, $M'=M\setminus \bigl(S^1\times B^2\bigr)$ and
$M''=M'\cup_\partial C$. Let \smash{$\TSkein_\cat(M')\tto i\TSkein_\cat(M)$} and \smash{$\TSkein_\cat(M')\tto {i''}\TSkein_\cat(M'')$} be the
maps induced by the inclusions. We define $\chi_{M,\Sp^1}$ to be the
map that sends a skein $i(T)$ to~$i''(T)\cup O_r$.
 Observe that this map is well-defined on all of $\TSkein_\cat(M)$ because each skein in $M$ can be isotoped
 out of
 $C$.
\begin{remark}
A reader who is used to the standard skein theoretical approach to ${\rm SU}(2)$-Reshetikhin--Turaev TQFTs might be confused by Figure~\ref{F:surgery}. Let us explain why this operation seems to be the inverse of what one would usually do.
In the standard RT theory, suppose one is given a skein $s$ in a $3$-manifold $X$ and that the manifold $X$ is obtained by surgery on a link~$L$ contained in a $3$-manifold $Y$. Then one can draw $s$ in $Y\setminus L$ and color $L$ by the Kirby color: this gives the same vector in the RT TQFT as $(X,s)$ but represented in $Y$ as $(Y,s\cup L_\Omega)$ (where by~$L_\Omega$ we denote $K$ colored by the Kirby color). This is the same as what is depicted in the left hand side of Figure~\ref{F:surgery}.
Thus one does a surgery ``from $Y$ to $X$'' (because the surgery link $L$ is in~$Y$) but then performs a ``pull back of a skein $s$ from $X$ to $Y$'' (i.e., one ends up drawing a skein in~$Y$).
Here instead we want to ``push forward the skeins'' (because we want to build a~covariant functor), i.e., we want to start from a skein which is depicted in $Y$ and draw its image in $X$.
In~the~right-hand side of the figure $M=Y$ is the manifold before the surgery on the green knot~$L$ and $X=M(S)$ is the one obtained after the surgery on the knot, and we want to ``push forward'' the skeins ``from $M=Y$ to $M(S)=X$''. Therefore, to obtain a skein in $X=M(S)$ one has to operate the ``opposite surgery'' which corresponds to coloring the meridian of L via the red color (corresponding to the Kirby color in the standard case).
\end{remark}
\begin{proposition}
 The linear map $\chi_{M,\Sp^1}$ is well-defined.
\end{proposition}
\begin{proof}
If $T_1,T_2\in \TSkein_\cat(M')$ are such that $i(T_1)=i(T_2)$ then $T_1$ and $T_2$ differ by isotopies in $M'$, slidings through meridian discs of $C$ and skein relations which, up to isotopy, can be supposed to be supported in a box disjoint from $C$.
 Then $i''(T_1)\sqcup O_r$ and $i''(T_2)\sqcup O_r$ differ by isotopies in~$i''(M')$, skein relations in $i''(M')$ and sliding of edges on the created red component $O_r$, which by Lemma~\ref{lemma:slidingOverRed} preserves the class in $\TSkein_\cat(M'')$.
\end{proof}

\begin{figure}[t]
 \centering
\begin{tikzpicture}[baseline = 0pt,xscale = 0.8]
\fill[gray!10] (0,-2) rectangle (2,2);
 \fill[gray!10] (0,2) arc(180:540:1 and 0.4);
 \draw[gray] (0,2) arc(180:540:1 and 0.4);
 \draw[gray] (0,2) arc(180:360:1 and 0.2);
 \draw[dashed, gray] (0,2) arc(180:0:1 and 0.2);

 \fill[gray!10] (0,-2) arc(180:540:1 and 0.4);
 \draw[gray] (0,-2) arc(180:540:1 and 0.4);
 \draw[gray] (0,-2) arc(180:360:1 and 0.2);
 \draw[dashed, gray] (0,-2) arc(180:0:1 and 0.2);

 \draw[gray] (0,0) arc(180:540:1 and 0.4);
 \draw[gray] (0,0) arc(180:360:1 and 0.2);
 \draw[dashed, gray] (0,0) arc(180:0:1 and 0.2);

 \draw[gray] (0,-2)--++(0,4);
 \draw[gray] (2,-2)--++(0,4);
 \node[gray] at (2.7,2.2) {$S^2\times D^1$};

\node[inner sep = 0pt, outer sep = 0pt, blue] (P) at (1,0) {};
\draw[blue] (0.5,2) node[rotate = 40, above]{$\vdots$} ..controls (0.5, 1.8) and (0.8,1.8).. (1, 1) -- (P) node[right]{$P$};
\draw[blue] (1.5,2) node[rotate = -40, above]{$\vdots$} ..controls (1.5, 1.8) and (1.2,1.8).. (1, 1);
\node[rectangle, draw, fill=white] at (1,1){$T_+$};

\draw[blue] (1,-2) node{$\vdots$}-- (1, -1) -- (P)node[pos = 0.98, sloped]{$>$};
\node[rectangle, draw, fill=white] at (1,-1){$T_-$};

\end{tikzpicture}
$\longmapsto$ \hspace{1cm} $\displaystyle \sum_i \quad$
\begin{tikzpicture}[baseline = 0pt,xscale = 0.8]
\fill[gray!10] (0,1) arc(180:360:1 and 0.8) -- (2,2) -- (0,2) -- (0,1);
\fill[gray!10] (0,-1) arc(180:0:1 and 0.8) -- (2,-2) -- (0,-2) -- (0,-1);
 \fill[gray!10] (0,2) arc(180:540:1 and 0.4);
 \draw[gray] (0,2) arc(180:540:1 and 0.4);
 \draw[gray] (0,2) arc(180:360:1 and 0.2);
 \draw[dashed, gray] (0,2) arc(180:0:1 and 0.2);

 \fill[gray!10] (0,-2) arc(180:540:1 and 0.4);
 \draw[gray] (0,-2) arc(180:540:1 and 0.4);
 \draw[gray] (0,-2) arc(180:360:1 and 0.2);
 \draw[dashed, gray] (0,-2) arc(180:0:1 and 0.2);

 \draw[gray] (0,1) arc(180:360:1 and 0.8);
 \draw[gray] (0,-1) arc(180:0:1 and 0.8);

 \draw[gray] (0,1)--++(0,1);
 \draw[gray] (0,-2)--++(0,1);
 \draw[gray] (2,1)--++(0,1);
 \draw[gray] (2,-2)--++(0,1);
 \node[gray] at (2.7,2.2) {$D^3\times S^0$};

\node[rectangle, draw, fill=white] (Up) at (1,0.7) {$x_i$};
\node[rectangle, draw, fill=white] (Do) at (1,-0.7) {$x^i$};
\draw[blue] (0.5,2) node[rotate = 40, above]{$\vdots$}..controls (0.5, 1.8) and (0.8,1.8).. (1, 1.5) -- (Up);
\draw[blue] (1.5,2) node[rotate = -40, above]{$\vdots$}..controls (1.5, 1.8) and (1.2,1.8).. (1, 1.5);
\node[rectangle, draw, fill=white] at (1,1.5){$T_+$};

\draw[blue] (1,-2)node{$\vdots$} -- (1, -1.5) -- (Do);
\node[rectangle, draw, fill=white] at (1,-1.5){$T_-$};
\end{tikzpicture}
\caption{The 3-handle, or cutting map. Here $\Omega_P = \sum_i x^i \otimes x_i$ is the copairing.}
 \label{fig:3handle}
\end{figure}
\begin{figure}[t]
\[
\epsh{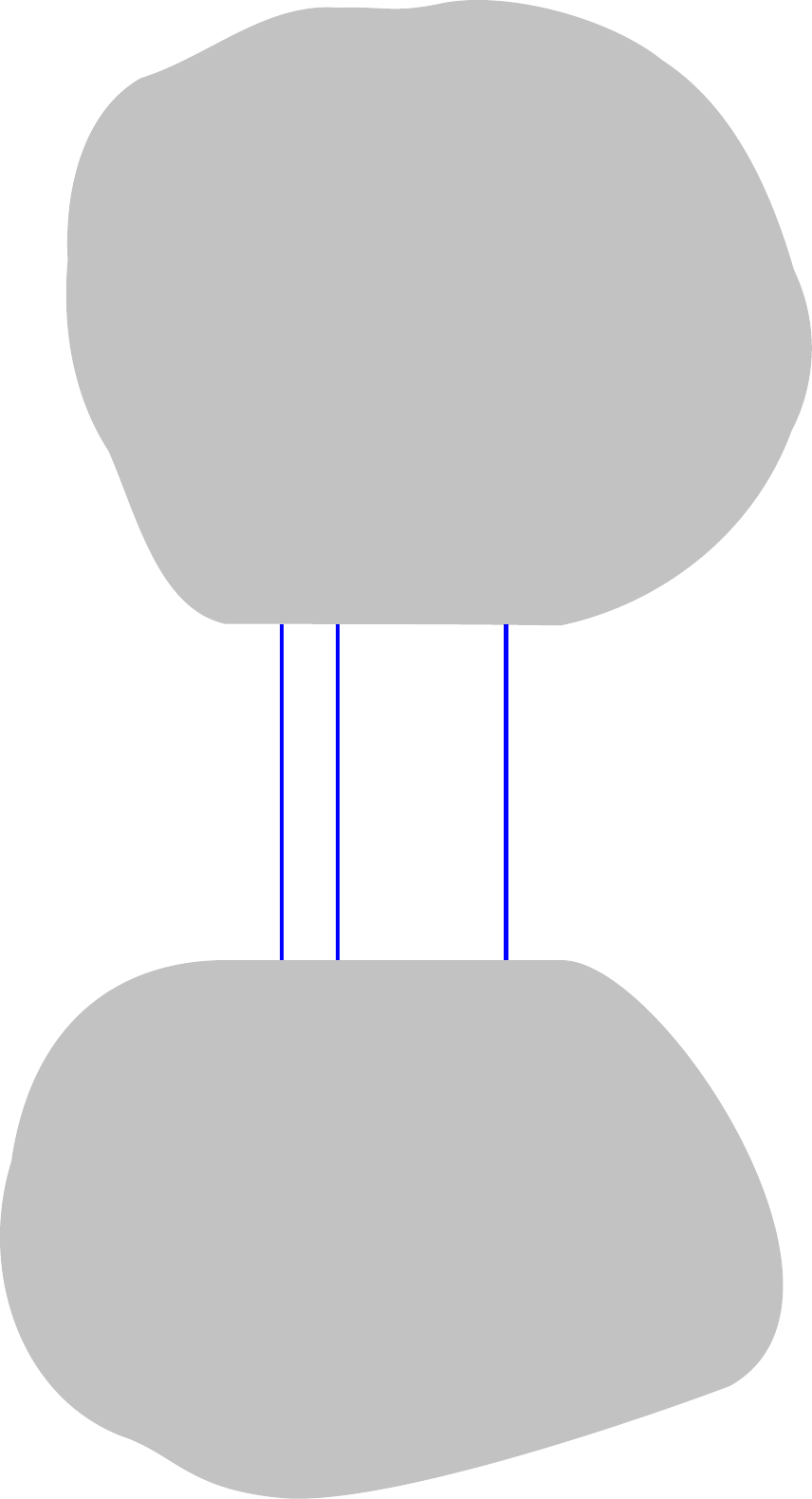}{20ex}
\putc{52}{48}{$\ms{\cdots}$}
\putc{52}{80}{A}
\putc{41}{17}{B}\longmapsto\sum_i
\epsh{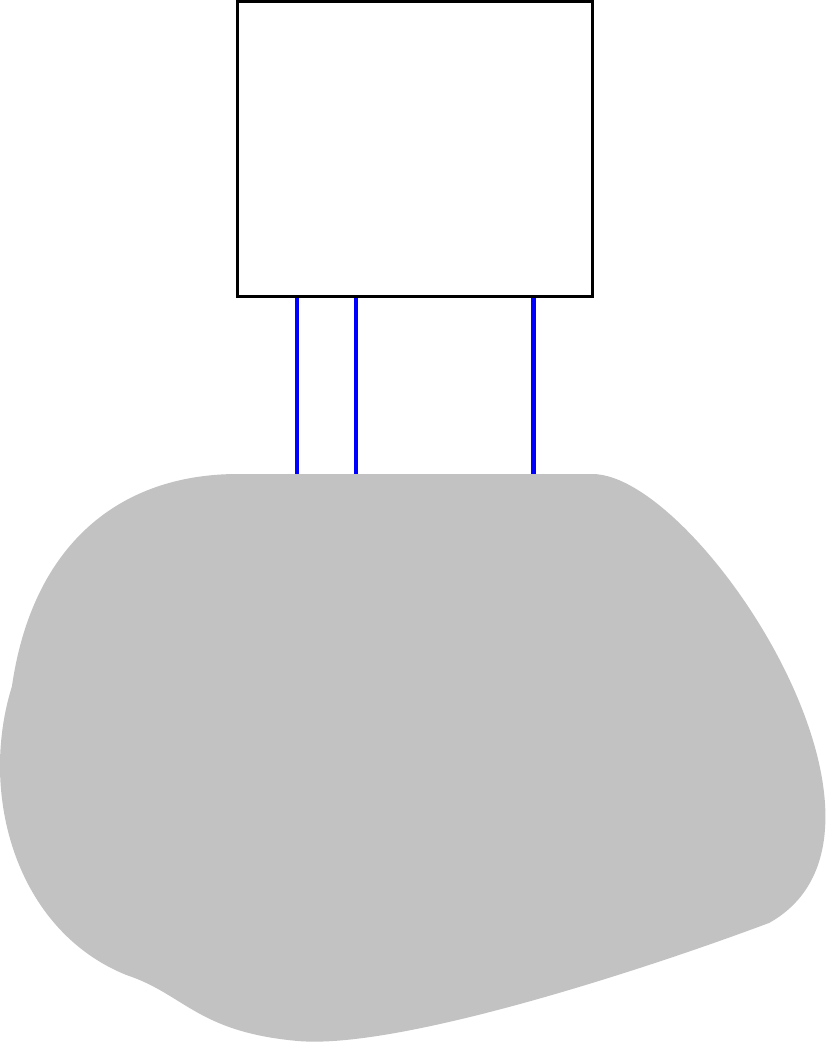}{14ex}
\putc{49}{87}{$\ms{x^i}$}
\putc{55}{64}{$\ms{\cdots}$}
\putc{44}{28}{B}
\sqcup
\epsh{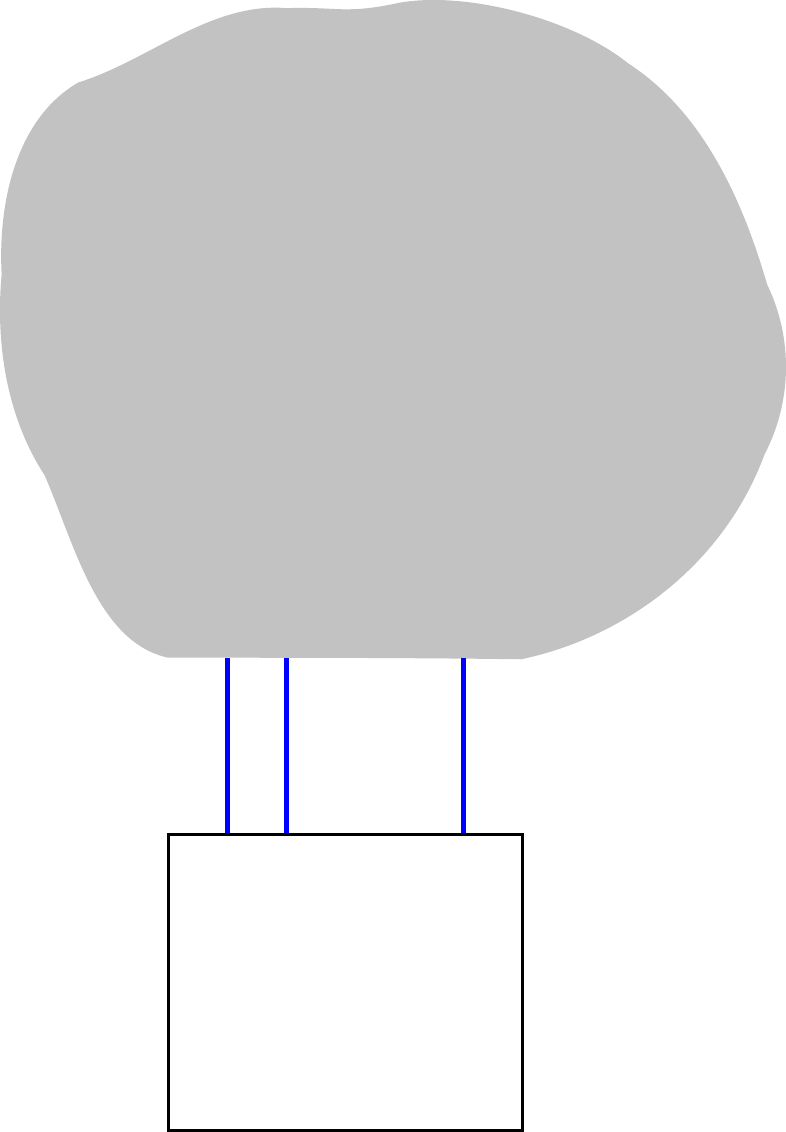}{14ex}
\putc{42}{14}{$\ms{x_i}$}
\putc{50}{34}{$\ms{\cdots}$}
\putc{46}{72}{A}
\qquad\text{ or }\qquad
\epsh{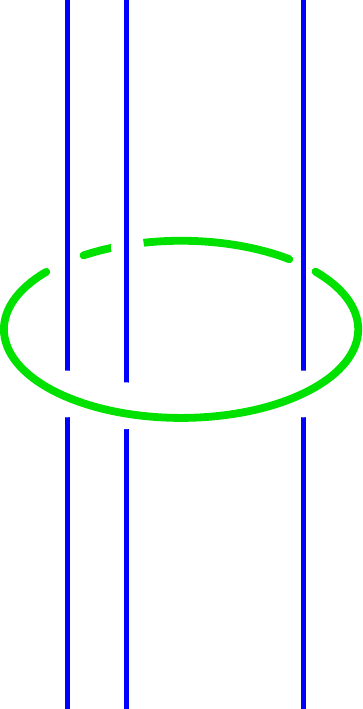}{14ex}
\putc{58}{17}{$\ms{\cdots}$}\longmapsto\sum_i
\epsh{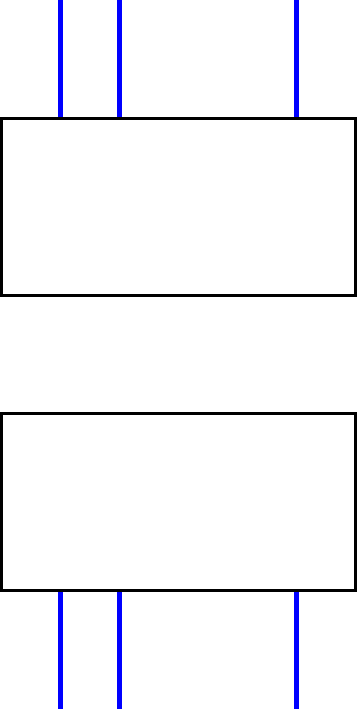}{14ex}
\putc{47}{71}{$\ms{x_i}$}
\putc{58}{92}{$\ms{\cdots}$}
\putc{48}{30}{$\ms{x^i}$}
\putc{57}{9}{$\ms{\cdots}$}
\]
 \caption{The cutting map $\chi_{M,\Sp^2}$: two representations
 depending on whether $\Sp^2$ is a separating (left) or a non-separating
 sphere in $M$ (right).}\label{F:cut}
\end{figure}
\textbf{3-handle: }
Given a framed sphere $\Sp^2$ in $M$,
there exists a \emph{cutting map}:
\[\chi _{M,{\Sp^2}}\colon\ \TSkein_\cat(M)\to\TSkein_\cat\bigl(M\bigl({\Sp^2}\bigr)\bigr)\]
sending parallel strands passing through the cutting sphere $S^2$ to the copairing $\Omega$, see Figure~\ref{fig:3handle}. A~representation based on a surgery presentation of $M$ is given in Figure~\ref{F:cut}. The impact on a~surgery presentation depends on whether $\Sp^2$ is separating. If it is separating, then $M$ is a~connected sum of $M_1$ and $M_2$ and can be presented as surgery on $L_1\cup L_2$. A surgery presentation of $M\bigl(\Sp^2\bigr) = M_1\sqcup M_2$ is given by$ L_1\sqcup L_2$. If $\Sp^2$ is not separating, let $\gamma$ be an embedded circle intersecting $\Sp^2$ once, then a small neighborhood of $\Sp^2\cup \gamma$ is diffeomorphic to a punctured $S^2 \times S^1$ and $M$ is obtained as a connected sum $M= M'\# S^2 \times S^1$. So we may obtain a surgery presentation of $M$ where $\Sp^2$ is represented by a green unknot. The effect of the 3-handle on skeins in both situations is depicted in Figure~\ref{F:cut}.

We say that the skein is in standard position with respect to $\Sp^2$ if its
intersection consists in~$n$ parallel edges in a rectangle \big(i.e., a disc of the form $\alpha\times [0,1]\subset \Sp^2\times [0,1]$ for some simple arc $\alpha\subset \Sp^2$\big) with at
least one edge colored by a projective module. We
now consider a~skein in standard position. Then the image by the
RT-functor of this rectangle is the identity of~$P$ for some
$P\in\Proj$. The cutting map $\chi_{M,\Sp^2}$ replaces the framed
sphere by the sums of graphs in two balls each containing a unique coupon
 colored with the dual basis of $\Hom_\cat(P,\unit)$ and~$\Hom_\cat(\unit,P)$.
\begin{proposition}
 The linear map $\chi_{M,\Sp^2}$ is well defined.
\end{proposition}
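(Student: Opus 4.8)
The plan is to verify three kinds of invariance: independence of the chosen dual bases, invariance under the ambient isotopies relating two standard positions of a fixed skein, and compatibility with projective skein relations. Independence of the dual bases is immediate, since by Subsection~\ref{sub:copairing} the copairing $\Omega_P=\sum_i x^i\otimes x_i$ does not depend on the choice of dual bases, so neither does the assigned skein. To reduce to a single crossing object, recall that in standard position the $n$ parallel strands carry colors $V_1,\dots,V_n$ with at least one $V_j\in\Proj$; as $\Proj$ is an ideal, $P:=V_1\otimes\cdots\otimes V_n\in\Proj$, the image under $F$ of the rectangle is $\Id_P$, and we cut along the single object $P$. Since each of the two balls then contains the $P$--colored edge meeting the new coupon, and since (exactly as in the $1$--handle case, using Lemma~\ref{L:prop-proj}) we may always arrange by a skein relation that a projective strand crosses $\Sp^2$, the output is an admissible skein.

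The core of the argument is invariance under isotopy. By general position every admissible skein is isotopic to one transverse to $\Sp^2$, and a further isotopy supported in a collar $\Sp^2\times[0,1]$ gathers all intersection points into one rectangle with the strands running straight through, i.e.\ into standard position. Two standard positions of isotopic skeins are related by an ambient isotopy of $M$; following the intersection with $\Sp^2$ through a generic such isotopy, the crossing pattern changes only by the elementary moves below, and for each I check that the cut output is unchanged using Lemma~\ref{P:Omega-nat}:
\begin{enumerate}
\item a coupon colored by a morphism $f\colon P\to Q$ (with $P,Q\in\Proj$) is pushed across $\Sp^2$: the two resulting skeins, obtained by cutting just above or just below $f$, are equal precisely by the naturality identity $\sum_i x^i\otimes(f\circ x_i)=\sum_i (y^i\circ f)\otimes y_i$. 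As (co)evaluations and braidings are particular morphisms, this case also covers sliding a $\rcoev_V$ or $\rev_V$ across $\Sp^2$, i.e.\ pushing a cap or a cup through the sphere and thereby changing the number of crossing points by $\pm2$;
\item a crossing strand is slid around the back of $\Sp^2$, cyclically permuting the tensor factors of $P$: this is handled by the rotation part of Lemma~\ref{P:Omega-nat}, which relates $\Omega_{P\otimes V}$ with $\Omega_{V\otimes P}$;
\item the orientation of a crossing strand is reversed: this is the duality part of Lemma~\ref{P:Omega-nat}, which identifies $\Omega_{P^*}$ with the dualized copairing.
\end{enumerate}
The separating and non-separating configurations of Figure~\ref{F:cut} are treated identically, the only difference being whether the two balls lie in different components of $M(\Sp^2)$ or in the same one.

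Finally, a projective skein relation is supported in a box $B\subset M$; since $\Sp^2$ is of codimension one, we may isotope the relation so that $B$ is disjoint from the collar in which the cut is performed. The coupons $x^i,x_i$ then lie outside $B$, so the relation is carried verbatim to a skein relation in $M(\Sp^2)$, and $\chi_{M,\Sp^2}$ descends to a linear map on $\TSkein_\cat(M)$. I expect the main obstacle to be the cap/cup crossing folded into item~(1): one must confirm that pushing a bend of a strand through $\Sp^2$ really is the slide of a single (co)evaluation coupon governed by naturality, with no leftover scalar and with the dual bases on the two sides matching, rather than an operation requiring a separate argument.
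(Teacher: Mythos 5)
Your proposal is correct and follows essentially the same route as the paper, which simply defers to the proof of \cite[Lemma 3.3]{CGPV2023a} and notes that the key point is that naturality of $\Omega$ makes the images of isotopic skeins skein equivalent. Your write-up is a spelled-out version of exactly that argument (with the duality and rotation parts of Lemma \ref{P:Omega-nat} handling orientation reversal and cyclic sliding, and the cap/cup move you flag at the end being, as you suspected, just an instance of naturality applied to the morphism $\Id_P\otimes\rev_V\colon P\otimes V\otimes V^*\to P$, which is legitimate since $\Proj$ is an ideal), so no separate argument is needed there.
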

\begin{proof}
 We refer here to the proof of~\cite[Lemma 3.3]{CGPV2023a}
 which is completely similar. The main idea is that the naturality of
 $\Omega$ implies that the images of isotopic skeins are skein
 equivalent.
\end{proof}

\begin{figure}[t]
 \centering
\begin{tikzpicture}[baseline = 0pt]
\draw[gray] (-1,0.2) arc(180:540:1.5);
\fill[gray!10] (-1,0.2) arc(180:540:1.5);
\node[color = gray] at (1.2,1.2){$S^3$};
\node[rectangle, draw, fill=white] (T) at (0,0) {$T_{cut}$};
\draw[blue] (T) ..controls (0,1) and (1,1).. (1,0) .. controls (1,-1) and (0,-1)..(T) node[pos = 0.1, right]{$P$} node [pos = 0.1, sloped]{$<$};
\end{tikzpicture} $\quad \mapsto \quad \mt_P(T_{cut})$
\caption{The 4-handle}
 \label{fig:4handle}
\end{figure}
\textbf{4-handle:} Given a framed sphere $\Sp^3$ in $M$, the map $\chi_{M,\Sp^3}$ corresponding to
filling of a~3-sphere of $M=M'\sqcup S^3$ is given by
\[(M,\Gamma)\mapsto F'\bigl(\Gamma\cap S^3\bigr)\bigl(M',\Gamma\cap M'\bigr)\in\TSkein_\cat\bigl(M'\bigr).\]
See Figure~\ref{fig:4handle}.

\begin{theorem}\label{T:S}
 There exists a unique symmetric monoidal functor{\samepage
 \[
 \TSkein_\cat\colon\ \nc\cob\to\Vect
 \]
 extending $\TSkein_\cat\colon \man\to\Vect$
 such that $\TSkein_\cat(e_{\Sigma,\Sp})=\chi_{\Sigma,\Sp}$.}

 If $\cat$ is chromatic compact, then the functor extends to a symmetric monoidal functor on $\cob$:
 \[
 \TSkein_\cat\colon\ \cob\to\Vect.
 \]
\end{theorem}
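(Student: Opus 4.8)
The plan is to use the generators-and-relations presentations recalled above. By Juhász's presentation theorem and its corollary, $\nc\cob\cong\mathcal{F}(\nc{\mathcal{G}})/\nc{\mathcal{R}}$ and $\cob\cong\mathcal{F}(\mathcal{G})/\mathcal{R}$, so it suffices to define a symmetric monoidal functor on the relevant free category and show it descends to the quotient, i.e.\ that it respects the relations (R1)--(R5). On the free category the assignment is forced and hence unique: an object $M$ goes to $\TSkein_\cat(M)$, a diffeomorphism edge $e_d$ goes to the functorial image $\TSkein_\cat(d)$ coming from $\TSkein_\cat\colon\Emb_n\to\Vect$, and a surgery edge $e_{M,\Sp}$ goes to the map $\chi_{M,\Sp}$ constructed in this section. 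The symmetric monoidal structure is inherited from $\TSkein_\cat(M_1\sqcup M_2)=\TSkein_\cat(M_1)\otimes\TSkein_\cat(M_2)$ together with the fact that each $\chi_{M,\Sp}$ is supported in a single component; so the whole task reduces to checking the relations, the well-definedness of each individual $\chi_{M,\Sp}$ being already secured by the constructions and propositions above.

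First I would dispatch the ``soft'' relations. Relation (R1) is immediate from functoriality of $\TSkein_\cat$ on $\Emb_n$ and from the fact that isotopic embeddings induce equal maps. Relation (R5), invariance under reversing the orientation of a framed sphere, is checked case by case: the $2$-handle's surgery circle is unoriented, the cutting $3$-handle uses the duality between $\Omega_{P^*}$ and $\Omega_P$ of Lemma \ref{P:Omega-nat}(1), and the $1$-handle case was already verified in its proposition via $\eta=\psi^{-1}\ve^*$. Relation (R2) is naturality of each $\chi_{M,\Sp}$ under diffeomorphisms: since every $\chi$ is built from intrinsic local data (coupons colored by $\ve,\eta,\gm$, a red meridian, or the copairing) which a diffeomorphism $d$ transports, conjugating by $\TSkein_\cat(d)$ sends $\chi_{M,\Sp}$ to $\chi_{M',\Sp'}$. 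Relation (R3), commutation of two disjoint surgeries, follows from locality, the two modifications acting in disjoint boxes.

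The heart of the matter, and the main obstacle, is the handle-cancellation relation (R4), where the hypotheses on $\cat$ enter. Transversality forces $\Sp'$ to be a framed $(k+1)$-sphere, so on $\nc\cob$ one must verify the $(1,2)$-, $(2,3)$- and $(3,4)$-cancellations, namely $\chi_{M(\Sp),\Sp'}\circ\chi_{M,\Sp}=\TSkein_\cat(\phi)$. The $(3,4)$-cancellation (``cut along a separating $S^2$, then evaluate the capped-off $S^3$ by $F'$'') collapses to the non-degeneracy of the m-trace pairing defining $\Omega$, together with $F'(\Gamma_0)=1$; the $(2,3)$-cancellation (``add a red meridian, then cut it by the belt sphere'') reduces to compatibility of the cutting map with red-to-blue modifications, via Lemma \ref{lemma:slidingOverRed} and the naturality of $\Omega$. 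Neither needs more than the chromatic-category structure. The decisive case is the $(1,2)$-cancellation: gluing the $\ve$- and $\eta$-coupons through a $\gm$-coupon and then encircling the resulting $P_\unit$-edge with a red meridian produces, after red-to-blue, exactly $\gm\circ\Delta_0^{P_\unit}$ along that edge, and the defining identity $\gm\circ\Delta_0^{P_\unit}=\Lambda_{P_\unit}=\eta\circ\ve$ of a gluing morphism (using Lemma \ref{L:prop-proj}(4)) recovers the original two-coupon configuration. This is precisely where chromatic non-degeneracy is indispensable: a category with a chromatic morphism but no gluing morphism would fail here.

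Finally, for the extension to all of $\cob$ under the chromatic-compact hypothesis, I would adjoin the $0$-handle birth map inserting $(S^3,\zeta\Gamma_0)$ and verify the relations involving $0$-handles; all but one are trivial by locality, the substantive one being the $(0,1)$-cancellation. With the gluing morphism taken to be $\gm=\zeta^{-1}\Id_{P_\unit}$, this cancellation unwinds through $\gm\circ\Delta_0^{P_\unit}=\Lambda_{P_\unit}$ to the defining relation $\Delta_0^{P_\unit}=\zeta\Lambda_{P_\unit}$ of chromatic compactness. Once (R1)--(R5) are established the functor descends to $\nc\cob$ (resp.\ $\cob$), and uniqueness follows since the functor was forced on generators, completing the proof.
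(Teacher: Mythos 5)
Your overall architecture is exactly the paper's: realize $\nc\cob$ and $\cob$ via Juh\'asz's presentation, observe the functor is forced on generators, and verify the relations (R1)--(R5); your treatment of (R1), (R2), (R3), (R5), of the $(1,2)$- and $(3,4)$-cancellations, and of the extension to $\cob$ and uniqueness all agree with the paper's proof. However, your verification of the $(2,3)$-cancellation has a genuine gap. You attribute it to ``compatibility of the cutting map with red-to-blue modifications, via Lemma~\ref{lemma:slidingOverRed} and the naturality of $\Omega$,'' but neither tool can close this case. After the $2$-handle inserts the red meridian around the strands and the $3$-handle cuts those strands, one is left with the red circle (made blue, i.e.\ a $\chr_{P_\unit}$-coupon) encircling the dual-basis coupons $x_i$, $x^i$: Lemma~\ref{lemma:slidingOverRed} only moves edges across a red circle that \emph{persists}, and naturality of $\Omega$ (Lemma~\ref{P:Omega-nat}) only transports morphisms through the copairing --- neither removes the encircling component. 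What collapses this configuration to the identity is precisely the defining identity \eqref{eq:chrP} of the chromatic morphism (the non-semisimple Kirby-color property), together with the topological observation that the attaching circle of the $2$-handle bounds a disc in the intermediate $3$-manifold, so the encircling curve may be taken to be an unknot; this is the content of the paper's Figure~\ref{F:R4c}. Your meta-claim that only the chromatic-category structure is needed is correct --- \eqref{eq:chrP} is part of that structure --- but the step as you describe it would fail, and it is the one place in (R4) where the chromatic identity itself must be invoked.

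Two smaller misstatements, easily repaired. First, the $(0,1)$-cancellation does not ``unwind through $\gm\circ\Delta_0^{P_\unit}=\Lambda_{P_\unit}$'': no $2$-handle, hence no red circle and no $\Delta_0$, occurs there. Chromatic compactness enters only through Lemma~\ref{lem:invertibleg}, which guarantees that $\zeta^{-1}\Id_{P_\unit}$ is a gluing morphism; the factor $\zeta$ carried by the birth map then cancels against the $\gm$-coupon of the $1$-handle, and the composite is the identity on skeins. Second, for the $(3,4)$-cancellation the operative fact is that evaluating $F'$ on the cut-off $3$-ball is itself a skein relation (reassembling the original coupon by duality of the bases $\{x_i\},\{x^i\}$); the normalization $F'(\Gamma_0)=1$ plays no role in that cancellation.
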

\begin{proof}
 We only need to prove that the relation (R1)--(R5) are satisfied by
 $\TSkein_\cat$.
 \begin{enumerate}\itemsep=0pt
 \item[(R1)] Since $\TSkein_\cat\colon \man\to\Vect$ is functorial we have
 $\TSkein_\cat(e_{d\circ d'})=\TSkein_\cat(e_{d})\circ
 \TSkein_\cat(e_{d'})$. Also, since elements of
 $\TSkein_\cat(M)$ are defined by ribbon graphs up to isotopy,
 we clearly have $\TSkein_\cat(e_{d})=\Id$ if $d$ is isotopic to
 $\Id_\Sigma$.
 \item[(R2)] Since the construction of the maps $\chi_{M,\Sp}$
 are local, they are covariant under diffeomorphisms of the pair
 $({M,\Sp})$.
 \item[(R3)] Again, since the construction of the maps
 $\chi_{M,\Sp}$ are local, they commute for disjoint framed
 spheres.
 \item[(R4)] The 2-3-handle cancellations reduces to the chromatic
 identity \eqref{eq:chrP} as shown in Figure~\ref{F:R4c}. Indeed since the attaching framed
 2-sphere of the 3-handle intersects the belt circle of the
 $2$-handle once, the attaching circle for the 2-handle bounds a
 disc in the intermediate 3-manifold. This is why we can represent
 the green circle in Figure~\ref{F:R4c} as an unknot.

 The
 1-2-handle cancellations reduces to the defining property of
 the gluing map. Indeed, the sphere $S^2$ created by the 1-handle can not
 be separating since it is intersected once by the attaching
 $\Sp^1$ of the 2-handle. This means that we can represent the map
 $\chi_{M,\Sp^1}$ as in the left-hand side of Figure~\ref{F:glue}
 and the map $\chi_{M,\Sp^2}$ is then the left hand-side of Figure~\ref{F:surgery}
 turning the green unknot into red.

 The 3-4-handle
 cancellation relies on the fact that evaluating $F'$ on a cut
 3-ball is a skein relation.

 Finally, in the compact case, the
 0-1-handle cancellation is obvious since we can choose
 $\gm=\zeta^{-1}\Id_{P_\unit}$ as gluing morphism.
 \begin{figure}[t]
$$
\epsh{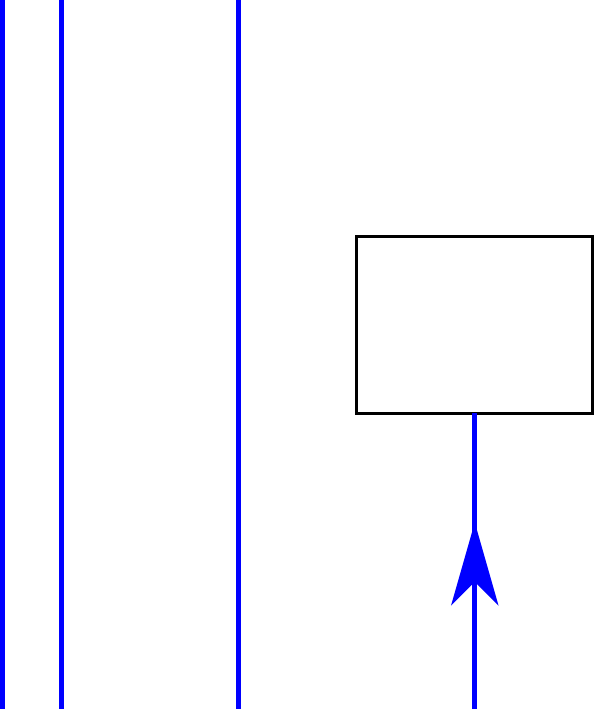}{14ex}
\putc{24}{51}{${\cdots}$}
\putc{80}{55}{${\ve}$}
\putlc{86}{18}{$\ms{P_\unit}$}
\longmapsto
\epsh{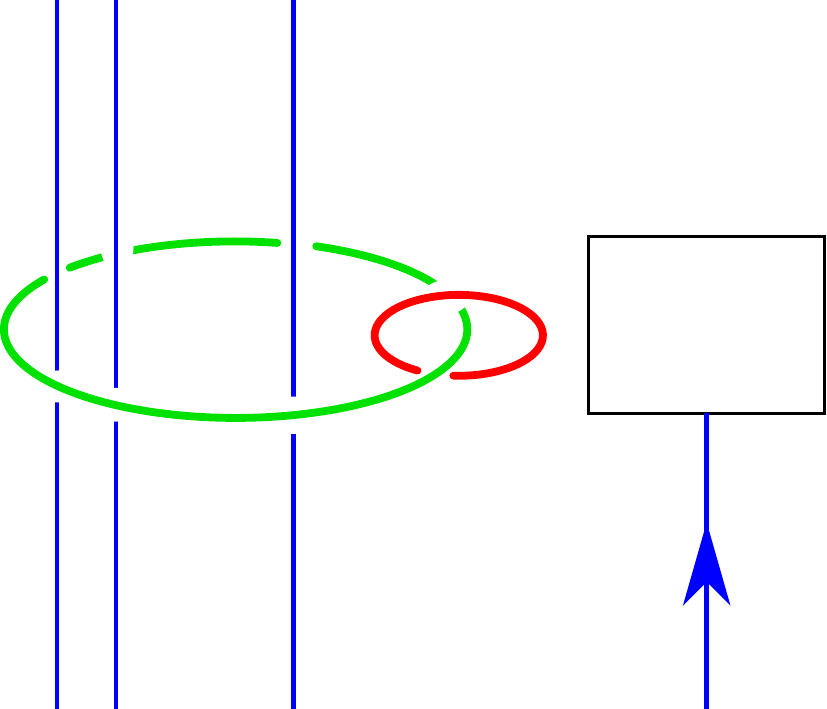}{14ex}
\putc{26}{19}{$\cdots$}
\putc{85}{55}{${\ve}$}
\putlc{89}{20}{$\ms{P_\unit}$}
\ \dot=
\epsh{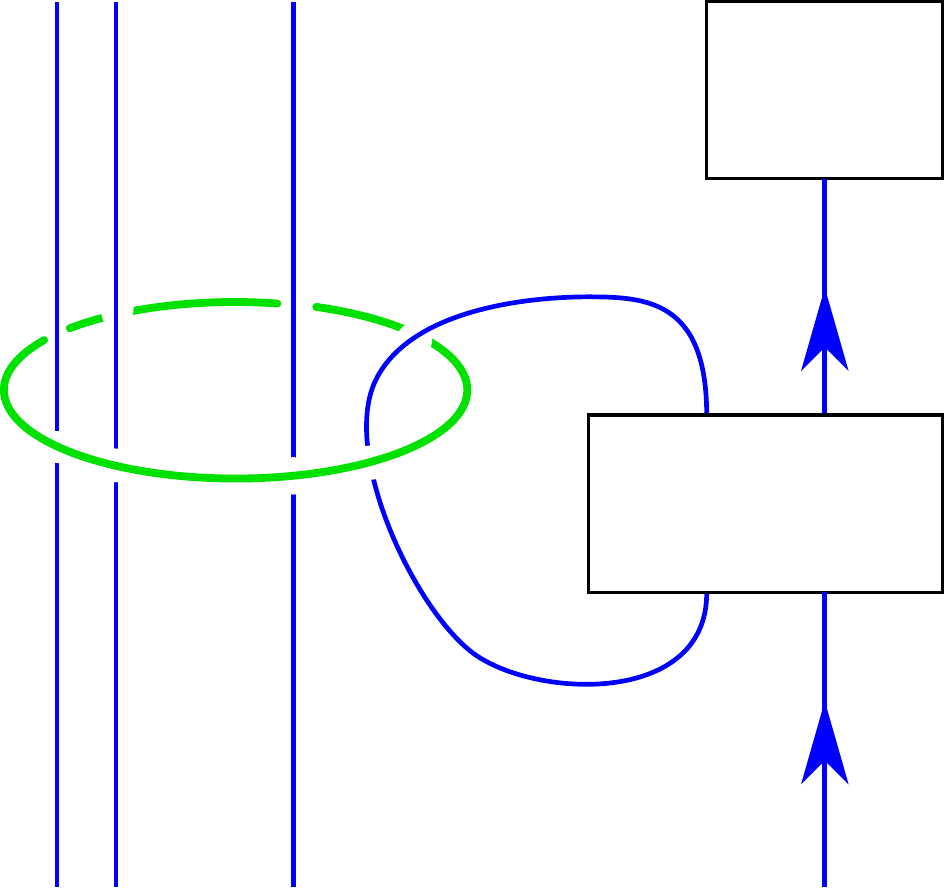}{14ex}
\putc{23}{22}{$\cdots$}
\putc{82}{43}{${\chr_{P_\unit}}$}
\putc{86}{91}{${\ve}$}
\putlc{92}{16}{$\ms{P_\unit}$}
\longmapsto
\epsh{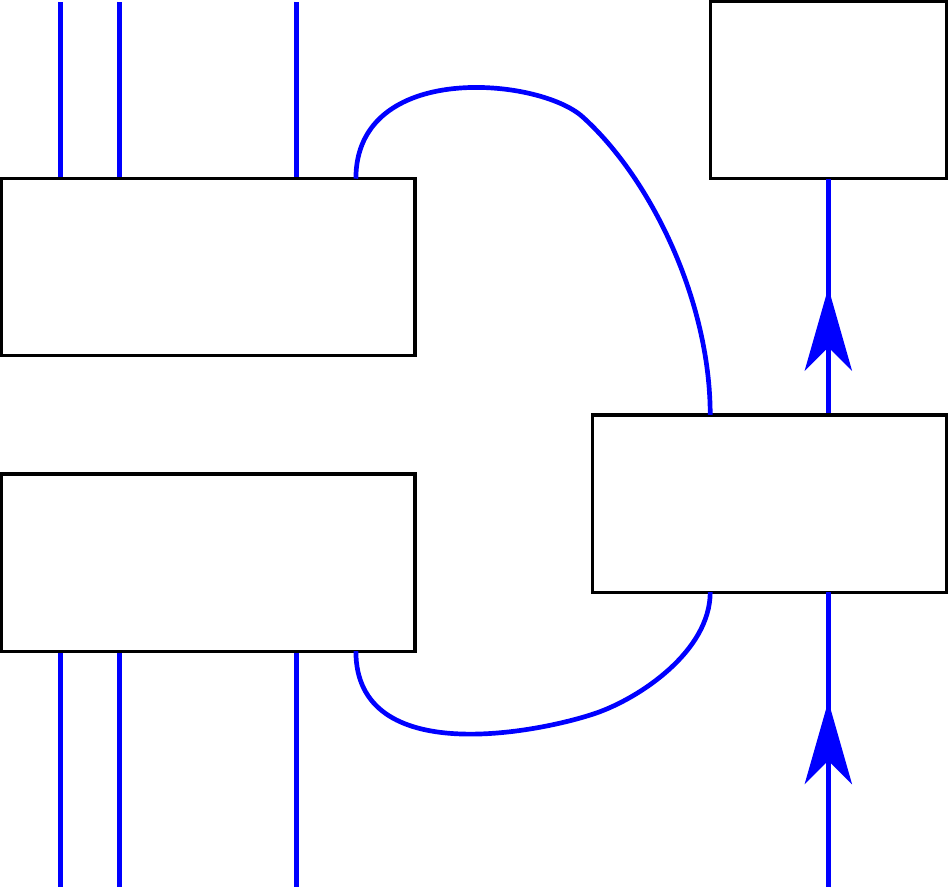}{14ex}
\putc{23}{13}{$\cdots$}
\putc{20}{37}{${x^i}$}
\putc{20}{70}{${x_i}$}
\putc{81}{43}{${\chr_{P_\unit}}$}
\putc{87}{90}{${\ve}$}
\putlc{93}{15}{$\ms{P_\unit}$}
\ \dot=
\epsh{fig24-0.pdf}{14ex}
\putc{24}{51}{${\cdots}$}
\putc{80}{55}{${\ve}$}
\putlc{86}{18}{$\ms{P_\unit}$}
$$
 \caption{The cancellation of a 2-handle by a 3-handle.}\label{F:R4c}
\end{figure}
 \item[(R5)] The maps $\chi_{M,\Sp}$ do not depend on the
 orientation of $\Sp$.
\hfill\qed
\end{enumerate}
\renewcommand{\qed}{}
\end{proof}

Let us illustrate the construction on two 4-manifolds:
\begin{example}
 The 4-manifold $S^3\times S^1$ is obtained by attaching successively exactly one~0,~1,~3 and~4-handle. The 0-handle introduces a tendril $\Gamma_0$ in $S^3$ times the global dimension $\zeta$. The 1-handle will glue the two ends of $\Gamma_0$ and turn it into a circle in $S^1\times S^2$ which goes around the $S^1$, with a coupon colored by $\gm$. The 3-handle will cut this circle and introduce a cutting morphism, which for $P_\unit$ is the composition of $\eta$ and $\varepsilon$. Finally, the 4-handle will evaluate this skein in $S^3$ using the modified trace,
 \begin{equation*}
1 \overset{\text{0-handle}}{\longmapsto} \zeta\begin{tikzpicture}[baseline = 8pt]
 \node[draw, rectangle, minimum height = 0.4cm, minimum width =
 0.7cm] (eta) at
 (0,0){$\eta$}; \node[draw, rectangle, minimum height = 0.4cm,
 minimum width = 0.7cm] (epsilon) at
 (0,1){$\varepsilon$}; \draw[blue] (eta) -- (epsilon)
 node[midway,sloped]{$>$} node[black, midway, right]{$P_\unit$};
\end{tikzpicture}
\overset{\text{1-handle}}{\longmapsto}
\zeta\begin{tikzpicture}[baseline = -8pt, yscale = 0.75]
 \node[draw, rectangle, minimum height = 0.4cm, minimum width =
 0.7cm] (gm) at (0,0){$\gm$};
\draw[blue] (gm)--++(0,0.5) arc(0:180:0.5) -- ++(0,-2)node[midway,sloped]{$>$} node[black, midway, right]{$P_\unit$} arc(180:360:0.5)--(gm) ;
\draw [line width = 5pt,white] (0.1,-0.5) arc (85:-245:0.5);
\draw [green] (0.1,-0.5) arc (85:-245:0.5);
\end{tikzpicture}
\overset{\text{3-handle}}{\longmapsto} \zeta\ \begin{tikzpicture}[baseline = 20pt]
 \node[draw, rectangle, minimum height = 0.4cm, minimum width =
 0.7cm] (eta) at
 (0,0){$\eta$}; \node[draw, rectangle, minimum height = 0.4cm,
 minimum width = 0.7cm] (gm) at
 (0,1){$\gm$}; \node[draw, rectangle, minimum height = 0.4cm,
 minimum width = 0.7cm] (epsilon) at
 (0,2){$\varepsilon$}; \draw[blue] (eta) -- (gm) node[midway,sloped]{$>$} node[black, midway, right]{$P_\unit$} -- (epsilon)
 node[midway,sloped]{$>$} node[black, midway, right]{$P_\unit$};
\end{tikzpicture}
\overset{\text{4-handle}}{\longmapsto} \zeta \mt(\eta \circ \varepsilon \circ \gm) = 1.
 \end{equation*}
 The topology of the ambient 3-manifolds is read in the green part, i.e., we go from the empty 3-manifold to $S^3$ to $S^2\times S^1$ to $S^3$ to the empty.

 If we do not suppose that $\cat$ is chromatic compact, we can simply remove the 0-handle and the 4-cobordism $S^3\times S^1$ minus a ball maps the tendril $\Gamma_0$ to $\mt(\eta \circ \varepsilon \circ \gm)$, i.e., to the scalar $\lambda$ such that $\varepsilon \circ\gm = \lambda \varepsilon$.
\end{example}
\begin{example}
 The 4-manifold $\CP^2$ is obtained by attaching successively exactly one 0, 2 and 4-handle. Again the 0-handle introduces $\zeta$ times the tendril $\Gamma_0$ in $S^3$. The 2-handle is attached along a 1-framed unknot and introduces its meridian in red in the surgered manifold. This manifold is $S^3$ and the meridian is a $(-1)$-framed unknot. It is turned blue using the tendril and the 4-handle will evaluate this skein in $S^3$ using the modified trace,
 \begin{gather*}
1 \overset{\text{0-handle}}{\longmapsto} \zeta\begin{tikzpicture}[baseline = 8pt]
 \node[draw, rectangle, minimum height = 0.4cm, minimum width =
 0.7cm] (eta) at
 (0,0){$\eta$}; \node[draw, rectangle, minimum height = 0.4cm,
 minimum width = 0.7cm] (epsilon) at
 (0,1){$\varepsilon$}; \draw[blue] (eta) -- (epsilon)
 node[midway,sloped]{$>$} node[black, midway, right]{$P_\unit$};
\end{tikzpicture}
\overset{\text{2-handle}}{\longmapsto}
\zeta\hskip-13pt\begin{tikzpicture}[baseline = 8pt]
\draw[green] (0.2,0) .. controls (2,0) and (0,2).. (0,0) ..controls (0,-1) and (-1,0).. (-0.2,0);
\draw[line width = 5pt, white] (0.5,0.2) arc (190:-140:0.5);
\draw[red] (0.5,0.2) arc (190:-140:0.5);
\begin{scope}[xshift = 2cm]
 \node[draw, rectangle, minimum height = 0.4cm, minimum width =
 0.7cm] (eta) at
 (0,0){$\eta$}; \node[draw, rectangle, minimum height = 0.4cm,
 minimum width = 0.7cm] (epsilon) at
 (0,1){$\varepsilon$}; \draw[blue] (eta) -- (epsilon)
 node[midway,sloped]{$>$} node[black, midway, right]{$P_\unit$};
\end{scope}
\end{tikzpicture}
=
\zeta\hskip-16pt\begin{tikzpicture}[baseline = 8pt]
\draw[green] (0.2,0) .. controls (2,0) and (0,2).. (0,0) ..controls (0,-1) and (-1,0).. (-0.2,0);
\begin{scope}[xshift = 1.2cm, xscale = -1, rotate = 90]
\draw[red] (0.2,0) .. controls (2,0) and (0,2).. (0,0) ..controls (0,-1) and (-1,0).. (-0.2,0);
\end{scope}
\begin{scope}[xshift =2.8cm]
 \node[draw, rectangle, minimum height = 0.4cm, minimum width =
 0.7cm] (eta) at
 (0,0){$\eta$}; \node[draw, rectangle, minimum height = 0.4cm,
 minimum width = 0.7cm] (epsilon) at
 (0,1){$\varepsilon$}; \draw[blue] (eta) -- (epsilon)
 node[midway,sloped]{$>$} node[black, midway, right]{$P_\unit$};
\end{scope}
\end{tikzpicture}
\overset{\text{4-handle}}{\longmapsto} \zeta \Delta_-.
 \end{gather*}
 Again in the non-compact, we remove the 0-handle and the 4-cobordism $\CP^2$ minus a ball maps the tendril $\Gamma_0$ to $\Delta_-$.
\end{example}
We now extract (even in the non-compact case) two scalar invariants of
$4$-manifolds: $\TSkein_\cat(W,T)$ for manifolds with an admissible
graph in the boundary and $\dot\TSkein_\cat(W)$ for connected closed
4-manifolds.
\begin{definition}
 Let $W$ be an oriented compact 4-manifolds with no closed components.
 A $\cat$-ribbon graph $T\subset(-\partial W)$ is \emph{admissible} if for each
 component $M$ of $-\partial W$, $T\cap M$ is admissible, i.e., if $T$
 represents an admissible skein of $\TSkein_\cat(-\partial W)$ (where
 the minus sign is for opposite orientation).
 If $T\subset(-\partial W)$ is admissible, then
 define the invariant
 \[\TSkein_\cat(W,T)=\TSkein_\cat\bigl(\wt W\bigr)(T),\]
 where \smash{$\wt W$} is $W$ seen
 as a cobordism from $-\partial W$ to
 $\varnothing$.
\end{definition}
\begin{definition}
 Let $W$ be a
 connected closed 4-manifold.
 Define
 \[
 \dot\TSkein_\cat(W)=\TSkein_\cat\bigl(\dot W,\Gamma_0\bigr)\in\FK,
 \]
 where
 \smash{$\dot W=W\setminus B^4$} is a once punctured $W$.
\end{definition}
If
$\cat$ is chromatic compact, by definition of the maps $\chi_{M,\Sp^0}$ (see Figure~\ref{fig:0handle}), we
have for any closed connected 4-manifold $W$:
\begin{equation*}
 \TSkein_\cat(W)=\zeta\dot\TSkein_\cat(W)\Id_\FK.
\end{equation*}
For example, \smash{$\dot\TSkein_\cat\bigl(S^4\bigr)=1$} and \smash{$\TSkein_\cat\bigl(S^4\bigr)=\zeta\Id_\FK$}.
\subsection{Properties}
Remember that in this section we are assuming that $\cat$ is chromatic non-degenerate so that in particular it has an m-trace $\mt$, chromatic morphism $\chr$ and gluing morphism $\gm$.
\begin{proposition}\label{prop:changeOfModifiedTraceTQFT}
 Let $\kappa\in\FK^*$, then
 \begin{enumerate}\itemsep=0pt
 \item[$(1)$] $\mt':=\kappa\mt$ is a non-degenerate
 m-trace
 on $\Proj$,
 \item[$(2)$] its associated copairing is given by
 \smash{$\Omega'_P=\frac1\kappa\Omega_P$}, and
 \smash{$\Gamma_0'=\frac1\kappa\Gamma_0$},
 \item[$(3)$] $\chr'=\kappa\chr$ is a chromatic morphism associated to
 $\mt'$,
 \item[$(4)$] \smash{$\gm'=\frac{1}{\kappa^2}\gm$} is a gluing morphism, and in the
 compact case $\zeta'= \kappa^2\zeta$.
 \end{enumerate}
 Finally, the TQFT $\TSkein'_\cat$ associated to $\mt'$ satisfies
 \smash{$\TSkein'_\cat(W)=\kappa^{\chi(W)}\TSkein_\cat(W)$} where $\chi$ is the Euler characteristic.
\end{proposition}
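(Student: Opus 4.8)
The plan is to first record how each auxiliary datum rescales (this is exactly parts (1)--(4)), and then to feed those scalings into the handle-by-handle description of $\TSkein_\cat$ from Theorem \ref{T:S}, showing that a $k$-handle contributes precisely a factor $\kappa^{(-1)^k}$, so that the total factor is $\kappa^{\sum_k(-1)^kh_k}=\kappa^{\chi(W)}$.

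Parts (1)--(4) are direct verifications. For (1) the three defining properties of an m-trace are each invariant under multiplying every $\mt_P$ by a fixed nonzero scalar, so $\mt'=\kappa\mt$ is again non-degenerate. For (2) the dual-basis condition $\mt'_P(x_i\circ x^j)=\delta_{ij}$ reads $\kappa\,\mt_P(x_i\circ x^j)=\delta_{ij}$, so one may take the new dual bases $\{x^i\}$ and $\{\tfrac1\kappa x_i\}$; hence $\Omega'_P=\tfrac1\kappa\Omega_P$, $\Lambda'_P=\tfrac1\kappa\Lambda_P$, and since $\Gamma_0$ is linear in $\eta$ and the new normalization forces $\eta'=\tfrac1\kappa\eta$, also $\Gamma_0'=\tfrac1\kappa\Gamma_0$. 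For (3) the identity \eqref{E:ChrMapDef} is bilinear in the pair $(\chr,\Lambda_{G\otimes G^*})$ with right-hand side independent of $\mt$; as $\Lambda'$ scales by $\tfrac1\kappa$, the morphism $\chr'=\kappa\chr$ restores it. For (4), Lemma \ref{L:Delta} computes $\Delta_0^P$ by $F$ of a diagram with a single chromatic coupon, so it is linear in $\chr$ and $\Delta_0'^P=\kappa\Delta_0^P$; then $\gm'\circ\Delta_0'^{P_\unit}=\Lambda'_{P_\unit}$ forces $\gm'=\tfrac1{\kappa^2}\gm$, and in the compact case $\Delta_0'^{P_\unit}=\kappa\Delta_0^{P_\unit}=\kappa\zeta\Lambda_{P_\unit}=\kappa^2\zeta\,\Lambda'_{P_\unit}$ gives $\zeta'=\kappa^2\zeta$.

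For the final formula I would fix a handle decomposition of the cobordism $W$ and use that $\TSkein_\cat(W)$ (resp.\ $\TSkein'_\cat(W)$) is the composite of the handle maps $\chi_{M,\Sp^{k-1}}$ (resp.\ $\chi'_{M,\Sp^{k-1}}$); since the underlying admissible skein modules do not depend on the m-trace, both composites act on the same spaces, and the diffeomorphism edges $e_d$ are assigned m-trace-independent maps, contributing $\kappa^0$. I claim $\chi'_{M,\Sp^{k-1}}=\kappa^{(-1)^k}\chi_{M,\Sp^{k-1}}$. Indeed, for $k=0$ the birth map inserts $\zeta'\Gamma_0'=\kappa^2\zeta\cdot\tfrac1\kappa\Gamma_0=\kappa\,\zeta\Gamma_0$, giving $\kappa^{+1}$; for $k=4$ it rescales by $F'\mapsto\kappa F'$, giving $\kappa^{+1}$; for $k=3$ the cutting map inserts $\Omega'=\tfrac1\kappa\Omega$, giving $\kappa^{-1}$; and for $k=2$ the surgery map adds a red meridian whose red-to-blue conversion uses $\chr'=\kappa\chr$, giving $\kappa^{+1}$.

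The one delicate point---and the main obstacle---is the $1$-handle, where the naive count gives the wrong power. The gluing map inserts $\gm'=\tfrac1{\kappa^2}\gm$, suggesting a factor $\kappa^{-2}$; but it is applied to a skein in \emph{good position}, i.e.\ carrying the standard coupon $\eta$, whereas the standard coupon for $\mt'$ is $\eta'=\tfrac1\kappa\eta$. To make this precise I would evaluate $\chi'_{M,\Sp^0}$ on a representative $T$ of a fixed class $v$ that carries an $\eta$-coupon: rewriting $\eta=\kappa\eta'$ shows that $T$ represents $\kappa$ times the corresponding $\eta'$-skein, and applying the new map to the latter inserts $\gm'=\tfrac1{\kappa^2}\gm$; crucially the $\eta$-coupon is erased by the gluing (it is replaced by the $\gm$-coupon), so the $\eta$ versus $\eta'$ distinction survives only as the overall scalar $\kappa$. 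The net effect is $\kappa\cdot\tfrac1{\kappa^2}=\kappa^{-1}=\kappa^{(-1)^1}$, exactly as required. Combining all indices and using $\chi(W)=\sum_k(-1)^kh_k$ (valid because the boundary $3$-manifolds, and the cylinders $\Sigma\times[0,1]$, have vanishing Euler characteristic), where $h_k$ is the number of $k$-handles, yields $\TSkein'_\cat(W)=\kappa^{\chi(W)}\TSkein_\cat(W)$.
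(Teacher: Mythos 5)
Your proposal is correct and follows essentially the same route as the paper: parts (1)--(4) by direct rescaling of the definitions, then a handle-by-handle count giving $\kappa^{(-1)^k}$ per $k$-handle and $\kappa^{\chi(W)}=\kappa^{n_0-n_1+n_2-n_3+n_4}$ overall. Your careful treatment of the $1$-handle (trading the $\eta$-coupon for $\eta'=\frac1\kappa\eta$ before inserting $\gm'=\frac1{\kappa^2}\gm$, net factor $\kappa^{-1}$) is precisely the bookkeeping the paper compresses into the remark that the map sends an $\Omega'_{P_\unit}$ to a $\gm'$.
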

\begin{proof}
The first four points are immediate from the definitions. In the compact case, the 0-handle map becomes $\chi _{M,{\Sp^{-1}}}'=\kappa \chi_{M,{\Sp^{-1}}}$, as \smash{$\zeta'\Gamma_0' = \kappa \zeta\Gamma_0$}. The 1-handle map becomes $\chi _{M,{\Sp^{0}}}'=\smash{\frac1\kappa \chi_{M,{\Sp^{0}}}}$ as it maps a $\Omega'_{P_\unit}$ to a $\gm'$. The 2-handle map becomes $\chi _{M,{\Sp^{0}}}'=\kappa \chi_{M,{\Sp^{0}}}$ as $\chr'=\kappa\chr$. The 3-handle map becomes \smash{$\chi _{M,{\Sp^{0}}}'=\frac1\kappa \chi_{M,{\Sp^{0}}}$} as \smash{$\Omega_P'=\frac1\kappa\Omega_P$}. The 4-handle map becomes \smash{$\chi _{M,{\Sp^{0}}}'=\kappa \chi_{M,{\Sp^{0}}}$} as $\mt'=\kappa\mt$.

Therefore, for a 4-bordism $W$ decomposed using $n_i$ $i$-handles, $0\leq i\leq 4$, one has
\[
\TSkein_\cat'(W) = \kappa^{n_4-n_3+n_2-n_1+n_0}\TSkein_\cat(W) = \kappa^{\chi(W)}\TSkein_\cat(W).
\tag*{\qed}
\]
\renewcommand{\qed}{}
\end{proof}

\begin{theorem}\label{T:invertible}
 The TQFT $\TSkein_\cat$ is invertible if and only if $\cat$ is factorizable.
\end{theorem}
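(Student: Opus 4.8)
The plan is to exploit the handle presentation of $\cob$ underlying Theorem~\ref{T:S}: the functor $\TSkein_\cat$ is determined by the five families of handle maps $\chi_{M,\Sp^{k-1}}$, and a symmetric monoidal functor to $\Vect$ is invertible precisely when every $3$-manifold is sent to a $1$-dimensional space and every cobordism to an isomorphism. The first reduction I would make is that, because any closed $3$-manifold $M$ is obtained from $\emptyset$ by surgeries and $\TSkein_\cat(\emptyset)=\FK$, invertibility is equivalent to the single condition that \emph{every} handle map $\chi_{M,\Sp^{k-1}}$ is an isomorphism: if they all are, then the cobordism $\emptyset\to M$ gives $\TSkein_\cat(M)\cong\FK$ and every cobordism map is an isomorphism, and the converse is immediate. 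Since invertibility is asserted for the functor on all of $\cob$, we are in the chromatic compact setting (Theorem~\ref{T:S}) and may fix the scalar $\zeta$ with $\Delta_0^{P_\unit}=\zeta\Lambda_{P_\unit}$.

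The second step is to separate the handle maps by which objects they involve. The birth map $\chi_{M,\Sp^{-1}}$ and the $4$-handle map $\chi_{M,\Sp^{3}}$ only use $\Gamma_0$ and $F'$, and are isomorphisms onto the relevant tensor factors because $\TSkein_\cat(S^3)=\FK$ (non-degeneracy of the m-trace, $F'(\Gamma_0)=1$) and $\zeta\neq0$. The $1$-handle (gluing) map only uses the gluing morphism $\gm\in\End_\cat(P_\unit)$, which in the compact case can be taken to be $\zeta^{-1}\Id_{P_\unit}$, hence invertible by Lemma~\ref{lem:invertibleg}; I would check that invertibility of $\gm$ makes $\chi_{M,\Sp^0}$ an isomorphism of skein modules, its inverse realised by cutting and inserting $\gm^{-1}$. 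Thus the $0$-, $1$-, and $4$-handle maps are automatically isomorphisms once $\cat$ is chromatic compact, because they see only $P_\unit$.

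The crux is the pair of handle maps that see arbitrary projective colours: the $2$-handle (knot-surgery) map $\chi_{M,\Sp^1}$, which adds a red meridian, and the $3$-handle (cutting) map $\chi_{M,\Sp^2}$, which inserts the copairing. The two basic identities are: a red circle linked once with a projective $P$-coloured strand collapses, after a red-to-blue modification, to the endomorphism $\Delta_0^P$ on that strand (this is the very definition of $\Delta_0^P$, Lemma~\ref{L:Delta}); while cutting a $P$-coloured strand inserts $\Omega_P$, and on closing up acts by $\Lambda_P$. I would then analyse a composite of a $3$-handle cut followed by a $2$-handle re-gluing --- which is not a handle cancellation, and is cleanest to follow on $M=S^2\times S^1$, whose skein module is spanned by closures $[P,f]$ of endomorphisms of projective objects. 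On such a generator the cut contributes $\Lambda_P$ while the added red circle contributes $\Delta_0^P$, so the composite acts by $\Delta_0^P$ on one side and by $\zeta\Lambda_P$ on the other. Invertibility of these maps for every projective strand therefore forces $\Delta_0^P=\zeta\Lambda_P$ for all $P\in\Proj$, which is factorizability (Definition~\ref{def:maindef}). Conversely, assuming factorizability, this same identity makes the $2$-handle and $3$-handle maps mutually inverse up to the single scalar $\zeta$, so all handle maps are isomorphisms and $\TSkein_\cat$ is invertible.

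I expect the main obstacle to be the precise skein-theoretic bookkeeping of this last composite: verifying, uniformly in $P$ and independently of the chosen chromatic and gluing data, that the red-meridian class equals $\Delta_0^P$ acting on the strand while the inverse cut contributes exactly $\Lambda_P$, and that consequently their proportionality with the \emph{one} compactness scalar $\zeta$ is both necessary and sufficient for invertibility. The conceptual point to get right is that chromatic compactness only constrains the colour $P_\unit$ (enough for the $0$-, $1$-, $4$-handles), whereas invertibility of the surgery and cutting maps probes all projective colours at once; it is exactly this ``for all $P$'' strengthening that upgrades chromatic compact to factorizable.
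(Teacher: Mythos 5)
Your necessity direction is essentially the paper's argument. The paper likewise works on $\TSkein_\cat(S^2\times S^1)$ with the generators $O_f$ given by closed loops carrying a coupon $f\in\End_\cat(G)$, computes that the 2-handle cobordism $W_2:S^2\times S^1\to S^3$ acts by $f\mapsto \mt_G(\Delta_0^G f)$ while the 3-handle cobordism $W_3$ acts by $f\mapsto \mt_G(\Lambda_G f)$, and uses $\dim_\FK\TSkein_\cat(S^2\times S^1)=1$ (forced by invertibility) together with non-degeneracy of the m-trace to conclude $\Delta_0^G=\zeta\Lambda_G$, hence factorizability by naturality. Your version of this is vaguer (a ``composite'' acting ``by $\Delta_0^P$ on one side and $\zeta\Lambda_P$ on the other'' is really a comparison of two distinct maps out of $S^2\times S^1$), and unlike you the paper does not presuppose chromatic compactness --- it extracts $\zeta$ from the proportionality of the two nonzero functionals, so its necessity argument also covers the non-compact TQFT --- but the mechanism is the same.

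The sufficiency direction, however, has a genuine gap, in two places. First, your claim that the $1$-handle maps are automatically isomorphisms once $\cat$ is chromatic compact is false: cutting the belt sphere after gluing yields only a \emph{left} inverse (a scalar times the identity in that one order), i.e.\ injectivity. Indeed $\chi_{S^3,\Sp^0}:\TSkein_\cat(S^3)\to\TSkein_\cat(S^2\times S^1)$ has one-dimensional source, while in the chromatic compact but non-factorizable examples of Subsection \ref{SS:Ex-sl2} the target has dimension $>1$ (this is exactly what the necessity argument detects: the functionals $\mt_G(\Delta_0^G\,\cdot)$ and $\mt_G(\Lambda_G\,\cdot)$ are not proportional), so this map cannot be surjective. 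Second, the $2$- and $3$-handle maps are not ``mutually inverse up to $\zeta$'': at the cobordism level the inverse of a $2$-handle surgery is \emph{another} $2$-handle surgery (along the meridian), while the $3$-handle pairs with the $1$-handle. Your proposed composite --- a $3$-handle cut followed by a $2$-handle re-gluing on $S^2\times S^1$ --- factors through the one-dimensional $\TSkein_\cat(S^3)$, so it can only be a scalar times the identity if one already knows the source is a line, which is circular. The paper's proof fills precisely these holes: it first shows by a skein computation (slide the second red meridian along the green curve, then apply $\Delta_0^P=\zeta\Lambda_P$) that the composite of a $2$-handle surgery with its inverse $2$-handle surgery is $\zeta\,\Id$; since every connected closed 3-manifold is obtained from $S^3$ by such surgeries and $\dim_\FK\TSkein_\cat(S^3)=1$, this yields $\dim_\FK\TSkein_\cat(M)=1$ for all $M$; only then does the one-sided identity ``$1$-handle followed by belt-sphere $3$-handle $=$ scalar $\cdot\,\Id$'' upgrade injectivity of $1$-handles and surjectivity of $3$-handles to bijectivity, because all the modules are lines. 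Without the explicit double-surgery computation and the resulting dimension count, your plan cannot establish surjectivity of $1$-handle maps or injectivity of $3$-handle maps.
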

\begin{proof}
 First we prove the necessity of the theorem: recall that \smash{$\TSkein_\cat\bigl(S^3\bigr)\simeq \FK$} is generated by
 the skein \smash{$\bigl(S^3,\Gamma_0\bigr)$}. Let $G$ be a projective generator with
 a~unique indecomposable factor \smash{$P_\unit\tto i G\tto p P_\unit$}.
 Then by naturality of $\Lambda$ and since $G$ contains a single copy of $P_1$, we have ${\Lambda_G=i\Lambda_{P_\unit} p}$. Consider the subspace of
 \smash{$\TSkein_\cat\bigl(S^2\times S^1\bigr)$} generated by graphs
 $\{O_f\}_{f\in\End_\cat(G)}$ with a~unique coupon colored by
 $f\in\End_\cat(G)$ and a~unique edge of the form $\{pt\}\times S^1$.
 Consider the two cobordisms \smash{$W_2,W_3\colon S^2\times S^1\to S^3$} given by
 gluing a 2-handle (resp. a 3-handle) to $S^2\times S^1\times[0,1]$
 respectively along $\{pt\}\times S^1\times \{1\}$ and
 $S^2\times \{pt\}\times \{1\}$. Then
 \begin{gather*}
 \TSkein_\cat(W_2)(O_f)=\mt_G\bigl(\Delta_0^Gf\bigr)\in\FK\simeq\TSkein_\cat\bigl(S^3\bigr)
 \!\qquad \text{and} \qquad\!
 \TSkein_\cat(W_3)(O_f)=\mt_G(\Lambda_Gf)\in\FK\simeq\TSkein_\cat\bigl(S^3\bigr).
 \end{gather*}
 In
 particular, for the gluing morphism $\gm$,
 \smash{$\TSkein_\cat(W_2)(O_{i\gm p})=1=\TSkein_\cat(W_3)(O_{\Id})=1$} so
 the two maps are non-zero. If $\TSkein_\cat$ is invertible,
 \smash{$\dim_\FK\bigl(S^2\times S^1\bigr)=1$} and there exists $\zeta\in\FK^*$ such
 that $\TSkein_\cat(W_3)=\zeta\TSkein_\cat(W_2)$. Then for any
 $f\in\End_\cat(G)$,
 \smash{$\mt_G\bigl(\Delta_0^Gf\bigr)=\TSkein_\cat(W_3)(O_f)=
 \zeta\TSkein_\cat(W_2)=\zeta\mt_G(\Lambda_Gf)$}. Finally, by
 non-degeneracy of the m-trace, $\Delta_0^G=\zeta\Lambda_G$.

 Now we prove the sufficiency of the theorem: we suppose $\cat$ is factorizable and we show that
 for any connected 3-manifold $M$,
 $\dim_\FK(\TSkein_\cat(M))=1$. This is true because the image of any
 1-surgery given by a 2-handle can be inverted,
 \begin{align*}
 \epsh{fig23-11}{10ex}
 &\stackrel{\chi_{M,\Sp^1}}{\longmapsto}
 \epsh{fig25-0}{12ex}\stackrel{\chi_{M',\Sp^1}}
 \longmapsto\epsh{fig25-1}{12ex}\\
 &=\epsh{fig25-2}{12ex}=\epsh{fig25-3}{12ex}\dot=\ \zeta\epsh{fig23-11}{10ex}\,.
 \end{align*}
 Here the first map is the image of any $S^1$-surgery on $M$ and the
 second is the image of an~appropriate second $S^1$-surgery; the
 first equality is an isotopy in the manifold obtained by sliding the
 second red curve along the first green curve, the second equality
 comes from the fact that topologically on the level of the $3$-manifolds, a surgery along a meridian of
 a $\Sp^1$-surgery component cancels both components and the last equivalence is a skein
 equivalence due to the factorizability of $\cat$.

Then we check that every cobordism induces an isomorphism. It is
 immediate from the definition that 0-handles and 4-handles are
 isomorphisms. The proof for 2-handles is given above. Note that a 1-handle
 followed by a 3-handle glued on the belt sphere created by the $1$-handle is a scalar times the identity. Indeed, the
 1-handle will introduce a gluing morphism (which is a scalar times
 the identity of $P_\unit$ by assumption) from a pair of coupons
 $\varepsilon$ and $\eta$. Then the 3-handle will cut it, turning it
 back to a pair of coupons $\varepsilon$ and $\eta$. This shows that
 1-handles are injective and 3-handles surjective. Because every
 skein module is 1-dimensional, they are also bijective.
\end{proof}

\begin{theorem}\label{T:4TQFT-3M}
 Assume $\cat$ is twist non-degenerate and chromatic non-degenerate
 and let $M$ be a closed connected $3$-manifold. Fix any connected
 bordism $W$ with $\partial W=-M$ such that the cobordism $W\colon M\to \varnothing$
 is made by gluing $2$-handles and a unique $4$-handle on ${M\times[0,1]}$.
 Let~$(r,s)$ be the signature of $-W$,then for any admissible skein
 $T$ in $M$,
 \begin{equation*}
 \TSkein_\cat(W,T)=\Delta_+^r\Delta_-^s\ThreeManInv'(M,T)\in\FK,
 \end{equation*}
 $($where $\ThreeManInv'$ is defined in Section~$\ref{sec:3mfld})$.
\end{theorem}
\begin{proof}
 Let
 \smash{$-W=B^4\bigcup_{N(L)= \bigsqcup_{i=1}^n(\partial B^2)\times B^2}
 \bigl(\bigsqcup_{i=1}^nB^2\times B^2\bigr)\colon\varnothing\to M$} where $N(L)$ is a
 tubular neighborhood of a $n$ component link $L$ in $S^3$. Then
 $L^{\rm red}\cup T$ is a link presentation in $S^3$ of the pair $(M,T)$
 while $L^{\rm green}\cup T$ represents $T$ in the manifold $M$. Now
 \[
 \TSkein_\cat\bigl(\dot W\bigr)(M,T)=\TSkein_\cat\bigl(\dot W\bigr)(L^{\rm green}\cup
 T)=L^{\rm red}\cup T\subset S^3.
 \]
 Thus
 \smash{$\TSkein_\cat(W,T)=F'\bigl(L^{\rm blue}\cup T\bigr)$} where \smash{$\bigl(L^{\rm blue}\cup T\bigr)$} is
 obtained from $L^{\rm red}\cup T$ by doing red to blue modifications.
 Finally (see~\cite[Proposition 4.5.11]{GS99}), the linking
 matrix of $L$ is the intersection form on $H^2(-W)$ so the signature
 of $-W$ is $(r,s)$.
\end{proof}
\begin{proposition}\label{P:conn} Behavior under connected sums:
 \begin{itemize}\itemsep=0pt
 \item The invariant of closed connected $4$-manifolds
 $\dot\TSkein_\cat(W)$ is multiplicative under connected sum.
 \item If $W$ is a closed connected $4$-manifold and
 $W'\colon M'\to N'\in\cob^{nc}$ $($resp. $W'\in\cob$ if $\cat$ is chromatic compact$)$, both non-empty, then
 \[
 \TSkein_\cat\bigl(W\#W'\bigr)=\dot\TSkein_\cat(W)\TSkein_\cat\bigl(W'\bigr)
 \in\Hom_\FK\bigl(\TSkein_\cat\bigl(M'\bigr),\TSkein_\cat\bigl(N'\bigr)\bigr).
 \]
 \item For non-empty $4$-manifolds $W$, $W'$ containing admissible graphs
 $T$, $T'$ in their boundaries,
 \[\TSkein_\cat\bigl(W\#W',T\cup T'\bigr)=
 \begin{cases}
 \zeta^{-1}\TSkein_\cat(W,T)\TSkein_\cat(W,T)&\text{if $\cat$ is chromatic compact,}\\
 0&\text{else.}
 \end{cases}\]

 \item If $\cat$ is chromatic compact, for two non-empty $4$-cobordisms
 $W\colon\! M\to N$ and ${W'\colon\! M'\to N'}$,
 \[
 \TSkein_\cat\bigl(W\#W'\bigr)=\zeta^{-1}\TSkein_\cat(W)\otimes\TSkein_\cat\bigl(W'\bigr)\colon \
 \TSkein_\cat(M)\otimes\TSkein_\cat\bigl(M'\bigr)\to\TSkein_\cat(N)\otimes\TSkein_\cat\bigl(N'\bigr).
 \]
 \end{itemize}
\end{proposition}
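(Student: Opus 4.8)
The plan is to realize every connected sum as a composition of cobordisms through a neck sphere $S^3$ and feed this into the symmetric monoidal functor $\TSkein_\cat$. The three facts I would use repeatedly are that $\TSkein_\cat(S^3)\cong\FK\cdot\Gamma_0$ is one dimensional, that the $4$-handle (death) map $\TSkein_\cat(S^3)\to\FK$ sends $\Gamma_0\mapsto F'(\Gamma_0)=1$, and that the $0$-handle (birth) map $\FK\to\TSkein_\cat(S^3)$ sends $1\mapsto\zeta\Gamma_0$, this last one only when $\cat$ is chromatic compact. Writing $\dot W=W\setminus B^4$, one-dimensionality of $\TSkein_\cat(S^3)$ gives two elementary identities on which everything rests: capping the extra $S^3$ of $A:=W'\setminus B^4$ with a death shows $\TSkein_\cat(A)(v)=\TSkein_\cat(W')(v)\otimes\Gamma_0$, and for closed $W$ the definition gives $\TSkein_\cat(\dot W)(\Gamma_0)=\dot\TSkein_\cat(W)$ when $\dot W$ is read as $S^3\to\emptyset$.

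For the first two items I would cut along the neck. For closed connected $W_1,W_2$, I write the once-punctured sum as the composite $S^3\xrightarrow{W_2^{\bullet\bullet}}S^3\xrightarrow{\dot W_1}\emptyset$ with $W_2^{\bullet\bullet}=W_2\setminus(B^4\sqcup B^4)$; capping the outgoing sphere by a death gives $\TSkein_\cat(W_2^{\bullet\bullet})(\Gamma_0)=\dot\TSkein_\cat(W_2)\,\Gamma_0$, and functoriality then yields $\dot\TSkein_\cat(W_1\#W_2)=\dot\TSkein_\cat(W_1)\dot\TSkein_\cat(W_2)$. For the second item, with $W$ closed and $W':M'\to N'$, I factor $W\#W'=(\Id_{N'}\sqcup\dot W)\circ A$ with $A=W'\setminus B^4:M'\to N'\sqcup S^3$ and $\dot W:S^3\to\emptyset$; combining $\TSkein_\cat(A)(v)=\TSkein_\cat(W')(v)\otimes\Gamma_0$ with $\TSkein_\cat(\dot W)(\Gamma_0)=\dot\TSkein_\cat(W)$ gives $\TSkein_\cat(W\#W')=\dot\TSkein_\cat(W)\,\TSkein_\cat(W')$. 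Both arguments only cap spheres by death maps, so they already hold in $\cob^{nc}$ and produce no factor of $\zeta$.

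When both summands have boundary, cutting along the neck forces one of the two necks to be filled by a birth instead of a death, which is where $\zeta^{-1}$ appears. For $W:M\to N$ and $W':M'\to N'$ I factor $W\#W'$ as the composite $M\sqcup M'\xrightarrow{\dot W\sqcup\Id_{M'}}N\sqcup S^3\sqcup M'\xrightarrow{\Id_N\sqcup\dot W'}N\sqcup N'$ with $\dot W:M\to N\sqcup S^3$ and $\dot W':M'\sqcup S^3\to N'$. As before $\TSkein_\cat(\dot W)(v)=\TSkein_\cat(W)(v)\otimes\Gamma_0$, while recovering $W'$ from $\dot W'$ now fills an incoming $S^3$ by a birth, so in the chromatic compact case $\TSkein_\cat(\dot W')(-\otimes\Gamma_0)=\zeta^{-1}\TSkein_\cat(W')$. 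Composing gives the fourth item, $\TSkein_\cat(W\#W')=\zeta^{-1}\TSkein_\cat(W)\otimes\TSkein_\cat(W')$, and the compact case of the third item is its specialization to $N=N'=\emptyset$ evaluated on $T$ and $T'$, namely $\zeta^{-1}\TSkein_\cat(W,T)\TSkein_\cat(W',T')$.

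The delicate point, and what I expect to be the main obstacle, is the non-compact case of the third item, where no birth is available and the claim is that the invariant vanishes. Here the same neck decomposition writes the invariant as $\TSkein_\cat(W,T)\cdot\TSkein_\cat(W'\setminus B^4)(\Gamma_0\otimes T')$, and I reduce the second factor by a single $1$-handle joining the neck $S^3$ (carrying $\Gamma_0$) to $T'$: the gluing map replaces the $\eta$-coupon of $T'$ by $\gm\circ\eta$ and leaves $\wt{W'}$ as the remaining cobordism, so the factor equals $\mu\,\TSkein_\cat(W',T')$, where $\mu$ is defined by $\gm\circ\eta=\mu\,\eta\in\FK\eta$ (using $\Hom_\cat(\unit,P_\unit)=\FK\eta$). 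The crux is then the dichotomy: if $\cat$ is chromatic non-degenerate but not chromatic compact, every gluing morphism is nilpotent. Indeed, by Lemma \ref{lem:invertibleg} no gluing morphism is invertible, so writing $\gm=b\,\Id+m$ with $m$ in the radical, a nonzero $b$ would make $\gm$ invertible and force $\Delta_0^{P_\unit}=\gm^{-1}\Lambda_{P_\unit}\in\FK\Lambda_{P_\unit}$ (because $m^j\circ\eta\in\FK\eta$), contradicting the failure of chromatic compactness; hence $b=0$, $\gm$ is nilpotent, $\gm\circ\eta=0$ and $\mu=0$. In the compact case the same computation with the invertible gluing morphism $\zeta^{-1}\Id$ gives $\mu=\zeta^{-1}$, in agreement with the other items.
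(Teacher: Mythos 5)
Your proposal is correct and follows essentially the same route as the paper: cut along the neck $S^3$, use one-dimensionality of $\TSkein_\cat(S^3)$ generated by $\Gamma_0$, realize the neck as a single $1$-handle inserting the gluing morphism $\gm$ between the $\eta$- and $\ve$-coupons, and settle the compact/non-compact dichotomy via Lemma \ref{lem:invertibleg} (your scalar $\mu$ with $\gm\circ\eta=\mu\,\eta$, which vanishes exactly when $\gm$ is nilpotent, is the paper's observation that $\ve\circ\gm=0$ unless $\gm$ is invertible). The only differences are organizational: you prove the fourth item directly by a birth/death factorization and obtain the compact case of the third item as a specialization, whereas the paper proves the third item first via the pair of pants $P:S^3\sqcup S^3\to S^3$ (a single $1$-handle) and then deduces the fourth item using dualizability of objects in $\cob$.
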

\begin{proof}
 The admissible skein module $\TSkein_\cat\bigl(S^3\bigr)$ is one-dimensional
 and generated by
\[
\Gamma_0=\tikz[baseline = -3pt] \node[scale = 0.6]at (0,0) {\begin{tikzpicture}
 \node[draw, rectangle, minimum height = 0.4cm, minimum width =
 0.7cm] (eta) at
 (0,0){$\eta$}; \node[draw, rectangle, minimum height = 0.4cm,
 minimum width = 0.7cm] (epsilon) at
 (0,1){$\varepsilon$}; \draw[blue] (eta) -- (epsilon)
 node[midway,sloped]{$>$} node[black, midway, right]{$P_\unit$};
\end{tikzpicture}};.
\]
 For a closed connected 4-manifold $W$, the twice
 punctured cobordism $\TSkein_\cat\bigl(\ddot W\bigr)\colon\TSkein_\cat\bigl(S^3\bigr)\to\TSkein_\cat\bigl(S^3\bigr)$ acts as multiplication by the scalar \smash{$\dot\TSkein_\cat(W)$}. Composition corresponds to connected sum for the twice-punctured cobordisms, and to multiplication for the scalars. The second point is obtained by adding a cancelling pair of 3- and 4-handles to $W'$. Then connected sum with $W$ precomposes by $\TSkein_\cat\bigl(\ddot W\bigr)$ before the 4-handle, hence simply multiplies by $\dot\TSkein_\cat(W)$.

 Let $P\colon S^3\sqcup S^3\to S^3$ be the three-dimensional pair of pants,
 namely a 3-punctured $S^4$ which can be seen as a unique 1-handle.
 The cobordism
 $\dot{(W\#W')}\colon (-\partial W)\sqcup (-\partial W')\to S^3$ factors as
 \smash{$\dot{W\#W'}=P\circ \bigl(\dot W\sqcup \dot W'\bigr)$}.

The map
 $\TSkein_\cat(P):\FK\otimes\FK=\FK\to\FK$ is a scalar morphism which
 sends $\Gamma_0\otimes\Gamma_0$ to the unique graph with 3 coupons
 colored by $\eta$, $\gm$ and $\ve$. Since $\ve\circ\gm=0$ unless
 $\gm$ is invertible (i.e., $\cat$ is chromatic compact by Lemma~\ref{lem:invertibleg}), the second case follows. Let us now assume that
 $\cat$ is chromatic compact and let us use
 $\gm=\zeta^{-1}\Id_{P_\unit}$ for the gluing morphism. Then
 $\TSkein_\cat(W\#W',T\cup T')=
 \TSkein_\cat(W,T)\TSkein_\cat(W',T')F'(\TSkein_\cat(P)(\Gamma_0\otimes\Gamma_0))=
 \zeta^{-1}\TSkein_\cat(W,T)\TSkein_\cat(W',T')$.

 For the last statement, since every object of $\cob$ is dualizable
 we can suppose that $N=N'=\varnothing$. Then the statement follows
 from the previous identity since for any
 $T\otimes T'\in\TSkein_\cat(M)\otimes\TSkein_\cat(M')
 \cong\TSkein_\cat(-\partial(W\sqcup W'))$, we have
 $\TSkein_\cat(W\#W')(T\otimes T')=\TSkein_\cat(W\#W',T\cup T')$.
\end{proof}
\begin{proposition}
The category $\cat$ is chromatic compact if and only if $\dot\TSkein_\cat\bigl({S^1\times S^3}\bigr)\neq 0$.
\end{proposition}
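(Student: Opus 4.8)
The plan is to compute $\dot\TSkein_\cat(S^1\times S^3)$ explicitly from a handle decomposition and to identify the resulting scalar with the ``scalar part'' of a gluing morphism, whose non-vanishing is equivalent to chromatic compactness by Lemma \ref{lem:invertibleg}. First I would use the handle decomposition of $S^1\times S^3$ obtained as the product of the standard decompositions of $S^1$ (one $0$-- and one $1$--handle) and of $S^3$ (one $0$-- and one $3$--handle): it has exactly one handle of each index $0,1,3,4$. Removing the $4$--handle gives $\dot W=S^1\times S^3\setminus B^4$ with $\partial\dot W=S^3$, and reading $\dot W$ as a cobordism $\widetilde{\dot W}:S^3\to\emptyset$ presents it, via handle duality, as the composite of a $1$--handle, a $3$--handle, and a $4$--handle. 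Correspondingly
$$\dot\TSkein_\cat(S^1\times S^3)=\chi_{S^3,\Sp^3}\circ\chi_{S^1\times S^2,\Sp^2}\circ\chi_{S^3,\Sp^0}(\Gamma_0),$$
where $\chi_{S^3,\Sp^0}$ is the gluing map (the $\Sp^0$ lies in the single component $S^3$, so we are in the left case of Figure \ref{F:glue}), $\chi_{S^1\times S^2,\Sp^2}$ is the cutting map along the non--separating sphere $\{q\}\times S^2$ (right case of Figure \ref{F:cut}), and $\chi_{S^3,\Sp^3}=F'$.

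Then I would track $\Gamma_0$ through this composite. The gluing map joins the two coupons $\eta$ and $\ve$ of $\Gamma_0$ through the $1$--handle while inserting a gluing morphism $\gm$; the result is the closed graph $O_\gm$ consisting of a single $P_\unit$--colored circle carrying one coupon $\gm$ and running once around the $S^1$--factor of $S^1\times S^2$. This circle meets the cutting sphere $\{q\}\times S^2$ transversally in one point, so the cutting map replaces that point by the copairing $\Omega_{P_\unit}=\sum_i x^i\otimes x_i$, producing in $S^3$ the contractible closed graph obtained by reading $x_i\circ x^i\circ\gm$ around the loop. Applying $F'$ and using $\Lambda_{P_\unit}=\sum_i x_i\circ x^i$ from \eqref{eq:cop} gives
$$\dot\TSkein_\cat(S^1\times S^3)=\sum_i\mt_{P_\unit}(x_i\circ x^i\circ\gm)=\mt_{P_\unit}(\Lambda_{P_\unit}\circ\gm).$$

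Finally I would evaluate this trace. Writing $\gm=a\,\Id_{P_\unit}+n$ with $a\in\FK$ and $n\in J$ nilpotent (the Fitting decomposition used in the proof of Lemma \ref{L:prop-proj}), we have $\ve\circ n=0$, since $J$ is exactly the kernel of $\ve\circ\_$ (proof of Lemma \ref{L:prop-proj}(1)); hence, using $\Lambda_{P_\unit}=\eta\circ\ve$ (Lemma \ref{L:prop-proj}(4)),
$$\Lambda_{P_\unit}\circ\gm=\eta\circ\ve\circ\gm=a\,\eta\circ\ve=a\,\Lambda_{P_\unit},$$
so that $\dot\TSkein_\cat(S^1\times S^3)=a\,\mt_{P_\unit}(\Lambda_{P_\unit})=a$ by the normalization $\mt_{P_\unit}(\eta\circ\ve)=1$. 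Since the TQFT, and hence this invariant, does not depend on the chosen gluing morphism, $a$ is the common scalar part of every gluing morphism, and $a\neq0$ if and only if $\gm=a\,\Id+n$ is invertible, i.e.\ if and only if there exists an invertible gluing morphism. By Lemma \ref{lem:invertibleg} the latter is equivalent to $\cat$ being chromatic compact, giving $\dot\TSkein_\cat(S^1\times S^3)\neq0$ if and only if $\cat$ is chromatic compact.

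I expect the main obstacle to be the purely topological bookkeeping of the handle decomposition --- in particular verifying that the circle created by the $1$--handle meets the cutting $2$--sphere of the $3$--handle exactly once, so that a single copairing is inserted --- together with fixing the orientation conventions so that the three handle maps compose in the stated order. The algebraic part, by contrast, reduces entirely to the Fitting decomposition of $\End_\cat(P_\unit)$ and to Lemma \ref{lem:invertibleg}.
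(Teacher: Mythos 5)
Your proof is correct and takes essentially the same route as the paper: the identical presentation of $\dot{S^1\times S^3}\colon S^3\to\emptyset$ as a $1$-handle, a $3$-handle cut along its belt sphere, and a $4$-handle, sending $\Gamma_0$ to a $\gm$-colored circle and then to the closure of $\gm\circ\Lambda_{P_\unit}$, with the conclusion drawn from Lemma \ref{lem:invertibleg}. Your explicit computation $\mt_{P_\unit}(\Lambda_{P_\unit}\circ\gm)=a$ via the Fitting decomposition simply spells out the paper's one-line assertion that this evaluation is non-zero if and only if $\gm$ is invertible.
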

\begin{proof}
 A handle decomposition of the punctured bordism
 \smash{$\dot{S^1\times S^3}\colon S^3\to \varnothing$} is given by a~1-handle followed by a
 3-handle glued on its belt sphere and a closing 4-handle. The skein $\Gamma_0$ is sent to a circle with a coupon
 $\gm$ in $\TSkein_\cat\bigl(S^2\times S^1\bigr)$ which is then cut into the
 closure of $\gm \circ \Lambda_{P_\unit}$ in
 $\TSkein_\cat\bigl(S^3\bigr)$. This is non-zero if and only if $\gm$ is
 invertible. The statement follows then by Lemma~\ref{lem:invertibleg}.
\end{proof}
\begin{proposition}
 If $\cat$ is twist non-degenerate or if
 $\dot\TSkein_\cat\bigl(S^2\times S^2\bigr)\neq0$, then $\TSkein$ does not distinguish
 exotic pairs of cobordisms.
\end{proposition}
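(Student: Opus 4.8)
The plan is to reduce both hypotheses to the single numerical statement that the scalar $\lambda:=\dot\TSkein_\cat(S^2\times S^2)$ is non-zero, and then to combine Gompf's stabilization theorem with the multiplicativity of $\TSkein_\cat$ under connected sum with a closed $4$-manifold (Proposition~\ref{P:conn}). Recall from the introduction that Gompf~\cite{Go}, extending Wall, shows that two compact oriented $4$-manifolds, possibly with boundary, that are homeomorphic become diffeomorphic after connect-summing each with the same number $k$ of copies of $S^2\times S^2$; this diffeomorphism can be taken rel boundary, so the same holds for a homeomorphic (hence ``exotic'') pair of cobordisms $W_1,W_2\colon M\to N$.

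First I would compute $\lambda$. The once-punctured $\dot{S^2\times S^2}$, viewed as a cobordism $S^3\to\emptyset$, is built from two $2$-handles and a single $4$-handle attached along the $0$-framed Hopf link realizing the hyperbolic intersection form of $S^2\times S^2$. Since $\cat$ is chromatic non-degenerate throughout this section, the twist non-degenerate hypothesis puts us in the scope of Theorem~\ref{T:4TQFT-3M}, which applies with $(r,s)=(1,1)$, the signature type of the hyperbolic form, and gives
\[
\lambda=\dot\TSkein_\cat(S^2\times S^2)=\TSkein_\cat(\dot{S^2\times S^2},\Gamma_0)=\Delta_+\Delta_-\,\ThreeManInv'(S^3,\Gamma_0)=\Delta_+\Delta_-,
\]
using $\ThreeManInv'(S^3,\Gamma_0)=F'(\Gamma_0)=1$. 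Thus twist non-degeneracy ($\Delta_+\Delta_-\neq0$) forces $\lambda\neq0$, which is exactly what the second hypothesis assumes outright; so in either case $\lambda\in\FK^*$.

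Now let $W_1,W_2\colon M\to N$ be a homeomorphic pair of cobordisms. By Gompf's theorem there is $k\in\N$ with $W_1\#k(S^2\times S^2)\cong W_2\#k(S^2\times S^2)$ rel boundary, so functoriality of $\TSkein_\cat$ yields equal morphisms $\TSkein_\cat(W_1\#k(S^2\times S^2))=\TSkein_\cat(W_2\#k(S^2\times S^2))$. Iterating the connected-sum formula of Proposition~\ref{P:conn}, each $S^2\times S^2$ summand contributes a factor $\lambda$, whence $\lambda^{k}\TSkein_\cat(W_1)=\lambda^{k}\TSkein_\cat(W_2)$. Since $\lambda\neq0$ we cancel $\lambda^{k}$ and conclude $\TSkein_\cat(W_1)=\TSkein_\cat(W_2)$; that is, $\TSkein_\cat$ assigns the same value to the two members of the pair.

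The step I expect to require the most care is applying Proposition~\ref{P:conn} uniformly across all cobordism types. Its stated form takes a summand $W'$ with non-empty source and target; to also cover empty source or target (closed $W_i$, handled by the first bullet, and the boundary-graph invariants $\TSkein_\cat(W_i,T)$) I would invoke dualizability of the objects of $\cob$, exactly as in the proof of the last bullet of Proposition~\ref{P:conn}, reducing to a cobordism into $\emptyset$ before inserting the $S^2\times S^2$ summands. One should also verify that Gompf's stabilization is compatible with a fixed admissible boundary graph $T$ — the graph may be isotoped away from the connect-sum balls — so that the conclusion holds for the refined invariant $\TSkein_\cat(-,T)$ as well.
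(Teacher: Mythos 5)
Your proof is correct, and in the twist non-degenerate case it takes a genuinely different route from the paper's. The paper keeps the two hypotheses separate: it notes $\dot\TSkein_\cat(\pm\CP^2)=\Delta_\pm$, invokes Gompf, and then disposes of the $S^2\times S^2$ summands in the twist non-degenerate case by the elementary diffeomorphism $(S^2\times S^2)\#\CP^2\cong\CP^2\#\CP^2\#(-\CP^2)$, so that all stabilizing summands become complex projective planes whose $\dot\TSkein_\cat$-values are the non-zero scalars $\Delta_\pm$; the conclusion then follows from Proposition \ref{P:conn} exactly as in your last two paragraphs. You instead unify the two hypotheses by computing $\lambda=\dot\TSkein_\cat(S^2\times S^2)=\Delta_+\Delta_-$ directly from Theorem \ref{T:4TQFT-3M} applied to $\dot{S^2\times S^2}\colon S^3\to\emptyset$ with $(r,s)=(1,1)$ and $\ThreeManInv'(S^3,\Gamma_0)=1$ — a correct application, since Section \ref{S:TQFT} already assumes chromatic non-degeneracy and the first hypothesis supplies twist non-degeneracy, and this computation genuinely requires the Kirby-I-invariance machinery (the $0$-framed Hopf link is not handle-slide equivalent to $U_+\sqcup U_-$, as $S^2\times S^2\not\cong\CP^2\#\overline{\CP^2}$). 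Your approach buys a cleaner statement — both hypotheses reduce to $\lambda\neq0$, making explicit that the second hypothesis is strictly weaker (cf.\ the $\mathfrak{sl}_2$ example at $8r$-th roots of unity, where $\Delta_+\Delta_-=0$ yet $\lambda\neq0$) — at the cost of leaning on the comparatively heavy Theorem \ref{T:4TQFT-3M}, whereas the paper's $\CP^2$-trading trick is elementary. Your flagged caveats (Gompf's stabilization rel boundary, which the paper glosses identically, and the dualizability reduction to handle empty targets in Proposition \ref{P:conn}) are handled to the same standard as the paper itself.
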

\begin{proof}
 Since $\dot\TSkein_\cat\bigl(\pm\CP^2\bigr)=\Delta_{\mp}$, the category is twist non-degenerate if and only if
 \[
 \dot\TSkein_\cat\bigl(\CP^2\bigr)\dot\TSkein_\cat\bigl(-\CP^2\bigr)\neq0.
 \]
 As
 said in the introduction, Gompf \cite{Go} showed that two
 homeomorphic compact orientable $4$-manifolds (possibly with
 boundary) become diffeomorphic after some finite sequence of
 connected sums with $S^2\times S^2$; the same is true for connected sums with complex projective planes (or their opposites) since $\bigl(S^2 \times S^2\bigr)\#\CP^2$ is diffeomorphic to $\CP^2\#\CP^2\#\bigl(-{\CP}^2\bigr)$.
The statement then follows from Proposition~\ref{P:conn}.
\end{proof}
\begin{proposition}
 Let $\cat$ be non-semisimple and chromatic compact then ${\TSkein_\cat\bigl(B^2\times S^2,\Gamma_0\bigr)=0}$ \big(where $\Gamma_0$ is the graph of the right-hand side Figure~$\ref{fig:0handle}$, contained in a ball in $\partial B^2\times S^2$\big).
Equivalently,
the
 skein ${\color{red}\mathsf O}\,\cup\Gamma_0$ is zero in
 $\TSkein_\cat\bigl(S^3\bigr)$ $($where ${\color{red}\mathsf O}$ denotes a red unknot$)$.
\end{proposition}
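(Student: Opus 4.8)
\emph{Setup and reduction.} The plan is to compute $\TSkein_\cat(B^2\times S^2,\Gamma_0)$ through a handle decomposition and read off the equivalence with the skein statement. Since $B^2\times S^2$ is the trace of $0$-surgery on the unknot, viewed as a cobordism $\widetilde{B^2\times S^2}\colon -(S^1\times S^2)\to\emptyset$ it is built from a single $2$-handle followed by a $4$-handle. First I would present $-\partial(B^2\times S^2)=S^1\times S^2$ by a $0$-framed green unknot with $\Gamma_0$ sitting in a disjoint ball. The $2$-handle map $\chi_{M,\Sp^1}$ (knot-surgery) then adjoins a red meridian $\mathsf{O}$, which after forgetting the green circle is a $0$-framed unknot disjoint from $\Gamma_0$, and turns the manifold into $S^3$; the $4$-handle map applies $F'$. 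Hence $\TSkein_\cat(B^2\times S^2,\Gamma_0)=F'(\mathsf{O}\cup\Gamma_0)$. Because $\TSkein_\cat(S^3)=\FK\,\Gamma_0$ with $F'(\Gamma_0)=1$, one has $\mathsf{O}\cup\Gamma_0=F'(\mathsf{O}\cup\Gamma_0)\,\Gamma_0$, so the two assertions of the proposition are equivalent and both reduce to proving $F'(\mathsf{O}\cup\Gamma_0)=0$.

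\emph{Identifying the scalar.} This computation is the exact $0$-framed counterpart of the one behind $\dot\TSkein_\cat(\pm\CP^2)=\Delta_\pm$: the only difference is the framing of the red meridian. Using the coend description $\Delta_\pm=\ve_\coend\circ\theta^{\pm1}\circ\lambda$ from Subsection \ref{ss:coend}, the $0$-framing contributes the untwisted $\theta^{0}=\Id_\coend$, so that $F'(\mathsf{O}\cup\Gamma_0)=\ve_\coend\circ\lambda=\lambda(1)$ (this is literal when $\cat$ is a finite tensor category, which covers all the examples; in general the point is only that $\mathsf{O}$ is disjoint from the unique projective edge of $\Gamma_0$, so its contribution is exactly this ``integral at the unit'' scalar). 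I would then invoke Radford's theorem, exactly as in the proof of Proposition \ref{P:symcase}, to conclude that $\lambda(1)\neq0$ if and only if $\cat$ is semisimple; since $\cat$ is assumed non-semisimple, $\lambda(1)=0$ and we are done.

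\emph{The categorical mechanism.} The algebraic heart of the vanishing, valid beyond the Hopf setting, is the identity $\ve\circ\eta=0$. Indeed, non-semisimplicity forces $\unit$ to be non-projective: were $\unit\in\Proj$, then since $\Proj$ is a tensor ideal we would have $V\cong\unit\otimes V\in\Proj$ for every $V$, making $\cat$ semisimple. Thus $P_\unit\not\cong\unit$. Now $\ve\circ\eta\in\End_\cat(\unit)=\FK$, and if it were nonzero then $\eta$ would split $\ve$, exhibiting $\unit$ as a direct summand of the indecomposable $P_\unit$, a contradiction; hence $\ve\circ\eta=0$. Together with chromatic compactness $\Delta_0^{P_\unit}=\zeta\Lambda_{P_\unit}=\zeta\,\eta\circ\ve$ (Lemma \ref{L:prop-proj}(4)), this is what collapses the relevant closed evaluation, e.g.\ in the variant where $\mathsf{O}$ is pushed to encircle the $P_\unit$-edge one gets $\zeta\,\mt_{P_\unit}\!\big(\eta\circ(\ve\circ\eta)\circ\ve\big)=0$.

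\emph{Main obstacle.} The delicate step is the evaluation of the lone red unknot $\mathsf{O}$: a red circle enclosing no projective strand cannot be made blue by a single chromatic morphism, so one must assign it a value through the coend/integral structure (the $0$-framed analogue of $\Delta_\pm$), and the genuine content is that this value $\lambda(1)$ — equivalently the vanishing $\ve\circ\eta=0$ — precisely detects non-semisimplicity. I would take particular care to distinguish this split configuration from the one in which $\mathsf{O}$ encircles the $P_\unit$-edge, since the two give different scalars in general (both happen to vanish here, but for the split circle the relevant quantity is $\lambda(1)$, not the encircling evaluation).
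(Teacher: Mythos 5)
Your reduction in the first paragraph (one $2$-handle plus one $4$-handle, so $\TSkein_\cat(B^2\times S^2,\Gamma_0)=F'(\mathsf O\cup\Gamma_0)$, and the equivalence of the two assertions via $\TSkein_\cat(S^3)=\FK\,\Gamma_0$) matches the paper. But the step you designate as the main argument has a genuine gap: the biconditional ``$\lambda(1)\neq 0$ if and only if $\cat$ is semisimple'' is a characteristic-zero statement. Radford's theorem only gives $\lambda(1)\neq 0$ iff $H^*$ is semisimple, and passing from cosemisimplicity to semisimplicity of $H$ uses Larson--Radford, which is exactly why Proposition \ref{P:symcase} carries the hypothesis $\operatorname{char}(\FK)=0$; the present proposition has no characteristic assumption. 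The tell-tale symptom is that your second paragraph never uses chromatic compactness, so, if valid, it would prove the vanishing for \emph{every} non-semisimple chromatic category --- and that is false: the paper's own example in Subsection \ref{SS:Ex-chp}, $\cat=\FK[\Z/p\Z]\text{-mod}$ in characteristic $p$, is non-semisimple with $\lambda(1)=1$ and $\Delta_0^P=\Id_P$, and there the skein $\mathsf O\cup\Gamma_0$ equals $\Gamma_0\neq 0$ (that category is chromatic non-degenerate but not chromatic compact). So compactness is not a side remark but the crux, and the detour through $\lambda(1)$ and Radford cannot stand (it is also only meaningful when $\cat$ is a finite tensor category, whereas the paper's $\cat$ need not be abelian, so your hedge ``in general the point is only that...'' does not supply an argument).

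The fix is already contained in your third paragraph, and it is the paper's proof. Since the $\eta$-coupon is a free end of $\Gamma_0$ (no legs below it), the split red unknot can be slid past it by an ambient isotopy; hence --- contrary to your closing caveat --- the split configuration and the one where $\mathsf O$ encircles the $P_\unit$-edge are the \emph{same} skein and give the same scalar. (Indeed one can check, by naturality of the coaction, that $\Delta_0^{P_\unit}\circ\eta=(\ve_{\coend}\circ\lambda)\,\eta$, so the encircling evaluation literally equals your ``integral at the unit''; the two quantities you propose to distinguish coincide, in all characteristics.) Granting this isotopy, Lemma \ref{L:Delta} lets you replace the red circle by a coupon $\Delta_0^{P_\unit}$ on the edge, chromatic compactness gives $\Delta_0^{P_\unit}=\zeta\Lambda_{P_\unit}=\zeta\,\eta\circ\ve$ (Lemma \ref{L:prop-proj}(4)), so $\mathsf O\cup\Gamma_0=\zeta\,\Gamma_0\sqcup\Gamma_0$, and evaluating one copy of $\Gamma_0$ by a skein relation produces the factor $F(\Gamma_0)=\ve\circ\eta$, which vanishes by exactly your (correct) observation that non-semisimplicity forces $P_\unit\not\cong\unit$, so that a nonzero $\ve\circ\eta$ would split $\unit$ off the indecomposable $P_\unit$. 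In short: delete the $\lambda(1)$/Radford route, promote your ``categorical mechanism'' paragraph to the main argument, and justify the isotopy moving $\mathsf O$ onto the $P_\unit$-edge instead of treating the split circle as a separate case.
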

\begin{proof}
As $\cat$ is chromatic compact,
\smash{$\Delta_0^{P_\unit}=\zeta\Lambda_{P_{\unit}}$} for some $\zeta\in \FK^*$. Therefore, ${\color{red}\mathsf O}\,\cup\Gamma_0=\zeta \Gamma_0\sqcup \Gamma_0$ which is zero by the skein relation which evaluates only one $\Gamma_0$ via the RT functor. This proves the second statement. The first follows by observing that $B^2\times S^2$ is obtained by gluing a $2$-handle to $S^1\times S^2\times [-1,1]$ along $S^1\times B^2\times \{1\}$ and then filling the result by a $4$-handle. Then we present $\bigl(S^1\times S^2,\Gamma_0\bigr)$ as ${\color{green}\mathsf O}\,\cup\Gamma_0$ and the first operation consists of changing ${\color{green}\mathsf O}$ to ${\color{red}\mathsf O}$.
\end{proof}

\section{Examples and relations with other works}\label{S:Examples}
\subsection{Semisimple case}\label{SS:ExSS} Using the chain-mail construction of~\cite{Roberts}, we can rewrite our construction in the semi-simple case as a state sum. We then recover the Crane--Yetter--Kauffman 4-manifold invariant associated with a semi-simple fusion category $\cat$. The chain mail construction has been carried out for the state spaces in~\cite{Tham21} in characteristic 0 and we will use this description.

The state-sum 4-manifold invariant was defined for all fusion categories in~\cite{CYK}. It was first only defined in the modular case by Crane and Yetter, and mentioned to extend to a TQFT there. It is well-known that in the modular case the TQFT is invertible, and the associated 4-manifold invariant is classical, namely only depends on the signature and Euler characteristic, see~\cite[Proposition 6.2]{CYK}. Note however that given the extra data of a boundary condition, which corresponds to the empty skein in our description, this TQFT recovers the Reshetikhin--Turaev invariants of 3-manifolds. It was shown in~\cite{BB18} that when the category $\cat$ is not modular, i.e., has non-trivial M\"uger center, it is no longer true that the 4-manifold invariants depend only on the signature and the Euler characteristic, but at least also on the fundamental group. It is however still almost trivial on simply connected manifolds, see~\cite{BB18}.
\begin{theorem} Let $\cat$ be a fusion category over an algebraically closed field of characteristic $0$. Choose $\mt=\operatorname{qTr}$ the standard categorical trace. Then the TQFT $\TSkein_\cat$ coincides with the Crane--Yetter--Kauffman TQFT.
\end{theorem}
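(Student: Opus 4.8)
The plan is to rewrite $\TSkein_\cat$ in the fusion case as Roberts' chain-mail invariant and then invoke the known identification of the chain mail with the Crane--Yetter--Kauffman state sum. Since both $\TSkein_\cat$ and the CYK theory are symmetric monoidal functors out of $\cob$, by Theorem \ref{T:S} and the generators-and-relations presentation of $\cob$ recalled in Section \ref{S:Top}, it suffices to produce a natural isomorphism on objects and to check agreement on each of the five handle-attachment morphisms $\chi_{M,\Sp^{k-1}}$, $k=0,\dots,4$. On objects I would use Tham's chain-mail computation of admissible skein modules \cite{Tham21}: in a fusion category with $\mt=\operatorname{qTr}$ the space $\TSkein_\cat(M)$ is canonically the chain-mail state space of $M$, which is by construction the CYK state space. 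This fixes, once and for all, the natural isomorphism intertwining the two theories on the level of $\man$.

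Next I would specialize each handle map to the fusion setting. By Proposition \ref{prop:semisimpleCompact} we have $P_\unit=\unit$, $\ve=\eta=\Id_\unit$, $\zeta=d(\cat)=\sum_{i}\qdim(S_i)^2$, gluing morphism $\gm=d(\cat)^{-1}\Id_\unit$, and chromatic morphism the Kirby color $\chr_P=\big(\bigoplus_{i}\qdim(S_i)\Id_{S_i}\big)\otimes\Id_P$. The key observation is that each handle operation then reproduces exactly one elementary move of the chain-mail link. The $2$-handle map $\chi_{M,\Sp^1}$ adds a red meridian which, after the red-to-blue modification of \eqref{eq:redtoblue}, becomes precisely a Kirby-colored attaching circle, the defining feature of Roberts' construction. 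The $3$-handle map $\chi_{M,\Sp^2}$ inserts the copairing $\Omega$, which for $\operatorname{qTr}$ carries exactly the $\qdim(S_i)^{-1}$ weights of the dual bases computed in Proposition \ref{prop:semisimpleCompact}, matching the handle-dualizing cap in chain mail. The $0$- and $4$-handle maps contribute the normalizing powers of $d(\cat)$ coming respectively from $\zeta\Gamma_0$ and from evaluation by $F'=\operatorname{qTr}$, while the $1$-handle gluing map with $\gm=d(\cat)^{-1}\Id_\unit$ supplies the remaining $d(\cat)^{-1}$ factor per $1$-handle. The crucial point is that the specific choice $\mt=\operatorname{qTr}$ (rather than an arbitrary rescaling $\kappa\operatorname{qTr}$) pins these per-handle scalars to the values dictated by Roberts' normalization; the rescaling behavior $\TSkein'_\cat(W)=\kappa^{\chi(W)}\TSkein_\cat(W)$ established earlier confirms how these scalars assemble across a decomposition with $n_k$ handles of index $k$.

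Finally I would conclude by quoting Roberts' theorem \cite{Roberts}, which identifies the Kirby-colored link evaluation with the CYK state sum \cite{CYK} (valid for any fusion category, not only the modular ones), so that each handle assignment of $\TSkein_\cat$ agrees with the corresponding CYK handle map under the natural isomorphism fixed in the first step. Hence $\TSkein_\cat$ and the CYK TQFT agree on all generators of $\cob$ and therefore coincide as symmetric monoidal functors.

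The main obstacle I anticipate is the careful bookkeeping of the quantum-dimension normalizations: one must track the factor contributed by every handle and verify that, for the specific choice $\mt=\operatorname{qTr}$, these assemble into precisely the global constant appearing in Roberts' chain-mail formula, rather than differing by a power of $d(\cat)^{\chi(W)}$. A secondary subtlety is ensuring that Tham's state-space identification \cite{Tham21} is natural enough to intertwine the handle maps on the nose---in particular that the red-to-blue cabling by the Kirby color coincides with the chain-mail cabling---and that the argument genuinely covers the non-modular case, where one uses the \cite{CYK} form of the state sum in place of the original modular Crane--Yetter construction.
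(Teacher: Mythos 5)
Your proposal follows essentially the same route as the paper: specialize the chromatic and gluing data via Proposition \ref{prop:semisimpleCompact} (Kirby-color chromatic map, $\gm=d(\cat)^{-1}\Id_\unit$, $\zeta=d(\cat)$), match the five handle maps of $\TSkein_\cat$ against the chain-mail/state-sum description handle by handle, and invoke Tham's identification \cite{Tham21} of the skein state spaces and induced cobordism maps with the Crane--Yetter ones---the paper's only nontrivial check being the 3-handle, where the dual-basis idempotents $f_j^\star\circ f_j$ are matched with the $\unit$-components of the cut strand. One caveat: the paper defers the final identification entirely to Tham's characteristic-zero results rather than to Roberts' theorem, whose chain-mail argument \cite{Roberts} is set in the modular (Kauffman-bracket) case, so your claim that it applies verbatim to any fusion category over-reads that reference, though your fallback to \cite{CYK} and \cite{Tham21} for the non-modular case covers this.
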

\begin{proof}
In the semi-simple case, the admissible skein modules are the usual skein modules.

Let us describe the TQFT $\TSkein_\cat$ in this setting. Let $\{S_i\}_{i\in I}$ be a set of representatives of the isomorphism classes of simple objects of $\cat$. We described in Proposition~\ref{prop:semisimpleCompact} the chromatic morphism $\chr_P=(\oplus_{i\in I} \qdim(S_i)\Id_{S_i})\otimes \Id_P$ and the gluing morphism \smash{$\gm=\frac{1}{d(\cat)}\Id_\unit$}, hence $\zeta = d(\cat)$.
\begin{enumerate}\itemsep=0pt
\item[0.] A 0-handle introduces $d(\cat)\cdot\varnothing$ in the created $S^3$.
\item[1.] A 1-handle on a skein disjoint from the attaching sphere multiplies by \smash{$\frac{1}{d(\cat)}$} without affecting the skein.
\item[2.] A 2-handle introduces a Kirby-colored circle along the attaching sphere.
\item[3.] A 3-handle cuts the strands passing through the canceling 2-handle represented by the green arc by introducing a copairing.
\item[4.] A 4-handle does Reshetikhin--Turaev evaluation on a skein in $S^3$.
\end{enumerate}
This is exactly the description of~\cite[Definition~5.11]{Tham21}. The only non-trivial check is for the 3-handle. Let $V$ denote the color of the strand passing trough the green arc. In the description there, one splits $V$ as a direct sum of simples, and only keeps the $\unit$ components. In our construction, we choose a~basis $\{f_j\}_j$ of $\Hom_\cat(V,\unit)$ and the dual basis $\bigl\{f_j^\star\bigr\}_j$ of $\Hom_\cat(\unit,V)$ with respect to the
m-trace.
Then indeed $f_j^\star\circ f_j$ is an idempotent of $V$ corresponding to a~$\unit$-component, and the two constructions agree.

It is shown in~\cite[Sections 5.2 and 3.4]{Tham21} that the TQFT of skein modules and the construction above coincides with the Crane--Yetter TQFT. Skein modules are introduced there in~Definition~5.6, and the linear maps induced by 4-manifolds in Definition~5.11. The Crane--Yetter state spaces (as outlined by Yetter, see also~\cite[Section 7.1]{BB18}) are introduced in Proposition 3.50, and the linear maps induced by 4-manifolds in Definition~3.46. The fact that this recovers the Crane--Yetter invariants is proven in Theorem~3.61. The isomorphism between the skein and Crane--Yetter state spaces is given in Lemmas~5.22 and~5.24. The fact that this isomorphism is natural and respects 4-cobordisms is Theorem~5.26.
\end{proof}

\subsection[The example of sl\_2]{The example of $\boldsymbol{\mathfrak{sl}_2}$}\label{SS:Ex-sl2}
We study the category of modules over a partially unrolled version of
the small quantum group associated with $\mathfrak{sl}_2$, at roots of
unity. Varying the parameters, this gives examples of possibly
non-factorizable and possibly twist-degenerate chromatic compact
categories. In particular, our construction applies and gives a plain
(3+1)-TQFT $\TSkein_\cat$. We expect this TQFT to be similar to the
construction of~\cite[Section 9.2]{BD21} on 2-handlebodies. In
particular, we expect that a result similar to~\cite[Theorem
8.1]{BD22} applies, and that the associated invariant of closed
connected 4-manifolds only depends on the Euler characteristic,
signature and (in the twist-degenerate case) spin status. The whole
TQFT might be of greater interest though.
\newcommand{\pp}{r}
\begin{definition}
 Let $\FK=\C$ and $m$, $n$, $r$ be positive integers such that $n\vert m$
 and $r\ge2$.
 Let~$q$ be a~primitive $2r$-th
 root of unity and choose \smash{$q^\frac{2}{mn}$} a~primitive $mn{\pp}$-th root
 of unity. Note that~\smash{$(q^\frac{2}{mn})^{\frac{m}{n}}$}
 is a primitive
 $n^2{\pp}$-th root of unity. Let
 \begin{align*}
 &H=\mathfrak{u}^{m,n}_{q}(\mathfrak{sl}_2)
 = \C\biggl\langle E,F,{\smk} \ \bigg|\,
 E^{{\pp}}=F^{{\pp}}=0,\, \smk^{\,mn{\pp}}=1, \,\smk E=q^\frac{2}{m}E\smk, \\
& \hphantom{H=\mathfrak{u}^{m,n}_{q}(\mathfrak{sl}_2)
 = \C\biggl\langle E,F,\smk \,\bigg|\, }{}
 \smk
 F=q^{-\frac{2}{m}}F\smk,\, EF-FE=\frac{K-K^{-1}}{q-q^{-1}} \biggr\rangle,
 \end{align*}
 where $K=\smk^{\, m}$.
 The algebra $H$ can be given the structure of a Hopf algebra with coproduct $\Delta$, counit $\varepsilon$ and antipode $S$ defined by
\begin{alignat*}{4}
&\Delta(E)=1\otimes E + E \otimes K,\qquad && \varepsilon(E)=0,\qquad && S(E)= -EK^{-1},&\\
&\Delta(F)= K^{-1}\otimes F+F\otimes 1,\qquad && \varepsilon(F)=0,\qquad && S(F)=-KF,&\\
&\Delta(\smk)= \smk\otimes \smk,\qquad && \varepsilon(\smk)=1,\qquad && S(\smk)=\smk^{\,-1}.&
 \end{alignat*}
 \end{definition}

 Note that $H$ contains a version of the small quantum group at even
 root of unity as the sub-Hopf-algebra generated be $E$, $F$ and $K$.
 Let
$\cat=H-{\rm mod}$
 be the
 category of finite-dimensional left
 $H$-modules.
\noindent
For $i\in\Z/mn{\pp}\Z$, denote
\[
\smk_{\; i} = \frac{1}{mn{\pp}}\sum_{j=0}^{mn{\pp}-1} q^{\frac{-2ij}{mn}}\smk^{\, j}.
\]
Then
\begin{align*}
&\smk\smk_{\; i} = q^{\frac{2i}{mn}}\smk_{\; i},\qquad \smk_{\; i}\smk_{\; j}=\delta_{i,j}\smk_{\; i}, \qquad \sum_{i=0}^{mn{\pp}-1} \smk_{\; i}=1, \\
&E\smk_{\; i}=\smk_{\; i+n}E, \qquad\text{and}\qquad F\smk_{\; i}=\smk_{\; i-n}F.
\end{align*}
Namely, $\smk_{\; i}$ acts as the projection on the \smash{$q^{\frac{2i}{mn}}$} eigenspace of $\smk$.
\begin{proposition}
 The Hopf algebra $H=\mathfrak{u}^{m,n}_{q}(\mathfrak{sl}_2)$ is
 ribbon where the R-matrix and twist are given by:
 \begin{gather*}
 R=\Biggl(\sum_{i,j=0}^{mn{\pp}-1}
 q^{\frac{2ij}{n^2}}\smk_{\; i}\otimes
 \smk_{\; j}\Biggr)\cdot\Biggl(\sum_{k=0}^{{\pp}-1}
 \frac{\{1\}^{2k}}{\{k\}!}q^\frac{k(k-1)}{2} E^k\otimes
 F^k\Biggr),
 \\
 \theta=K^{{\pp}-1}\sum_{k=0}^{{\pp}-1}
 \frac{\{1\}^{2k}}{\{k\}!}q^\frac{k(k-1)}{2}
 S\bigl(F^k\bigr)\Biggl(\sum_{i=0}^{mn{\pp}-1}
 q^{\frac{-2i^2}{n^2}}\smk_{\; i}\Biggr) E^k.
 \end{gather*}
 where $\{k\} = q^k-q^{-k}$ and $\{k\}! = \{k\} \{k-1\} \cdots \{1\}$.
\end{proposition}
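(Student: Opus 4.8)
The plan is to verify directly that the stated $R$ is a universal $R$-matrix and that $\theta$ is a compatible ribbon element, following the classical template for quantum $\mathfrak{sl}_2$ and regarding $H$ as an enlargement of the small quantum group $\langle E,F,K\rangle$ by the extra group-like $\smk$ with $\smk^m=K$. That $H$ is a Hopf algebra with the stated structure maps is a routine check on generators, so I would take it as the starting point.

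First I would factor $R=D\cdot\Theta$, where the Cartan part $D=\sum_{i,j}q^{2ij/n^2}\,\smk_i\otimes\smk_j$ is diagonal in the idempotents and the nilpotent part $\Theta=\sum_k\frac{\{1\}^{2k}}{\{k\}!}q^{k(k-1)/2}E^k\otimes F^k$ is exactly Lusztig's quasi-$R$-matrix for $\mathfrak{sl}_2$ (note $\frac{\{1\}^{2k}}{\{k\}!}=\frac{(q-q^{-1})^k}{[k]!}$), truncated by $E^r=F^r=0$. A preliminary but essential point is that every exponent is well defined: since $q^{2/(mn)}$ has order $mnr$ while $q^{2/n^2}=(q^{2/(mn)})^{m/n}$ has order $n^2r$, the scalar $q^{2ij/n^2}$ depends only on $ij$ modulo $n^2r$, and its well-definedness as $i,j$ range over $\Z/mnr\Z$ forces $n^2r\mid mnr$, i.e. exactly the hypothesis $n\mid m$; the same check controls $q^{-2i^2/n^2}$ in $\theta$.

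Next I would check the three quasi-triangularity axioms on the generators $E,F,\smk$. In $\Delta^{op}(x)=R\,\Delta(x)\,R^{-1}$ the two factors play complementary roles: the relations $E\smk_i=\smk_{i+n}E$ and $F\smk_i=\smk_{i-n}F$ show that $D$ produces the correct eigenvalue shifts on the group-like part, while the intertwining of $\Theta$ with $\Delta(E),\Delta(F)$ is the standard $\mathfrak{sl}_2$ identity. For $(\Delta\otimes\mathrm{id})(R)=R_{13}R_{23}$ and $(\mathrm{id}\otimes\Delta)(R)=R_{13}R_{12}$, the bilinearity of $ij\mapsto q^{2ij/n^2}$ handles $D$ and the usual $q$-binomial coproduct identities for the divided powers handle $\Theta$.

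Finally, for the ribbon structure I would identify the pivotal group-like element $g$ (a power of $K$) implementing $S^2=\mathrm{Ad}_g$, compute the Drinfeld element $u=\sum_j S(\beta_j)\alpha_j$ (where $R=\sum_j\alpha_j\otimes\beta_j$), and verify that the stated $\theta$ equals the associated ribbon element $v=g\,u^{-1}$ (up to the chosen convention); centrality of $\theta$ together with $\varepsilon(\theta)=1$, $S(\theta)=\theta$, and $\Delta(\theta)=(R_{21}R)^{-1}(\theta\otimes\theta)$ then follow from the general theory of ribbon Hopf algebras once the two closed forms are matched. Alternatively one can verify these ribbon axioms directly on the explicit $\theta$. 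I expect the main obstacle to be precisely this matching step, reconciling the explicit closed form of $\theta$ with the intrinsic $u,g$ construction, together with the modular-arithmetic bookkeeping of the Cartan exponents; the nilpotent-part computations, while lengthy, run entirely parallel to the well-documented $U_q(\mathfrak{sl}_2)$ case.
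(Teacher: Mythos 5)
Your plan is correct in outline, and your factorization $R=D\cdot\Theta$ into a Cartan part and the truncated Lusztig quasi-$R$-matrix is exactly the paper's decomposition ($R=\mathcal H\check R$ in their notation), but your overall route is genuinely different from theirs. The paper does not verify quasi-triangularity directly for arbitrary $q$: it first takes the analytic choice $q=\exp(i\pi/r)$, $q^{2/mn}=\exp(2i\pi/mnr)$, realizes $H$ as a sub-quotient of the topological unrolled quantum group of \cite{GHP22}, and imports the needed identities for the Cartan part $\mathcal H$ (the weight-conjugation rule $\mathcal H(x\otimes y)\mathcal H^{-1}=xK^{|y|/2}\otimes K^{|x|/2}y$ together with $(\Delta\otimes\Id)(\mathcal H)=\mathcal H_{13}\mathcal H_{23}$ and $(\Id\otimes\Delta)(\mathcal H)=\mathcal H_{13}\mathcal H_{12}$); it then establishes the twist not via the Drinfeld element but via the symmetry $S(\mathcal T)=\mathcal T$ for $\mathcal T=m\circ(S\otimes\Id)(\mathcal H_{21})$, and finally transfers everything to an arbitrary primitive root of unity by a Galois automorphism applied to the $\mathbb{Q}(q^{2/mn})$-form of the algebra. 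Your direct verification buys uniformity in $q$ (no Galois descent step, no reliance on the unrolled machinery), and your observation that well-definedness of $q^{2ij/n^2}$ for $i,j\in\Z/mnr\Z$ is precisely the hypothesis $n\mid m$ is a nice point the paper leaves implicit in its choice of roots; the paper's route buys brevity, since the hexagon-type identities for $\mathcal H$ and $\check R$ are inherited rather than recomputed. Two cautions if you execute your plan: bilinearity of $(i,j)\mapsto q^{2ij/n^2}$ alone gives $(\Delta\otimes\Id)(D)=D_{13}D_{23}$, but to conclude $(\Delta\otimes\Id)(R)=R_{13}R_{23}$ for the product $D\Theta$ you must also commute $\Theta_{13}$ past $D_{23}$, which requires exactly the weight-conjugation rule above, so make it explicit rather than subsuming it under bilinearity; and for the twist, matching the closed form $K^{r-1}\sum_k c_k\,S(F^k)\,\mathcal T\,E^k$ against $u$ and a balancing element (note the candidate is $K$ up to central group-likes, whence the $K^{r-1}$ prefactor) is precisely the bookkeeping that the paper's $S(\mathcal T)=\mathcal T$ argument is designed to shortcut, so you should expect the real work there rather than in the nilpotent part.
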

\begin{proof} We first sketch the proof when $q=\exp(i\pi/r)$ and
 \smash{$q^{\frac2{mn}}=\exp(2i\pi/mnr)$}. Then $H$ is a sub-quotient of the
 topological unrolled quantum group (see~\cite{GHP22}). The
 $R$-matrix factors $R=\mathcal H\check R$ where $\check R$ is the
 quasi R-matrix. Then R is an R-matrix since $\mathcal H$ satisfies
 the following relations: $\forall x,y\in H$,
 $\mathcal H(x\otimes y)\mathcal H^{-1}=xK^{|y|/2}\otimes K^{|x|/2}y$
 where $|x|$, $|y|$ are the integral weights of~$x$ and~$y$;
 $\Delta\otimes\Id(\mathcal H)=\mathcal H_{13}\mathcal H_{23}$; and
 $\Id\otimes\Delta(\mathcal H)=\mathcal H_{13}\mathcal H_{12}$.
 Similarly the fact that $\theta$ is a twist follows since
\[
\mathcal T=\sum_{i=0}^{mn{\pp}-1}
 q^{\frac{-2i^2}{n^2}}\smk_{\; i}=m\circ(S\otimes\Id)(\mathcal H_{21})
\]
 satisfies $S(\mathcal T)=\mathcal T$.

 Finally, for the general case, we remark that the
 \smash{${\mathbb Q}\bigl(q^{\frac2{mn}}\bigr)$} subalgebra generated by~$E$,~$F$ and~$\smk$ is
 also ribbon since it is isomorphic through a Galois isomorphism to a
 sub Hopf algebra of the previous case which contains the R-matrix
 and the twist.
\end{proof}

The cointegral is $\Lambda = c\smk_{\;0}E^{{\pp}-1}F^{{\pp}-1}$ for some scalar $c\in\FK^\times$ and the right integral is $\lambda\bigl(\smk^iE^nF^k\bigr)=\frac{mn{\pp}}{c} \delta_{i,m(1-{\pp})}\delta_{n,{\pp}-1}\delta_{k,{\pp}-1}$. In particular, \smash{$\lambda\bigl(\smk_{\; i}F^{{\pp}-1}E^{{\pp}-1}\bigr)=\frac{1}{c} q^\frac{2i({\pp}-1)}{n}$}.
\begin{proposition}
The category $\cat=H-mod$ is
chromatic compact. It is factorizable if and only if $m=n$ and both
$n$ and ${\pp}$ are odd. It is twist degenerate if and only if $n$ is odd and $\pp$ is a~multiple of~$4$.
\end{proposition}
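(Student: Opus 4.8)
The plan is to translate all three assertions into computations inside the ribbon Hopf algebra $H$, using the dictionary of the Hopf-algebra paragraph of Subsection \ref{ss:coend}: for $\cat=H\text{-mod}$ one has $\Lambda_P=$ (action of the two-sided cointegral $\Lambda$), $\Delta_0^P=$ (action of the central element $\Delta_0=(\lambda\otimes\Id_H)(R_{21}R_{12})$), and, in its Hopf-algebra incarnation, $\Delta_\pm=\lambda(\theta^{\pm1})$. Moreover $\cat$ is factorizable iff $\Delta_0=\zeta\Lambda$ in $H$ (equivalently the Hopf pairing $\omega$ is non-degenerate), chromatic compact iff $\Delta_0^{P_\unit}=\zeta\Lambda_{P_\unit}$ with $\zeta\neq0$, and twist non-degenerate iff $\Delta_+\Delta_-\neq0$. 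Throughout I write $\xi:=(q^{2/mn})^{m/n}$, a primitive $n^2r$-th root of unity.

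\emph{Twist degeneracy.} Because the right integral is supported on the single monomial $\smk_{m(1-r)}E^{r-1}F^{r-1}$, only the top summand $k=r-1$ of $\theta^{\pm1}$ survives under $\lambda$. Substituting $S(F^{r-1})=(-1)^{r-1}q^{(r-1)(r-2)}K^{r-1}F^{r-1}$ and normal-ordering with $\smk F=q^{-2/m}F\smk$ and $\smk\smk_i=q^{2i/mn}\smk_i$, the scalars collapse to generalized quadratic Gauss sums. Using that the linear and quadratic parts are both $n^2r$-periodic in the summation index, one finds $\Delta_\pm=(\text{nonzero})\cdot\tfrac{m}{n}\,g(\mp1,b_\pm,n^2r)$, where $g(a,b,N)=\sum_{x\bmod N}\xi^{\,ax^2+bx}$ and $b_\pm$ has the parity of $n(r-1)$ (in fact $b_+=3n(r-1)$). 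Invoking the classical vanishing criterion for such sums, reduced to the $2$-primary part $g(a,b,2^s)$ with $a$ odd (which vanishes iff $b$ is even when $s=1$, and iff $b$ is odd when $s\ge2$), I would check the cases on $(n,r)$: with $n$ even the modulus $n^2r$ has $s\ge2$ while $b$ is even, so $g\neq0$; with $n,r$ odd the modulus is odd, so $g\neq0$; with $n$ odd and $r\equiv2\pmod 4$ one has $s=1$ and $b$ odd, so $g\neq0$; and with $n$ odd and $4\mid r$ one has $s\ge2$ and $b$ odd, so $g=0$. Since $g(\pm1,b,N)$ vanish under the same $2$-adic condition, this gives $\Delta_+\Delta_-=0$ exactly when $n$ is odd and $4\mid r$.

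\emph{Factorizability.} Both $\Delta_0$ and $\Lambda=c\,\smk_0E^{r-1}F^{r-1}$ lie in $Z(H)$, so I would compute the Drinfeld image $\Delta_0=(\lambda\otimes\Id_H)(R_{21}R_{12})$ and decide when it is a nonzero multiple of the top central element $\Lambda$; equivalently, I would show that $\omega$ is non-degenerate iff the M\"uger center of $\cat$ is trivial, analyzing transparency of simples separately in the group-like and the $\mathfrak{sl}_2$-nilpotent directions. The monodromy on the $\smk$-characters is governed by the bicharacter $(i,j)\mapsto q^{4ij/n^2}$ on the weight grading, which is non-degenerate with no $2$-torsion kernel precisely when $m=n$ (so that $\smk$ contributes no transparent characters beyond those of $K=\smk^m$) and $n$ is odd; the $E,F$-part contributes the classical factorizability condition for quantum $\mathfrak{sl}_2$ at a $2r$-th root of unity, namely $r$ odd. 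Combining yields factorizable iff $m=n$ and both $n,r$ are odd; this is consistent with Lemma \ref{L:fact>twistnd}, since $r$ odd excludes $4\mid r$ and hence forces twist non-degeneracy.

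\emph{Chromatic compactness.} This is the step I expect to be the main obstacle, since it must hold for \emph{all} admissible parameters, including the twist-degenerate ones where $\Delta_+\Delta_-=0$; the content is that chromatic non-degeneracy is insensitive to the vanishing of the twist Gauss sums. I would first record, from the Loewy structure of the principal block of $H$ (which is governed by that of the restricted quantum group), that $P_\unit$ contains the trivial object exactly twice, as its top and its socle, so that $\End_\cat(P_\unit)=\FK\,\Id_{P_\unit}\oplus\FK\,\Lambda_{P_\unit}$ with $\mathrm{rad}\,\End_\cat(P_\unit)=\FK\,\Lambda_{P_\unit}$ and $\Lambda_{P_\unit}=\eta\circ\ve$ nilpotent. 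Writing $\Delta_0^{P_\unit}=\alpha\,\Id_{P_\unit}+\beta\,\Lambda_{P_\unit}$, the semisimple part $\alpha$ is the central character of the principal block, hence $\alpha=\varepsilon_H(\Delta_0)=\lambda\big((\Id\otimes\varepsilon_H)(R_{21}R_{12})\big)=\lambda(1)=0$, where $(\Id\otimes\varepsilon_H)(R_{21}R_{12})=1$ follows from the counit axioms of $R$ and $\lambda(1)=0$ because $r\ge2$. It then remains to prove $\beta=\mt_{P_\unit}(\Delta_0^{P_\unit})\neq0$ for every $m,n,r$; I would extract $\beta$ as the principal-block component of $\Delta_0$ and identify it with a \emph{complete} sum over $\Z/mnr\Z$ (rather than the collapsed sum governing $\Delta_\pm$), whose non-vanishing I would check directly. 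Granting $\beta\neq0$, we obtain $\Delta_0^{P_\unit}=\zeta\,\Lambda_{P_\unit}$ with $\zeta=\beta\neq0$, so $\cat$ is chromatic compact.
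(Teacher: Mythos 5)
Your overall architecture (work entirely in $H$, identify $\Lambda_P$, $\Delta_0^P$ and $\Delta_\pm$ with the cointegral, the Drinfeld element $(\lambda\otimes\Id)(R_{21}R_{12})$ and $\lambda(\theta^{\pm1})$) is the paper's architecture, and your twist-degeneracy argument is essentially identical to the paper's: collapse of $\lambda(\theta^{\pm1})$ to the top term, reduction to a quadratic Gauss sum $G(1,-3n(r-1),n^2r)$, and the classical $2$-adic vanishing criterion, with the same four-case analysis. That part is correct. But for the other two assertions there is a genuine gap, and it sits exactly at the heart of the proposition. For chromatic compactness you reduce to proving $\beta=\mt_{P_\unit}(\Delta_0^{P_\unit})\neq0$ and then write ``granting $\beta\neq0$'': that non-vanishing \emph{is} the statement being proved, so nothing has been established. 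Moreover your reduction itself leans on two unproven inputs: that $\End_\cat(P_\unit)=\FK\,\Id\oplus\FK\,\Lambda_{P_\unit}$ is two-dimensional (plausible from the restricted-$\mathfrak{sl}_2$ principal block, but never verified for $\mathfrak{u}^{m,n}_q$, and load-bearing, since with $\dim\mathrm{rad}\,\End(P_\unit)>1$ a nonzero nilpotent part of $\Delta_0^{P_\unit}$ need not be proportional to $\Lambda_{P_\unit}$). The paper avoids both issues by brute force: it computes $\Delta_0$ in closed form — the inner sum $\sum_i q^{4i(j+n(r-1))/n^2}$ is a \emph{complete linear} character sum, equal to $mnr$ or $0$ by orthogonality (so your guess that the relevant sum is complete and never exhibits Gauss-sum cancellation is right in spirit, but this is precisely the computation you skipped) — yielding $\Delta_0$ as a manifestly nonzero combination of monomials $\smk_{jn^2r/\gcd(n^2r,2)}E^{r-1}F^{r-1}$. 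It then exhibits the explicit gluing morphism $\gm_H=(\,\cdot\,\tfrac{c^2r}{mn\{1\}^{4(r-1)}}\smk_0)$, checks $\gm_H\circ\Delta_0^H(1)=\Lambda$ using $\smk_{jn^2r/\gcd}\,\smk_0=\delta_{j,0}\smk_0$, descends to $P_\unit$ by naturality, and concludes invertibility of $\gm$ from $\ve\circ\gm\neq0$ via Lemma \ref{L:prop-proj} and Lemma \ref{lem:invertibleg} — no knowledge of the block structure of $H$ is needed. (Your observation $\alpha=\varepsilon_H(\Delta_0)=\lambda(1)=0$ is correct but also automatic from the paper's formula, since every monomial of $\Delta_0$ contains $E^{r-1}F^{r-1}$.)

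The factorizability paragraph has the same conditional character. You propose to classify transparent simples and split the analysis into a Cartan bicharacter part and an $\mathfrak{sl}_2$ part; the final criterion you state agrees with the paper's, but the route requires (i) the equivalence of the paper's notion of factorizable with triviality of the M\"uger center — the paper only identifies factorizability with non-degeneracy of the coend pairing $\omega$ in Section \ref{ss:coend}, so you would need a Shimizu-type theorem on top — and (ii) a complete classification of transparent objects (including ruling out non-invertible ones), neither of which you carry out; moreover the paper's condition $m'=\tfrac{m\gcd(n^2r,2)}{n}=1$ mixes the parities of $n$ and $r$ in a single formula, so the claimed clean separation into ``directions'' is at minimum delicate. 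In the paper, by contrast, factorizability is read off for free from the same closed-form $\Delta_0$: it is a scalar multiple of $\Lambda=c\,\smk_0E^{r-1}F^{r-1}$ exactly when the sum has a single term, i.e. $m'=1$, i.e. $m=n$ with $n,r$ odd. In short: the one computation you declined to do — the explicit evaluation of $(\lambda\otimes\Id)(R_{21}R_{12})$ — is the engine that powers both the chromatic compactness and the factorizability claims in the paper, and without it your proposal proves only the twist statement.
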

\begin{proof} As discussed in Section~\ref{ss:coend}, $\Delta_0^P$ is
 given by the action of the central element
 $\Delta_0=(\lambda \otimes \Id)(R_{21}R_{12})$. One can compute
\begin{align*}
\Delta_0&{}=(\lambda \otimes \Id)(R_{21}R_{12})\\
&{}=(\lambda \otimes \Id)\biggl(\sum \limits_{i,j,p,s,k,l}\frac{\{1\}^{2k+2l}}{\{k\}!\{l\}!}q^\frac{k(k-1)+l(l-1)}{2}q^\frac{2ij+2ps}{n^2}\smk_p\smk_{\; i-nl}F^lE^k\otimes \smk_s\smk_{\; j+nl}E^lF^k\biggr).
\end{align*}
Each summand is 0 unless $p=i-nl$ and $s=j+nl$, and after applying $\lambda$ they are also 0 unless $k=l={\pp}-1$. We use that \smash{$\{{\pp}-1\}!=q^{{\pp}({\pp}-1)/2}{\pp}$}.
\begin{align*}
(\lambda \otimes \Id)(R_{21}R_{12})&{}= \sum\limits_{i,j=0}^{mn{\pp}-1}\frac{\{1\}^{4({\pp}-1)}}{c{\pp}^2}q^\frac{2ij+2(i-n({\pp}-1))(j+n({\pp}-1))}{n^2}q^\frac{2(i-n({\pp}-1))({\pp}-1)}{n}\\
&\hphantom{= \sum\limits_{i,j=0}^{mn{\pp}-1}}{}\,
\times \smk_{\; j+n({\pp}-1)}E^{{\pp}-1}F^{{\pp}-1}\\
&{}=\frac{\{1\}^{4({\pp}-1)}}{c{\pp}^2}q^{2({\pp}-1)} \sum \limits_{j=0}^{mn{\pp}-1} q^\frac{-2({\pp}-1)j}{n}\Biggl(\sum \limits_{i=0}^{mn{\pp}-1}q^\frac{4i(j+n({\pp}-1))}{n^2}\Biggr)\\
&{}\hphantom{=\frac{\{1\}^{4({\pp}-1)}}{c{\pp}^2}q^{2({\pp}-1)} \sum \limits_{j=0}^{mn{\pp}-1}}{}\,
\times\smk_{\; j+n({\pp}-1)}E^{{\pp}-1}F^{{\pp}-1}.
\end{align*}
The term in parenthesis is $mn{\pp}$ if $j+n({\pp}-1)$ is a multiple of \smash{$\frac{n^2{\pp}}{\gcd(n^2{\pp},2)}$} and 0 otherwise. Let \smash{$m'=\frac{m\gcd(n^2{\pp},2)}{n}$}. Finally,
\[
(\lambda \otimes \Id)(R_{21}R_{12})= mn \frac{\{1\}^{4({\pp}-1)}}{c{\pp}} \sum \limits_{j=0}^{m'-1} (-1)^\frac{-2jn({\pp}-1)}{\gcd(n^2{\pp},2)} \smk_{\; \frac{jn^2{\pp}}{\gcd(n^2{\pp},2)}}E^{{\pp}-1}F^{{\pp}-1}.
\]

 Then $\gm_H$ given by multiplication on the right by $\frac{c^2{\pp}}{mn\{1\}^{4({\pp}-1)}}\smk_{\;0}$ is a gluing morphism for~$H$,~i.e.:
\[
\gm_H \circ \Delta_0^H(1) = (\lambda \otimes \gm_H)(R_{21}R_{12})= \smk_{\;0}E^{{\pp}-1}F^{{\pp}-1}=\Lambda=\Lambda_H(1).
\]
Write $P_\unit$ as an idempotent
$e_{P_\unit}=i_{P_\unit} \circ \pi_{P_\unit}$ in $H$ such that
$\varepsilon \circ \pi_{P_\unit}$ is the counit. The morphism
$\gm = \pi_{P_\unit} \circ \gm_H \circ i_{P_\unit}$ is a gluing
morphism by naturality of the $\Delta_0^P$'s and the $\Lambda_P$'s.
Hence $\cat$ is always chromatic non-degenerate. Actually,
\[
\varepsilon\circ\gm=\varepsilon\circ\pi_{P_\unit}\circ\left(-\cdot \frac{c^2{\pp}}{mn\{1\}^{4({\pp}-1)}}\smk_{\;0}\right)\circ i_{P_\unit}=\frac{c^2{\pp}}{mn\{1\}^{4({\pp}-1)}}\varepsilon
\]
as the counit is multiplicative and is 1 on $\smk_{\;0}$.
By Lemma~\ref{L:prop-proj}, $\gm$ is invertible,
and by Lemma~\ref{lem:invertibleg}, $\cat$~is chromatic compact.

As discussed in Section~\ref{ss:coend}, $\cat$ is factorizable if and only if $(\lambda \otimes \Id)(R_{21}R_{12})$ is a scalar times~$\Lambda$. This happens if and only if $m'=1$, so if and only if $m=n$ and $n$ and ${\pp}$ are odd.

Let us check for twist non-degeneracy,
\begin{align*}
\Delta_-=\overline{\Delta_+}=\lambda(\theta) &{}= \lambda\Biggl(K^{{\pp}-1}\sum_{k=0}^{{\pp}-1} \frac{\{1\}^{2k}}{\{k\}!}q^\frac{k(k-1)}{2} S(F^k)\Biggl(\sum_{i=0}^{mn{\pp}-1} q^{\frac{-2i^2}{n^2}}\smk_{\; i}\Biggr) E^k\Biggr)\\
&{}= \frac{\{1\}^{2({\pp}-1)}}{c{\pp}}\sum\limits_{i=0}^{mn{\pp}-1}(-1)^{{\pp}-1}q^{\frac{-2i^2}{n^2}} \lambda\bigl(K^{2{\pp}-2}F^{{\pp}-1}\smk_{\; i}E^{{\pp}-1}\bigr)\\
&{}= \frac{\{1\}^{2({\pp}-1)}}{(-1)^{{\pp}-1}c{\pp}}\sum\limits_{i=0}^{mn{\pp}-1}q^{\frac{1}{n^2} (-2i^2+6n({\pp}-1)(i-n({\pp}-1)) )}\\
&{}= \frac{\{1\}^{2({\pp}-1)}}{(-1)^{{\pp}-1}c{\pp}}q^{-6({\pp}-1)^2}\sum\limits_{i=0}^{mn{\pp}-1}q^{\frac{2i}{n^2} (-i+3n({\pp}-1) )}.
\end{align*}
This is a quadratic Gauss sum at a $n^2{\pp}$-th root of unity. They are well-studied, we are computing $G\bigl(1,-3n(\pp-1),n^2\pp\bigr)$ in the notations of~\cite[Appendix B]{BD21}. It is recalled there that if $3n({\pp}-1)$ is even, this vanishes if and only if $n^2{\pp} \equiv 2[4]$ which never happens. If $3n({\pp}-1)$ is odd, this vanishes if and only if $4\vert n^2{\pp}$. Hence $\cat$ is twist degenerate if and only if $n$ is odd and $4\vert {\pp}$.
\end{proof}

The algebraic input in the following example is
a generalization of the one used in~\cite{BD22}, where analogous
computation was performed.
\begin{proposition}
 For $n$ odd and $4\vert {\pp}$, the $(3+1)$-TQFT $\TSkein_\cat$
 distinguishes the closed $4$-manifolds $S^2\times S^2$ and
 \smash{$\C P^2\#\overline{\C P^2}$}, which have same signature, Euler
 characteristic and fundamental groups but different spin status. One
 has: 
 \[\TSkein_\cat\bigl(S^2\times S^2\bigr)= \frac{m^3n \gcd(nr,2)\{1\}^{8({\pp}-1)}}{c^4{\pp}^2}
 \qquad\text{and}\qquad \TSkein_\cat\bigl(\C P^2\#\overline{\C P^2}\bigr)=0\]
\end{proposition}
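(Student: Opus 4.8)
The plan is to exploit that $\cat$ has already been shown to be chromatic compact, so that by Equation \eqref{eq:dotSkcpct} one has $\TSkein_\cat(W)=\zeta\,\dot\TSkein_\cat(W)$ for every closed connected $4$-manifold $W$, with $\zeta$ the scalar read off from the gluing morphism $\gm_H$ above (namely $\zeta=\frac{mn\{1\}^{4(\pp-1)}}{c^2\pp}$). Both asserted equalities then reduce to computing the normalized invariant $\dot\TSkein_\cat$ on the two manifolds, and the contrast between them will come entirely from the twist (de)generacy already established.

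For $\CP^2\#\overline{\CP^2}$ the computation is immediate: by the multiplicativity of $\dot\TSkein_\cat$ under connected sum (first item of Proposition \ref{P:conn}) together with $\dot\TSkein_\cat(\pm\CP^2)=\Delta_{\pm}$, we get
\[
  \TSkein_\cat(\CP^2\#\overline{\CP^2})=\zeta\,\dot\TSkein_\cat(\CP^2)\,\dot\TSkein_\cat(\overline{\CP^2})=\zeta\,\Delta_+\Delta_-.
\]
Since $n$ is odd and $4\mid\pp$, the previous proposition shows $\cat$ is twist degenerate, i.e.\ $\Delta_+\Delta_-=0$ (indeed the Gauss sum computed there makes $\Delta_-=\lambda(\theta)$ vanish, and $\Delta_+=\overline{\Delta_-}$), so this invariant is $0$.

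For $S^2\times S^2$ the idea is to run the handle maps of Theorem \ref{T:S} on the standard handle decomposition with one $0$-handle, two $2$-handles attached along the $0$-framed Hopf link, and one $4$-handle (no $1$- or $3$-handles are needed, as $S^2\times S^2$ is simply connected with even $b_2$). The $0$-handle sends $1\mapsto\zeta\Gamma_0$, each $2$-handle inserts a red meridian along its attaching circle, and the $4$-handle applies $F'$; hence $\dot\TSkein_\cat(S^2\times S^2)$ equals $F'$ of $\Gamma_0$ together with the $0$-framed Hopf link coloured red. Making one component blue turns it into the Kirby-coloured circle, and the second $0$-framed component then links it once; using the coend description of Subsection \ref{ss:coend}, this Hopf-link pairing of two integrals is governed by the central element $\Delta_0=(\lambda\otimes\Id)(R_{21}R_{12})$, so that $\dot\TSkein_\cat(S^2\times S^2)$ is the modified trace on the regular representation $H$ of the action of $\Delta_0$. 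I would then substitute the explicit formula for $\Delta_0$ computed in the previous proposition, evaluate the $m$-trace through the integral $\lambda$, and check that the $m'=\frac{m\gcd(n^2\pp,2)}{n}$ nonzero $\smk$-summands of $\Delta_0$ each contribute with the same phase, giving $\dot\TSkein_\cat(S^2\times S^2)=\zeta\,m'$ and therefore
\[
  \TSkein_\cat(S^2\times S^2)=\zeta^2 m'=\frac{m^3 n\,\gcd(n\pp,2)\{1\}^{8(\pp-1)}}{c^4\pp^2},
\]
using $\gcd(n\pp,2)=\gcd(n^2\pp,2)$ in this regime.

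The two genuinely delicate points, which I expect to be the main obstacle, are (i) justifying the reduction of the red $0$-framed Hopf link to the single central insertion $\Delta_0$, and hence to $\mt_H(\Delta_0^{H})$ — this requires carefully tracking the red-to-blue modification, the framings, and the role of $\Gamma_0$ in the admissibility and normalisation — and (ii) the purely algebraic evaluation of $\mt_H(\Delta_0^{H})$, where one must show that the Gauss-sum phases of the $m'$ summands align so that the answer is exactly $\zeta m'$ rather than a smaller cancelling sum. Conceptually, the nonvanishing here while $\CP^2\#\overline{\CP^2}\mapsto 0$ is the expected spin phenomenon: $S^2\times S^2$ has even intersection form (is spin) whereas $\CP^2\#\overline{\CP^2}$ does not, and the twist-degenerate theory is precisely sensitive to this distinction.
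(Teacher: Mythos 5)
Your strategy mirrors the paper's: the same handle decompositions (one $0$-handle, two $2$-handles, one $4$-handle), the same role for $\Gamma_0$ and the red attaching circles, and your $\CP^2\#\overline{\CP^2}$ half is correct --- the paper computes the same product $\zeta\Delta_+\Delta_-$ directly from the handle maps, while you route it through Proposition \ref{P:conn} and $\dot\TSkein_\cat(\pm\CP^2)=\Delta_\pm$; both are fine, and your bookkeeping $\TSkein_\cat(S^2\times S^2)=\zeta^2m'$ with $\zeta=\frac{mn\{1\}^{4(r-1)}}{c^2r}$, $m'=\frac{m\,\gcd(n^2r,2)}{n}$, $\gcd(n^2r,2)=\gcd(nr,2)$ does reproduce the stated value.

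However, there is a genuine error at exactly the point you flag as delicate, namely (i). The red $0$-framed Hopf link together with $\Gamma_0$ does \emph{not} evaluate to ``the modified trace on the regular representation $H$ of the action of $\Delta_0$''. Red circles are evaluated through the integral with no pivotal insertion: tracking the red-to-blue modification through $\Gamma_0$ (make one component blue via the $P_\unit$-strand, then the remaining red meridian inserts $\Delta_0^G$ on the $G$-loop; the chromatic coupon then collapses to a counit) gives $\dot\TSkein_\cat(S^2\times S^2)=(\lambda\otimes\lambda)(R_{21}R_{12})=\lambda(\Delta_0)$, which is what the paper uses. By contrast, the modified trace of $\Delta_0$ acting on $H$ is the \emph{symmetrized} integral, $\mt_H(\Delta_0^H)=\lambda(g^{\pm1}\Delta_0)$ with $g$ the pivotal element (see \cite{BBG17b}, as cited in the paper), and the extra $g$ is fatal precisely in the regime of the proposition: $g$ is an odd power of $K=\smk^m$, so inserting it multiplies the $j$-th summand $\smk_{jn^2r/2}E^{r-1}F^{r-1}$ of $\Delta_0$ by $(-1)^j$; since $n$ is odd and $4\mid r$ force $m'=2m/n$ to be even, the phases then cancel pairwise and $\mt_H(\Delta_0^H)=0$. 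Taken literally, your reduction would therefore output $\TSkein_\cat(S^2\times S^2)=0$, contradicting the statement (and destroying the spin-sensitivity you want). With the corrected identification $\dot\TSkein_\cat(S^2\times S^2)=\lambda(\Delta_0)$, your step (ii) goes through exactly as you anticipate: for each of the $m'$ summands the sign $(-1)^{-2jn(r-1)/\gcd(n^2r,2)}$ cancels against the phase $q^{2i_j(r-1)/n}$ coming from $\lambda(\smk_{i_j}F^{r-1}E^{r-1})=\frac1c q^{2i_j(r-1)/n}$, so all $m'$ terms contribute $1$, giving $\lambda(\Delta_0)=\zeta m'$ and hence the claimed value.
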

\begin{proof}
 Both 4-manifolds can be obtained by a single 0 handle, two 2-handles
 and a single 4-handle. For $S^2\times S^2$ the 2-handles form a Hopf
 link, whereas for \smash{$\C P^2\#\overline{\C P^2}$} they are two disjoint
 $\pm1$-framed unknots. The 0-handle gives the skein
 $\zeta \Gamma_0$. Adding a red $\pm1$-framed unknot multiplies by
 $\Delta_\pm$, so by 0 here, and
\[
\TSkein_\cat\bigl(\C P^2\#\overline{\C P^2}\bigr)=0.
\]
 Adding a red Hopf link
 multiplies by~$(\lambda \otimes \lambda)(R_{21}R_{12})$ which is
\smash{$\frac{m^2 \gcd(nr,2)\{1\}^{4({\pp}-1)}}{c^2{\pp}}$}. The 4-handle
 evaluates $\Gamma_0$ to 1. So
 \[
 \TSkein_\cat\bigl(S^2\times S^2\bigr)= \zeta. (\lambda \otimes
 \lambda)(R_{21}R_{12}) = \frac{m^3n\gcd(nr,2)\{1\}^{8({\pp}-1)}}{c^4{\pp}^2}.
\tag*{\qed}
\]
\renewcommand{\qed}{}
\end{proof}

\subsection[Characteristic p]{Characteristic $\boldsymbol{p}$}\label{SS:Ex-chp}
We give an example of a category which is chromatic non-degenerate but
not chromatic compact, and therefore gives a non-compact TQFT. The
example we give is very simple and unlikely to give interesting
4-manifold invariant, but the TQFT already shows some very interesting
features. Its associated algebra on $S^2\times S^1$ is non-semisimple,
so it does not fall under Reutter's theorem~\cite{Reutter} showing
that semi-simple TQFTs cannot detect exotic structures.

The proof of Proposition~\ref{P:symcase} hints at this example. In
characteristic $p$, one may find a~cocommutative Hopf algebra $H$
which is non-semisimple but such that $H^*$ is semi-simple. This gives
a symmetric monoidal, non-semisimple and chromatic non-degenerate
category, therefore with non-semisimple M\"uger center.
\begin{definition}
 Let $\FK$ be an algebraically closed field of characteristic $p$, and
 $H=\FK[\Z/p\Z]$. Denote $\alpha$ the generator of $\Z/p\Z$. Let
 $\cat = H$-mod$^{fd}$ be the symmetric monoidal category of finite-dimensional left $H$-modules.
\end{definition}
\begin{proposition}
 The category $\cat$ is chromatic non-degenerate, but not chromatic
 compact. It
 gives rise to a non-compact TQFT $\TSkein_\cat$.
\end{proposition}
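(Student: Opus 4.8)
The plan is to run the Hopf-algebraic description of $\Delta_0$ and $\Lambda$ from Subsection \ref{ss:coend}, exactly as in the proof of Proposition \ref{P:symcase}, and then to separate the two morphisms by a rank argument. First I would record the structure of $H=\FK[\Z/p\Z]$: in characteristic $p$ one has $H\cong\FK[x]/(x-1)^p$ (writing $\alpha=x$), which is local, hence non-semisimple, with $H$ its unique indecomposable projective module. In particular $H$ is the projective cover $P_\unit$ of the unit and, being a group algebra, $H$ is unimodular and ribbon with trivial $R$-matrix and trivial twist; so $\cat$ is a (symmetric) chromatic category in the sense of Definition \ref{D:chrom-cat}, with the regular representation as projective generator and the chromatic morphism of Equation \eqref{E:FormulaChHopf}.

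Next I would compute $\Delta_0^{P_\unit}$. Since $\cat$ is symmetric, the double braiding $R_{21}R_{12}$ is trivial, so by the formula $\Delta_0=(\lambda\otimes\Id)(R_{21}R_{12})$ of Subsection \ref{ss:coend} one gets $\Delta_0=\lambda(1)\cdot 1\in H$. For the group algebra the right integral is $\lambda(\alpha^i)=\delta_{i,0}$ (up to scalar), so $\lambda(1)\neq 0$ and $\Delta_0=1$; therefore $\Delta_0^{P_\unit}=\Delta_0^H=\Id_H\neq 0$, which is precisely chromatic non-degeneracy in the sense of Definition \ref{def:maindef}(2). This is consistent with the Radford criterion invoked in Proposition \ref{P:symcase}: the nonvanishing of $\lambda(1)$ reflects the semisimplicity of $H^*\cong\FK^{\Z/p\Z}$, even though $H$ itself is not semisimple.

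Then I would compute $\Lambda_{P_\unit}=\Lambda_H$, which by the same subsection is the action on $H$ of the two-sided cointegral $\Lambda=\sum_{i=0}^{p-1}\alpha^i$ (equivalently $\Lambda_{P_\unit}=\eta\circ\ve$ by Lemma \ref{L:prop-proj}(4)). Because $\alpha^j\Lambda=\Lambda$ for every $j$, this endomorphism sends all of $H$ into the line $\FK\cdot\Lambda$, so $\Lambda_H$ has rank one. Comparing ranks, $\Delta_0^H=\Id_H$ has rank $p\ge 2$ whereas $\Lambda_H$ has rank one, so no scalar $\zeta\in\FK^*$ can satisfy $\Delta_0^{P_\unit}=\zeta\,\Lambda_{P_\unit}$; hence $\cat$ is not chromatic compact, by Definition \ref{def:maindef}(3). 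Finally, chromatic non-degeneracy feeds directly into Theorem \ref{T:S} to produce the non-compact $(3+1)$-TQFT $\TSkein_\cat$, while the failure of chromatic compactness means this theory does not extend to all cobordisms.

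I expect the only delicate point to be the Hopf-algebraic bookkeeping: checking that $\lambda(\alpha^i)=\delta_{i,0}$ is indeed a right integral so that $\lambda(1)\neq 0$, that the cointegral is $\sum_i\alpha^i$ up to scalar, and that the symmetric structure legitimately allows dropping the double braiding in $\Delta_0$. Once these normalizations are pinned down, the contrast between the invertible $\Delta_0^H=\Id_H$ and the rank-one $\Lambda_H$ makes both assertions immediate, and it is exactly this rank gap that encodes the non-semisimplicity of $\cat$ at the level of the TQFT (and, as remarked, yields a non-semisimple algebra on $S^2\times S^1$).
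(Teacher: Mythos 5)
Your proposal is correct and follows essentially the same route as the paper: both compute $\Delta_0=\lambda(1)1=1$ (using that the double braiding is trivial and $\lambda(1)\neq 0$), hence $\Delta_0^{P_\unit}=\Id\neq 0$ giving chromatic non-degeneracy, and both rule out chromatic compactness by the non-invertibility of $\Lambda_{P_\unit}$ coming from non-semisimplicity. The only cosmetic difference is that the paper phrases the second half via the gluing morphism $\gm=\Lambda_{P_\unit}$ failing to be invertible (Lemma \ref{lem:invertibleg}), whereas you compare ranks ($\Id_H$ has rank $p$, $\Lambda_{P_\unit}=\eta\circ\ve$ has rank one) directly against Definition \ref{def:maindef}(3) — the same argument in substance.
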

\begin{proof}
 The cointegral is \smash{$\Lambda=\sum_{i=0}^{p-1}\alpha^i$}, and the right
 integral is $\lambda=1^*$ in the basis
 $\bigl(1,\alpha,\dots,\allowbreak \smash{\alpha^{p-1}\bigr)}$. We observe indeed that
 $\varepsilon(\Lambda)=p=0$ whereas $\lambda(1)=1\neq 0$, so $H^*$ is
 semi-simple whereas $H$ is not. One computes the central element
 $\Delta_0= \lambda(1)1=1\in H$, thus $\Delta_0^P=\Id_P$
 for any projective. Therefore, the gluing morphism $\gm$ is given by
 $\Lambda_{P_\unit}$ which is not invertible as $\cat$ is
 non-semisimple.
\end{proof}

Note that $\gm=\Lambda_{P_\unit}$ means that the 1-handle map does not
affect the skein. Similarly, $\cat$ being symmetric and $\lambda(1)=1$
implies that a homotopically-trivial red links can be ignored.

As explained in~\cite{Reutter}, the vector space
\smash{$\TSkein_\cat\bigl(S^2\times S^1\bigr)$} has a natural algebra structure induced~by
the cobordism \smash{$\overset{\dots}{S^3}\times S^1$} where
\smash{$\overset{\dots}{S^3}$} is the thrice-punctured sphere.
 Note that this
algebra is non-unital as the TQFT is non-compact.
\begin{proposition}
 The non-unital algebra $\TSkein_\cat\bigl(S^2\times S^1\bigr)$ is
 non-semisimple $($i.e., it is non-semisimple if one freely adjoins a
 unit$)$.
\end{proposition}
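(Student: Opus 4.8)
The plan is to identify the non-unital algebra $A:=\TSkein_\cat(S^2\times S^1)$ explicitly and to show that its multiplication is identically zero while $A\neq 0$; this forces $A$ to be a nonzero square-zero ideal of its unitalization $A^+=A\oplus\FK$, so that the radical of $A^+$ contains $A\neq 0$ and $A^+$ is not semisimple. First I would describe $A$ as a vector space. Cutting a skein along $S^2\times\{pt\}$ turns it into an endomorphism of the object $V$ obtained by tensoring the colours crossing the sphere (projective, by admissibility), and sliding coupons around the $S^1$ factor identifies two such endomorphisms exactly when they agree in the trace space $\HH_0(\Proj)=\int^{P\in\Proj}\End_\cat(P)$. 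Since here the only indecomposable projective is $H=P_\unit$ and $\End_\cat(H)\cong H$ is commutative, this identifies $A$ with $\HH_0(\Proj)\cong H/[H,H]=H$, spanned by the loops $O_h$ carrying a single coupon $h\in\End_\cat(H)=H$. That $A\neq 0$ is seen by evaluating the $3$-handle cobordism $W_3$ from the proof of Theorem \ref{T:invertible}, which sends $O_{\Id_H}$ to $\mt_H(\Lambda_H)=\dim_\FK\Hom_\cat(\unit,H)=1\neq 0$.

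Next I would compute the product coming from the pair-of-pants cobordism $\overset{\dots}{S^3}\times S^1$. Its natural handle decomposition is a single $1$-handle (merging the two spheres) followed by a $2$-handle, and this is where the two facts recorded just before the Proposition do the work: because $\gm=\Lambda_{P_\unit}$ the $1$-handle gluing map acts trivially, and the $2$-handle contributes a red circle along the meridian of its attaching knot, which bounds a disc and is therefore homotopically trivial in $S^2\times S^1$; as $\cat$ is symmetric with $\lambda(1)=1$ this red circle may be erased. Hence the product is plain fusion of loops, $O_f\cdot O_g=O^{\,H\otimes H}_{f\otimes g}$, whose class in $\HH_0(\Proj)\cong H$ is the Hattori--Stallings trace of $f\otimes g$ computed through a fixed isomorphism $H\otimes H\cong H^{\oplus p}$.

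Finally I would carry out this trace computation. Using the untwisting isomorphism $H\otimes H\cong H\otimes\FK^{p}$ of $H$-modules, the morphism $f\otimes g$ becomes a $p\times p$ matrix over $H$ whose diagonal entries all equal $\sum_a f_a g_a\alpha^a$; summing over the $p$-dimensional multiplicity space produces a factor $p$, so the trace is $p\sum_a f_a g_a\alpha^a=0$ in characteristic $p$. (Already $O_{\Id_H}\cdot O_{\Id_H}=[\Id_{H\otimes H}]=p\,[\Id_H]=0$.) Thus $A^2=0$, and combined with $A\neq 0$ this shows $A$ is a nonzero square-zero ideal of $A^+$, completing the proof.

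The main obstacle I expect is the middle step: rigorously reducing the pair-of-pants product to plain fusion, i.e.\ controlling both the $1$-handle gluing morphism and the homotopically trivial red meridian produced by the $2$-handle, and checking that this meridian neither links the loops nor survives the symmetric evaluation. Once the product is known to be fusion, both the characteristic-$p$ vanishing $A^2=0$ and the non-vanishing $A\neq 0$ are short.
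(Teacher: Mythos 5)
Your proposal is correct and follows essentially the same route as the paper: both identify $\TSkein_\cat(S^2\times S^1)$ with the cocenter of $\End_\cat(H)\cong H$ via the loops $O_f$, reduce the pair-of-pants product to plain fusion $O_f\cdot O_g=O_{f\otimes g}$ using the facts that $\gm=\Lambda_{P_\unit}$ makes the $1$-handle act trivially and that the homotopically trivial red meridian from the $2$-handle can be erased, and then kill the product by decomposing $H\otimes H\cong H^{\oplus p}$ so that the diagonal contributes a factor $p=0$ in characteristic $p$. Your only genuine addition is the independent non-vanishing check $\TSkein_\cat(W_3)(O_{\Id_H})=\mt_H(\Lambda_H)=1\neq 0$, a harmless safeguard that the paper does not need because it obtains $\TSkein_\cat(S^2\times S^1)\cong H\neq 0$ directly from its explicit basis $\{O_i\}$.
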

\begin{proof}
For $f\colon P\to P$ an endomorphism of a projective object, denote $O_f$ the skein $\{pt\}\times S^1\subseteq S^2\times S^1$ colored by $P$ with a single coupon $f$. The skein module of $S^2\times S^1$ is generated by the $O_f$'s. As the braiding and twist are trivial, the only relation is cyclicity: \smash{$O_{f\circ g}=O_{g\circ f}$} for $f\colon P\to Q$ and $g\colon Q\to P$. A handle decomposition of \smash{$\overset{\dots}{B^3}\times S^1$} is given by a single 1-handle and a single 2-handle, both of which does not affect the skeins. The algebra structure is given by~${O_f . O_g=O_{f\otimes g}}$.

As $H$ is a projective generator of the category, one can restrict to $P=H$ for the generators of $\TSkein_\cat\bigl(S^2\times S^1\bigr)$. Furthermore endomorphisms of $H$ are right multiplications by elements of $H$, so since $H$ is commutative, the cyclic relations are trivial. So $\TSkein_\cat\bigl(S^2\times S^1\bigr)$ is isomorphic to $\End_\cat(H)\simeq H$ as a vector space, with basis the $O_i:=O_{-.\alpha^i}$'s.
To compute their product, we need to decompose \smash{$H\otimes H=\oplus_{k=0}^{p-1} H. (1\otimes \alpha^k)$}.
Then $O_i. O_j$
is multiplication by $\alpha^i\otimes \alpha^j$ on~${H\otimes H}$. It maps $1\otimes\alpha^k$ to $\alpha^i\otimes\alpha^{k+j}$ which is in the $k+j-i$ summand.We get
\[
O_i. O_j = \sum_{k=0}^{p-1} \delta_{i,j}O_i = p\delta_{i,j}O_i = 0.\]

If one freely adjoins a unit to $\TSkein_\cat\bigl(S^2\times S^1\bigr)$ one gets the non-semisimple $(p+1)$-dimensional algebra $\FK[O_0,O_1,\dots,O_{p-1}]/(O_i.O_j=0)$.
\end{proof}

\subsection*{Acknowledgments}

F.C. is supported by CIMI Labex ANR
11-LABX-0040 at IMT Toulouse within the program ANR-11-IDEX-0002-02
and from the french ANR Project CATORE ANR-18-CE40-0024. N.G.\ is
supported by the Labex CEMPI (ANR-11-LABX-0007-01), Institut de
Math\'ematiques de Toulouse and by the NSF grant DMS-2104497. B.P.\ thanks the France 2030 program Centre Henri Lebesgue ANR-11-LABX-0020-01.
We thank Anna Beliakova, Marco De Renzi, Matthieu Faitg and David Jordan for useful conversations and for suggesting key improvements and Kevin Walker for his encouragement.
The authors also wish to warmly thank the anonymous referees for their questions and comments which very much helped improving the quality of the paper.


\pdfbookmark[1]{References}{ref}
\LastPageEnding

\end{document}